\numberwithin{equation}{section}
\renewcommand{\mod}[1]{\left\lvert#1\right\rvert}
\numberwithin{equation}{section}
\newtheorem{theorem}{Theorem}[section]
\newtheorem{cor}[theorem]{Corollary}
\newtheorem{defn}[theorem]{Definition}
\newtheorem{lem}[theorem]{Lemma}
\newtheorem{prop}[theorem]{Proposition}
\newtheorem{rem}[theorem]{Remark}
\newcommand{\pa} {\partial}
\newcommand{\N}{\mathbb{N}}
\renewcommand{\leq}{\leqslant}
\newcommand{\norm}[1]{\left\Vert#1\right\Vert}
\def\hmath$#1${\texorpdfstring{{\rmfamily\textit{#1}}}{#1}}
\newcommand{\rd}{{\rm d}}
\newcommand{\mc}{\mathcal}
\def\textmatrix#1&#2\\#3&#4\\{\bigl({#1 \atop #3}\ {#2 \atop #4}\bigr)}
\def\dispmatrix#1&#2\\#3&#4\\{\left({#1 \atop #3}\ {#2 \atop #4}\right)}
\newcommand{\ip}[2]{\left<{#1},{#2}\right>}
\newcommand{\rea}{\mathbb{R}}
\newcommand{\abs}[1]{\left|#1\right|}
\newcommand{\cplx}{\mathbb{C}}
\newcommand{\z}{\mathbb{Z}}
\newcommand{\tl}{\tilde}
\theoremstyle{remark}
\newsavebox{\savepar}		 
\address{Department of Mathematics \& Statistics, Indian Institute of Technology Kanpur, Kanpur, UP- 208016}
\email{sakilahamed777@gmail.com}
\address{Department of Mathematics, Indian Institute of Technology Bombay, Powai, Mumbai- 400076}
\email{subratamajumdar634@gmail.com}
\date{\today}
\begin{document}
	\title[Control of Compressible Navier-Stokes system with Maxwell's law]{Controllability and Stabilizability of the linearized compressible Navier-Stokes system with Maxwell's law}
	\author[Sakil Ahamed, and Subrata Majumdar]{Sakil Ahamed$^*$, and Subrata Majumdar$^{\dagger}$}
	\thanks{$^*$Department of Mathematics \& Statistics, Indian Institute of Technology Kanpur, Kanpur, UP- 208016, India; e-mail:  sakilahamed777@gmail.com}
	\thanks{$^\dagger$ Department of Mathematics, Indian Institute of Technology Bombay, Powai, Mumbai- 400076, India; e-mail: subratamajumdar634@gmail.com}
	\thanks{$^*$Corresponding author: Sakil Ahamed}
	\keywords{Linearized compressible Navier-Stokes equation with Maxwell's law, Exact controllability, Boundary control, distributed control, Ingham inequality, Observability inequality, Feedback stabilization}
	\subjclass[2020]{35Q35,35Q30, 93B05, 93B07, 93D15}
	

		\medskip

		\begin{abstract}
			In this paper, we study the control properties of the linearized compressible Navier-Stokes system with Maxwell's law around a constant steady state $(\rho_s, u_s, 0), \rho_s>0, u_s>0$ in the interval $(0, 2\pi)$ with periodic boundary data. We explore the exact controllability of the coupled system by means of a localized interior control acting in any of the equations when time is large enough. We also study the boundary exact controllability of the linearized system using a single control force when the time is sufficiently large. In both cases, we prove the exact controllability of the system in the space $L^2(0,2\pi)\times L^2(0, 2\pi)\times L^2(0, 2\pi)$. We establish the exact controllability results by proving an observability inequality with the help of an Ingham-type inequality. Moreover, we prove that the system is exactly controllable at any time if the control acts everywhere in the domain in any of the equations. Next, we prove the small time lack of controllability of the concerned system. 
			
			Further, using a Gramian-based approach demonstrated by Urquiza, we prove the exponential stabilizability of the corresponding closed-loop system with an arbitrary prescribed decay rate using boundary feedback control law. 
		\end{abstract}
		\maketitle
		\tableofcontents
		
		\section{Introduction and Main results}
		\subsection{Setting of the problem}
		The control and stabilization of fluid flow have been extensively studied due to their numerous practical applications such as weather prediction, water flow in a pipe, designing aircraft and cars. Many researchers have been interested in the subject of the controllability of fluid flows, more so for incompressible flow (see \cite{VB06}, \cite{Fur04}, \cite{Raymond}, \cite{Raymond1}, \cite{VK08}) than for compressible flow (see \cite{SC00}, \cite{SC02}). In our research, we focus on investigating the exact controllability and stabilizability of the linearized compressible Navier-Stokes system with Maxwell's law.
		
		Let us consider the one-dimensional compressible  Navier-Stokes system in the domain $(0, 2\pi)$:
		\begin{equation}\label{eq1}
			\left.
			\begin{aligned}
				&\partial_t\hat{\rho}+\partial_x(\hat{\rho}\hat{u})=0 \qquad\qquad\qquad\qquad\text{ in }(0, T)\times (0, 2\pi),\\&\partial_t\left( {\hat{\rho}\hat u}\right) +\partial_x\left( {\hat{\rho}\hat u^2}\right) +\partial_xp =\partial_x\hat{S}\qquad\text{in }(0, T)\times (0, 2\pi),
			\end{aligned}
			\right\}
		\end{equation} 
		where, $\hat{\rho}$, $\hat{u}$, $p$ and  $\hat{S}$ represent the density, velocity, pressure and stress tensor of the fluid, respectively. Equation \eqref{eq1}$_1$ is the consequence of conservation of mass and equation \eqref{eq1}$_2$ is the consequence of conservation of momentum. We assume that the pressure $p$ satisfies the following constitutive law:
		\begin{equation}\label{pr}
			p(\hat \rho)=a\hat\rho^{\gamma},\: \quad \gamma\geq 1,
		\end{equation}
		where $a$ is a positive constant. While, the stress tensor $\hat S$ is assumed to satisfy the Maxwell's law:
		\begin{equation}\label{max law}
			\kappa\partial_t\hat{S}+\hat{S}=\mu\partial_x\hat{u}, \text{ in }(0, T)\times (0, 2\pi).
		\end{equation} Here $\mu$ and $\kappa$ are positive constants, with $\mu$ representing the fluid viscosity and $\kappa$ denoting the relaxation time that characterizes the time delay in the response of the stress tensor to the velocity gradient. For more detailed information, refer to \cite{hu2019global} and references therein. Relation $\eqref{max law}$ is first proposed by Maxwell, in order to describe the relation of stress tensor and velocity gradient for a non-simple fluid.

		In this paper, we study the control aspects of the one-dimensional compressible Navier-Stokes system with Maxwell's law and with periodic boundary conditions
		linearized around a constant steady state $\left( \rho_s, u_s, 0\right) , \rho_s > 0, u_s > 0$ of \eqref{eq1}.
		More precisely we consider the following system:
		\begin{equation}\label{nmaxeq3}
			\left.
			\begin{aligned}
				&\partial_t\rho+u_s\partial_x\rho+\rho_s\partial_xu=\mathbbm{1}_{\mathcal{O}_{1}} f_1, \qquad&&\text{in }(0, T)\times (0, 2\pi),\\&\partial_tu+u_s\partial_xu+a\gamma{\rho_s}^{\gamma-2}\partial_x\rho-\frac{1}{\rho_s}\partial_xS=\mathbbm{1}_{\mathcal{O}_{2}}  f_2,\qquad&&\text{in }(0, T)\times (0, 2\pi),\\&\partial_tS+\frac{1}{\kappa}S-\frac{\mu}{\kappa}\partial_xu=\mathbbm{1}_{\mathcal{O}_{3}} f_3, \qquad&&\text{in }(0, T)\times (0, 2\pi),\\&\rho(t, 0)=\rho(t, 2\pi),\: u(t,0)=u(t,2\pi),\: S(t,0)=S(t,2\pi),\qquad&&  t\in (0, T),\\&\rho(0,x)=\rho_0(x),\quad u(0,x)=u_0(x),\quad S(0,x)=S_0(x),\qquad&&  x\in (0,2\pi),
			\end{aligned}
			\right\}
		\end{equation}
		where $\mathbbm{1_{\mathcal{O}_{j}}}$ is  the characteristic function of an open set ${\mathcal{O}}_j \subset (0,2\pi),$  $j=1,2,3$ and $f_i$, $i=1,2,3$ are the distributed controls.
		
		\begin{defn}
			The system \eqref{nmaxeq3} is exactly controllable in $(L^2(0, 2\pi))^3$ at time $T>0$, if for any initial condition $(\rho_0, u_0, S_0)^{\top} \in (L^2(0, 2\pi))^3$ and any other $(\rho_1, u_1, S_1)^{\top}\in (L^2(0, 2\pi))^3$, there exist controls $f_i\in L^2\left( 0,T;L^2(\mathcal{O}_i) \right), i=1,2,3,$ such that the corresponding solution $(\rho, u, S)^{\top}$ of \eqref{nmaxeq3} satisfies
			\begin{equation}
				(\rho, u, S)^{\top}(T, x)=(\rho_1, u_1, S_1)^{\top}(x), \text{ for all } x \in (0, 2\pi).
			\end{equation}
		\end{defn}
		Let us first mention controllability results for some fluid models related to our system. The existence of the solution of the compressible Navier-Stokes system with Maxwell's law, along with the blow-up results has been studied, for example, in \cite{HuRacke, hu2019global, WangHu} and references therein. If $\kappa = 0$, then the Maxwell's law \eqref{max law} turns into the Newtonian law $\hat{S}=\mu\partial_x\hat{u}$ and the equation \eqref{eq1} becomes Navier-Stokes system of a viscous, compressible, isothermal barotropic fluid (density is function of pressure only), in a bounded domain $(0,2\pi)$. The compressible Navier-Stokes system linearized around a constant trajectory $(\rho_s, u_s)$ for $\rho_s>0, u_s>0$, yields a coupled transport-parabolic system with constant coefficients.
		The controllability of such systems with constant coefficients in one dimension has been extensively studied in the literature. In \cite{CM15, chowdhury2014null}, the authors studied this system in the domain  $(0,2\pi)$ with periodic boundary conditions and localized interior control acting only in the parabolic equation. In \cite{CM15}, using the moment method, the authors proved the null controllability in $H^{s+1}_{\textnormal{m}}(0,2\pi) \times H^{s}(0,2\pi),$ $s > 6.5,$ at time $T > \frac{2\pi}{|u_{s}|},$ where $H^{s}_{\textnormal{m}}(0, 2\pi)$ is the space of periodic Sobolev space with mean zero. This result was improved in \cite{chowdhury2014null} by showing that the null controllability holds for any initial data in $H^{1}_{\textnormal{m}}(0,2\pi) \times L^{2}(0,2\pi).$ Moreover, the authors also proved that, the system in consideration is not null controllable in $H^{s}_{\textnormal{m}}(0,2\pi) \times L^{2}(0,2\pi),$ $0\leqslant s < 1,$ at any time $T > 0$ by $L^{2}$-control acting in the parabolic equation. Thus  $H^{1}_{\textnormal{m}}(0,2\pi) \times L^{2}(0,2\pi)$ is the largest space in which the system is  null controllable by a $L^{2}$-parabolic control.
		
		
		Recently, in \cite{Beauchard}, the above results have been extended to more general coupled transport-parabolic systems with constant coefficients. The authors considered coupling of several transport and parabolic equations in one-dimensional torus $\mathbb{T}$ and studied the null controllability with localized interior control in optimal time. Moreover, an algebraic necessary and sufficient condition on the coupling term was obtained when controls act only on the parabolic or transport components. For the extension of these results to the general coupling matrix, one can refer to the work \cite{PLissy}.
		
		The local null controllability of the nonlinear compressible Navier-Stokes system around a trajectory with non-zero velocity at large time $T>0$ has been obtained in \cite{ervedoza1, ervedoza2, ervedoza2018local, MR-17, MRR-17, MN-19}.

		In \cite{ahamed}, the authors considered the one-dimensional compressible Navier-Stokes equations with Maxwell's law linearized around a constant steady state $(\rho_s, 0,0), \rho_s>0$ with Dirichlet boundary conditions and with interior controls in the interval $(0, \pi)$. They have proved that the system is not null controllable at any time
		using localized controls in density and stress equations and even everywhere control in the velocity equation. Moreover, they have shown that the system is null controllable at any time $T>0$ in the space $(L^2(0, \pi))^3$, if the control acts everywhere in the density or stress equation. Approximate controllability at large time using localized controls has also been studied. 
		
		In the following subsection, we state our main results regarding the interior controllability of the system \eqref{nmaxeq3}
		\subsection{Interior controllability}\label{intc}
		At first, we consider the case when $f_2=0=f_3$ in \eqref{nmaxeq3}. Performing integration by parts and using the boundary conditions, from the system \eqref{nmaxeq3}, we deduce
		\begin{equation*}
			\frac{d}{dt}\left( \int_{0}^{2\pi}u(t,x)\:\rd x\right)=0,\quad \frac{d}{dt}\left( \int_{0}^{2\pi}S(t,x)e^{\frac{t}{\kappa}}\:\rd x\right)=0,
		\end{equation*}
		and therefore, 
		\begin{equation*}
			\int_{0}^{2\pi}u(T,x)\:\rd x=\int_{0}^{2\pi}u_0(x)\:\rd x,\quad \int_{0}^{2\pi}S(T,x)e^{\frac{T}{\kappa}}\:\rd x=\int_{0}^{2\pi}S_0(x)\:\rd x.
		\end{equation*}
		Thus if the system \eqref{nmaxeq3} with $f_2=0=f_3$ is exactly controllable at time $T>0$ then necessarily
		\begin{equation}\label{zm}
			\int_{0}^{2\pi}u_0(x)\:\rd x=0=\int_{0}^{2\pi}S_0(x)\:\rd x.
		\end{equation}
		
		{Since we're working with a system governed by periodic boundary conditions, it is convenient to consider the Sobolev spaces for the periodic functions. For $s\in \mathbb{N}\cup \{0\}$, we denote by $H^s_{\text{p}}\left(0,2\pi \right) $, the space of $2\pi$-periodic functions belonging to $H^s_{\text{loc}}\left(\mathbb{R}\right) $, and by $\dot H^s_{\text{p}}\left(0,2\pi \right) $, the subspace of the functions
			belonging to $H^s_{\text{p}}\left(0,2\pi \right) $, with mean value zero. Consequently, the space $\dot L^2\left(0,2\pi \right) $, is given by
			\begin{equation*}
				\dot L^2\left(0,2\pi \right)=\left\lbrace f\in L^2_{\text{loc}}(\mathbb{R}) : \: f(x+2\pi)=f(x), \text{ for a.e. }x\in \mathbb{R},\: \int_{0}^{2\pi}f(x)\, \rd x=0\right\rbrace. 
			\end{equation*}
			
		}
		
		\begin{theorem}\label{nmaxthm_pos} Let $f_2=0=f_3$ in \eqref{nmaxeq3} and $\mathcal{O}_1\subseteq (0,2\pi)$. Then there exists a $T_0>0$ such that the system \eqref{nmaxeq3} is exactly controllable in $L^{2}(0,2\pi) \times {\dot L^{2}(0,2\pi) \times \dot L^{2}(0,2\pi)}$ at time $T>T_0$, by an interior
			control $f_1\in L^2\left( 0,T;L^2(\mathcal{O}_1) \right) $ for the density.
		\end{theorem}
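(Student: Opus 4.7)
The plan is to use the Hilbert Uniqueness Method (HUM) and translate the exact controllability statement into an observability inequality for the adjoint system. Writing the primal equation as $Y_{t} + \mathcal{A}Y = BF$ with $Y = (\rho,u,S)^{\top}$ and $BF = (\mathbbm{1}_{\mathcal{O}_{1}}f_{1},0,0)^{\top}$, exact controllability in $L^{2}(0,2\pi) \times \dot L^{2}(0,2\pi) \times \dot L^{2}(0,2\pi)$ is equivalent to
\begin{equation*}
\|\Phi(0)\|^{2} \leq C \int_{0}^{T}\int_{\mathcal{O}_{1}} |\sigma(t,x)|^{2}\,\rd x\,\rd t
\end{equation*}
for every solution $\Phi = (\sigma,v,\tau)^{\top}$ of the periodic adjoint problem, the norm being taken in the dual of the target space. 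The zero Fourier mode of the adjoint dynamics decouples and shows that the mean values of $u$ and $S$ cannot be reached using $f_{1}$ alone, which explains the $\dot L^{2}$ restrictions in the statement.

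For $n \neq 0$, a Fourier decomposition reduces the adjoint system to a family of $3\times 3$ constant-coefficient ODEs. The principal symbol of $\mathcal{A}$ is $in\, M_{0}$, where
\begin{equation*}
M_{0} = \begin{pmatrix} u_{s} & \rho_{s} & 0 \\ a\gamma\rho_{s}^{\gamma-2} & u_{s} & -\tfrac{1}{\rho_{s}} \\ 0 & -\tfrac{\mu}{\kappa} & 0 \end{pmatrix},
\end{equation*}
whose characteristic polynomial reduces to the cubic $\nu\bigl[a\gamma\rho_{s}^{\gamma-1} - (u_{s}-\nu)^{2}\bigr] = (u_{s}-\nu)\tfrac{\mu}{\rho_{s}\kappa}$. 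In the admissible range of parameters this cubic admits three distinct real roots $\nu_{1},\nu_{2},\nu_{3}$, which produce three branches of eigenvalues $\lambda_{n}^{(k)} = in\nu_{k} + \beta_{k} + O(1/|n|)$ with eigenvectors $\xi_{n}^{(k)}$ converging to those of $M_{0}$. A direct inspection of the eigenvalue equation shows that a vanishing first component forces the whole eigenvector to be zero, so $|(\xi_{n}^{(k)})_{1}| \geq c > 0$ uniformly in $n,k$; the collection $\{e^{inx}\xi_{n}^{(k)}\}$ is then a Riesz basis of the vector-valued periodic $L^{2}$ space.

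Using this basis, I would write $\Phi(t,x) = \sum_{n,k} c_{n,k}\, e^{\lambda_{n}^{(k)} t}\, e^{inx}\, \xi_{n}^{(k)}$, so that for each $x \in \mathcal{O}_{1}$ the observation $\sigma(t,x)$ is an exponential sum in the exponents $\{\lambda_{n}^{(k)}\}_{n,k}$. A generalized Ingham--Haraux inequality, applied to the union of the three branches (each with asymptotic gap $|\nu_{k}|$), yields for $T$ larger than a critical time $T_{0}$
\begin{equation*}
\int_{0}^{T} |\sigma(t,x)|^{2}\,\rd t \geq C \sum_{n,k} |c_{n,k}|^{2}\, |(\xi_{n}^{(k)})_{1}|^{2}.
\end{equation*}
Integrating in $x \in \mathcal{O}_{1}$, and using the uniform lower bound on $|(\xi_{n}^{(k)})_{1}|$ together with the Riesz-basis equivalence $\|\Phi(0)\|^{2} \asymp \sum_{n,k} |c_{n,k}|^{2}$, produces the desired observability inequality and hence exact controllability.

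The main obstacles are: (i) verifying reality, simplicity, and uniform asymptotic separation of the three eigenvalue branches so that a single Ingham inequality with a \emph{finite} critical time $T_{0}$ applies to all three families at once; (ii) extending the leading-order non-degeneracy of $(\xi_{n}^{(k)})_{1}$ from the asymptotic regime down to every finite $n \neq 0$, possibly via a case analysis of exceptional low modes; and (iii) arranging the Ingham constant to be uniform in $x$, so that the pointwise-in-$x$ lower bound can be integrated over $\mathcal{O}_{1}$. Once these spectral ingredients are secured, the HUM framework closes the argument.
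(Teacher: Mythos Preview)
Your strategy coincides with the paper's: reduce to an observability inequality for the adjoint, expand in the Riesz basis of (generalized) eigenfunctions, apply an Ingham-type inequality pointwise in $x$, then integrate over $\mathcal{O}_1$ and add back the finitely many low modes.

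The one place where you underestimate the difficulty is the sentence ``A generalized Ingham--Haraux inequality, applied to the union of the three branches\ldots''. This is \emph{not} available off the shelf: the three branches have three distinct asymptotic speeds $\nu_1,\nu_2,\nu_3$, so the combined family $\{\lambda_n^{(k)}\}_{n,k}$ does not satisfy a uniform gap in its imaginary parts, and the paper stresses explicitly that standard Ingham results (and the usual modifications used for two-branch transport--parabolic systems) are insufficient here. The paper devotes a full section to this point, building the inequality via a biorthogonal family: one writes the Weierstrass product $P(z)=\prod_j P_j(z/\beta_j)$ where each $P_j$ is a sine-type function with zeros at $i\overline{\lambda_n^{(j)}}/\beta_j$, shows $P$ is entire of exponential type $\pi\sum_j 1/|\beta_j|$, bounds $|P'(i\overline{\lambda_n^{(j)}})|$ from below using the cross-branch gap $|\omega_j-\omega_l|>0$, and then invokes Paley--Wiener. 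This yields the explicit critical time $T_0 = 2\pi\bigl(\tfrac{1}{|\nu_1|}+\tfrac{1}{|\nu_2|}+\tfrac{1}{|\nu_3|}\bigr)$, which is the \emph{sum} rather than the maximum of the individual transport times. Your obstacle (iii) is actually harmless: since $|a_n^{(k)}(x)|=|c_{n,k}(\xi_n^{(k)})_1|$ is independent of $x$, the Ingham constant is automatically uniform in $x$. Obstacle (ii) is handled exactly as you suggest, by adding the finitely many low modes one at a time; the paper also treats separately the (finitely many) possible multiple eigenvalues via generalized eigenfunctions.
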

		\begin{rem}\label{nmaxremnull} In fact, if the control is used in one of the equations, then the following exact controllability results can be obtained:
			\begin{enumerate}
				\item Let $f_1=0=f_3$ in \eqref{nmaxeq3} and $\mathcal{O}_2\subseteq (0,2\pi)$. Then the system \eqref{nmaxeq3} is exactly controllable in ${\dot L^{2}(0,2\pi) \times L^{2}(0,2\pi) \times \dot L^{2}(0,2\pi)}$ at time $T>T_0$, by an interior
				control $f_2\in L^2\left( 0,T;L^2(\mathcal{O}_2) \right) $ for the velocity.
				
				\item Let $f_1=0=f_2$ in \eqref{nmaxeq3} and $\mathcal{O}_3\subseteq (0,2\pi)$. Then the system \eqref{nmaxeq3} is exactly controllable in ${\dot L^{2}(0,2\pi) \times \dot L^{2}(0,2\pi)} \times L^{2}(0,2\pi)$ at time $T>T_0$, by an interior control $f_3\in L^2\left( 0,T;L^2(\mathcal{O}_3) \right) $ for the stress.
			\end{enumerate}
		\end{rem}
		\begin{rem}\label{optimal time}
			Note that the characteristics equations associated to our system \eqref{nmaxeq3} can be written as:
			\begin{equation*}
				x+\beta_1 t=c_1,\quad x+\beta_2 t=c_2,\quad x+\beta_3 t=c_3,
			\end{equation*}
			where $c_1,c_2,c_3$ are constants and $\beta_1, \beta_2, \beta_3$ are the roots (distinct and nonzero) of the equation
			\begin{equation}\label{nmaxpolybeta intro}
				r^3+2u_sr^2+\left( u_s^2-b\rho_s-\frac{\mu}{\kappa\rho_s}\right)r-\frac{\mu u_s}{\kappa\rho_s}=0.
			\end{equation} 
			In the above theorem, the waiting time $T_0$  is of the form $$T_0={2\pi}{\left(\frac{1}{|\beta_1|}+\frac{1}{|\beta_2|}+\frac{1}{|\beta_3|}\right)}.$$ In \Cref{nmaxthm_pos}, the exact controllability result is proved for any time $T>T_0$. We can not claim that $T_0$ is the minimal time to have the exact controllability of the system. Determine the minimal time $T_{min}>0$, such that the system is exactly controllable at $T\geq T_{min}$ and the system is not exactly controllable at $T<T_{min}$ is a challenging open problem. In particular, notice that the minimal control time should depend on the support of the localized control.
			
		\end{rem}
		{Next result regarding the controllability of the system when control acts everywhere in the domain.
			\begin{theorem}\label{thm_pos}
				Let $f_2=0=f_3$ in \eqref{nmaxeq3}. Then for any $T>0$ the system \eqref{nmaxeq3} is exactly controllable in $L^{2}(0,2\pi) \times {\dot L^{2}(0,2\pi) \times \dot L^{2}(0,2\pi)}$ at time $T>0$, by a control $f_1\in L^2\left( 0,T;L^2(0,2\pi) \right) $ acting everywhere in the density.
			\end{theorem}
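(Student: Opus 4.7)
The plan is to exploit the periodic boundary conditions and the full-support of the control $f_1$ to decouple the problem into an infinite family of finite-dimensional ODE control problems indexed by Fourier modes; each such mode will be controllable at any time by the Kalman rank condition, and one combines them via duality to produce a single $L^{2}$ control on $(0,T)\times(0,2\pi)$.

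Expanding $\rho, u, S, f_1$ in Fourier series in $x$, the system \eqref{nmaxeq3} with $f_2=f_3=0$ reduces, for each $n\in\mathbb{Z}$, to
\[
\dot X_n = A_n X_n + B f_{1,n},\qquad X_n=(\rho_n,u_n,S_n)^\top,\quad B=(1,0,0)^\top,
\]
with
\[
A_n=\begin{pmatrix}-in u_s & -in\rho_s & 0\\ -in a\gamma\rho_s^{\gamma-2} & -in u_s & \tfrac{in}{\rho_s}\\ 0 & \tfrac{in\mu}{\kappa} & -\tfrac{1}{\kappa}\end{pmatrix}.
\]
For $n=0$ the matrix $A_0$ is diagonal and only the density component is actuated by $f_1$; the mean-zero hypothesis on $u_0,u_1,S_0,S_1$ (cf.\ \eqref{zm}) forces the zero Fourier mode of $u$ and $S$ to vanish identically in both the initial and target states, so the mode-$0$ target is reached by a suitable choice of $f_{1,0}$. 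For $n\neq 0$ a direct computation yields
\[
\det\bigl(B,\,A_nB,\,A_n^2B\bigr) \;=\; -\frac{i n^3 \mu\, (a\gamma\rho_s^{\gamma-2})^2}{\kappa}\;\neq\;0,
\]
so the Kalman rank condition holds and each mode is individually controllable at any time $T>0$.

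To assemble these mode-wise controls into a single $L^2$ control, I would invoke the Hilbert Uniqueness Method. By duality, \Cref{thm_pos} follows from the observability inequality
\[
\|Y(0)\|_{(L^2(0,2\pi))^3}^2 \;\leq\; C\int_0^T\!\!\!\int_0^{2\pi}|\pi_1 Y(t,x)|^2\,\rd x\,\rd t
\]
for solutions $Y$ of the adjoint system, where $\pi_1 Y$ denotes its density component. Because the adjoint also decouples in Fourier modes and Plancherel converts the right-hand side into $2\pi\sum_n\int_0^T|\pi_1 Y_n(t)|^2\,\rd t$, it suffices to prove the mode-wise observability $\|Y_n(0)\|^2\leq C_n\int_0^T|\pi_1 Y_n(t)|^2\,\rd t$ with constants $C_n$ bounded uniformly in $n$.

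The main obstacle is precisely this uniformity in $n$. To control $C_n$ for large $|n|$, I would exploit the spectral structure of $A_n$: its three eigenvalues admit the expansion $\lambda_{n,j}=-in\beta_j+O(1)$ as $|n|\to\infty$, where $\beta_1,\beta_2,\beta_3$ are the three distinct nonzero roots of \eqref{nmaxpolybeta intro}. Hence $\pi_1 Y_n(t)$ is a three-term exponential sum whose frequencies have gaps of order $|n|$, and a three-term Ingham-type inequality on $(0,T)$ gives $C_n$ uniformly bounded for $|n|\geq n_0$ and any fixed $T>0$. The finitely many low modes $|n|<n_0$ are handled directly from Kalman controllability and distinctness of the eigenvalues of $A_n$. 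Summing over $n$ produces the desired observability, and HUM converts it into an $L^2$ control on $(0,T)\times(0,2\pi)$ driving any initial datum to any target, completing the proof.
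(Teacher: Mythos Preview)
Your proposal is correct and follows a genuinely different route from the paper. Both arguments start from the same Fourier decoupling into mode-wise $3\times 3$ systems and both identify the uniformity in $n$ as the only real issue. The difference is in how that uniformity is obtained.

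The paper proceeds by \emph{direct construction}: for each mode it writes down the control $f_n$ explicitly via the finite-dimensional controllability Gramian $W_{n,T}$, then computes the matrix entries of $W_{n,T}$ in the eigenbasis and uses the eigenvalue asymptotics $\lambda_n^l = -\omega_l + i\beta_l n + O(1/|n|)$ to show that the diagonal entries of $W_{n,T}$ converge to nonzero limits while the off-diagonal ones tend to zero, so $\|W_{n,T}^{-1}\|$ stays bounded. Summing the $f_n$ then gives the global $L^2$ control. Controllability of each mode is checked via the Hautus test rather than Kalman, but that is equivalent to your rank computation.

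Your route is the dual one: you reduce to a mode-wise observability inequality and argue that the three-term exponential sum in $\pi_1 Y_n$ has frequency gaps of order $|n|$, so an Ingham-type estimate on any fixed interval $(0,T)$ yields a uniform observability constant. This is correct---the off-diagonal Gram entries $\int_0^T e^{(\overline{\lambda_{n,j}}-\overline{\lambda_{n,k}})t}\,\rd t$ are $O(1/|n|)$---and it is in fact the same mechanism that makes the paper's Gramian asymptotically diagonal. What the paper's approach buys is an explicit control and avoidance of any Ingham machinery; what yours buys is a cleaner conceptual picture via HUM. One point you should make explicit when filling in details: the observability constant also depends on the coefficients $\pi_1\xi^*_{n,l}$ relating the eigenbasis coordinates to the observed component, and you need these bounded away from zero uniformly in $n$ (the paper records this as $\alpha_{n,l}^1=1$ with $|\psi_{n,l}|$ bounded).
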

			
			\begin{rem}
				Additionally, we achieve exact controllability of the system \eqref{nmaxeq3} at time $T>0$ by means of interior control acting either velocity or stress equation applied everywhere in the domain.
			\end{rem}

		}
		
		
		Next, let us study the controllability properties of the linearized compressible Navier–Stokes system with Maxwell's law when the control acts only in the boundary.

		\subsection{Boundary controllability}
		Let us consider the following system:
		\begin{equation}\label{nmaxeq3 bd}
			\left.
			\begin{aligned}
				&\partial_t\rho+u_s\partial_x\rho+\rho_s\partial_xu=0, \qquad&&\text{in }(0, T)\times (0, 2\pi),\\&\partial_tu+u_s\partial_xu+a\gamma{\rho_s}^{\gamma-2}\partial_x\rho-\frac{1}{\rho_s}\partial_xS=0,\qquad&&\text{in }(0, T)\times (0, 2\pi),\\&\partial_tS+\frac{1}{\kappa}S-\frac{\mu}{\kappa}\partial_xu=0, \qquad&&\text{in }(0, T)\times (0, 2\pi),\\&\rho(0,x)=\rho_0(x),\quad u(0,x)=u_0(x),\quad S(0,x)=S_0(x),\qquad&&  x\in (0,2\pi),
			\end{aligned}
			\right\}
		\end{equation}
		along with one of the following boundary conditions in the time interval $(0, T):$
		\begin{align}
			\label{density}&\textit{Control in density: } \rho(t, 0)=\rho(t, 2\pi)+q(t),\: u(t,0)=u(t,2\pi),\: S(t,0)=S(t,2\pi),\\ 
			\label{velocity}&\textit{Control in velocity: } \rho(t, 0)=\rho(t, 2\pi),\: u(t,0)=u(t,2\pi)+r(t),\: S(t,0)=S(t,2\pi),\\
			\label{stress}&\textit{Control in stress: } \rho(t, 0)=\rho(t, 2\pi),\: u(t,0)=u(t,2\pi),\: S(t,0)=S(t,2\pi)+p(t),
		\end{align}
		where $q, r, p$, are the controls. We recall the definition of boundary exact controllability of the system \eqref{nmaxeq3 bd}-\eqref{density}.	
		\begin{defn}
			The system \eqref{nmaxeq3 bd}-\eqref{density} is boundary exactly controllable in $(L^2(0, 2\pi))^3$ at time $T>0$, if for any initial condition $(\rho_0, u_0, S_0)^{\top} \in (L^2(0, 2\pi))^3$ and any other $(\rho_1, u_1, S_1)^{\top}\in (L^2(0, 2\pi))^3$, there exists control $q\in L^2\left( 0,T \right)$ such that the corresponding solution $(\rho, u, S)^{\top}$ of \eqref{nmaxeq3 bd}-\eqref{density} satisfies
			\begin{equation*}
				(\rho, u, S)^{\top}(T, x)=(\rho_1, u_1, S_1)^{\top}(x), \text{ for all } x \in (0, 2\pi).
			\end{equation*}
		\end{defn}
		We now mention some boundary controllability results of the linearized compressible Navier-Stokes system. In \cite{CM15}, the authors studied null controllability of the concerned system in $H^{s+1}_{\textnormal{m}}(0,2\pi) \times H^{s}(0,2\pi),$ $s > 4.5,$ using boundary control acting at the parabolic component at time $T > \frac{2\pi}{|u_{s}|}$ with periodic boundary data. Recently, in the work \cite{jiten2}, the above result has been improved. The author established the boundary null controllability of the system in the space $L^2(0, 2\pi)\times L^2(0, 2\pi)$ using density control and in the space $H^1_m(0, 2\pi)\times L^2(0, 2\pi)$ using velocity control in time $T > \frac{2\pi}{|u_{s}|}$. This paper extends these two results for the linearized compressible Navier-Stokes equation with non-barotropic fluids. In the context of Dirichlet boundary, the paper \cite{jiten} dealt with the boundary null controllability of this system with a density control in the space $H^s(0,1)\times L^2(0,1), s>\frac{1}{2}$ at time $T>1$. The authors have also explored the approximate controllability of the system in the space $L^2(0,1)\times L^2(0,1)$ at time $T>1$. Null controllability ``up to a finite-dimensional space" in the space $H^s(0,1)\times L^2(0,1), s>\frac{1}{2}$ has been shown by a velocity control in time $T>1$ with { periodic boundary condition for density and Dirichlet boundary conditions for velocity.}

		Next, we will state our boundary controllability results for the system \eqref{nmaxeq3 bd}. 
		Observe that, if the system \eqref{nmaxeq3 bd}-\eqref{density} is boundary exactly controllable then necessarily 
		\begin{align*}
			\int_{0}^{2\pi}\rho_0(x)\,\rd x+&u_s\int_{0}^{T}q(t)\rd t=0,\quad	\int_{0}^{2\pi}u_0(x)\,\rd x+b\int_{0}^{T}q(t)\, \rd t=0,\quad \int_{0}^{2\pi}S_0(x)\,\rd x=0,
		\end{align*}
		which imply
		\begin{equation}\label{condbdden}
			b\int_{0}^{2\pi}\rho_0(x)\, \rd x=u_s \int_{0}^{2\pi}u_0(x)\, \rd x, \quad \int_{0}^{2\pi}S_0(x)\,\rd x=0,
		\end{equation}
		{ where the positive constant $b$ is defined as
			\begin{equation*}\label{nmaxequ-constant}
				b=a\gamma{\rho_s}^{\gamma-2}.
			\end{equation*} 
			Similarly, if the system \eqref{nmaxeq3 bd} with the boundary  condition \eqref{velocity} is boundary exactly controllable then necessarily 
			\begin{equation}\label{condbdvel}
				u_s\int_{0}^{2\pi}\rho_0(x)\, \rd x=\rho_s \int_{0}^{2\pi}u_0(x)\, \rd x, \quad \int_{0}^{2\pi}S_0(x)\,\rd x=\frac{\mu}{\kappa}\int_{0}^{T}e^{\frac{t}{\kappa}}r(t)\, \rd t,
			\end{equation}
			and, if the system \eqref{nmaxeq3 bd} with the boundary  condition \eqref{stress} is boundary exactly controllable then necessarily 
			\begin{equation}\label{condbdstr}
				\int_{0}^{2\pi}\rho_0(x)\, \rd x= \int_{0}^{2\pi}S_0(x)\, \rd x=0, \quad \int_{0}^{2\pi}u_0(x)\,\rd x=\frac{1}{\rho_s}\int_{0}^{T}p(t)\, \rd t.
			\end{equation}
			Therefore, to investigate the exact boundary controllability of systems \eqref{nmaxeq3 bd}-\eqref{stress}, for the sake of simplicity,  we consider the following space which satisfy all the compatibility conditions \eqref{condbdden}-\eqref{condbdstr}: $$\dot L^2(0, 2\pi)\times \dot L^2(0, 2\pi)\times \dot L^2(0, 2\pi).$$
			\begin{theorem}\label{nmaxthm_pos bd} The system \eqref{nmaxeq3 bd}-\eqref{density} is exactly controllable in $\dot L^2(0, 2\pi)\times \dot L^2(0, 2\pi)\times \dot L^2(0, 2\pi)$ at time $T>T_0$ (same as in \Cref{optimal time}), by a boundary 
				control $q\in \dot L^2( 0,T) $ for the density.
			\end{theorem}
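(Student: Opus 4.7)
The plan is to deduce \Cref{nmaxthm_pos bd} from an observability estimate for the adjoint system via the Hilbert Uniqueness Method (HUM). First I would derive the adjoint system by multiplying the equations in \eqref{nmaxeq3 bd} by smooth $2\pi$-periodic test functions $(\sigma,v,\tau)$ with mean zero, integrating over $(0,T)\times(0,2\pi)$, and using the non-homogeneous jump boundary condition \eqref{density}. This yields, after integration by parts, a backward periodic system for $(\sigma,v,\tau)$ with the same characteristic polynomial \eqref{nmaxpolybeta intro} as the forward system, together with a duality identity of the shape
\begin{equation*}
\langle (\rho,u,S)(T,\cdot), (\sigma,v,\tau)(T,\cdot)\rangle - \langle (\rho_0,u_0,S_0), (\sigma,v,\tau)(0,\cdot)\rangle = \int_0^T q(t)\, g[\sigma,v,\tau](t)\,\rd t,
\end{equation*}
where $g[\sigma,v,\tau](t)$ is an explicit linear combination of the boundary traces at $x=0$ produced by the density jump. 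Thus boundary exact controllability in $(\dot L^2(0,2\pi))^3$ by $q\in \dot L^2(0,T)$ becomes equivalent to the observability inequality
\begin{equation*}
\|(\sigma,v,\tau)(0,\cdot)\|^2_{(\dot L^2(0,2\pi))^3} \leq C\int_0^T |g[\sigma,v,\tau](t)|^2\,\rd t.
\end{equation*}

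Next I would expand the adjoint in Fourier series $(\sigma,v,\tau)(t,x)=\sum_{n\in\mathbb{Z}}(\sigma_n(t),v_n(t),\tau_n(t))e^{inx}$, which diagonalises each mode to a $3\times 3$ constant-coefficient ODE system whose matrix $A_n$ has characteristic polynomial reducing, after scaling by $(in)^3$ and taking the high-frequency limit, to \eqref{nmaxpolybeta intro}. Hence each eigenvalue admits the asymptotic expansion
\begin{equation*}
\lambda_{n,j}=in\beta_j+\alpha_j+O(1/n),\qquad j=1,2,3,
\end{equation*}
with $\alpha_j$ constants depending on $u_s,\rho_s,\mu,\kappa$. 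The boundary observation functional then becomes a series
\begin{equation*}
g[\sigma,v,\tau](t)=\sum_{n\in\mathbb{Z}}\sum_{j=1}^{3}c_{n,j}\,e^{\lambda_{n,j}t},
\end{equation*}
with coefficients $c_{n,j}$ linear in the terminal Fourier data and expressed through the corresponding left eigenvectors of $A_n$.

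I would then close the argument by invoking the same generalized Ingham-type inequality used for the interior result \Cref{nmaxthm_pos}. Since the $\beta_j$ are distinct nonzero real numbers (as emphasised in \Cref{optimal time}), the union of the three sequences $\{\lambda_{n,j}\}$ has an asymptotic gap governed by $\min_{j\neq k}|\beta_j-\beta_k|$, and a cardinality count produces the critical time
\begin{equation*}
T_0=2\pi\sum_{j=1}^{3}\frac{1}{|\beta_j|}.
\end{equation*}
For $T>T_0$ the Ingham lower bound
\begin{equation*}
\int_0^T\Big|\sum_{n,j}c_{n,j}e^{\lambda_{n,j}t}\Big|^2\,\rd t \geq C\sum_{n,j}|c_{n,j}|^2
\end{equation*}
combined with a uniform equivalence $\sum_{n,j}|c_{n,j}|^2 \asymp \|(\sigma,v,\tau)(0,\cdot)\|^2_{(\dot L^2(0,2\pi))^3}$ delivers the desired observability estimate, completing the proof by HUM.

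The main obstacle I anticipate is the verification of two nondegeneracy properties at the boundary. First, one must show that the observation coefficients $c_{n,j}$ satisfy a uniform lower bound of the form $|c_{n,j}|\geq c/|n|^{k}$ for some fixed $k$ on every nonzero mode and every spectral branch $j$; this is an algebraic check pairing the density-jump trace functional against the left eigenvectors of $A_n$, and it is the step where the \emph{choice} of the controlled variable (density vs.\ velocity vs.\ stress, producing different trace functionals and hence different coefficients) truly enters the argument. Second, the finitely many low-frequency modes where the asymptotic expansion of $\lambda_{n,j}$ is not yet in force must be handled separately by a Kalman-type unique continuation argument on the corresponding finite-dimensional subsystem, ruling out invisible modes that the Ingham estimate alone cannot see. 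Once both items are settled, the Ingham bound together with the equivalence of norms yields \Cref{nmaxthm_pos bd}.
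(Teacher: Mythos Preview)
Your proposal is correct and follows essentially the same route as the paper: reduce to an observability inequality for the adjoint via duality, expand the adjoint solution in the Riesz basis of eigenfunctions of $\mathcal{A}^*$, apply the Ingham-type inequality of \Cref{propI-4} to the boundary trace series for $T>T_0$, and recover the finitely many low-frequency modes by the standard one-by-one addition of exponentials (which is exactly where the nondegeneracy condition $\mathcal{B}_\rho^*\xi^*_{n,l}\neq 0$ of \Cref{non zero ob} is used). The only point to sharpen is your nondegeneracy estimate: you will in fact get $|c_{n,j}|\asymp 1$ uniformly in $n$ (not merely $\geq c/|n|^k$), which is what makes the norm equivalence $\sum|c_{n,j}|^2\asymp\|(\sigma_T,v_T,\tilde S_T)\|^2_{(\dot L^2)^3}$ hold and yields observability in the correct $L^2$ space.
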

			\begin{rem}\label{nmaxremnul}If the control acts in one of the boundary, then we also get the exact controllability of the system \eqref{nmaxeq3 bd}:
				\begin{enumerate}
					\item  The system \eqref{nmaxeq3 bd} with the boundary condition \eqref{velocity} is exactly controllable in $\dot L^2(0, 2\pi)\times \dot L^2(0, 2\pi)\times \dot L^2(0, 2\pi)$ at time $T>T_0$, by a boundary
					control $r \in \dot L^2( 0,T) $ for the velocity.
					
					\item The system \eqref{nmaxeq3 bd} with the boundary condition \eqref{stress} is exactly controllable in $\dot L^2(0, 2\pi)\times \dot L^2(0, 2\pi)\times \dot L^2(0, 2\pi)$ at time $T>T_0$, by a boundary
					control $p \in \dot L^2( 0,T) $ for the stress.
				\end{enumerate}
			\end{rem}
		}

			%
		\subsection{Methodology of the proof of main results}
		The proof of the exact controllability result relies on an observability inequality and the spectral analysis of the linearized operator. The spectrum of the linear operator consists of three sequences of complex eigenvalues whose real parts converge to three distinct finite numbers, and the imaginary parts behave as $n$ for $|n|\rightarrow \infty$ (see, \Cref{nmaxsecspec}). Therefore the system behaves like a hyperbolic system. Hence the geometric control condition is needed for the controllability of the system using localized control. Moreover, the eigenfunctions of the linearized operator and its adjoint form Riesz bases on $L^2(0,2\pi)\times L^2(0,2\pi)\times L^2(0,2\pi)$. Then using the series representation of the solution of the adjoint problem and a hyperbolic type Ingham inequality, we can prove \Cref{nmaxthm_pos}. The proof of the Ingham-type inequality relies on the construction of a family biorthogonal to the family of exponentials $\{e^{-{\lambda_k^l}t}, k\in \z^*, l=1,2,3\}$, see \Cref{biorthogonal}. The technique we have adapted is inspired from \cite{LR14} and \cite{biccari2019null}. The entire study relies on the theory of logarithmic integral and complex analysis. One can refer to the book \cite{RY} for more details. Moreover, it is worthwhile to mention that a modification (for example \cite[Proposition 3.1]{chowdhury2014null} or \cite[Proposition 8.1]{ahamed}) to the general hyperbolic Ingham inequality (see \cite{ingham1936some}, \cite[Theorem 4.1]{micu2004introduction}) is not enough to prove the required Ingham-type inequality for our case.
		
		{
			The controllability result (\Cref{thm_pos}) is established using a direct approach: constructing the control explicitly to bring the system's solution at rest at any given finite time $T$. Since the eigenfunctions (or the generalized eigenfunctions) of the linearized operator form a Riesz Basis, the system \eqref{nmaxeq3} with control acting everywhere in the domain can be projected onto each finite dimensional eigenspaces for each $n\in \mathbb{Z}$. For any given time $T>0$, 
			we can achieve null controllability of this finite-dimensional system using the `Hautus lemma'. Furthermore, constructing the control using the finite-dimensional controllability operator can be achieved alongside a uniform estimate that is independent of $n$ (see \Cref{lem-cntrlfin-est}). Next, summing up these finite dimensional controls obtained for each projected system, we construct a control for the whole system and we show that the control brings the solution of the whole system to rest at time $T>0$. This approach closely resembles that utilized in \cite{chowdhury2012controllability}.
		}
		
		\medskip
		
		This paper also deals with the lack of controllability of the system \eqref{nmaxeq3}. For the influence of the transport component of the system, it is relevant to guess that the system \eqref{nmaxeq3} cannot be steered to zero (as exact and null controllability are equivalent for our system) at small time (see \Cref{lack theorem}). More precisely, in \Cref{lack sec}, we have established that, the system is not exactly controllable at small time by means of localized interior control {acting on every equations}. We have utilized the work \cite[Section 3]{Beauchard} to establish this result. {Furthermore, we also proved that the control system \eqref{nmaxeq3 bd}, \eqref{velocity} is not controllable in small time with the boundary control acting in density.  Similarly, the same result can be demonstrated for  other boundary cases (\eqref{density}, \eqref{stress}) also.}
		\subsection{Stabilizability}
		In this paper, we also study the boundary feedback stabilization of linearized compressible Navier-Stokes system with Maxwell's law \eqref{nmaxeq3 bd} with the control acting only in the any boundary of density \eqref{density}, velocity \eqref{velocity} and stress \eqref{stress}. More precisely, we construct a feedback control law which forces the solution of \eqref{nmaxeq3 bd}-\eqref{density} to decay exponentially towards the origin with any order. 

		In our case, we prove the exponential stabilization result by an explicit feedback law following a method demonstrated by J.M. Urquiza \cite{UR05} which relies on the Gramian approach and the Riccati equations, see \cite{AV}, \cite{KV1} for more details. This method addresses the exponential stabilization issue for an exactly controllable system with an operator that generates an infinitesimal group of continuous operator. This approach has been successfully utilized to study rapid exponential stabilization of KdV equation \cite{CE09}, Boussinesq system of KdV-KdV type in \cite{Filho2021RapidES}, linearized compressible Navier-Stokes equation in the case of creeping flow in \cite{Shirshendu}. Our work regarding exponential stabilization is inspired from \cite{CE09}, \cite{Filho2021RapidES} and \cite{Shirshendu}. Recently, classical moment method has been extensively utilized to establish the exact controllability and stabilizability of some dispersive system, see \cite{leal2021control}, \cite{leal2021simple}. In these works, utilizing the group structure of the corresponding system, the authors conclude the stabilizability from the controllability in some periodic Sobolev spaces.
		
		Let us state the main stabilization result for the closed-loop system \eqref{nmaxeq3 bd}-\eqref{density}.
		\begin{theorem}\label{nmaxthm_pos st 1}The control system \eqref{nmaxeq3 bd}-\eqref{density} is completely stabilizable in $\dot L^2(0, 2\pi)\times \dot L^2(0, 2\pi)\times \dot L^2(0, 2\pi)$ by boundary feedback control $q\in L^2\left( 0,\infty\right) $ for the density. {That is, for any $\nu>0$, there exists a continuous linear map  $\Pi: L^2(0, 2\pi)\times  L^2(0, 2\pi)\times  L^2(0, 2\pi)\to \cplx$ such that for any initial data $(\rho_0, u_0, S_0) \in \dot L^2(0, 2\pi)\times \dot L^2(0, 2\pi)\times \dot L^2(0, 2\pi)$, the solution $(\rho, u, S)$ of system \eqref{nmaxeq3 bd}-\eqref{density} with $q(t)= \Pi(\left(\rho(t,\cdot), u(t,\cdot), S(t,\cdot)\right))$ satisfies} the following
			{\small
				\begin{align*}
					\norm{\rho(t, \cdot )}_{L^2(0,2\pi)}+\norm{u(t, \cdot )}_{L^2(0,2\pi)}+\norm{S(t, \cdot )}_{L^2(0,2\pi)}& \leq C e^{-\nu t} \bigg[ \norm{\rho_0}_{L^2(0,2\pi)}+\norm{u_0}_{L^2(0,2\pi)}+\norm{S_0}_{L^2(0,2\pi)} \bigg], 
			\end{align*}}
			for all  $t>0$,	where $C=C(T)$ is a positive constant, independent of $\rho_0, u_0, S_0, q$ and $t$.
		\end{theorem}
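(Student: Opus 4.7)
The plan is to recast the density boundary control problem \eqref{nmaxeq3 bd}--\eqref{density} as an abstract evolution equation
\[
	\frac{d}{dt}\begin{pmatrix}\rho\\u\\S\end{pmatrix} = \mc{A}\begin{pmatrix}\rho\\u\\S\end{pmatrix} + \mc{B}\,q(t), \qquad \begin{pmatrix}\rho\\u\\S\end{pmatrix}(0)=\begin{pmatrix}\rho_0\\u_0\\S_0\end{pmatrix},
\]
on the state space $\mc{H}:=\dot L^2(0,2\pi)\times \dot L^2(0,2\pi)\times \dot L^2(0,2\pi)$, where $\mc{A}$ is the realization of the system operator under pure periodic boundary conditions (densely defined on $\mc{H}$) and $\mc{B}\in \mathcal{L}(\cplx, D(\mc{A}^*)')$ is the unbounded boundary lifting operator associated with the control entering the density jump. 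This is the standard trick: pick a smooth lifting $R(x)$ with $R(0)-R(2\pi)=1$, write $\rho = \tilde\rho + qR$, and read off $\mc{B}$. Then I would apply Urquiza's theorem \cite{UR05} to $(\mc{A},\mc{B})$. The three hypotheses to verify are: (i) $\mc{A}$ generates a $C_0$--group on $\mc{H}$, (ii) $\mc{B}$ is admissible, and (iii) $(\mc{A},\mc{B})$ is exactly controllable at some finite time.

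For (i), the spectral analysis announced in \Cref{nmaxsecspec} gives three branches of eigenvalues $\{\lm_n^l\}_{n\in\z^*,l=1,2,3}$ whose real parts converge to three finite constants, hence are uniformly bounded above \emph{and below}, and whose imaginary parts behave like $n$. Combined with the Riesz basis property of the eigenfunctions alluded to in the methodology subsection, this yields that both $\mc{A}$ and $-\mc{A}$ are generators of $C_0$--semigroups on $\mc{H}$, so $\mc{A}$ generates a group. Property (iii) is furnished directly by \Cref{nmaxthm_pos bd}: the pair $(\mc{A},\mc{B})$ is exactly controllable for any $T>T_0$. The admissibility (ii) is the standard dual estimate $\int_0^T |\mc{B}^*\varphi(t)|^2\,\rd t \leq C_T \|\varphi_0\|_{\mc{H}}^2$ for solutions of the adjoint problem, which follows from the Ingham-type inequality driving the observability estimate in the proof of \Cref{nmaxthm_pos bd}, since $\mc{B}^*$ amounts to a trace-type evaluation that gets bounded through the same spectral expansion.

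Once these three ingredients are in hand, I fix $\nu>0$ and form, following Urquiza, the Gramian
\[
	\Lambda_\nu := \int_0^{\infty} e^{-2\nu t}\, e^{-t\mc{A}} \mc{B}\mc{B}^* e^{-t\mc{A}^*}\,\rd t,
\]
which is well defined because $e^{-t\mc{A}}$ is a $C_0$--group (so $e^{-t\mc{A}}$ makes sense for $t>0$) and the integral converges in $\mc{L}(\mc{H})$ thanks to admissibility combined with exact controllability of $(-\mc{A}, \mc{B})$ (which one gets from (iii) by time reversal). Exact controllability ensures $\Lambda_\nu$ is coercive; standard manipulations show it solves an algebraic Riccati-type identity. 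Then the feedback
\[
	\Pi(\rho,u,S) := -\mc{B}^*\Lambda_\nu^{-1}(\rho,u,S)
\]
is continuous from $\mc{H}$ into $\cplx$, and the closed-loop operator $\mc{A}+\mc{B}\Pi$ generates a $C_0$--semigroup on $\mc{H}$ whose norm decays like $e^{-\nu t}$, which is exactly the desired estimate.

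The main obstacle I anticipate is the rigorous verification of the admissibility condition for the boundary control operator $\mc{B}$, because $\mc{B}$ is genuinely unbounded (a boundary trace in a periodic setting, weighted with a Dirac-type distribution against the dual basis), and one has to extract a trace-regularity estimate of the type $q \mapsto (\rho,u,S)|_{\text{boundary data}}$ uniformly in time. The cleanest route is to expand adjoint solutions in the Riesz basis provided by \Cref{nmaxsecspec}, read $\mc{B}^*\varphi$ as a scalar series in the corresponding dual coefficients, and then invoke the same biorthogonal family constructed for the observability inequality: this yields the admissibility bound on any bounded interval $[0,T]$ and, after absorbing the uniform growth of the semigroup, on $[0,\infty)$ with the $e^{-2\nu t}$ weight used in $\Lambda_\nu$. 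The analogous argument, with the obvious changes in the lifting $R$ and in the dual trace formula, handles the velocity and stress boundary control cases of \eqref{velocity} and \eqref{stress}.
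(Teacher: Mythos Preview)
Your proposal is correct and follows essentially the same approach as the paper: both verify the hypotheses (H1)--(H4) of Urquiza's theorem (group generation, admissibility, and exact controllability via \Cref{nmaxthm_pos bd} and the Ingham-type inequality) and then define the feedback as $\Pi=-\mc{B}^*\Lambda_\omega^{-1}$ with the infinite-horizon Gramian $\Lambda_\omega$. The only minor difference is that the paper obtains the group property from the decomposition $\mc{A}=\mc{A}_1+\mc{A}_2$ with $\mc{A}_1$ skew-adjoint (Stone's theorem) and $\mc{A}_2$ bounded, rather than from the Riesz basis and eigenvalue asymptotics as you suggest; either route works.
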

		\subsection{Organization of the paper}
		The plan of the paper is as follows. In \Cref{nmaxwell-posedness}, we investigate the well-posedness of the system \eqref{nmaxeq3} using semigroup theory and determine the adjoint of the linear operator associated with the coupled system. Next, in \Cref{nmaxsecspec}, we analyze the behavior of the spectrum of the linearized operator associated with system \eqref{nmaxeq3}, demonstrating that the eigenfunctions of the linear operator form a Riesz basis in $(L^2(0, 2\pi))^3$. The exact controllability results, \Cref{nmaxthm_pos} and \Cref{thm_pos}, are proved in \Cref{nmaxsecnullcont}. The proof of the boundary exact controllability result, \Cref{nmaxthm_pos bd}, is given in \Cref{nmax-bd}. In \Cref{lack sec}, we discuss the lack of controllability of system \eqref{nmaxeq3} in small time. Subsequently, in \Cref{stab}, we examine the rapid exponential stabilization result (\Cref{nmaxthm_pos st 1}). In \Cref{biorthogonal}, we present the proof of an Ingham-type inequality adapted to our context. Finally, in \Cref{secmultiev}, we provide some details on managing multiple eigenvalues of the linear operator.

		\section{Linearized operator}\label{nmaxwell-posedness}
		
		{
			For any $s\in \mathbb{N}\cup \{0\},$ we consider the spaces,
			\begin{align*}
				H^s_{\text{p}}\left(0,2\pi \right)=&\left\lbrace f:\: f=\sum_{k\in \mathbb{Z}}c_ke^{2\pi ikx},\: \sum_{k\in \mathbb{Z}}\left| k\right|^{2s}\left| c_k\right|^2< \infty  \right\rbrace,  \\
				\dot H^s_{\text{p}}\left(0,2\pi \right)=&\left\lbrace f\in H^s_{\text{p}}\left(0,2\pi \right):\: \int_{0}^{2\pi}f(x)\, \rd x=0 \right\rbrace, 
			\end{align*}
			with the norms
			\begin{equation*}
				\left\| f\right\|_{H^s_{\text{p}}\left(0,2\pi \right)}=\left(\sum_{k\in \mathbb{Z}}\left(1+ \left| k\right|^{2s}\right)\left| c_k\right|^2  \right)^{\frac{1}{2}},\: 	\left\| f\right\|_{\dot H^s_{\text{p}}\left(0,2\pi \right)}=\left(\sum_{k\in \mathbb{Z}\setminus\{0\}} \left| k\right|^{2s}\left| c_k\right|^2  \right)^{\frac{1}{2}}.
			\end{equation*}

		}
		
		Let us define $\mathcal {{Z}}=L^2(0, 2\pi)\times L^2(0, 2\pi)\times L^2(0, 2\pi)$.
		Let $\mathcal {{Z}}$ be endowed with the inner product
		\begin{equation}\label{nmaxinnerproduct}
			\left\langle \begin{pmatrix}
				\rho\\ u\\S
			\end{pmatrix},\begin{pmatrix}
				\sigma\\ v\\\tilde{S}
			\end{pmatrix} \right\rangle_{\mathcal {{Z}}}=b\int_{0}^{2\pi}\rho\bar{\sigma} \,\rd x+\rho_s\int_{0}^{2\pi}u\bar{v}\, \rd x+\frac{\kappa}{\mu}\int_{0}^{2\pi}S\bar{\tilde{S}}\, \rd x. 
		\end{equation}
		We now define the unbounded operator $\left( \mathcal A, \mathcal D(\mathcal A;\mathcal {{Z}})\right) $ in $\mathcal {{Z}}$ by{\small$$\mathcal D(\mathcal A;\mathcal{{Z}})=\left\lbrace \begin{pmatrix}
				\rho\\u\\S
			\end{pmatrix}\in \mathcal {{Z}}\: :\:\left(\rho,u,S \right)^\top \in  {H_p^1(0,2\pi)\times  H_p^1(0,2\pi)\times  H_p^1(0,2\pi)}
			\right\rbrace
			$$}and
		\begin{equation}\label{op}
			\mathcal A=\begin{bmatrix}
				-u_s\frac{d}{dx}\hspace{1mm}&-\rho_s\frac{d}{dx}\hspace{1mm}&0\vspace{2mm}\\-b\frac{d}{dx}&-u_s\frac{d}{dx}&\frac{1}{\rho_s}\frac{d}{dx}\vspace{2mm}\\0&\frac{\mu}{\kappa}\frac{d}{dx}&-\frac{1}{\kappa}
			\end{bmatrix}.
		\end{equation}
		The control operator $\mathcal{B} \in \mathcal{L}(\mathcal {{Z}};\mathcal {{Z}})$ is defined by 
		\begin{equation}\label{nmaxeqcontr}
			\mathcal{B} f  = \left( \mathbbm{1}_{\mathcal{O}_{1}} f_1, \mathbbm{1}_{\mathcal{O}_{2}} f_2, \mathbbm{1}_{\mathcal{O}_{3}} f_3\right)^{\top}, \qquad f=(f_1, f_2, f_3)^{\top} \in \mathcal{{ Z}}.
		\end{equation}
		With the above introduced notations, the system  \eqref{nmaxeq3} can be rewritten as 
		\begin{equation} \label{nmaxop-eqn}
			\dot{z}(t) = \mathcal{A} z(t) + \mathcal{B} f(t), \quad t\in (0,T), \qquad z(0) = z_{0},
		\end{equation}
		where, we have set $z(t) = (\rho(t,\cdot), u(t, \cdot), S(t,\cdot))^{\top},$ $z_{0} =(\rho_{0}, u_{0}, S_0)^{\top}, \text{ and } f(t)=(f_1(t,\cdot), f_2(t,\cdot), f_3(t,\cdot))^{\top} .$
		
		Next, we will prove the well-posedness of the system \eqref{nmaxop-eqn}.
		
		\begin{prop} \label{nmaxpr:semigroup-z} 
			The operator $(\mathcal{A}, \mathcal{D}(\mathcal{A}; \mathcal {{Z}}))$ is the infinitesimal generator of a strongly continuous semigroup $\left\lbrace \mathbb{T}_{t} \right\rbrace_{t\geq 0} $ on $\mathcal {{Z}}.$
			Further, for any $f\in L^2(0,T; \mathcal{{ Z}})$ and for any $z_0\in \mathcal {{Z}}$, \eqref{nmaxop-eqn} admits a unique solution $(\rho, u, S)\in C([0,T]; \mathcal {{Z}})$ with 
			\begin{equation*}
				\|(\rho, u, S)\|_{C([0,T];\mathcal {{Z}})} \leqslant  C \Big(\|z_0\|_{\mathcal {{Z}}}+ \|f\|_{L^2(0,T;\mathcal{{ Z}})}\Big).
			\end{equation*}
		\end{prop}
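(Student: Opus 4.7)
The plan is to apply the Lumer-Phillips theorem on the Hilbert space $(\mathcal{Z}, \langle \cdot, \cdot\rangle_{\mathcal{Z}})$ with the weighted inner product \eqref{nmaxinnerproduct}. The weights $b$, $\rho_s$, and $\kappa/\mu$ appearing in \eqref{nmaxinnerproduct} are precisely engineered so that the first-order part of $\mathcal{A}$ is skew-adjoint; it is the zeroth-order term $-\frac{1}{\kappa}S$ that produces dissipation. First I would check that $\mathcal{D}(\mathcal{A};\mathcal{Z})$ is dense in $\mathcal{Z}$ (which is immediate since smooth $2\pi$-periodic functions are dense in $L^2(0,2\pi)$) and that $\mathcal{A}$ is closed (its graph norm is equivalent to the $H^1_p$-norm on each component).

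Next, for $z=(\rho,u,S)^{\top}\in\mathcal{D}(\mathcal{A};\mathcal{Z})$ I would compute $\mathrm{Re}\,\langle\mathcal{A}z, z\rangle_{\mathcal{Z}}$ by integration by parts. The cross terms coming from the first-order derivatives group as
\begin{equation*}
b\rho_s\!\int_{0}^{2\pi}\!\!\bigl(u_x\bar\rho+\rho_x\bar u\bigr)\,\rd x
+\!\int_{0}^{2\pi}\!\!\bigl(S_x\bar u+u_x\bar S\bigr)\,\rd x
+\text{diagonal transport terms},
\end{equation*}
and, thanks to the periodic boundary conditions, each of these brackets is purely imaginary (the diagonal $-u_s\partial_x$ terms give $\int\rho_x\bar\rho+\rho\bar\rho_x = 0$, etc.). The only surviving real contribution is $-\frac{1}{\mu}\int_0^{2\pi}|S|^2\,\rd x\leq 0$. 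Hence $\mathcal{A}$ is dissipative on $\mathcal{Z}$.

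Then I would establish the range condition: for some (equivalently, for all sufficiently large) $\lambda>0$, the operator $\lambda I-\mathcal{A}:\mathcal{D}(\mathcal{A};\mathcal{Z})\to\mathcal{Z}$ is surjective. Since $\mathcal{A}$ has constant coefficients and acts on the torus, I would expand each component in a Fourier series $\sum_{k\in\mathbb{Z}}c_k e^{ikx}$, reducing $(\lambda I-\mathcal{A})z=g$ to a $3\times 3$ algebraic system for each Fourier mode $k$ whose matrix is $\lambda I_3 - M_k$, with $M_k$ the symbol of $\mathcal{A}$. A direct determinant computation shows $|\det(\lambda I_3-M_k)|$ stays bounded below uniformly in $k$ once $\lambda$ is large, from which one solves mode-by-mode and verifies that the resulting $z$ lies in $(H^1_p)^3$ (the $ik$ factors are compensated by $\lambda+O(|k|)$). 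Combined with dissipativity this yields via Lumer–Phillips that $\mathcal{A}$ generates a $C_0$-semigroup $\{\mathbb{T}_t\}_{t\geq 0}$ of contractions on $\mathcal{Z}$.

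Finally, for the nonhomogeneous problem I would invoke the variation-of-constants formula $z(t)=\mathbb{T}_t z_0+\int_0^t\mathbb{T}_{t-s}\mathcal{B}f(s)\,\rd s$, which gives a unique mild solution in $C([0,T];\mathcal{Z})$; the desired estimate then follows from $\|\mathbb{T}_t\|_{\mathcal{L}(\mathcal{Z})}\leq 1$, $\|\mathcal{B}\|_{\mathcal{L}(\mathcal{Z})}\leq 1$, and the Cauchy–Schwarz inequality applied to the Duhamel integral. The main obstacle, in my view, is the range/surjectivity step: one must verify that solving $(\lambda I-\mathcal{A})z = g$ yields $z\in(H^1_p)^3$ with good estimates, which requires the Fourier-symbol computation and a careful bound on $\det(\lambda I_3 - M_k)$ uniform in $k\in\mathbb{Z}$ (this symbol coincides with the one that will appear in Section~\ref{nmaxsecspec}, so the spectral computation carried out there effectively underwrites this step).
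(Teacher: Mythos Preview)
Your approach is correct but differs from the paper's. The paper splits $\mathcal{A}=\mathcal{A}_1+\mathcal{A}_2$, where $\mathcal{A}_2=\mathrm{diag}(0,0,-\tfrac{1}{\kappa})$ is bounded and $\mathcal{A}_1$ is the remaining first-order part; it then observes $\mathcal{A}_1^*=-\mathcal{A}_1$ in $(\mathcal{Z},\langle\cdot,\cdot\rangle_{\mathcal{Z}})$, invokes Stone's theorem to get a unitary group for $\mathcal{A}_1$, and concludes by the bounded-perturbation theorem. This bypasses the range condition entirely. Your direct Lumer--Phillips argument is equally valid and has the advantage of yielding the contraction property $\|\mathbb{T}_t\|\le 1$ immediately (the paper's route gives only $\|\mathbb{T}_t\|\le e^{t/\kappa}$ a priori from the perturbation bound). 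Conversely, your Fourier-symbol computation for surjectivity of $\lambda I-\mathcal{A}$ is more laborious than necessary: since the same integration-by-parts computation shows $\mathcal{A}^*$ (given in \eqref{nmaxop-A*}) is also dissipative, the reflexive-space corollary of Lumer--Phillips dispenses with the explicit range check. Either way the argument closes; the paper's decomposition is just the shortest path.
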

		\begin{proof}
			We rewrite $\mathcal{A}:= \mathcal{A}_{1} + \mathcal{A}_{2},$ with 
			\begin{equation*}
				\mathcal{A}_{1} = \begin{bmatrix}
					-u_s\frac{d}{dx}\hspace{1mm}&-\rho_s\frac{d}{dx}\hspace{1mm}&0\vspace{2mm}\\-b\frac{d}{dx}&-u_s\frac{d}{dx}&\frac{1}{\rho_s}\frac{d}{dx}\vspace{2mm}\\0&\frac{\mu}{\kappa}\frac{d}{dx}&0
				\end{bmatrix}, \quad 
				\mathcal{A}_{2} = \begin{bmatrix}
					0&0&0\\0&0&0\\0&0&-\frac{1}{\kappa}
				\end{bmatrix}.
			\end{equation*}
			Note that 
			$$ \norm{\mathcal{A}_{2}\begin{pmatrix} \rho \\ u \\ S \end{pmatrix}}_{\mathcal {{Z}}}\leqslant \frac{1}{{\kappa}} \norm{\begin{pmatrix} \rho \\ u \\ S  \end{pmatrix} }_{\mathcal {{Z}}} \quad \mbox{ for all }
			\begin{pmatrix} \rho \\ u \\ S \end{pmatrix}  \in \mathcal {{Z}}.$$
			Thus $\mathcal{A}_{2}$ is a bounded perturbation of the operator $\mathcal{A}_{1}$ on $\mathcal {{Z}}$.
			Therefore, it is sufficient to show that $\mathcal{A}_{1}$ generates a $C^{0}$-semigroup on $\mathcal {{Z}},$ (for details, see \cite[Theorem 2.11.2]{TW09}).
			
			The adjoint $\mathcal{A}^*_1$ of $\mathcal{A}_1$ with the domain 
			$\mathcal D(\mathcal A_1^*;\mathcal {{Z}})=\mathcal D(\mathcal A;\mathcal {{Z}})$ is given by
			\begin{equation*}
				\mathcal A^*_1=\begin{bmatrix}
					u_s\frac{d}{dx}\hspace{1mm}&\rho_s\frac{d}{dx}\hspace{1mm}&0\vspace{2mm}\\b\frac{d}{dx}&u_s\frac{d}{dx}&-\frac{1}{\rho_s}\frac{d}{dx}\vspace{2mm}\\0&-\frac{\mu}{\kappa}\frac{d}{dx}&0
				\end{bmatrix}=-\mathcal A_1.
			\end{equation*}
			Thus $\mathcal{A}_1$ is a skew-adjoint operator. Then using Stone Theorem (\cite[Theorem 3.8.6]{TW09}), we get that $\mathcal{A}_1$  generates a unitary group on $\mathcal {{Z}}$.
		\end{proof}
		Thus, we have the following well-posedness result for the system \eqref{nmaxeq3}.
		\begin{theorem}
			For any $(\rho_0, u_0, S_0)^{\top}\in \mathcal{{ Z}}$ and $f_i\in L^2(0, T; L^2(0, 2\pi)), \, i=1,2,3,$ the system \eqref{nmaxeq3} admits a unique solution $(\rho, u, S)^{\top}\in C\left( [0, T];\mathcal{{ Z}}\right)$ with
			\begin{equation*}
				\|(\rho, u, S)\|_{C([0,T];\mathcal {{Z}})} \leqslant  C \Big(\|(\rho_0, u_0, S_0)\|_{\mathcal {{Z}}}+ \|(f_1, f_2, f_3)\|_{L^2(0,T;\mathcal{{ Z}})}\Big).
			\end{equation*}
		\end{theorem}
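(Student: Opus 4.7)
The plan is to deduce this theorem as a direct consequence of \Cref{nmaxpr:semigroup-z}, which already provides the semigroup $\{\mathbb{T}_t\}_{t\geq 0}$ generated by $\mathcal{A}$ on $\mathcal{Z}$. Since the control operator $\mathcal{B}$ defined in \eqref{nmaxeqcontr} is clearly bounded on $\mathcal{Z}$, with $\|\mathcal{B}\|_{\mathcal{L}(\mathcal{Z};\mathcal{Z})}\leqslant 1$, and since the inhomogeneous term $\mathcal{B} f(\cdot)$ belongs to $L^2(0,T;\mathcal{Z})$ whenever $f\in L^2(0,T;\mathcal{Z})$, the abstract theory of evolution equations with bounded control operators applies directly to \eqref{nmaxop-eqn}.

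Concretely, I would define the mild solution $z = (\rho,u,S)^{\top}$ by the Duhamel (variation of constants) formula
\begin{equation*}
z(t) = \mathbb{T}_{t} z_{0} + \int_{0}^{t} \mathbb{T}_{t-s}\, \mathcal{B} f(s)\, \rd s, \qquad t\in [0,T],
\end{equation*}
and verify that $z\in C([0,T];\mathcal{Z})$. Continuity of the homogeneous term follows from strong continuity of $\{\mathbb{T}_t\}_{t\geq 0}$, while continuity of the convolution term is a standard consequence of the strong continuity of the semigroup combined with dominated convergence, using that $\mathcal{B} f\in L^2(0,T;\mathcal{Z})\subset L^1(0,T;\mathcal{Z})$.

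For the quantitative estimate, I would use the exponential bound $\|\mathbb{T}_{t}\|_{\mathcal{L}(\mathcal{Z})}\leqslant M e^{\omega t}$ valid for any $C^0$-semigroup, so that $\sup_{t\in [0,T]} \|\mathbb{T}_{t}\|_{\mathcal{L}(\mathcal{Z})}\leqslant M e^{\omega T} =: C_{T}$. The homogeneous term is then bounded by $C_{T}\|z_0\|_{\mathcal{Z}}$. For the inhomogeneous contribution, applying Cauchy--Schwarz in time yields
\begin{equation*}
\left\| \int_{0}^{t}\mathbb{T}_{t-s}\mathcal{B} f(s)\,\rd s\right\|_{\mathcal{Z}} \leqslant C_T \int_0^t \|f(s)\|_{\mathcal{Z}}\,\rd s \leqslant C_T \sqrt{T}\, \|f\|_{L^2(0,T;\mathcal{Z})},
\end{equation*}
uniformly in $t\in [0,T]$. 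Combining the two estimates gives the claimed bound with $C = C_T(1+\sqrt{T})$.

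Uniqueness is immediate: the difference of two mild solutions corresponds to zero data and zero forcing, and Gronwall applied to the Duhamel identity (or directly the uniqueness statement for $C^0$-semigroups) forces it to vanish. There is no substantial obstacle here --- the whole content of the theorem is already packaged in \Cref{nmaxpr:semigroup-z} together with the trivial boundedness of $\mathcal{B}$; the only thing to be careful about is that the constant $C$ depends on $T$ through the semigroup bound, which is why the estimate is stated on the fixed interval $[0,T]$ rather than globally.
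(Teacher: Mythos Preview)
Your proposal is correct and matches the paper's approach: the paper states this theorem immediately after \Cref{nmaxpr:semigroup-z} with the words ``Thus, we have the following well-posedness result'', treating it as a direct restatement of that proposition. Your Duhamel-based elaboration simply spells out the standard details behind that one-line deduction, using exactly the boundedness of $\mathcal{B}$ and the $C^0$-semigroup generated by $\mathcal{A}$.
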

		Note that, the exact controllability of the pair $(\mc A, \mc B)$ is equivalent to the observability of the pair $(\mc A^*, \mc B^*),$ where $\mc A^*$ and $\mc B^*$ are the adjoint operators of $\mc A \text{ and } \mc B$ respectively (see \cite[Chapter 2.3]{Co07} for details). Thus it is important to derive the adjoint of the operator $\mc A:$ 
		\begin{prop} \label{nmaxadj-op}
			The adjoint of  $(\mathcal{A}, \mathcal{D}(\mathcal{A} ; \mathcal {{Z}}))$ in $\mathcal {{Z}}$ is defined by 
			\begin{equation} \label{nmaxdom-A*}
				\mathcal{D}(\mathcal{A}^*;\mathcal {{Z}}) =\mathcal{D}(\mathcal{A};\mathcal {{Z}}),
			\end{equation}
			and 
			\begin{equation} \label{nmaxop-A*}
				\mathcal{A}^* = \begin{bmatrix}
					u_s\frac{d}{dx}\hspace{1mm}&\rho_s\frac{d}{dx}\hspace{1mm}&0\vspace{2mm}\\b\frac{d}{dx}&u_s\frac{d}{dx}&-\frac{1}{\rho_s}\frac{d}{dx}\vspace{2mm}\\0&-\frac{\mu}{\kappa}\frac{d}{dx}&-\frac{1}{\kappa}
				\end{bmatrix}.
			\end{equation}
			Moreover, $(\mathcal{A}^*, \mathcal{D}(\mathcal{A}^*; \mathcal {{Z}}))$ is the infinitesimal generator of a strongly continuous semigroup $\left\lbrace \mathbb{T}^*_{t} \right\rbrace_{t\geq 0} $ on $\mathcal {{Z}}.$
		\end{prop}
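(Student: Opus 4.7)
The plan is to compute $\mathcal{A}^\ast$ directly from the definition of adjoint in the weighted inner product \eqref{nmaxinnerproduct}, then invoke a standard Hilbert-space duality result to obtain the semigroup on the dual side. I expect the only slightly delicate step to be keeping track of how the weights $b$, $\rho_s$ and $\kappa/\mu$ interact with the off-diagonal couplings in $\mathcal{A}$; everything else is routine.

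First I would fix $(\rho,u,S)^\top,(\sigma,v,\tilde S)^\top\in \mathcal{D}(\mathcal A;\mathcal{Z})$ and expand
\[
\left\langle \mathcal A(\rho,u,S)^\top,(\sigma,v,\tilde S)^\top\right\rangle_{\mathcal Z}
= b\!\int_0^{2\pi}\!(-u_s\partial_x\rho-\rho_s\partial_xu)\bar\sigma\,\rd x
+\rho_s\!\int_0^{2\pi}\!\bigl(-b\partial_x\rho-u_s\partial_xu+\tfrac{1}{\rho_s}\partial_xS\bigr)\bar v\,\rd x
+\tfrac{\kappa}{\mu}\!\int_0^{2\pi}\!\bigl(\tfrac{\mu}{\kappa}\partial_xu-\tfrac{1}{\kappa}S\bigr)\bar{\tilde S}\,\rd x.
\]
Since both triples are in $H^1_{\rm p}\times H^1_{\rm p}\times H^1_{\rm p}$, periodicity makes all boundary terms vanish upon integration by parts, so every derivative can be moved onto the second factor. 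Collecting the coefficients of $\rho$, $u$ and $S$ in the resulting integrand, and then matching them against $b\,\overline{(\mathcal A^{\ast}\cdot)_1}$, $\rho_s\,\overline{(\mathcal A^{\ast}\cdot)_2}$ and $\tfrac{\kappa}{\mu}\,\overline{(\mathcal A^{\ast}\cdot)_3}$ respectively, gives the three entries of the first, second and third rows of $\mathcal A^\ast$; the weights $b$, $\rho_s$, $\kappa/\mu$ cancel in exactly the right way to produce \eqref{nmaxop-A*}. This formal computation also shows the inclusion $H^1_{\rm p}\times H^1_{\rm p}\times H^1_{\rm p}\subseteq \mathcal D(\mathcal A^\ast;\mathcal Z)$.

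To get the reverse inclusion for the domain, I would take $(\sigma,v,\tilde S)^\top\in\mathcal D(\mathcal A^\ast;\mathcal Z)$ and test against smooth periodic $(\rho,u,S)^\top$ supported away from $0$ and $2\pi$, so that the distributional derivatives $\partial_x\sigma,\partial_x v,\partial_x \tilde S$ are forced to lie in $L^2(0,2\pi)$; then, by testing against triples that do not vanish at the endpoints, the periodic matching conditions on $(\sigma,v,\tilde S)$ drop out of the vanishing of the boundary terms. This is entirely parallel to the standard characterization of $\mathcal D(\mathcal A;\mathcal Z)$, so \eqref{nmaxdom-A*} follows.

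For the last statement, there is nothing further to prove from scratch: by \Cref{nmaxpr:semigroup-z}, $(\mathcal A,\mathcal D(\mathcal A;\mathcal Z))$ generates a $C^0$-semigroup $\{\mathbb T_t\}_{t\geq 0}$ on the Hilbert space $\mathcal Z$, and on a Hilbert space the adjoint semigroup $\{\mathbb T_t^\ast\}_{t\geq 0}$ is automatically strongly continuous with infinitesimal generator $\mathcal A^\ast$ (see, e.g., \cite[Proposition 2.8.5]{TW09}). Alternatively, one can mimic the proof of \Cref{nmaxpr:semigroup-z} by splitting $\mathcal A^\ast=-\mathcal A_1+\mathcal A_2^\ast$, noting that $-\mathcal A_1$ is again skew-adjoint (hence generates a unitary group by Stone's theorem) and that $\mathcal A_2^\ast=\mathcal A_2$ is a bounded perturbation. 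The main obstacle in the whole argument is really just the bookkeeping in the first step: ensuring that the non-trivial weights in the inner product reproduce exactly the coefficients appearing in \eqref{nmaxop-A*}.
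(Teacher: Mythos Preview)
Your proposal is correct, and in fact it supplies more detail than the paper itself: the paper states \Cref{nmaxadj-op} without proof, treating both the explicit form of $\mathcal{A}^*$ and the generation of $\{\mathbb{T}_t^*\}_{t\geq 0}$ as routine. Your integration-by-parts computation in the weighted inner product, the domain argument, and the two alternatives you give for the semigroup statement (either the general Hilbert-space adjoint-semigroup result or the splitting $\mathcal{A}^*=-\mathcal{A}_1+\mathcal{A}_2$ with Stone's theorem plus bounded perturbation, mirroring \Cref{nmaxpr:semigroup-z}) are exactly the natural justifications the paper is implicitly relying on.
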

		As mentioned in \Cref{intc}, the controllability results for \eqref{nmaxeq3} is expected in the subspaces of $\mathcal{Z}$ satisfying \eqref{zm}. Thus, we need to have that the restriction of the operator $\mathcal{A}$ on those subspaces of  $\mathcal{Z}$ also generates a semigroup. 
		From \cite[Proposition 2.4.4, Section 2.4, Chapter 2]{TW09}, the following result holds. 
		\begin{prop}
			Let us define the spaces
			\begin{align}
				{\mathcal Z_m}=& L^2(0, 2\pi)\times {\dot L^2(0, 2\pi)\times \dot L^2(0, 2\pi)},\\
				{\mathcal Z_{m,m}}=& {\dot L^2(0, 2\pi)\times \dot L^2(0, 2\pi)\times \dot L^2(0, 2\pi)}\label{defn_Z_mm}.
			\end{align}
			where the spaces ${\mathcal Z_m}, {\mathcal Z_{m,m}}$ are equipped with the inner product \eqref{nmaxinnerproduct}.
			
			Then $\mathcal Z_m$ is invariant under the semigroups $\mathbb{T}$ and $\mathbb{T}^*$. The restriction of $\mathbb{T}$ to $\mathcal Z_m$ is a strongly continuous
			semigroup in $\mathcal Z_m$ generated by $(\mathcal{A}, \mathcal{D}(\mathcal{A};\mathcal{Z}_m))$, where $\mathcal{D}(\mathcal{A};\mathcal{Z}_m)=\mathcal{D}(\mathcal{A}, \mathcal{Z})\cap \mathcal{Z}_m$. Also, the restriction of $\mathbb{T}^*$ to $\mathcal Z_m$ is a strongly continuous semigroup in $\mathcal Z_m$  generated by $(\mathcal{A}^*, \mathcal{D}(\mathcal{A}^*;\mathcal{Z}_{m}))$, where $\mathcal{D}(\mathcal{A}^*;\mathcal{Z}_{m})=\mathcal{D}(\mathcal{A}^*, \mathcal{Z})\cap \mathcal{Z}_m$. 
			
			Similarly, $\mathcal{Z}_{m,m}$ is invariant under the semigroups $\mathbb{T}$ and $\mathbb{T}^*$. 
			The restriction of $\mathbb{T}$ to $\mathcal Z_{m,m}$ is a strongly continuous
			semigroup in $\mathcal Z_{m,m}$ generated by $(\mathcal{A}, \mathcal{D}(\mathcal{A};\mathcal{Z}_{m,m}))$, where $\mathcal{D}(\mathcal{A};\mathcal{Z}_{m,m})=\mathcal{D}(\mathcal{A}, \mathcal{Z})\cap \mathcal{Z}_{m,m}$. Also, the restriction of $\mathbb{T}^*$ to $\mathcal Z_{m,m}$ is a strongly continuous semigroup in $\mathcal Z_{m,m}$  generated by $(\mathcal{A}^*, \mathcal{D}(\mathcal{A}^*;\mathcal{Z}_{m,m}))$, where $\mathcal{D}(\mathcal{A}^*;\mathcal{Z}_{m,m})=\mathcal{D}(\mathcal{A}^*, \mathcal{Z})\cap \mathcal{Z}_{m,m}$. 
		\end{prop}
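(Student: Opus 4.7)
The plan is to reduce the statement to a direct application of \cite[Proposition 2.4.4]{TW09}, which asserts that if a closed subspace of a Banach space is invariant under a $C^0$-semigroup, then the restriction is again a $C^0$-semigroup and its generator is the part of the original generator in that subspace. The subspaces $\mathcal{Z}_m$ and $\mathcal{Z}_{m,m}$ are closed in $\mathcal{Z}$ since they are defined as intersections of kernels of the bounded mean-value functionals $\ell_\rho(z)=\int_0^{2\pi}\rho\,\rd x$, $\ell_u(z)=\int_0^{2\pi}u\,\rd x$, $\ell_S(z)=\int_0^{2\pi}S\,\rd x$. So the only nontrivial task is to verify the semigroup invariance.

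To check invariance of $\mathcal{Z}_m$ under $\mathbb{T}$, I would first take $z_0\in\mathcal{D}(\mathcal{A};\mathcal{Z})\cap\mathcal{Z}_m$, so that $z(t)=\mathbb{T}_t z_0$ is a classical solution of the uncontrolled system. Integrating each of the three scalar PDEs over $(0,2\pi)$ and using the periodic boundary conditions to cancel every $\partial_x$ term gives
\begin{equation*}
\frac{d}{dt}\!\int_0^{2\pi}\!\rho(t,x)\,\rd x=0,\quad \frac{d}{dt}\!\int_0^{2\pi}\!u(t,x)\,\rd x=0,\quad \frac{d}{dt}\!\int_0^{2\pi}\!S(t,x)\,\rd x=-\frac{1}{\kappa}\!\int_0^{2\pi}\!S(t,x)\,\rd x.
\end{equation*}
Thus $\ell_u(z(t))=\ell_u(z_0)$ and $\ell_S(z(t))=e^{-t/\kappa}\ell_S(z_0)$; in particular, if $\ell_u(z_0)=\ell_S(z_0)=0$ these means remain zero for all $t$. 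By density of $\mathcal{D}(\mathcal{A};\mathcal{Z})\cap\mathcal{Z}_m$ in $\mathcal{Z}_m$ and the continuity of $\mathbb{T}_t$ and of $\ell_u,\ell_S$ on $\mathcal{Z}$, the same conclusion propagates to arbitrary $z_0\in\mathcal{Z}_m$, yielding $\mathbb{T}_t\mathcal{Z}_m\subset\mathcal{Z}_m$. The argument for $\mathcal{Z}_{m,m}$ is identical, adding the constraint $\ell_\rho(z(t))=\ell_\rho(z_0)=0$, which follows from the first identity above.

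For the adjoint semigroup $\mathbb{T}^*$ generated by $\mathcal{A}^*$ given in \eqref{nmaxop-A*}, I would repeat the integration-in-$x$ computation: since every spatial derivative term is killed by periodicity, the means of the first two components are conserved, while the mean of the third again satisfies the scalar ODE $\frac{d}{dt}\ell_{\tilde S}+\frac{1}{\kappa}\ell_{\tilde S}=0$. Hence the same invariance holds for $\mathbb{T}^*$ on both $\mathcal{Z}_m$ and $\mathcal{Z}_{m,m}$.

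With invariance established, I would invoke \cite[Proposition 2.4.4]{TW09} on each of the four cases $(\mathbb{T},\mathcal{Z}_m)$, $(\mathbb{T}^*,\mathcal{Z}_m)$, $(\mathbb{T},\mathcal{Z}_{m,m})$, $(\mathbb{T}^*,\mathcal{Z}_{m,m})$ to conclude that the restricted semigroups are strongly continuous on the corresponding subspace and that their generators are the parts of $\mathcal{A}$ (respectively $\mathcal{A}^*$) in that subspace, whose domains are exactly $\mathcal{D}(\mathcal{A};\mathcal{Z})\cap\mathcal{Z}_m$, $\mathcal{D}(\mathcal{A}^*;\mathcal{Z})\cap\mathcal{Z}_m$, and their analogues in $\mathcal{Z}_{m,m}$, as claimed. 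The only mildly delicate point, rather than a true obstacle, is noticing that the mean of $S$ is not conserved but merely satisfies a linear homogeneous ODE, so that the zero-mean constraint on $S$ is nevertheless preserved in time; everything else is routine.
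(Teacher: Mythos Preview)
Your proposal is correct and follows essentially the same approach as the paper: the paper simply cites \cite[Proposition 2.4.4]{TW09} for this result, having already recorded the relevant mean-value identities (conservation of $\int u$ and $\int\rho$, exponential decay of $\int S$) earlier in Section~\ref{intc}. You have merely made the invariance verification and the density/continuity passage explicit, which is a faithful expansion of what the paper leaves implicit.
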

		\subsection{Spectral analysis of the linearized operator}\label{nmaxsecspec}
		We define a Fourier basis $\left\lbrace \phi_{0,1},\:\phi_{n,l},\: l =1,2,3\right\rbrace_{n\in\mathbb{Z}^*}$ $\left(\: \mathbb{Z}^* = \mathbb{Z}\setminus \{0\}\right) $ in $\mathcal {{Z}}_m$ as follows :
		\begin{align}\label{nmaxfbasis}
			&\phi_{0,1}(x)=\frac{1}{\sqrt{2\pi b}}\begin{pmatrix}
				1\\0\\0
			\end{pmatrix},\:\phi_{n,1}(x)=\frac{1}{\sqrt{2\pi b}}\begin{pmatrix}
				e^{inx}\\0\\0
			\end{pmatrix},\notag\\
			&\phi_{n,2}(x)=\frac{1}{\sqrt{2\pi \rho_s}}\begin{pmatrix}
				0\\e^{inx}\\0
			\end{pmatrix},\:\phi_{n,3}(x)=\sqrt{\frac{\mu}{2\pi\kappa}}\begin{pmatrix}
				0\\0\\e^{inx}
			\end{pmatrix},\quad n\in\mathbb{Z}^*.
		\end{align}
		Let us define the following finite dimensional spaces
		$$\mathbf{V}_{0}=\text{span} \left\lbrace \phi_{0,1}\right\rbrace , \:\mathbf{V}_n=\text{span} \left\lbrace \phi_{n,l},\: l=1,2,3\right\rbrace , n\in\mathbb{Z}^*,$$where `span' stands for the vector space generated by those functions. One can verify that $\mathcal {{Z}}_m=\oplus_{n\in \mathbb{Z}}\mathbf{V}_n.$ 
		\begin{lem}\label{nmaxinv}
			For all $n\in\mathbb{Z}^*$, $\mathbf{V}_n$ is invariant under $\mathcal A$ and $$\mathcal A_n=\mathcal A|_{\mathbf{V}_n}\in\mathcal{L}(\mathbf{V}_n),$$has the following matrix representation$$\begin{pmatrix}
				-inu_s&-in\sqrt{b\rho_s}&0\\
				-in\sqrt{b\rho_s}&-inu_s&in\sqrt{\frac{\mu}{\kappa\rho_s}}\\
				0&in\sqrt{\frac{\mu}{\kappa\rho_s}}&-\frac{1}{\kappa}
			\end{pmatrix}$$in the basis $\left\lbrace \phi_{n,l},\: l=1,2,3\right\rbrace_{n\in\mathbb{Z}^*} $ of $\mathbf{V}_n$.
		\end{lem}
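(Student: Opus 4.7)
The plan is a direct computation: apply $\mathcal{A}$ to each basis vector $\phi_{n,l}$, observe that $\tfrac{d}{dx}e^{inx}=in\,e^{inx}$ so the image lies again in $\mathbf{V}_n$, and then expand the result in the basis $\{\phi_{n,1},\phi_{n,2},\phi_{n,3}\}$ to read off the column entries of the matrix representation. Invariance of $\mathbf{V}_n$ is then automatic, and boundedness of $\mathcal{A}_n$ follows because $\mathbf{V}_n$ is three-dimensional.

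Concretely, for $\phi_{n,1}=\tfrac{1}{\sqrt{2\pi b}}(e^{inx},0,0)^{\top}$, acting by the matrix differential operator \eqref{op} gives
\[
\mathcal{A}\phi_{n,1}=\frac{-in}{\sqrt{2\pi b}}\bigl(u_s\,e^{inx},\,b\,e^{inx},\,0\bigr)^{\top}.
\]
Since $\tfrac{1}{\sqrt{2\pi b}}e^{inx}$ is the first component of $\phi_{n,1}$ and $\tfrac{-inb}{\sqrt{2\pi b}}=-in\sqrt{b\rho_s}\cdot\tfrac{1}{\sqrt{2\pi\rho_s}}$, this equals $-inu_s\,\phi_{n,1}-in\sqrt{b\rho_s}\,\phi_{n,2}$, yielding the first column of the claimed matrix. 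The same recipe applied to $\phi_{n,2}$ and $\phi_{n,3}$ produces the second and third columns: one just matches the normalisation constants $(2\pi b)^{-1/2}$, $(2\pi\rho_s)^{-1/2}$, $\sqrt{\mu/(2\pi\kappa)}$ whenever one converts a coefficient of $e^{inx}$ in a given component into a multiple of the corresponding $\phi_{n,l}$. The cross-terms $-\rho_s \partial_x$ and $-b\partial_x$ combine to produce the symmetric off-diagonal $-in\sqrt{b\rho_s}$, while $\tfrac{1}{\rho_s}\partial_x$ and $\tfrac{\mu}{\kappa}\partial_x$ combine with the normalisation mismatch between $\phi_{n,2}$ and $\phi_{n,3}$ to produce $+in\sqrt{\mu/(\kappa\rho_s)}$ in both positions; the only purely algebraic entry $-1/\kappa$ comes from the zeroth-order term in the third row of $\mathcal{A}$.

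There is no real obstacle: the exponentials $e^{inx}$ are eigenfunctions of $\tfrac{d}{dx}$, so invariance is immediate, and the only thing one has to be careful about is the bookkeeping of the normalisation constants so that the resulting matrix appears symmetric in the first two off-diagonal entries and in the $(2,3)/(3,2)$ entries (which is precisely what makes the inner product \eqref{nmaxinnerproduct} the right one to work with). Once these constants are matched, the matrix displayed in the statement is read off directly.
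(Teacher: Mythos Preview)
Your proof is correct and is exactly the direct computation one expects; the paper states this lemma without proof, treating it as a routine verification, so your approach is precisely what was intended.
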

		\begin{prop}\label{sp}
			The spectrum of the operator $(\mathcal{A}, \mathcal{D}(\mathcal{A}; \mathcal {{Z}}_m))$ consists of $0$ and three sequences $\lambda_n^1$, $\lambda_n^2$ and $\lambda_n^3$ of eigenvalues. Furthermore, we have the following asymptotic behaviors:
			\begin{equation}\label{nmaxasmlambda}
				\left.
				\begin{aligned}
					\lambda_n^1=&-\omega_1+i \beta_1n + O\left( \frac{1}{|n|}\right),\\
					\lambda_n^2=&-\omega_2+i \beta_2n + O\left( \frac{1}{|n|}\right),\\
					\lambda_n^3=&-\omega_3+i \beta_3n + O\left( \frac{1}{|n|}\right),
				\end{aligned}
				\right\}
			\end{equation}
			for $|n|$ large enough, where $\beta_j, j=1,2,3$ are the distinct real roots of the equation
			\begin{equation}\label{nmaxpolybeta}
				r^3+2u_sr^2+\left( u_s^2-b\rho_s-\frac{\mu}{\kappa\rho_s}\right)r-\frac{\mu u_s}{\kappa\rho_s}=0,
			\end{equation}
			and 
			\begin{equation}\label{nmaxwn}
				\omega_j=\frac{ {b\rho_s-u_s^2-2u_s\beta_j-\beta_j^2}}{\kappa\left( 3\beta_j^2+4u_s\beta_j+u_s^2-b\rho_s-\mu/\kappa\rho_s\right) }{\neq \omega_l\neq 0,\text{ for }j\neq l; \quad j,l=1,2,3.}
			\end{equation}
		\end{prop}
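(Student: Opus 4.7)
The approach is to exploit the block-diagonal structure $\mathcal{Z}_m = \bigoplus_{n\in\mathbb{Z}} \mathbf{V}_n$ provided by \Cref{nmaxinv}. Because each $\mathbf{V}_n$ is finite-dimensional and invariant under $\mathcal{A}$, one has $\sigma(\mathcal{A}) = \{0\}\cup \bigcup_{n\in\mathbb{Z}^*}\sigma(\mathcal{A}_n)$: the case $n=0$ yields the eigenvalue $0$ immediately since $\mathcal{A}\phi_{0,1}=0$, and for $n\in\mathbb{Z}^*$ the task reduces to analysing the characteristic polynomial $P_n$ of the explicit $3\times 3$ matrix $\mathcal{A}_n$ displayed in \Cref{nmaxinv}.

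Expanding the determinant along the first row yields
\begin{equation*}
P_n(\lambda) = (\lambda + inu_s)^2\!\left(\lambda + \tfrac{1}{\kappa}\right) + \frac{n^2\mu}{\kappa\rho_s}\bigl(\lambda + inu_s\bigr) + n^2 b\rho_s\!\left(\lambda + \tfrac{1}{\kappa}\right),
\end{equation*}
which is cubic in $\lambda$ and of total degree three in $(\lambda,n)$. The scaling forces any unbounded root to satisfy $\lambda = in\beta + O(1)$ for some $\beta\in\mathbb{R}$. I would insert the two-term ansatz $\lambda = i\beta n - \omega + O(1/|n|)$ and collect powers of $n$. At order $n^3$ the equation is purely imaginary and, after dividing by $i$, reduces to
\begin{equation*}
-\beta(\beta+u_s)^2 + \frac{\mu(\beta+u_s)}{\kappa\rho_s} + b\rho_s\beta = 0,
\end{equation*}
which after expansion is exactly \eqref{nmaxpolybeta}; thus three branches are indexed by the three real roots $\beta_1,\beta_2,\beta_3$ of that cubic. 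At order $n^2$, using the leading-order relation to cancel the highest terms, one obtains a linear equation for $\omega$ whose solution is precisely \eqref{nmaxwn}. The remainder $O(1/|n|)$ is justified by the implicit function theorem applied to $P_n(\lambda)=0$, regarded as a smooth perturbation of the reduced cubic in the small parameter $1/n$: each simple root $\beta_j$ perturbs into a unique analytic branch $\lambda_n^j$ for all sufficiently large $|n|$.

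The principal obstacle is establishing that \eqref{nmaxpolybeta} has three distinct real roots (the implicit hypothesis of \Cref{optimal time}), since this guarantees the denominator $3\beta_j^2+4u_s\beta_j+u_s^2 - b\rho_s - \mu/(\kappa\rho_s)$ in \eqref{nmaxwn} is nonzero, being precisely the derivative of \eqref{nmaxpolybeta} at $\beta_j$. I would verify this by a direct sign analysis of $P(r) = r^3 + 2u_sr^2 + (u_s^2 - b\rho_s - \mu/(\kappa\rho_s))r - \mu u_s/(\kappa\rho_s)$: under the standing assumption $\rho_s,u_s,\mu,\kappa>0$ one computes $P(0)= -\mu u_s/(\kappa\rho_s) < 0$ and $P(-u_s) = u_s b\rho_s > 0$, while $P(r)\to -\infty$ as $r\to -\infty$ and $P(r)\to +\infty$ as $r\to +\infty$. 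The three resulting sign changes in $(-\infty,-u_s)$, $(-u_s,0)$, and $(0,\infty)$ produce three distinct nonzero real roots. The claims $\omega_j\neq\omega_l$ and $\omega_j\neq 0$ then follow algebraically from the formula \eqref{nmaxwn} together with the distinctness of the $\beta_j$. Finally, for small values of $|n|$ where the asymptotic expansion is not yet accurate, $\sigma(\mathcal{A}_n)$ consists of three complex numbers (possibly with multiplicity); the treatment of any multiple eigenvalues that may arise at such $n$ is deferred to \Cref{secmultiev}.
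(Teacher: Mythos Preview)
Your proposal is correct and reaches the same conclusions, but the route differs from the paper's in two notable places.

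First, to locate the leading behaviour $\lambda\sim i\beta n$, the paper does not proceed by ansatz. Instead it invokes an external result (Zahreddine--Elshehawey) to show all roots have negative real part, then uses Vieta's relations \eqref{nmaxrc1}--\eqref{nmaxrc6} to deduce that the real parts are bounded and that $\tau_n^l/n$ satisfies a perturbed cubic converging to \eqref{nmaxpolybeta}, with continuous dependence of roots on coefficients providing the limit. Your dominant-balance argument is shorter and self-contained; it dispenses with the cited negativity result entirely. One should just be slightly careful to justify that the scaled roots $\lambda/n$ stay bounded before passing to the limit---this is immediate from the degree-three homogeneity you point out, but is worth stating.

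Second, and more strikingly, your proof that \eqref{nmaxpolybeta} has three distinct real roots is cleaner than the paper's. The paper computes the discriminant \eqref{nmaxdiscriminant} explicitly and checks it is positive. Your sign analysis $P(-\infty)<0<P(-u_s)$, $P(0)<0<P(+\infty)$ not only gives three distinct real roots via the intermediate value theorem but also shows for free that none of them equals $-u_s$ or $0$; the paper has to revisit $P(-u_s)=b\rho_s u_s>0$ separately when proving $\omega_j\neq 0$. Your approach thus absorbs that step. The only place where you are a bit quick is the assertion that $\omega_j\neq\omega_l$ ``follows algebraically'': the paper derives an explicit factorisation \eqref{wj-wl} to see this, and the computation is not entirely trivial, so you should expect to supply it.
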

		\begin{proof}
			We see that $\mathcal{A}\phi_{0,1}=0\cdot\phi_{0,1}$ and $ \phi_{0,1}\neq \mathbf 0$. Thus $0$ is an eigenvalue of $\mathcal{A}$.
			
			From \Cref{nmaxinv}, the characteristic equation is given by:
			\begin{align}\label{nmaxchar.poly}
				\mathcal{F}_n(\lambda) : = \lambda^3+\left( \frac{1}{\kappa}+2inu_s\right)\lambda^2+&\left(-u_s^2n^2+b\rho_sn^2+\frac{\mu n^2}{\kappa\rho_s}+\frac{2iu_sn}{\kappa}
				\right)\lambda\notag\\
				+&\left( -\frac{u_s^2n^2}{\kappa}+\frac{b\rho_sn^2}{\kappa}+\frac{i\mu u_sn^3}{\kappa\rho_s}\right) =0,\quad n\in\mathbb{Z}^*.
			\end{align}
			Let $\lambda_n^l=\eta_n^l+i\tau_n^l$ with $\eta_n^l\in\mathbb{R}$, $\tau_n^l\in \mathbb{R}$, and $l=1,2,3$ are the roots of \eqref{nmaxchar.poly}. Using \cite[Theorem 3.2]{ZZ}, we can show that all the roots of \eqref{nmaxchar.poly} have negative real parts, i.e., 
			\begin{equation}\label{negrealpart}
				\eta_n^l <0 , \text{ for } l=1,2,3 \text{  and } n\in\mathbb{Z}^*.
			\end{equation}
			
			From the relation between roots and coefficients of the equation \eqref{nmaxchar.poly}, we get
			\begin{align}
				&\eta_n^1+\eta_n^2+\eta_n^3=-\frac{1}{\kappa},\label{nmaxrc1}\\
				&\tau_n^1+\tau_n^2+\tau_n^3=-2u_s n,\label{nmaxrc2}\\
				&\left(\eta_n^1\eta_n^2+\eta_n^1\eta_n^3+\eta_n^2\eta_n^3 \right) -\left(\tau_n^1\tau_n^2+\tau_n^1\tau_n^3+\tau_n^2\tau_n^3 \right) =\left(b\rho_s+\frac{\mu}{\kappa\rho_s}-u^2_s \right)n^2,\label{nmaxrc3}\\ 
				&\eta_n^1\left(\tau_n^2+\tau_n^3 \right) +\eta_n^2\left(\tau_n^1+\tau_n^3 \right)+\eta_n^3\left(\tau_n^1+\tau_n^2 \right)=\frac{2u_s}{\kappa}n,\label{nmaxrc4}\\
				&\eta_n^1\eta_n^2\eta_n^3-\eta_n^1\tau_n^2\tau_n^3-\eta_n^2\tau_n^1\tau_n^3-\eta_n^3\tau_n^1\tau_n^2=\frac{u^2_s-b\rho_s}{\kappa}n^2,\label{nmaxrc5}\\
				&\tau_n^1\tau_n^2\tau_n^3-\tau_n^1\eta_n^2\eta_n^3-\eta_n^1\tau_n^2\eta_n^3-\eta_n^1\eta_n^2\tau_n^3=\frac{\mu u_s}{\kappa\rho_s}n^3.\label{nmaxrc6}
			\end{align}
			We will prove \eqref{nmaxasmlambda} in the following steps.
			
			\textit{Step 1: Asymptotic behavior of real part of eigenvalues: }
			
			Since $\eta_n^l <0 $, for $l=1,2,3$ and $n\in\mathbb{Z}^*$, from \eqref{nmaxrc1}, we observe that the sequence $\left\lbrace \eta_n^l\right\rbrace_{n\in\mathbb{Z}^*} $ is bounded for $l=1,2,3.$ Thus we get $\frac{\eta_n^l}{n}\rightarrow 0$ as $|n|\rightarrow \infty$, for $l=1,2,3.$
			
			\vspace{2mm}
			
			\textit{Step 2: Asymptotic behavior of imaginary part of  eigenvalues: }	
			
			Thanks to \eqref{nmaxrc1}, \eqref{nmaxrc2} and \eqref{nmaxrc3}, we have
			\begin{align}\label{nmaxrc61}
				\left(\left( \eta_n^1\right)^2+\left( \eta_n^2\right)^2+\left( \eta_n^3\right)^2 \right) - \left(\left( \tau_n^1\right)^2+\left( \tau_n^2\right)^2+\left( \tau_n^3\right)^2 \right)= \frac{1}{\kappa^2}- 2\left( u_s^2+b\rho_s+\frac{\mu}{\kappa\rho_s}\right)n^2.
			\end{align} 
			Set $\tilde{\tau}_n^l=\frac{\tau_n^l}{n}$, $l=1,2,3$. Then using Step 1 and \eqref{nmaxrc61}, it follows that
			\begin{equation}
				\lim\limits_{|n|\rightarrow \infty}\left(\left( \tilde\tau_n^1\right)^2+\left( \tilde\tau_n^2\right)^2+\left( \tilde\tau_n^3\right)^2 \right)= 2\left( u_s^2+b\rho_s+\frac{\mu}{\kappa\rho_s}\right).
			\end{equation}
			Thus the sequence $\left\lbrace \tilde\tau_n^l\right\rbrace_{n\in\mathbb{Z}^*} $ is bounded for $l=1,2,3.$	
			
			Also from \eqref{nmaxrc2}, \eqref{nmaxrc3} and \eqref{nmaxrc6},  we deduce that 
			\begin{equation}\label{nmaxrc7}
				\left.
				\begin{aligned}
					\tilde\tau_n^1+\tilde\tau_n^2+\tilde\tau_n^3 =& -2u_s,\\
					\lim\limits_{|n|\rightarrow \infty}	\left( \tilde\tau_n^1\tilde\tau_n^2+\tilde\tau_n^1\tilde\tau_n^3+\tilde\tau_n^2\tilde\tau_n^3\right) =& u_s^2-b\rho_s-\frac{\mu}{\kappa\rho_s},\\
					\lim\limits_{|n|\rightarrow \infty}	 \tilde\tau_n^1\tilde\tau_n^2\tilde\tau_n^3 =& \frac{\mu  u_s}{\kappa\rho_s}.
				\end{aligned}
				\right\}
			\end{equation}
			Thus from \eqref{nmaxrc7}, it follows that
			\begin{equation}\label{nmaxrc8}
				\left.
				\begin{aligned}
					\tilde\tau_n^1+\tilde\tau_n^2+\tilde\tau_n^3 =& -2u_s,\\
					\tilde\tau_n^1\tilde\tau_n^2+\tilde\tau_n^1\tilde\tau_n^3+\tilde\tau_n^2\tilde\tau_n^3 =& u_s^2-b\rho_s-\frac{\mu}{\kappa\rho_s}+\delta^1_n,\\
					\tilde\tau_n^1\tilde\tau_n^2\tilde\tau_n^3 =& \frac{\mu  u_s}{\kappa\rho_s}+\delta^2_n,
				\end{aligned}
				\right\}
			\end{equation}
			where $\left| \delta^l_n\right| \rightarrow 0$, as $|n|\rightarrow \infty$, for $l=1,2$. Therefore, $\tilde\tau_n^l,  l=1,2,3 $ satisfies the following equation:
			\begin{equation}\label{p_nx}
				\mathcal{P}_n\left(r \right) : = r^3+2u_sr^2+\left( u_s^2-b\rho_s-\frac{\mu}{\kappa\rho_s}+\delta^1_n\right)r-\left( \frac{\mu u_s}{\kappa\rho_s}+\delta^2_n\right) =0,\quad n\in\mathcal{Z}^*.
			\end{equation}
			Set
			\begin{equation}\label{nmaxrimag}
				\mathcal{P}\left(r \right) : = r^3+2u_sr^2+\left( u_s^2-b\rho_s-\frac{\mu}{\kappa\rho_s}\right)r-\frac{\mu u_s}{\kappa\rho_s}=0.
			\end{equation}
			Let us denote the roots of \eqref{nmaxrimag} by $\beta_l$, $ l=1,2,3$. Observe that the coefficients of $\mathcal{P}_n$ converge to the coefficients of $\mathcal{P}$, as $|n|\rightarrow \infty$. Then using the fact that the roots of a polynomial continuously depend on the coefficients of the polynomial (see \cite{RCDC}), we get that the roots of the polynomial $\mathcal{P}_n$ converge to the roots of the polynomial $\mathcal{P}$, as $|n|\rightarrow \infty$, i.e., $\frac{\tau_n^l}{n}\rightarrow \beta_l$ as $|n|\rightarrow \infty$, for $l=1,2,3.$
			
			\vspace{2mm}
			
			\textit{Step 3: Behavior of roots of the polynomial $\mathcal{P}$: }
			
			The discriminant of the cubic polynomial $\mathcal{P}$ is given by
			{
				
				{
					\begin{align}\label{nmaxdiscriminant}
						D= &\left( 2u_s\right)^2\left( u_s^2-b\rho_s-\frac{\mu}{\kappa\rho_s}\right)^2-4\left(u_s^2-b\rho_s-\frac{\mu}{\kappa\rho_s} \right)^3-4\left(2u_s \right)^3\left(-\frac{\mu u_s}{\kappa\rho_s} \right)\notag\\
						-& 27\left(-\frac{\mu u_s}{\kappa\rho_s} \right)^2+18\left( 2u_s\right)\left( u_s^2-b\rho_s-\frac{\mu}{\kappa\rho_s}\right)\left(-\frac{\mu u_s}{\kappa\rho_s} \right)\notag\\
						= & 4b\rho_s\left( b\rho_s-u_s^2\right)^2+\frac{\mu^2 u_s^2}{\kappa^2\rho_s^2}+\frac{20\mu b u_s^2}{\kappa}+\frac{12\mu b^2\rho_s}{\kappa}+\frac{12\mu^2 b}{\kappa^2\rho_s}+\frac{4\mu^3}{\kappa^3\rho_s^3} . 
					\end{align}	
				}

			}
			{	We observe that $D>0$, indicating that the roots of the polynomial $\mathcal{P}$ are both real and distinct. Consequently, we obtain:}	
			\begin{equation}\label{nmaxnodoubleroot}
				\mathcal{P}'\left( \beta_l\right)  = 3\beta_l^2+4u_s\beta_l +u_s^2-b\rho_s -\frac{\mu}{\kappa\rho_s}\neq 0,\text{ for }l=1,2,3.	
			\end{equation}

			
			
			\textit{Step 4: Asymptotic behavior of the eigenvalues: }
			
			Using Step 1 and Step 2, we can rewrite the roots of \eqref{nmaxchar.poly} as 
			\begin{equation}\label{nmaxsetlambda}
				\lambda_n = i\beta n +\epsilon_n,
			\end{equation}
			where $\beta$ is a root of the equation \eqref{nmaxrimag} and
			\begin{equation}\label{expepsilonn}
				\epsilon_n= n\theta_n^1+i n \theta_n^2,\quad \theta_n^l\rightarrow 0 \text{ as } \left|n \right|\rightarrow \infty, \text{ for }l=1,2. 
			\end{equation}
			From \eqref{expepsilonn}, it follows that
			\begin{equation}\label{cgsepsilon_n}
				\frac{\epsilon_n}{n}\rightarrow 0 \text{ as } \left|n \right|\rightarrow \infty.
			\end{equation}
			Now, we will see the asymptotic behavior of $\epsilon_n$. From \eqref{nmaxchar.poly} and \eqref{nmaxsetlambda}, we get that $\epsilon_n$ satisfy the following equation:
			\begin{align*}
				\epsilon_n^3+\left(\frac{1}{\kappa}+\left( 3\beta +2u_s\right) in \right)\epsilon_n^2+\left(\left( -3\beta^2-4u_s\beta -u_s^2+b\rho_s +\frac{\mu}{\kappa\rho_s}\right) n^2+2\frac{\beta+u_s}{\kappa}in \right)\epsilon_n&\notag\\
				+ \frac{b\rho_s-u_s^2-2u_s\beta-\beta^2}{\kappa}n^2=0.&
			\end{align*}
			Using \eqref{nmaxnodoubleroot} and the fact that $\beta$ is root of \eqref{nmaxrimag}, we can rewrite the above equation in the following form
			\begin{align}\label{nmaxepsilonn}
				\epsilon_n=  -\frac{\frac{\epsilon_n^2}{\kappa n^2} {+}\frac{b\rho_s-u_s^2-2u_s\beta-\beta^2}{\kappa}}{\left(\frac{2u_s+3\beta}{2}+\frac{\epsilon_n}{in} \right)^2+\mathcal{P}'(\beta)-\left(\frac{\left(2u_s+3\beta \right)^2 }{4}+2i\frac{u_s+\beta}{\kappa n}\right)}.
			\end{align}
			Denoting numerator of \eqref{nmaxepsilonn} by $c_n$ and denominator of \eqref{nmaxepsilonn} by $d_n$, then using \eqref{nmaxnodoubleroot} and \eqref{cgsepsilon_n}, we get
			\begin{align}
				&c_n\rightarrow {-}\frac{b\rho_s-u_s^2-2u_s\beta-\beta^2}{\kappa} \text{ as } \left|n \right|\rightarrow \infty,\label{cgsepsilon1}\\
				&d_n\rightarrow \mathcal{P}'(\beta)\neq 0 \text{ as } \left|n \right|\rightarrow \infty.\label{cgsepsilon2}
			\end{align}
			Therefore \eqref{nmaxepsilonn} is well-defined for $\left| n\right|$ large enough. Thus from \eqref{nmaxepsilonn}, \eqref{cgsepsilon1} and \eqref{cgsepsilon2}, it follows that
			\begin{equation}\label{cgsepsilon4}
				\epsilon_n\rightarrow -\frac{ {b\rho_s-u_s^2-2u_s\beta-\beta^2}}{\kappa \mathcal{P}'(\beta)} : = -\omega (\text{say}), \text{ as } \left|n \right|\rightarrow \infty.
			\end{equation}
			Next, we will find the explicit asymptotic of the eigenvalues. 
			For that, let us define
			\begin{equation}\label{nmaximplicitfunc}
				F\left( \zeta,\eta\right)=\zeta+ \frac{-\frac{1}{\kappa }\zeta^2\eta^2{+}\frac{b\rho_s-u_s^2-2u_s\beta-\beta^2}{\kappa} }{\left(\frac{2u_s+3\beta}{2}-{\zeta \eta} \right)^2+\mathcal{P}'(\beta)-\left(\frac{\left(2u_s+3\beta \right)^2 }{4}+2\frac{u_s+\beta}{\kappa }\eta\right)},\quad \zeta,\eta\in \mathbb{C}.
			\end{equation}
			{ Using \eqref{nmaxnodoubleroot}, we can see that $F$ is well defined and analytic in a neighborhood of $\left( -\omega,0\right) $. }Moreover
			\begin{equation*}
				F\left( -\omega,0\right)= \lim\limits_{\eta\rightarrow 0}F\left( -\omega,\eta\right)=0,
			\end{equation*}
			and
			\begin{align*}
				\frac{\partial F}{\partial \zeta}\left( -\omega,0\right)= \lim\limits_{\eta\rightarrow 0}\frac{\partial F}{\partial \zeta}\left( -\omega,\eta\right)=1\neq  0 .
			\end{align*}
			Then using the Implicit Function Theorem, there exist neighborhoods $N\left( -\omega\right) $ and $N\left(0 \right) $ such that the equation $F\left( \zeta,\eta\right)=0$ has a unique root $\zeta=G\left( \eta\right) $ in $N\left( -\omega\right) $ for any given $\eta\in N\left(0 \right) $. Moreover, the function $G\left( \eta\right) $ is single-valued and analytic in $N\left(0 \right)$, and satisfies the condition $G\left( 0\right)=-\omega$. Therefore we obtain
			\begin{align}\label{nmaximplicit2}
				\zeta=G\left( \eta\right)=&G\left( 0\right)+G'\left( 0\right)\eta+\dots\notag\\
				=& -\omega + O\left( \eta\right)\quad\text{ in }N\left(0 \right).
			\end{align}
			From \eqref{nmaxepsilonn}, \eqref{cgsepsilon4}, \eqref{nmaximplicitfunc} and \eqref{nmaximplicit2}, it follows that, for $|n|$ large enough, $\epsilon_n\in N\left( -\omega\right)$, $\frac{i}{n}\in N(0)$, $F\left( \epsilon_n,\frac{i}{n}\right)=0$ and
			\begin{equation*}
				\epsilon_n=-\omega + O\left( \frac{1}{|n|}\right).
			\end{equation*}
			Thus from \eqref{nmaxsetlambda}, we get
			\begin{equation}\label{nmaxlambdaasm}
				\lambda_n = -\omega + i\beta n + O\left( \frac{1}{|n|}\right),
			\end{equation}
			for $|n|$ large enough.
			
			Since the equation \eqref{nmaxrimag} has three distinct real roots, we get three distinct real values of $\beta$ and for each $\beta$, from \eqref{cgsepsilon4}, we get a real $\omega.$ 
			Furthermore, we can show that $\omega_1\neq\omega_2\neq\omega_3\neq 0$. Indeed, 
			suppose that $\omega_j=0$, which implies
			\begin{equation}\label{nembj}
				\beta_j^2+2u_s \beta_j+u_s^2-b\rho_s=0.
			\end{equation}
			Since $\beta_j$ is a root of the equation \eqref{nmaxrimag}, we get	
			\begin{align*}
				&\beta_j^3+2u_s\beta_j^2+\left( u_s^2-b\rho_s-\frac{\mu}{\kappa\rho_s}\right)\beta_j-\frac{\mu u_s}{\kappa\rho_s}=0,
			\end{align*}
			which along with \eqref{nembj} implies $\beta_j=-u_s$. 
			This is a contradiction, since $-u_s$ is not a root of the equation \eqref{nmaxrimag} $\left( \text{ as }\mathcal{ P}\left( -u_s\right)=b\rho_s u_s >0  \right) $.

			Moreover, from \eqref{nmaxnodoubleroot} and \eqref{cgsepsilon4}, we have
			\begin{equation}\label{wj-wl}
				\omega_j-\omega_l=\frac{\beta_j-\beta_l}{\kappa \mathcal{P}'(\beta_j)\mathcal{P}'(\beta_l)}\left[ \frac{2\mu u_s^2}{\kappa \rho_s\beta_p}{+}\beta_p\left(2b\rho_s-2u_s^2-\frac{\mu}{\kappa\rho_s}\right){+}2u_s(b\rho_s-u_s^2)\right],
			\end{equation}
			where $ j,l,p\in \{1,2,3\}, j\neq l\neq p.$ Using the fact that $\beta_j$ are distinct, from \eqref{wj-wl}, one
			can show that $\omega_j\neq\omega_l$. Thus \Cref{sp} follows.
		\end{proof}
		
		\begin{rem}\label{rem multi}
			From the asymptotic behaviors \eqref{nmaxasmlambda}-\eqref{nmaxwn} of the eigenvalues, it is clear that all eigenvalues are simple at least for large $n$. Thus if there exist multiple eigenvalues  depending on the system parameters $\rho_s, u_s, b, \kappa, \mu$, they are finite in numbers.
		\end{rem}
		{ For the case of multiple eigenvalues, the following results can be derived by appropriately introducing generalized eigenfunctions (for more details, see \Cref{secmultiev}). From the previous remark, we observe that multiple eigenvalues of $\mc A$, if they occur at all, are only finitely many. Since our later analysis requires the asymptotic behaviors of the coefficients of the eigenvectors, we will avoid the details of the explicit computation of the generalized eigenfunctions. However, for clarity, we will compute the generalized eigenfunctions of $\mathcal{A}$ in the case of double eigenvalues.}

		Thus, now onwards, we assume the following :
		\begin{equation}\label{nmaxsimpleeigen}
			\text{The spectrum of $\mathcal{A}$ has simple eigenvalues on $\mathcal {{Z}}_m$.}\tag{${{\mc H}}$}
		\end{equation}

		We define the family $\left\lbrace \xi_0\right\rbrace \cup \left\lbrace \xi_{n,l}\: |\: 1\leq l\leq 3, n\in\mathbb{Z}^*\right\rbrace$ as follows : 
		
		\noindent
		We choose a normalized eigenfunctions of $\mathcal{A}$ for the eigenvalue $\lambda_0=0$ defined by
		\begin{equation*}
			\xi_0=\frac{1}{\sqrt{2b\pi}}\begin{pmatrix}
				1\\0\\0
			\end{pmatrix},
			\end{equation*}
			and for the eigenvalue $\lambda_n^l$, the normalized eigenfunction defined by
			\begin{equation}\label{nmaxequ-xi_n}
				\xi_{n,l}=\frac{1}{\theta_{n,l}}\begin{pmatrix}
					-1\vspace{1mm}\\\frac{\lambda_n^l+inu_s}{{in\rho_s}}\vspace{1mm}\\\frac{\mu\left( \lambda_n^l+inu_s\right) }{\rho_s\left( 1+\kappa\lambda_n^l\right) }
				\end{pmatrix}e^{inx},\: l\in \left\lbrace 1,2,3\right\rbrace,\: n\in\mathbb{Z}^*,
			\end{equation}
			where 
			\begin{equation}\label{nmaxequ-theta_nl}
				\theta_{n,l}=\sqrt{2{\pi}\left(b+\frac{\left| \lambda_n^l+inu_s\right|^2 }{\rho_sn^2} + \frac{\kappa\mu\left| \lambda_n^l+inu_s\right|^2 }{\rho_s^2\left| 1+\kappa\lambda_n^l\right|^2}\right) }.
			\end{equation}
			
			\begin{figure}[ht!]
				\includegraphics[width=.8\textwidth]{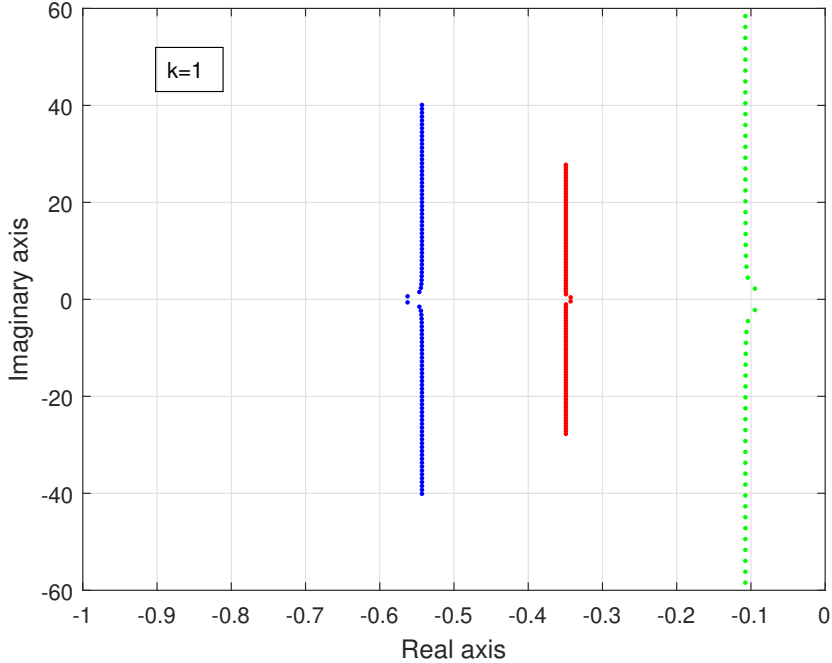}
				
				\caption{Eigenvalues of $\mathcal A$ in the complex plane for $|n|$ varies from $1$ to $30$ when $\mu=\rho_s=u_s=b=1$ and k=1. }     
			\end{figure}
			
			$\left| 1+\kappa\lambda_n^l\right|\neq 0  \left( l=1,2,3\right) $, as $-1/\kappa$ is not a root of the characteristic polynomial \eqref{nmaxchar.poly}.\\
			Similarly, we choose $\left\lbrace \xi^*_0\right\rbrace \cup \left\lbrace \xi^*_{n,l}\: |\: 1\leq l\leq 3, n\in\mathbb{Z}^*\right\rbrace$ as follows : 
			
			$\xi^*_0$ is an eigenfunction of $\mathcal{A}^*$ with the eigenvalue $\lambda_0=0$ defined by
			\begin{equation}\label{nmaxequ-xi_0^*}
				\xi^*_0=\frac{1}{\sqrt{2b\pi}}\begin{pmatrix}
					1\\0\\0
				\end{pmatrix},
			\end{equation}
			and for the eigenvalue $\overline{\lambda_n^l}$, the eigenfunction defined by
			\begin{equation}\label{nmaxequ-xi_n^*}
				\xi^*_{n,l}=\frac{1}{\psi_{n,l}}\begin{pmatrix}
					1\vspace{1mm}\\\frac{\overline{\lambda_n^l}-inu_s}{{in\rho_s}}\vspace{1mm}\\-\frac{\mu\left( \overline{\lambda_n^l}-inu_s\right) }{\rho_s\left( 1+\kappa\overline{\lambda_n^l}\right) }
				\end{pmatrix}e^{inx},\: l\in \left\lbrace 1,2,3\right\rbrace,\: n\in\mathbb{Z}^*,
			\end{equation}
			where
			\begin{equation}\label{nmaxequ-psi_nl}
				\psi_{n,l}=\frac{\sqrt{{2\pi}}\left(-b+\frac{\left(  \overline{\lambda_n^l+inu_s}\right)^2 }{\rho_sn^2} - \frac{\mu\kappa\left(  \overline{\lambda_n^l+inu_s}\right)^2 }{\rho_s^2\left(  1+\kappa\overline{\lambda_n^l}\right)^2}\right)  }{\sqrt{\left(b+\frac{\left| \lambda_n^l+inu_s\right|^2 }{\rho_sn^2} + \frac{\kappa\mu\left| \lambda_n^l+inu_s\right|^2 }{\rho_s^2\left| 1+\kappa\lambda_n^l\right|^2}\right) }}.
			\end{equation} 
			Here for all $n\in\mathbb{Z}^*$, $\psi_{n,l}\neq 0$ for $l=1,2,3 $, because of the assumption \eqref{nmaxsimpleeigen}.		
			
			The choice of this family of eigenfunctions of $\mathcal{A}$ and $\mathcal{A}^*$ ensures the following lemma.
			
			\begin{lem}\label{nmaxbiorthonormality}
				Under the assumption \eqref{nmaxsimpleeigen}, the families $\left\lbrace \xi_0\right\rbrace \cup \left\lbrace \xi_{n,l}\: |\: 1\leq l\leq 3, n\in\mathbb{Z}^*\right\rbrace$ and $\left\lbrace \xi^*_0\right\rbrace \cup \left\lbrace \xi^*_{n,l}\: |\: 1\leq l\leq 3, n\in\mathbb{Z}^*\right\rbrace$ satisfy the following bi-orthonormality relations :
				\begin{align}
					&\left\langle \xi_0, \xi^*_{k,p}\right\rangle_{\mathcal{Z}}=0,\quad \left\langle \xi_{n,l}, \xi^*_{k,p}\right\rangle_{\mathcal{Z}}=\delta_k^n\delta_p^l,\: l,p\in \left\lbrace 1,2,3\right\rbrace,\: k,n\in\mathbb{Z}^*,\label{nmaxbiortho}\\
					& \left\langle \xi_0, \xi^*_{0}\right\rangle_{\mathcal{Z}}=1,\quad \left\langle \xi_{n,l}, \xi^*_{0}\right\rangle_{\mathcal{Z}}=0,\: l\in \left\lbrace 1,2,3\right\rbrace,\: n\in\mathbb{Z}^*,
				\end{align}
				where 
				$$
				\delta_n^l=\begin{cases}
					0, \: n\neq l\\
					1,\: n=l.
				\end{cases}
				$$
				
				Moreover, the asymptotic behaviors of $\theta_{n,l}$ and $\psi_{n,l}$ are as follows :
				\begin{equation}\label{nmaxcgs-theta_nl}
					\begin{aligned}
						\left|\theta_{n,l} \right|, \left|\psi_{n,l} \right|\rightarrow  \sqrt{2{\pi}\left( b+\frac{\left( \beta_l+u_s\right)^2 }{\rho_s}+\frac{\mu\left( \beta_l+u_s\right)^2 }{\kappa\rho_s^2 \beta_l^2}\right)   }, \text{ for }l=1,2,3\text{ as }|n|\rightarrow \infty.\\
					\end{aligned}
				\end{equation}
				Also if we write the eigenfunctions of $\mathcal{A}^*$ in the following way
				\begin{equation}\label{nmaxcoeffofxi}
					\xi_{n,l}^{*}=\frac{1}{\psi_{n,l}}\begin{pmatrix}
						\alpha_{n,l}^1\\\alpha_{n,l}^2\\\alpha_{n,l}^3
					\end{pmatrix} e^{inx},\quad n\in \mathbb{Z}^*, l=1,2,3, 
				\end{equation}
				then we have the following asymptotic behaviors
				\begin{align}
					&\alpha_{n,l}^1= 1, \text{ for }l=1,2,3, \text{ for all  }n\in\mathbb{Z}^*,\label{nmaxcgsalpha1}\\
					&\left|\alpha_{n,l}^2 \right| \rightarrow \left| \frac{ \beta_l+u_s }{ \rho_s}\right|, \text{ for }l=1,2,3, \text{ as }|n|\rightarrow \infty,\label{nmaxcgsalpha2}\\
					&\left|\alpha_{n,l}^3 \right| \rightarrow \left| \frac{\mu\left( \beta_l+u_s\right) }{\kappa \rho_s \beta_l}\right| , \text{ for }l=1,2,3, \text{ as }|n|\rightarrow \infty.\label{nmaxcgsalpha3}
				\end{align} 
				
			\end{lem}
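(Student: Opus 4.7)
The plan is to split the lemma into three independent pieces: cross-frequency orthogonality, same-frequency biorthonormality at fixed $n$, and asymptotic analysis.

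First, I would exploit the Fourier factorisation built into the eigenfunctions. Every component of $\xi_{n,l}$ and $\xi_{k,p}^{*}$ carries a scalar factor $e^{inx}$ and $e^{ikx}$ respectively, so each of the three integrals in the inner product \eqref{nmaxinnerproduct} becomes a multiple of $\int_{0}^{2\pi} e^{i(n-k)x}\, \mathrm{d}x$. This vanishes whenever $n \ne k$, and in the same way $\langle \xi_{0}, \xi_{k,p}^{*}\rangle_{\mathcal{Z}} = 0$ and $\langle \xi_{n,l},\xi_{0}^{*}\rangle_{\mathcal{Z}} = 0$ for $n \ne 0$. The identity $\langle \xi_{0},\xi_{0}^{*}\rangle_{\mathcal{Z}} = 1$ is then a one-line calculation with the prefactor $1/\sqrt{2b\pi}$.

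Next I would handle the same-frequency case. Fix $n \in \mathbb{Z}^{*}$. By \Cref{nmaxinv}, the restriction $\mathcal{A}_{n}$ is a $3\times 3$ operator on $\mathbf{V}_{n}$ with eigenvalues $\lambda_{n}^{1},\lambda_{n}^{2},\lambda_{n}^{3}$, distinct under \eqref{nmaxsimpleeigen}, and the $\xi_{n,l}^{*}$ are precisely (normalised) eigenvectors of $(\mathcal{A}_{n})^{*}$ for the conjugate eigenvalues. The classical identity
\[
\lambda_{n}^{l}\,\langle \xi_{n,l},\xi_{n,p}^{*}\rangle_{\mathcal{Z}} = \langle \mathcal{A}\xi_{n,l},\xi_{n,p}^{*}\rangle_{\mathcal{Z}} = \langle \xi_{n,l},\mathcal{A}^{*}\xi_{n,p}^{*}\rangle_{\mathcal{Z}} = \lambda_{n}^{p}\,\langle \xi_{n,l},\xi_{n,p}^{*}\rangle_{\mathcal{Z}}
\]
then forces orthogonality when $l \ne p$. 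For the diagonal entry, I would expand $\langle \xi_{n,l},\xi_{n,l}^{*}\rangle_{\mathcal{Z}}$ directly using the weighted inner product: the three contributions produce respectively $2\pi b$, a multiple of $(\overline{\lambda_{n}^{l}+inu_{s}})^{2}/(n^{2}\rho_{s})$ (carrying a sign change from the $i$ in the denominator), and a multiple of $(\overline{\lambda_{n}^{l}+inu_{s}})^{2}/(\overline{1+\kappa\lambda_{n}^{l}})^{2}$. The formula for $\psi_{n,l}$ in \eqref{nmaxequ-psi_nl} is tailored precisely so that $\theta_{n,l}\,\overline{\psi_{n,l}}$ cancels exactly against this sum and the diagonal collapses to $1$; hence the non-vanishing of $\psi_{n,l}$ under \eqref{nmaxsimpleeigen} is both necessary and sufficient.

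Finally, for the asymptotics I would substitute the spectral expansion $\lambda_{n}^{l} = -\omega_{l}+i\beta_{l}n+O(1/|n|)$ from \eqref{nmaxasmlambda}, yielding $|\lambda_{n}^{l}+inu_{s}|^{2} = n^{2}(\beta_{l}+u_{s})^{2}+O(1)$ and $|1+\kappa\lambda_{n}^{l}|^{2} = \kappa^{2}\beta_{l}^{2}n^{2}+O(|n|)$. Inserting these into \eqref{nmaxequ-theta_nl} and \eqref{nmaxequ-psi_nl} and passing to the limit $|n|\to\infty$ gives \eqref{nmaxcgs-theta_nl}. For the coefficients $\alpha_{n,l}^{j}$, I would divide the three components in \eqref{nmaxequ-xi_n^*} (absorbing $\psi_{n,l}$ into the common prefactor) and apply the same expansion: the first component is $1$ by construction, and the other two converge to the claimed limits in \eqref{nmaxcgsalpha2}--\eqref{nmaxcgsalpha3}. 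The main obstacle is the diagonal same-frequency computation, where the sign conventions and complex conjugations appearing in the definition of $\psi_{n,l}$ must be tracked carefully to verify the desired cancellation; everything else reduces to Fourier orthogonality and mechanical substitution of the spectral asymptotics.
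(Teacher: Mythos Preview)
Your proposal is correct and is precisely the argument the paper has in mind; note, however, that the paper does not actually write out a proof of this lemma, merely stating before it that ``the choice of this family of eigenfunctions of $\mathcal{A}$ and $\mathcal{A}^*$ ensures the following lemma'' and leaving the verification to the reader. Your three-step plan (Fourier orthogonality across frequencies, the standard $\lambda_n^l \ne \lambda_n^p$ eigenvector argument within a fixed frequency, and direct substitution of the asymptotics \eqref{nmaxasmlambda} into the explicit formulae \eqref{nmaxequ-theta_nl}--\eqref{nmaxequ-psi_nl}) supplies exactly the omitted details, and your remark that $\psi_{n,l}$ is engineered so that $\theta_{n,l}\overline{\psi_{n,l}}$ cancels the diagonal inner product is the key observation the authors are relying on.
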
	
			Now, we show that the eigenfunctions of $\mathcal{A}$ form a Riesz basis in $\mathcal{Z}_m$.
			Recall from \eqref{nmaxfbasis} that $\left\lbrace \phi_{0,1}, \phi_{n,1},\phi_{n,2}, \phi_{n,3} | n\in\mathbb{Z}^*\right\rbrace $  forms an orthonormal basis in $\mathcal{Z}_m$.

			For the definition of Riesz basis in $\mathcal{Z}_m$, we refer \cite[Definition 2.5.1, Chapter 2, Section
			2.5]{TW09}. In other words, the sequence $\left\lbrace \xi_0\right\rbrace \cup \left\lbrace \xi_{n,l}\: |\: 1\leq l\leq 3\right\rbrace_{ n\in\mathbb{Z}^*}$ forms a Riesz basis in $\mathcal{Z}_m$ if there exists an
			invertible operator $Q\in\mathcal{L}\left( \mathcal{Z}_m\right) $ such that
			\begin{equation*}
				Q\phi_{0,1}=\xi_0,\: Q\phi_{n,1}=\xi_{n,1},\:Q\phi_{n,2}=\xi_{n,2},\:Q\phi_{n,3}=\xi_{n,3},\:\forall n\in\mathbb{Z}^*.
			\end{equation*}
			
			\begin{lem}\label{nmaxlem-bais_representation}
				Assume \eqref{nmaxsimpleeigen}. Let us set $\mathcal{B}_n^1=\left\lbrace \phi_{n,l}\: | l=1,2,3\right\rbrace $
				and $\mathcal{B}_n^2=\left\lbrace \xi_{n,l}\: | l=1,2,3\right\rbrace $ for all $n\in\mathbb{Z}^*$, where $\left\lbrace \phi_{n,l}\right\rbrace_{1\leq l\leq 3} $ and $\left\lbrace \xi_{n,l}\right\rbrace_{1\leq l\leq 3} $ are defined as in \eqref{nmaxfbasis} and \eqref{nmaxequ-xi_n}. Let $z_n\in \mathbf{V}_n$ be expressed in the basis $\mathcal{B}_n^1$ and $\mathcal{B}_n^2$ as follows :
				\begin{equation}\label{nmaxequ-z_n}
					z_n=\sum\limits_{p=1}^{3}c_{n,p}\phi_{n,p}=\sum\limits_{l=1}^{3}d_{n,l}\xi_{n,l},
				\end{equation}
				and let the operator $\Gamma_n\in \mathbb{C}^{3\times 3}$ be defined by
				\begin{equation}\label{nmaxwedge}
					\Gamma_n \begin{pmatrix}
						c_{n,1}\\c_{n,2}\\c_{n,3}
					\end{pmatrix}= \begin{pmatrix}
						d_{n,1}\\d_{n,2}\\d_{n,3}
					\end{pmatrix}.
				\end{equation}
				Then there exist positive constants $l_1$ and $l_2$ independent of $n$ such that
				\begin{equation*}
					\left\| \Gamma_n\right\| < l_1, \quad  \left\| \Gamma_n^{-1}\right\| < l_2,\text{ for all }n\in \mathbb{Z}^*.
				\end{equation*}
			\end{lem}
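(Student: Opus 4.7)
The plan is to reduce uniform invertibility of $\Gamma_n$ to invertibility of a single limiting $3\times 3$ matrix, and then handle the finitely many low frequencies via assumption \eqref{nmaxsimpleeigen}. First I would expand each $\xi_{n,l}$ in the orthonormal basis $\mathcal{B}_n^1$ of $\mathbf{V}_n$. Using \eqref{nmaxfbasis} and \eqref{nmaxequ-xi_n}, one obtains
\begin{equation*}
\xi_{n,l}=\frac{1}{\theta_{n,l}}\left[-\sqrt{2\pi b}\,\phi_{n,1}+\sqrt{\tfrac{2\pi}{\rho_s}}\,\frac{\lambda_n^l+inu_s}{in}\,\phi_{n,2}+\sqrt{\tfrac{2\pi\kappa}{\mu}}\,\frac{\mu(\lambda_n^l+inu_s)}{\rho_s(1+\kappa\lambda_n^l)}\,\phi_{n,3}\right],
\end{equation*}
which identifies the columns of a matrix $M_n\in\mathbb{C}^{3\times 3}$ such that the coordinate vectors satisfy $(c_{n,p})=M_n(d_{n,l})$; equivalently $\Gamma_n=M_n^{-1}$. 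Uniform bounds on $\Gamma_n$ and $\Gamma_n^{-1}$ are therefore equivalent to uniform bounds on $M_n$ and $M_n^{-1}$.

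Second, I would compute the entrywise limits of $M_n$ as $|n|\to\infty$. From $\lambda_n^l=-\omega_l+i\beta_l n+O(1/|n|)$ in \eqref{nmaxasmlambda}, one has $(\lambda_n^l+inu_s)/(in)\to \beta_l+u_s$ and $1+\kappa\lambda_n^l=i\kappa\beta_l n+O(1)$, whence
\begin{equation*}
\frac{\mu(\lambda_n^l+inu_s)}{\rho_s(1+\kappa\lambda_n^l)}\longrightarrow \frac{\mu(\beta_l+u_s)}{\kappa\rho_s\beta_l}.
\end{equation*}
Combined with the stated limit \eqref{nmaxcgs-theta_nl} of $\theta_{n,l}$ (which is a strictly positive finite number because $\beta_l\neq 0$), this gives a finite limit matrix $M_\infty$ and immediately $\sup_n\|M_n\|<\infty$.

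The main obstacle is showing $M_\infty$ is invertible, which I would address by a Vandermonde-type calculation. Pulling out the nonzero column factors $1/\theta_\infty^l$ and the fixed nonzero row scalars, invertibility of $M_\infty$ reduces to non-vanishing of
\begin{equation*}
\det\begin{pmatrix} 1 & 1 & 1 \\ \beta_1+u_s & \beta_2+u_s & \beta_3+u_s \\ (\beta_1+u_s)/\beta_1 & (\beta_2+u_s)/\beta_2 & (\beta_3+u_s)/\beta_3 \end{pmatrix}.
\end{equation*}
Subtracting $u_s\cdot\text{row}_1$ from row $2$ and $\text{row}_1$ from row $3$, then multiplying column $l$ by $\beta_l\neq 0$ (nonzero because the constant term $-\mu u_s/(\kappa\rho_s)$ of \eqref{nmaxrimag} is nonzero), the determinant becomes $u_s(\beta_1\beta_2\beta_3)^{-1}$ times a Vandermonde $(\beta_2-\beta_1)(\beta_3-\beta_1)(\beta_3-\beta_2)$. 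Since $u_s>0$ and the roots $\beta_1,\beta_2,\beta_3$ are pairwise distinct by Step~3 of the proof of \Cref{sp}, this expression is nonzero, hence $M_\infty$ is invertible and $\|M_n^{-1}\|$ stays bounded for $|n|$ large.

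Finally, for the finitely many remaining values of $n$, assumption \eqref{nmaxsimpleeigen} ensures that $\lambda_n^1,\lambda_n^2,\lambda_n^3$ are distinct eigenvalues of $\mathcal{A}_n$, so $\{\xi_{n,l}\}_{l=1,2,3}$ is linearly independent and $M_n$ is invertible for each such $n$. Taking the maximum of $\|M_n\|$ and $\|M_n^{-1}\|$ over these finitely many $n$ together with the uniform large-$|n|$ bound gives the constants $l_1$ and $l_2$ claimed in the lemma.
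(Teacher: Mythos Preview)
Your proof is correct, and the strategy—entrywise convergence of the change-of-basis matrix to an invertible limit, with the Vandermonde structure coming from the distinctness of the $\beta_l$—is the same idea the paper uses. The route differs in one respect worth noting: the paper computes $\Gamma_n$ itself, not $M_n=\Gamma_n^{-1}$, by pairing $z_n$ with the biorthogonal family $\xi_{n,l}^*$ from \Cref{nmaxbiorthonormality}; this yields a closed-form determinant
\[
\det\Gamma_n=\frac{2\pi\kappa\sqrt{2\pi b\rho_s\kappa\mu}}{in\rho_s^2\,\overline{\psi_{n,1}}\,\overline{\psi_{n,2}}\,\overline{\psi_{n,3}}}\cdot\frac{(\lambda_n^1-\lambda_n^2)(\lambda_n^1-\lambda_n^3)(\lambda_n^2-\lambda_n^3)(1-i\kappa nu_s)}{(1+\kappa\lambda_n^1)(1+\kappa\lambda_n^2)(1+\kappa\lambda_n^3)},
\]
which is nonzero for every $n$ directly from \eqref{nmaxsimpleeigen}, so no separate treatment of low frequencies is needed. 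Your version avoids invoking the dual family and is more self-contained, at the cost of splitting into the asymptotic regime (handled by $M_\infty$) and finitely many remaining $n$ (handled by linear independence of eigenvectors for distinct eigenvalues). Both arguments land on the same Vandermonde factor in the limit, and both give the uniform bounds claimed.
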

			\begin{proof}
				Let $n\in\mathbb{Z}^*$ and $z_n$ as in \eqref{nmaxequ-z_n}. Taking inner product of $z_n$ with each $\xi_{n,l}^*$ for $1\leq l\leq 3$, and using the bi-orthonormality relation \eqref{nmaxbiortho}, we get
				\begin{align*}
					d_{n,l}=&\left\langle z_n,\xi_{n,l}^*\right\rangle_{\mathcal {{Z}}}=\sum\limits_{p=1}^{3}c_{n,p}\left\langle \phi_{n,p},\xi_{n,l}^*\right\rangle_{\mathcal {{Z}}}\\
					=&\sqrt{2\pi b}\frac{\overline{\alpha_{n,l}^1}}{\overline{\psi_{n,l}}}c_{n,1}+\sqrt{2\pi \rho_s}\frac{\overline{\alpha_{n,l}^2}}{\overline{\psi_{n,l}}}c_{n,2}+\sqrt{\frac{2\pi \kappa}{\mu}}\frac{\overline{\alpha_{n,l}^3}}{\overline{\psi_{n,l}}}c_{n,3}.
				\end{align*}
				Then the matrix $\Gamma_n$ in \eqref{nmaxwedge} is given by
				\begin{equation}\label{nmaxGamma}
					\Gamma_n=\begin{pmatrix}
						\sqrt{2\pi b}\frac{\overline{\alpha_{n,1}^1}}{\overline{\psi_{n,1}}}&\sqrt{2\pi \rho_s}\frac{\overline{\alpha_{n,1}^2}}{\overline{\psi_{n,1}}}&\sqrt{\frac{2\pi \kappa}{\mu}}\frac{\overline{\alpha_{n,1}^3}}{\overline{\psi_{n,1}}}\\
						\sqrt{2\pi b}\frac{\overline{\alpha_{n,2}^1}}{\overline{\psi_{n,2}}}&\sqrt{2\pi \rho_s}\frac{\overline{\alpha_{n,2}^2}}{\overline{\psi_{n,2}}}&\sqrt{\frac{2\pi \kappa}{\mu}}\frac{\overline{\alpha_{n,2}^3}}{\overline{\psi_{n,2}}}\\
						\sqrt{2\pi b}\frac{\overline{\alpha_{n,3}^1}}{\overline{\psi_{n,3}}}&\sqrt{2\pi \rho_s}\frac{\overline{\alpha_{n,3}^2}}{\overline{\psi_{n,3}}}&\sqrt{\frac{2\pi \kappa}{\mu}}\frac{\overline{\alpha_{n,3}^3}}{\overline{\psi_{n,3}}}
					\end{pmatrix}.
				\end{equation}
				Then using \eqref{nmaxsimpleeigen}, we obtain
				\begin{align}\label{nmaxdetwedge}
					\det \left( \Gamma_n\right) = & \frac{2\pi\sqrt{2\pi b\rho_s\kappa\mu}}{in\rho_s^2\overline{\psi_{n,1}}\overline{\psi_{n,2}}\overline{\psi_{n,3}}}\begin{vmatrix}
						1&\lambda_n^1+inu_s&\frac{\lambda_n^1+inu_s}{1+\kappa\lambda_n^1}\\
						1&\lambda_n^2+inu_s&\frac{\lambda_n^2+inu_s}{1+\kappa\lambda_n^2}\\
						1&\lambda_n^3+inu_s&\frac{\lambda_n^3+inu_s}{1+\kappa\lambda_n^3}
					\end{vmatrix}\notag\\
					= & \frac{2\pi\kappa\sqrt{2\pi b\rho_s\kappa\mu}}{in\rho_s^2\overline{\psi_{n,1}}\overline{\psi_{n,2}}\overline{\psi_{n,3}}} \frac{\left( \lambda_n^1-\lambda_n^2\right) \left( \lambda_n^1-\lambda_n^3\right)\left( \lambda_n^2-\lambda_n^3\right)\left(1-\kappa inu_s \right)  }{\left( 1+\kappa\lambda_n^1\right) \left(1+\kappa\lambda_n^2 \right)\left( 1+\kappa\lambda_n^3\right) } \neq 0. 
				\end{align}
				Using \eqref{nmaxasmlambda} and \eqref{nmaxcgs-theta_nl}, we get
				\begin{equation*}
					\left| \det \left( \Gamma_n\right)\right|\rightarrow C\text{ as }|n|\rightarrow \infty,
				\end{equation*}
				where $C$ is a positive constant.
				Thus, for each $n\in\mathbb{Z}^*, \Gamma_n$ is invertible with the inverse $\Gamma_n^{-1}$.

				From the matrix $\Gamma_n$, using  \eqref{nmaxcgs-theta_nl}, \eqref{nmaxcgsalpha1}, \eqref{nmaxcgsalpha2} and \eqref{nmaxcgsalpha3}, we get a positive constant $l_1$ independent of $n$ such that
				\begin{equation*}
					\left\| \Gamma_n\right\| < l_1, \:\forall\: n\in\mathbb{Z}^*.
				\end{equation*}
				Similarly, we can get a positive constant $l_2$ independent of $n$ such that
				\begin{equation*}
					\left\| \Gamma_n^{-1}\right\| < l_2, \:\forall\: n\in\mathbb{Z}^*.
				\end{equation*}
			\end{proof}
			Using \Cref{nmaxlem-bais_representation} and from the second part of  \cite[Proposition 2.5.3, Chapter 2, Section 2.5]{TW09}, we can see that the eigenfunctions of $\mathcal{A}$, $\big\lbrace\xi_0 , \xi_{n,l}\: | 1\leq l\leq 3, \: n\in \mathbb{Z}^*\big\rbrace $ forms a Riesz basis in $\mathcal{Z}_m$. Also, using \cite[Proposition 2.8.6, Chapter 2, Section 2.8]{TW09}, \cite[Definition 2.6.1, Chapter 2, Section 2.6]{TW09} and  \Cref{nmaxbiorthonormality}, we obtain
			$\left\lbrace\xi_0^* , \xi^*_{n,l}\: | 1\leq l\leq 3, \: n\in \mathbb{Z}^*\right\rbrace $ forms a Riesz basis in $\mathcal{Z}_m$.
			\begin{prop}\label{nmaxbasis_representation}
				Assume \eqref{nmaxsimpleeigen} holds. Then any $z\in{\mathcal {{Z}}_m}$ can be uniquely represented as
				\begin{equation}\label{nmaxexpansion_z}
					z=\sum\limits_{n\in\mathbb{Z}^*}\sum\limits_{l=1}^{3}\left\langle z, \xi^*_{n,l} \right\rangle_{\mathcal {{Z}}} \xi_{n,l}+\left\langle z, \xi^*_{0} \right\rangle_{\mathcal {{Z}}} \xi_{0}. 
				\end{equation}
				There exist positive numbers $C_1$ and $C_2$ such that
				\begin{equation}\label{nmaxequ-est-z_0}
					C_1\left( \left| \left\langle z, \xi^*_{0} \right\rangle\right|^2+\sum_{n\in\mathbb{Z}^*}\sum_{l=1}^{3}\left| \left\langle z, \xi^*_{n,l} \right\rangle\right|^2 \right) \leq \left\|z \right\|^2_{{\mathcal {{Z}}}}\leq C_2\left(\left| \left\langle z, \xi^*_{0} \right\rangle\right|^2+ \sum_{n\in\mathbb{Z}^*}\sum_{l=1}^{3}\left| \left\langle z, \xi^*_{n,l} \right\rangle\right|^2 \right). 
				\end{equation}

				Similarly, any $z\in\mathcal {{Z}}_m$ can be uniquely represented in the basis $\left\lbrace\xi_0^* \right\rbrace \cup \{ \xi^*_{n,l}\: | 1\leq l\leq 3, \: n\in \mathbb{Z}^*\} $ by
				\begin{equation}\label{nmaxexpansion_z-adj}
					z=\sum\limits_{n\in\mathbb{Z}^*}\sum\limits_{l=1}^{3}\left\langle z, \xi_{n,l} \right\rangle_{\mathcal {{Z}}} \xi^*_{n,l}+\left\langle z, \xi_{0} \right\rangle_{\mathcal {{Z}}} \xi^*_{0}, 
				\end{equation}
				and
				\begin{equation}\label{nmaxequ-est-adj-z_0}
					\frac{1}{C_2}\left( \left| \left\langle z, \xi_{0} \right\rangle\right|^2+\sum_{n\in\mathbb{Z}^*}\sum_{l=1}^{3}\left| \left\langle z, \xi_{n,l} \right\rangle\right|^2 \right) \leq \left\|z \right\|^2_{{\mathcal {{Z}}}}\leq \frac{1}{C_1}\left(\left| \left\langle z, \xi_{0} \right\rangle\right|^2+ \sum_{n\in\mathbb{Z}^*}^{\infty}\sum_{l=1}^{3}\left| \left\langle z, \xi_{n,l} \right\rangle\right|^2 \right). 
				\end{equation}
			\end{prop}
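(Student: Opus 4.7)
The plan is to read this proposition as a direct corollary of the Riesz basis property already established (via \Cref{nmaxlem-bais_representation} together with \cite[Proposition 2.5.3]{TW09}), combined with the biorthonormality relations of \Cref{nmaxbiorthonormality}. I would not reprove anything about Riesz bases from scratch; instead I would identify the coefficients in the abstract expansion with the inner products against the dual family and read off both the representation formula and the norm equivalence.

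First I would invoke the Riesz basis property of $\{\xi_0\}\cup\{\xi_{n,l}\}_{n\in\z^*,\,1\leq l\leq 3}$ to write, for any $z\in\mathcal{Z}_m$, a unique expansion $z=c_0\xi_0+\sum_{n,l}c_{n,l}\xi_{n,l}$, together with the Riesz basis inequalities $C_1(|c_0|^2+\sum|c_{n,l}|^2)\leq \|z\|_{\mathcal{Z}}^2\leq C_2(|c_0|^2+\sum|c_{n,l}|^2)$. Next, I would take the $\mathcal{Z}$-inner product of this expansion with $\xi_0^*$ and with each $\xi_{k,p}^*$ and, using the bi-orthonormality in \eqref{nmaxbiortho}, identify $c_0=\langle z,\xi_0^*\rangle_{\mathcal{Z}}$ and $c_{n,l}=\langle z,\xi_{n,l}^*\rangle_{\mathcal{Z}}$. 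Substituting back yields \eqref{nmaxexpansion_z}, and rewriting the Riesz inequalities in these coefficients gives \eqref{nmaxequ-est-z_0}.

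For the second pair \eqref{nmaxexpansion_z-adj}--\eqref{nmaxequ-est-adj-z_0}, I would use that the biorthogonal family $\{\xi_0^*\}\cup\{\xi_{n,l}^*\}$ is itself a Riesz basis in $\mathcal{Z}_m$ (which is exactly what \cite[Proposition 2.8.6]{TW09} together with \cite[Definition 2.6.1]{TW09} provides, as noted just before the proposition statement). Repeating the argument of the previous paragraph with the roles of $\xi_{n,l}$ and $\xi_{n,l}^*$ swapped, and using the same bi-orthonormality relations (now read the other way), gives the expansion and the coefficient identification. The upper/lower constants $1/C_1$ and $1/C_2$ in \eqref{nmaxequ-est-adj-z_0} appear because the Riesz bounds of a dual basis are the reciprocals of the bounds of the original basis; this is a general fact about biorthogonal Riesz bases in a Hilbert space and can be obtained by duality (writing the synthesis operator for $\{\xi_{n,l}^*\}$ as the inverse adjoint of that for $\{\xi_{n,l}\}$).

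The only thing that requires any care is bookkeeping: making sure the single eigenvector $\xi_0$ (the kernel direction in $\mathcal{Z}_m$) is treated on the same footing as the pairs $(n,l)$, and that the summations are interpreted as unconditional convergence (which is automatic from the Riesz basis property). There is no genuine obstacle; the substance of the proposition is entirely encoded in \Cref{nmaxlem-bais_representation} and \Cref{nmaxbiorthonormality}, and the proof is essentially a transcription of the standard Riesz-basis identities into the concrete coefficients $\langle z,\xi_0^*\rangle$ and $\langle z,\xi_{n,l}^*\rangle$ (respectively $\langle z,\xi_0\rangle$ and $\langle z,\xi_{n,l}\rangle$).
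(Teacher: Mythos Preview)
Your proposal is correct and mirrors the paper's own treatment: the proposition is presented as an immediate consequence of the Riesz basis property of $\{\xi_0\}\cup\{\xi_{n,l}\}$ (obtained from \Cref{nmaxlem-bais_representation} and \cite[Proposition 2.5.3]{TW09}) together with the biorthonormality in \Cref{nmaxbiorthonormality}, and the dual statement from the fact that $\{\xi_0^*\}\cup\{\xi_{n,l}^*\}$ is also a Riesz basis via \cite[Proposition 2.8.6, Definition 2.6.1]{TW09}. The paper does not even give a separate proof of the proposition, so your transcription of the standard Riesz-basis identities is exactly what is intended.
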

			
			As a consequence of the above proposition, from \cite[Remark 2.6.4, Chapter 2, Section
			2.6]{TW09}, we get the following result of spectrum of $\mathcal{A}$ and $\mathcal{A}^*$.
			\begin{theorem}
				Assume \eqref{nmaxsimpleeigen} holds. Then the spectrum of the operator $(\mathcal{A}, \mathcal{D}(\mathcal{A}; \mathcal {{Z}}_m))$ and $(\mathcal{A}^*, \mathcal{D}(\mathcal{A}^*; \mathcal {{Z}}_m))$ are given by 
				\begin{align*}
					\sigma(\mathcal{A}) & =\left\lbrace \lambda_0=0,  \lambda_n^j,\: n\in\mathbb{Z}^*,\: j=1,2,3\right\rbrace,\\
					\sigma(\mathcal{A}^*) & =\left\lbrace \lambda_0,  \overline{\lambda_n^j},\: n\in\mathbb{Z}^*,\: j=1,2,3\right\rbrace.
				\end{align*}
			\end{theorem}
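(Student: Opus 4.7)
The plan is to reduce the assertion to the fact, already in hand from the discussion preceding the statement, that the eigenfunctions $\{\xi_0\}\cup\{\xi_{n,l}:n\in\mathbb{Z}^*,\,1\leq l\leq 3\}$ form a Riesz basis of $\mathcal{Z}_m$, combined with the observation that the eigenvalues $\{0\}\cup\{\lambda_n^l\}$ have no finite accumulation point. The inclusion $\{0\}\cup\{\lambda_n^l\}\subseteq\sigma(\mathcal{A})$ is immediate from \Cref{sp}, so the content of the theorem is the reverse inclusion, namely that every other complex number lies in the resolvent set of $\mathcal{A}$.

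To establish the reverse inclusion, fix $\mu\in\mathbb{C}$ with $\mu\neq 0$ and $\mu\neq\lambda_n^l$ for every $n\in\mathbb{Z}^*,\,l\in\{1,2,3\}$. The asymptotics \eqref{nmaxasmlambda} show that $\operatorname{Re}\lambda_n^l$ stays bounded while $|\operatorname{Im}\lambda_n^l|\to\infty$ as $|n|\to\infty$, so the eigenvalue set has no finite accumulation point and
\[
\delta:=\inf\bigl(\{|\mu|\}\cup\{|\mu-\lambda_n^l|:n\in\mathbb{Z}^*,\,l=1,2,3\}\bigr)>0.
\]
Using the Riesz-basis expansion \eqref{nmaxexpansion_z}, I would define a candidate resolvent by
\[
R_\mu z:=\frac{\langle z,\xi_0^*\rangle_{\mathcal{Z}}}{\mu}\,\xi_0+\sum_{n\in\mathbb{Z}^*}\sum_{l=1}^{3}\frac{\langle z,\xi_{n,l}^*\rangle_{\mathcal{Z}}}{\mu-\lambda_n^l}\,\xi_{n,l},\qquad z\in\mathcal{Z}_m.
\]
The frame bounds \eqref{nmaxequ-est-z_0} combined with the uniform estimate $|\mu-\lambda_n^l|^{-1}\leq\delta^{-1}$ ensure that the series converges in $\mathcal{Z}_m$ and that $R_\mu\in\mathcal{L}(\mathcal{Z}_m)$. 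A termwise computation invoking $\mathcal{A}\xi_0=0$ and $\mathcal{A}\xi_{n,l}=\lambda_n^l\xi_{n,l}$ then yields $(\mu I-\mathcal{A})R_\mu=I$ on $\mathcal{Z}_m$ and $R_\mu(\mu I-\mathcal{A})=I$ on $\mathcal{D}(\mathcal{A};\mathcal{Z}_m)$, so $\mu\in\rho(\mathcal{A})$.

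The conclusion for $\sigma(\mathcal{A}^*)$ follows in parallel using the dual Riesz basis $\{\xi_0^*\}\cup\{\xi_{n,l}^*\}$ of eigenvectors of $\mathcal{A}^*$ (with eigenvalues $\overline{\lambda_n^l}$) together with the expansion \eqref{nmaxexpansion_z-adj}; alternatively one may appeal to the identity $\sigma(\mathcal{A}^*)=\overline{\sigma(\mathcal{A})}$ valid for any densely defined closed operator on the Hilbert space $\mathcal{Z}_m$. The one technical point I expect to require care is confirming that $R_\mu z$ actually lies in $\mathcal{D}(\mathcal{A};\mathcal{Z}_m)$, which reduces to the square-summability of $\{\lambda_n^l\langle z,\xi_{n,l}^*\rangle/(\mu-\lambda_n^l)\}$; since $|\lambda_n^l/(\mu-\lambda_n^l)|\to 1$ as $|n|\to\infty$, this inherits square-summability from $\{\langle z,\xi_{n,l}^*\rangle\}$ via \eqref{nmaxequ-est-z_0}. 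This is precisely the abstract principle referenced in \cite[Remark~2.6.4]{TW09}, so the argument is essentially a verification of its hypotheses against the data supplied by \Cref{nmaxbasis_representation}.
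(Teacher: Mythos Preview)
Your proposal is correct and follows essentially the same approach as the paper: the paper simply states that the theorem is a consequence of \Cref{nmaxbasis_representation} together with \cite[Remark~2.6.4]{TW09}, while you have unpacked exactly what that remark entails (explicit construction of the resolvent via the Riesz-basis expansion and verification that the eigenvalue set has no finite accumulation point). Your final paragraph even identifies this as the content of that same reference, so the two arguments coincide.
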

			{
				
				\subsection{ Finite dimensional projected system}\label{secproj}
				Here, we will discuss the projection of the operators onto eigenspaces, which will aid us in proving \Cref{thm_pos}. Let's define the following finite-dimensional spaces:
				\begin{equation}\label{def-Zn}
					\mathbf{Z}_0=\text{span} \left\lbrace \xi_{0} \right\rbrace,\:\mathbf{Z}_n=\text{span} \left\lbrace \xi_{n,l},\: l=1,2,3\right\rbrace , n\in \mathbb{Z}^*.
				\end{equation}
				\Cref{nmaxbasis_representation} shows that the space ${\mathcal Z_m}$ is  the orthogonal sum of the subspaces $\left\lbrace  \mathbf{Z}_n\right\rbrace_{n\in\mathbb{Z}} $, i.e., 
				\begin{equation*}
					\mathcal{Z}_m= \oplus_{n\in\mathbb{Z}}\mathbf{Z}_n.
				\end{equation*}
				With reference to \eqref{nmaxexpansion_z}, let us define the projection $\pi_n\in \mathcal{L}\left(\mathcal{Z}_m \right) $ with Range $\pi_n\subset \mathbf{Z}_n$ by
				\begin{equation}\label{def-pi}
					\pi_n z=  \sum\limits_{l=1}^{3}\left\langle z, \xi^*_{n,l} \right\rangle \xi_{n,l},\quad \forall\, n\in\mathbb{Z}^*,\quad 
					\pi_0 z = \left\langle z, \xi^*_{0} \right\rangle \xi_{0}, \quad \forall\, z\in \mathcal{Z}_m.
				\end{equation}
				Similarly, defining 
				\begin{equation}\label{def-adjZn}
					\mathbf{Z}^*_0=\text{span} \left\lbrace \xi^*_{0} \right\rbrace,\:\mathbf{Z}^*_n=\text{span} \left\lbrace \xi^*_{n,l},\: l=1,2,3\right\rbrace , n\in\mathbb{Z}^*,
				\end{equation}
				we get $\mathcal{Z}_m= \oplus_{n\in\mathbb{Z}}\mathbf{Z}^*_n$. 
				Referring to \eqref{nmaxexpansion_z-adj}, the projection $\pi^*_n\in \mathcal{L}\left(\mathcal{Z}_m \right) $ with Range $\pi^*_n\subset \mathbf{Z}^*_n$ by
				\begin{equation}\label{def-adj-pi}
					\pi^*_n z=  \sum\limits_{l=1}^{3}\left\langle z, \xi_{n,l} \right\rangle \xi^*_{n,l},\quad \forall\, n\in\mathbb{Z}^*,\quad 
					\pi^*_0 z = \left\langle z, \xi_{0} \right\rangle \xi^*_{0}, \quad \forall\, z\in \mathcal{Z}_m.
				\end{equation}
				
				Next, let $f_2=0, f_3=0, f_1=f$ and $\mathcal{O}_1= (0,2\pi)$. Then the system \eqref{nmaxeq3} becomes:
				\begin{equation} \label{opp-eqn}
					\dot{z}(t) = \mathcal{A} z(t) + \mathcal{B} f(t), \quad t\in (0,T), \qquad z(0) = z_{0},
				\end{equation}
				where $z(t) = (\rho(t,\cdot), u(t, \cdot), S(t,\cdot))^{\top},$ $z_{0} =(\rho_{0}, u_{0}, S_0)^{\top}\in\mathcal{Z}_m$ , $\left( \mathcal{A},\mathcal{D}\left( \mathcal{A};\mathcal{Z}_m\right) \right) $ is as defined in \eqref{op} and the control operator $\mathcal{B} \in \mathcal{L}(L^2(0,2\pi);{\mathcal{Z}_m})$ is defined as follows:
				\begin{equation}\label{defn-Bf(t)}
					\mathcal{B}f=\begin{pmatrix}
						f\\0\\0
					\end{pmatrix}, \quad \forall\, f\in L^2(0,2\pi).
				\end{equation}

				For each $n\in\mathbb{Z}$, since $\mathbf{Z}_n$ is the eigenspace of $\mathcal{A}$ with finite dimension, $\mathbf{Z}_n$ is a subspace of $\mathcal{D}(\mathcal A ; \mathcal{Z}_m)$ and the following result holds.

				\begin{lem}\label{lem-An}
					For each $n\in\mathbb{Z}$, the space $\mathbf{Z}_n$ defined in \eqref{def-Zn} is invariant under $\mathcal{A}$ and
					\begin{equation}
						\pi_n \mathcal{A}=\pi_n\mathcal{A}\pi_n=\mathcal{A}\pi_n.
					\end{equation}
					For each $n\in\mathbb{Z}^*$, the matrix representation of $\pi_n\mathcal{A}|_{\mathbf{Z}_n}$ in the basis $\left\lbrace \xi_{n,l},\xi_{n,2},\xi_{n,3}\right\rbrace $ is denoted as $\mathcal{A}_n$, defined by
					\begin{equation}
						\mathcal{A}_n=\begin{pmatrix}
							\lambda_n^1&0&0\\
							0&\lambda_n^2&0\\
							0&0&\lambda_n^3
						\end{pmatrix},
					\end{equation}
					where $\{\lambda_n^\ell \mid \ell=1,2,3\}$ represents the eigenvalues of $\mathcal{A}$ as mentioned in \Cref{sp}. For $n=0$, $\mathcal{A}_0=0$.
					
					For all $n\in\mathbb{Z}$, the adjoint of $\pi_n \mathcal{A}|_{\mathbf{Z}_n}$ is defined by $(\mathcal{A}^*\pi^*_n)|_{\mathbf{Z}^*_n}$ with the matrix representation 
					$\mathcal{A}^*_n$ in the basis of $\mathbf{Z}^*_n$. 
					
					Furthermore, there exists a positive constant $C$ independent of $n$ such that 
					\begin{equation}\label{equniformfinsemigr}
						\|e^{t \pi_n \mathcal{A}}\|_{\mathcal{L}(\mathbf{Z}_n)}\le C, \quad \|e^{t (\pi_n \mathcal{A})^*}\|_{\mathcal{L}(\mathbf{Z}^*_n)}\le C, \quad \forall\, n\in\mathbb{Z}.
					\end{equation}
				\end{lem}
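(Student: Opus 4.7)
The plan is to exploit the eigen-structure of $\mathcal{A}$ from \Cref{sp}, the biorthogonality of $\{\xi_{n,l}\}$ and $\{\xi^*_{n,l}\}$ from \Cref{nmaxbiorthonormality}, and the uniform bounds on the change-of-basis operators $\Gamma_n, \Gamma_n^{-1}$ from \Cref{nmaxlem-bais_representation}. Steps 1--2 (invariance, commutation, matrix representation) will be immediate consequences of the eigenvector definitions; Step 3 (uniform semigroup bound) is where the work is, and it is where the uniform estimates of \Cref{nmaxlem-bais_representation} are essential.

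First, I would record that by construction $\mathcal{A}\xi_{n,l} = \lambda_n^l \xi_{n,l} \in \mathbf{Z}_n$, so $\mathbf{Z}_n$ is $\mathcal{A}$-invariant for every $n\in\mathbb{Z}$. To get $\pi_n\mathcal{A} = \mathcal{A}\pi_n$ on $\mathcal{D}(\mathcal{A};\mathcal{Z}_m)$, I would compute directly, using that $\xi^*_{n,l}$ is an eigenfunction of $\mathcal{A}^*$ with eigenvalue $\overline{\lambda_n^l}$:
\[
\pi_n \mathcal{A} z = \sum_{l=1}^{3}\langle \mathcal{A}z,\xi^*_{n,l}\rangle_{\mathcal{Z}}\,\xi_{n,l} = \sum_{l=1}^{3}\langle z,\mathcal{A}^*\xi^*_{n,l}\rangle_{\mathcal{Z}}\,\xi_{n,l} = \sum_{l=1}^{3}\lambda_n^l\langle z,\xi^*_{n,l}\rangle_{\mathcal{Z}}\,\xi_{n,l} = \mathcal{A}\pi_n z.
\]
Since $\mathcal{A}\pi_n z$ already lies in $\mathbf{Z}_n$, applying $\pi_n$ once more does nothing, giving $\pi_n\mathcal{A}\pi_n = \pi_n\mathcal{A}$ as well. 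The matrix representation follows immediately: $\pi_n\mathcal{A}|_{\mathbf{Z}_n}$ acts as $\mathrm{diag}(\lambda_n^1,\lambda_n^2,\lambda_n^3)$ in the eigenbasis $\{\xi_{n,1},\xi_{n,2},\xi_{n,3}\}$, and $\mathcal{A}_0 = 0$ since $\mathcal{A}\xi_0 = 0$. The adjoint statement is the mirror argument on $\mathbf{Z}^*_n$: $\mathcal{A}^*\xi^*_{n,l}=\overline{\lambda_n^l}\xi^*_{n,l}$ yields invariance of $\mathbf{Z}^*_n$ under $\mathcal{A}^*$ and matrix $\mathrm{diag}(\overline{\lambda_n^l}) = \mathcal{A}_n^*$ in the basis $\{\xi^*_{n,l}\}$, which via the duality pairing $\langle\cdot,\cdot\rangle_{\mathcal{Z}}$ is exactly the adjoint of $\pi_n\mathcal{A}|_{\mathbf{Z}_n}$.

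For the uniform bound \eqref{equniformfinsemigr}, the plan is to transport the computation to the orthonormal basis $\{\phi_{n,l}\}$ where the $\mathcal{Z}$-norm coincides with the $\ell^2$-norm of coefficients. For $z_n\in \mathbf{Z}_n$ with representations $z_n = \sum_l c_{n,l}\phi_{n,l} = \sum_l d_{n,l}\xi_{n,l}$, we have $d = \Gamma_n c$ and $\|z_n\|_{\mathcal{Z}} = \|c\|_{\ell^2}$. Since $\pi_n\mathcal{A}$ acts diagonally on the $\xi$-basis, $e^{t\pi_n\mathcal{A}}z_n$ has $\xi$-coefficients $(e^{t\lambda_n^l}d_{n,l})_l$ and hence $\phi$-coefficients $c' = \Gamma_n^{-1} e^{t\mathcal{A}_n}\Gamma_n c$. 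Thus
\[
\|e^{t\pi_n\mathcal{A}}z_n\|_{\mathcal{Z}} = \|c'\|_{\ell^2}\leq \|\Gamma_n^{-1}\|\,\|e^{t\mathcal{A}_n}\|\,\|\Gamma_n\|\,\|z_n\|_{\mathcal{Z}}\leq l_1 l_2 \max_l e^{t\eta_n^l}\,\|z_n\|_{\mathcal{Z}} \leq l_1 l_2\,\|z_n\|_{\mathcal{Z}},
\]
the last inequality using \eqref{negrealpart} that $\eta_n^l < 0$. Setting $C := l_1 l_2$, which is independent of $n$ by \Cref{nmaxlem-bais_representation}, gives the desired uniform estimate. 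The same argument, applied with the analogous change-of-basis matrix from $\{\phi_{n,l}\}$ to $\{\xi^*_{n,l}\}$ (which is uniformly bounded with uniformly bounded inverse by the biorthogonality and the explicit asymptotics \eqref{nmaxcgs-theta_nl}--\eqref{nmaxcgsalpha3}), yields the companion bound on $\|e^{t(\pi_n\mathcal{A})^*}\|_{\mathcal{L}(\mathbf{Z}^*_n)}$.

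The structural part (invariance, commutation, diagonal form) is routine once the eigenvector construction is in hand. The main obstacle is the uniformity in $n$ in \eqref{equniformfinsemigr}: the eigenbasis $\{\xi_{n,l}\}$ is not orthonormal and becomes increasingly non-orthogonal as the three eigenvectors of $\mathcal{A}_n$ are different combinations of $\phi_{n,1},\phi_{n,2},\phi_{n,3}$, so one cannot simply read off the norm from the diagonal. The fact that the condition numbers of $\Gamma_n$ stay bounded as $|n|\to\infty$ is precisely the content of \Cref{nmaxlem-bais_representation}, which rests on the non-degeneracy \eqref{nmaxdetwedge} together with the asymptotics \eqref{nmaxasmlambda} and \eqref{nmaxcgs-theta_nl}; with that lemma available, the present proof is a short computation.
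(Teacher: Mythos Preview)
Your proposal is correct and is precisely the argument the paper's setup is designed to support; the paper itself states the lemma without proof, relying on the eigenstructure from \Cref{sp}, the biorthogonality in \Cref{nmaxbiorthonormality}, and the uniform bounds on $\Gamma_n,\Gamma_n^{-1}$ from \Cref{nmaxlem-bais_representation}, exactly as you do. One small remark: your bound $\max_l e^{t\eta_n^l}\le 1$ uses $t\ge 0$ (via \eqref{negrealpart}); this is the relevant range for the application in \eqref{eqgram}--\eqref{equ-mincontrol}, and if one wanted $t\in[0,T]$ for the adjoint semigroup as well, the uniform boundedness of $\eta_n^l\in(-1/\kappa,0)$ from \eqref{nmaxrc1} would give the needed $n$-independent constant.
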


				Now, we introduce the finite dimensional subspaces $E_n$ of $L^2(0,2\pi)$ defined by
				\begin{equation}\label{def-E_n}
					E_0=\text{ span }\left\lbrace \frac{1}{\sqrt{2\pi}}\right\rbrace,\quad E_n=\text{ span }\left\lbrace \frac{1}{\sqrt{2\pi}} e^{inx};\: x\in (0,2\pi)\right\rbrace,\quad n\in\mathbb{Z}^*. 
				\end{equation}
				Then $L^2(0,\pi)=\oplus_{n\in\mathbb{Z}}E_n$ and any $g\in L^2\left(0,T;L^2(0,2\pi) \right) $ can be uniquely expressed in the form
				\begin{equation}\label{f=f_n}
					g=\sum\limits_{n\in\mathbb{Z}}g_n,\text{ where }g_n\in E_n, \quad \text{with}\quad 
					\left\| g\right\|^2_{L^2(0,2\pi)}= \sum\limits_{n\in\mathbb{Z}}\left\| g_n\right\|^2_{L^2(0,2\pi)}.
				\end{equation}
				
				\begin{lem}\label{lem-Bn}
					Let $g\in L^2(0,2\pi) $ be decomposed in the form \eqref{f=f_n}. Then
					\begin{equation}\label{pi_nf=pi_nf_n}
						\pi_n\mathcal{B}g= \pi_n\mathcal{B}g_n,\text{ for all }n\in\mathbb{Z},
					\end{equation}
					and $\pi_n\mathcal{B}|_{E_n}\in \mathcal{L}(E_n, \mathbf{Z}_n)$. 		
					For any $n\in\mathbb{Z}^*$, the matrix representation of $\pi_n \mathcal{B}|_{E_n}$ in the basis $\left\lbrace \frac{1}{\sqrt{2\pi}} e^{inx}\right\rbrace $ of $E_n$ and $\left\lbrace \xi_{n,1},\xi_{n,2}, \xi_{n,3}\right\rbrace $ of $\mathbf{Z}_n$ is
					\begin{equation}\label{B_n}
						\mathcal{B}_n= b\sqrt{2\pi}\begin{pmatrix}
							\frac{1}{{\bar\psi_{n,1}}}\\\frac{1}{{\bar\psi_{n,2}}}\\\frac{1}{{\bar\psi_{n,3}}}
						\end{pmatrix},
					\end{equation}
					where $\psi_{n,\ell}$, for $n\in\mathbb{Z}^*$ and $\ell=1,2,3$ is defined in \eqref{nmaxequ-psi_nl}. 
					For $n=0$, $\mathcal{B}_0=\sqrt{b}$.	
					
					For all, $n\in\mathbb{Z}$, the adjoint of $\pi_n\mathcal{B}|_{E_n}$ is defined by $\mathcal{B}^*\pi^*_n|_{\mathbf{Z}_n^*}\in \mathcal{L}(\mathbf{Z}_n^*, E_n)$ with the matrix representation $\mathcal{B}^*_n$ in the basis  of $\mathbf{Z}^*_n$ and $E_n$.

					Furthermore, there exists a positive constant $C_B>0$ such that 
					\begin{equation}\label{equniformbddcntrl}
						\|\pi_n\mathcal{B}\|_{\mathcal{L}(E_n, \mathbf{Z}_n)}\le C_B, \quad \|(\pi_n\mathcal{B})^*\|_{\mathcal{L}(\mathbf{Z}_n^*, E_n)}\le C_B, \forall\, n\in\mathbb{Z}.
					\end{equation}
				\end{lem}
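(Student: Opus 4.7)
The plan is to exploit two structural features: each dual eigenfunction $\xi^*_{n,l}$ from \eqref{nmaxequ-xi_n^*} has a single Fourier mode $e^{inx}$ in its spatial profile, and the control operator $\mathcal{B}$ in \eqref{defn-Bf(t)} is a pure injection into the first component. Consequently, the $\mathcal{Z}$-inner product that defines $\pi_n$ reduces, when applied to $\mathcal{B}g$, to a weighted $n$-th Fourier coefficient of $g$, from which the three assertions follow almost mechanically, with the only real issue being the uniform bound \eqref{equniformbddcntrl}.

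To verify \eqref{pi_nf=pi_nf_n} and the matrix identity \eqref{B_n}, I would expand the definitions \eqref{def-pi}, \eqref{defn-Bf(t)}, \eqref{nmaxinnerproduct} to obtain
\[
\langle \mathcal{B} g, \xi^*_{n,l} \rangle_{\mathcal{Z}} = \frac{b\, \overline{\alpha_{n,l}^{1}}}{\overline{\psi_{n,l}}} \int_0^{2\pi} g(x)\, e^{-inx}\, dx \qquad (n\in\mathbb{Z}^*),
\]
with the analogous identity against $\xi_0^*$ when $n=0$. Writing $g = \sum_k g_k$ with $g_k \in E_k$, the orthogonality of $\{e^{ikx}\}$ annihilates every $g_k$ with $k\neq n$, proving \eqref{pi_nf=pi_nf_n}. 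Substituting $g_n = c_n\cdot\frac{1}{\sqrt{2\pi}}e^{inx}$ and using $\alpha_{n,l}^1 = 1$ from \eqref{nmaxcgsalpha1} turns the integral into $\sqrt{2\pi}\,c_n$, so the $l$-th coordinate of $\pi_n \mathcal{B} g_n$ in the basis $\{\xi_{n,l}\}$ is $\frac{b\sqrt{2\pi}}{\overline{\psi_{n,l}}}\, c_n$, which is precisely \eqref{B_n}; the direct computation with $\xi_0^*$ yields $\mathcal{B}_0 = \sqrt{b}$. The adjoint identification is then automatic, since $\mathcal{B}^*\pi_n^*|_{\mathbf{Z}_n^*}$ is represented by the conjugate transpose of $\mathcal{B}_n$ in the dual bases.

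The uniform bound \eqref{equniformbddcntrl} is the main obstacle. With $g_n$ as above, $\|g_n\|_{L^2(0,2\pi)} = |c_n|$ and $\pi_n\mathcal{B}g_n = \sum_l d_{n,l}\,\xi_{n,l}$ with $d_{n,l} = \frac{b\sqrt{2\pi}}{\overline{\psi_{n,l}}}\,c_n$. Applying \Cref{nmaxlem-bais_representation} (equivalence of the coordinate norms in $\{\xi_{n,l}\}$ and in the orthonormal basis $\{\phi_{n,p}\}$, uniformly in $n$) reduces the desired estimate to a uniform \emph{lower} bound on $|\psi_{n,l}|$. For $|n|$ large this follows from \eqref{nmaxcgs-theta_nl}, whose limit is strictly positive for each $l$; for the remaining finitely many indices, the simple-eigenvalue hypothesis \eqref{nmaxsimpleeigen}, together with the observation recorded after \eqref{nmaxequ-psi_nl} that $\psi_{n,l}\neq 0$, delivers a positive minimum. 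The dual estimate for $(\pi_n\mathcal{B})^*$ is identical, since its matrix representation has entries that are complex conjugates of those of $\mathcal{B}_n$. The real work, in short, is this uniform positive lower bound on $|\psi_{n,l}|$, which is the crux of the proof.
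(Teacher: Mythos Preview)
Your proposal is correct and follows essentially the same route as the paper's own proof: expand $\pi_n\mathcal{B}g$ via the definition \eqref{def-pi}, use Fourier orthogonality to kill all $g_k$ with $k\neq n$, substitute the basis element $\frac{1}{\sqrt{2\pi}}e^{inx}$ to read off $\mathcal{B}_n$, and then invoke the asymptotics \eqref{nmaxcgs-theta_nl} of $|\psi_{n,l}|$ for the uniform bound. Your treatment of \eqref{equniformbddcntrl} is in fact more explicit than the paper's, which simply writes ``Using \eqref{nmaxcgs-theta_nl} and above representation, \eqref{equniformbddcntrl} can be proved''; your appeal to \Cref{nmaxlem-bais_representation} for the uniform norm equivalence and your separate handling of large $|n|$ versus finitely many small $|n|$ make the argument complete where the paper leaves it to the reader.
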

				\begin{proof}
					Since $\mathcal{B}g\in\mathcal{Z}_m$, from \eqref{def-pi}, we have
					\begin{equation}\label{eqexx}
						\pi_n\mathcal{B}g = \sum_{l=1}^{3}\left\langle \begin{pmatrix}
							g\\0\\0
						\end{pmatrix}, \xi_{n,l}^*\right\rangle_{\mathcal{Z}}\xi_{n,l},  \quad  \forall\, n\in \mathbb{Z}^*, \quad 
						\pi_0\mathcal{B}g = \left\langle \begin{pmatrix}g\\0\\0 \end{pmatrix}, \xi_{0}^*\right\rangle_{\mathcal{Z}}\xi_{0}. 
					\end{equation}
					Now, using \eqref{f=f_n} and the fact that 
					\begin{equation*}
						\left\langle \begin{pmatrix}
							g_k\\0\\0
						\end{pmatrix}, \xi_{n,l}^*\right\rangle_{\mathcal{Z}}= 0\text{ for }k\neq n,\quad \left\langle \begin{pmatrix}
							g_k\\0\\0
						\end{pmatrix}, \xi_{0}^*\right\rangle_{\mathcal{Z}}= 0\text{ for }k\neq 0,
					\end{equation*}
					from \eqref{eqexx}, \eqref{pi_nf=pi_nf_n} follows.

					Now note that $\pi_n\mathcal{B}|_{E_n}\in \mathcal{L}(E_n, \mathbf{Z}_n)$ and for all $x\in (0,2\pi)$, $g_n(x)=\frac{1}{\sqrt{2\pi}} e^{inx}$, for $n\in\mathbb{Z}^*$ and 
					$g_0(x)=\frac{1}{\sqrt{2\pi}}$, we get 
					$$
					\pi_n\mathcal{B}g_n =b\sqrt{{2\pi}}\sum\limits_{l=1}^{3}\frac{1}{{\bar\psi_{n,l}}}\xi_{n,l}, \text{ for all} \, n\in\mathbb{Z}^*, 
					\quad 
					\pi_0\mathcal{B}g_0 = \sqrt{b}\xi_0.
					$$
					Thus, the representation $\mathcal{B}_n$ for all $n\in\mathbb{Z}$ holds.  Using \eqref{nmaxcgs-theta_nl} and above representation, \eqref{equniformbddcntrl} can be proved. 
				\end{proof}
				For each $n\in\mathbb{Z}$, projecting \eqref{opp-eqn} on $\mathbf{Z}_n$ and considering control from $E_n$,  we obtain the finite dimensional system 
				\begin{equation} \label{opp-eqn_proj}
					\dot{z}_n(t) = \pi_n\mathcal{A} z_n(t) + \pi_n\mathcal{B}f_n(t), \quad t\in (0,T), \qquad z_n(0) =z_{0,n},
				\end{equation}
				where $\dot{z}_n(t)=\pi_n\dot{z}(t), \, z_n(t)=\pi_n z(t), \, z_{0,n}=\pi_n z_0$.

			}
			\section{Interior controllability}\label{nmaxsecnullcont}
			This section is devoted to the exact controllability of the system \eqref{nmaxeq3} by means of localized interior control acting in any of the equations. Our proof relies on the duality between the controllability of the system  and the observability of the corresponding adjoint system. Here we only prove the exact controllability result when the control acts only in the density equation. For the other two cases, the proofs follow in a similar fashion.
			
			We recall the final state observability of  $(\mathcal{A}^{*}, \mathcal{B}^{*}):$
			For $(\mathcal{A}^*, \mathcal{D}(\mathcal{A}^*; \mathcal {{Z}}_m))$ defined in \eqref{nmaxdom-A*}-\eqref{nmaxop-A*} and $(\sigma_{T}, v_{T}, \tilde{S}_T )^{\top} \in \mathcal {{Z}}_m,$ we set 
			\begin{equation*}
				(\sigma(t), v(t), \tilde{S}(t))^{\top} = \mathbb{T}_{T-t}^{*} (\sigma_{T}, v_{T}, \tilde{S}_T)^{\top} \qquad (t\in [0,T]),
			\end{equation*}
			where $\mathbb{T}^*$ is the $C^{0}$-semigroup generated by $(\mathcal{A}^*, \mathcal{D}(\mathcal{A}^*; \mathcal {{Z}}_m))$ on $\mathcal {{Z}}_m$. 
			In view of \Cref{nmaxadj-op}, $(\sigma, v, \tilde{S})^{\top}$ belongs to $C([0,T];\mathcal {{Z}}_m)$ and satisfies:
			\begin{equation}\label{nmaxeqadj}
				\begin{cases}
					\partial_t\sigma +u_s\pa_x\sigma+{\rho_s}\pa_x v =0, & \mbox{ in } (0,T) \times (0, 2\pi), \\
					\partial_t v+b \partial_{x} \sigma+u_s\pa_x v - \frac{1}{\rho_s} \partial_x\tilde{S}= 0, &  \mbox{ in } (0,T) \times (0, 2\pi),\\
					\partial_t\tilde{S}-\frac{1}{\kappa}\tilde{S} - \frac{\mu}{\kappa} \partial_x v= 0, &  \mbox{ in } (0,T) \times (0, 2\pi),\\
					\sigma(t,0) = \sigma(t, 2\pi), v(t,0) = v(t, 2\pi),\: \tilde S(t,0)=\tilde S(t,2\pi),&  \mbox{ in } (0,T), \\
					\sigma(T,x)=\sigma_{T}(x), \quad v(T,x)=v_T(x), \quad \tilde{S}(T,x)=\tilde{S}_T(x), & \mbox{ in } (0,2\pi).
				\end{cases}
			\end{equation} 
			Here, at first we discuss a standard approach to deduce the observability inequality for the adjoint system \eqref{nmaxeqadj}  which essentially gives the exact controllability of the main system \eqref{nmaxeq3}. Let us consider the linear map $\mathcal{F}_T: L^2(0,T, L^2(0,2\pi)\to (L^2(0,2\pi))^3$ by $\mathcal{F}_T(f_1)= \left(\rho(T,\cdot),u(T, \cdot), S(T, \cdot)\right)^{\top}$ where $(\rho, u, S)^{\top}$ is the solution of the system \eqref{nmaxeq3} with $(\rho_0, u_0, S_0)^{\top}=(0,0,0)^{\top}$ and  $f_2=f_3=0$. It is clear that exact controllability of \eqref{nmaxeq3} is equivalent to the surjectivity of the linear map $\mathcal{F}_T$. Note that the map $\mathcal{F}_T$ is surjective if and only if there exists a constant $C>0$ such that the following inequality holds:
			\begin{align}\label{eq:fstar}
				\norm{\mathcal{F}_T^*  {z}}_{L^2(0,T, L^2(0,2\pi)}\geq C\norm{ z}_{(L^2(0,2\pi))^3}, \, \text{ for all }  z \in (L^2(0,2\pi))^3.
			\end{align} A direct computation shows that
			$\mathcal{F}_T^*((\sigma_{T}, v_{T}, \tilde{S}_T)^{\top} )= \mathbbm{1}_{\mathcal{O}_{1}} \sigma$,  where $(\sigma,v, \tl S)$ is the solution of the adjoint system \eqref{nmaxeqadj} with the terminal data $(\sigma_{T}, v_{T}, \tilde{S}_T)$. Thus the exact controllability of the system \eqref{nmaxeq3} is equivalent to the following observability inequality: 
			
			\begin{prop}\label{nmaxthobs1}
				Let $T>0$. 
					Assume $f_2=0=f_3$. 	
					Then the system \eqref{nmaxeq3} is exactly controllable in $\mathcal {{Z}}_m$ at time $T>0$ using a control $f_1$ in $L^2(0,T;L^2(0,2\pi))$ with support in $\mathcal{O}_1$ acting only in the density equation, if and only if, there exists a positive constant $C_T>0$  such that for any $(\sigma_{T}, v_{T}, \tilde{S}_T)^{\top}\in \mathcal {{Z}}_m$, 
					$(\sigma, v, \tilde{S})^{\top}$, the solution of \eqref{nmaxeqadj}, satisfies the following observability inequality:
					\begin{multline}\label{nmaxobsden}
						\int_{0}^{2\pi} |\sigma_T(x)|^{2} \ \rd x \; + \; \int_{0}^{2\pi} |v_T(x)|^{2} \ \rd x + \; \int_{0}^{2\pi} |\tilde{S}_T(x)|^{2} \ \rd x 
						\leqslant C_T  \int_0^T\int_{\mathcal{O}_1}|\sigma(t,x)|^2\, \rd x\,\rd t. 
					\end{multline}
				\end{prop}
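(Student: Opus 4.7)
The plan is to establish the equivalence through the standard Hilbert Uniqueness Method duality between controllability and observability. I treat $\mathcal{F}_T : L^2(0,T; L^2(\mathcal{O}_1)) \to \mathcal{Z}_m$, as defined in the excerpt before the statement, as a bounded linear map between Hilbert spaces; its boundedness follows immediately from the well-posedness result of \Cref{nmaxpr:semigroup-z}. By linearity of \eqref{nmaxeq3}, the exact controllability in $\mathcal{Z}_m$ at time $T$ (from arbitrary initial data to arbitrary target) reduces to the surjectivity of $\mathcal{F}_T$, and a classical closed-range argument (equivalently, open mapping applied to the corestriction of $\mathcal{F}_T$) shows that $\mathcal{F}_T$ is surjective if and only if its adjoint $\mathcal{F}_T^{*}$ satisfies the coercivity bound \eqref{eq:fstar}.

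The main step is then to compute $\mathcal{F}_T^{*}$ explicitly and identify it with the restriction of the adjoint density component to $\mathcal{O}_1$. I would take $(\sigma_T, v_T, \tilde{S}_T)^{\top} \in \mathcal{Z}_m$ and let $(\sigma, v, \tilde{S})$ denote the solution of \eqref{nmaxeqadj}. Multiplying the three equations of \eqref{nmaxeq3} (with zero initial data and $f_2=f_3=0$) by $b\bar{\sigma}$, $\rho_s\bar{v}$ and $\tfrac{\kappa}{\mu}\bar{\tilde{S}}$ respectively, so as to match the weights of the inner product \eqref{nmaxinnerproduct}, and integrating over $(0,T)\times(0,2\pi)$, every spatial boundary term generated by integration by parts vanishes thanks to the periodic boundary conditions of both systems, and the cross-coupling terms cancel against one another upon invoking \eqref{nmaxeqadj}. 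The only surviving contributions are the temporal trace at $t=T$, which reassembles into $\langle \mathcal{F}_T(f_1), (\sigma_T, v_T, \tilde{S}_T)^{\top} \rangle_{\mathcal{Z}_m}$, and the forcing term $b\int_{0}^{T}\!\!\int_{\mathcal{O}_1} f_1(t,x)\,\overline{\sigma(t,x)}\,\rd x\,\rd t$. This identifies $\mathcal{F}_T^{*}(\sigma_T,v_T,\tilde{S}_T)^{\top} = b\,\mathbbm{1}_{\mathcal{O}_1}\sigma$.

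Substituting this representation of $\mathcal{F}_T^{*}$ into the abstract coercivity \eqref{eq:fstar}, and noting that the norm on $\mathcal{Z}_m$ induced by \eqref{nmaxinnerproduct} is equivalent to the standard $(L^2)^3$ norm, one recovers exactly the announced observability inequality \eqref{nmaxobsden}, the constant $b$ being absorbed into $C_T$. The proof is therefore a direct duality computation; the only technical care needed is to rigorously justify the integration by parts for general $\mathcal{Z}_m$-data and $L^2(0,T;L^2(\mathcal{O}_1))$-controls, which is handled by approximating by smooth data in $\mathcal{D}(\mathcal{A})$ and $\mathcal{D}(\mathcal{A}^{*})$ respectively and passing to the limit with the energy bounds of \Cref{nmaxpr:semigroup-z} and \Cref{nmaxadj-op}. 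The genuine difficulty in this paper is not this equivalence but rather the proof of \eqref{nmaxobsden} itself, which will be carried out later via the Riesz-basis expansion of \Cref{nmaxbasis_representation} together with the Ingham-type inequality announced in \Cref{biorthogonal}.
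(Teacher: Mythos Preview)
Your proposal is correct and follows essentially the same approach as the paper: both reduce exact controllability to surjectivity of the input-to-state map $\mathcal{F}_T$, invoke the standard equivalence between surjectivity and coercivity of the adjoint \eqref{eq:fstar}, and identify $\mathcal{F}_T^*$ with $\mathbbm{1}_{\mathcal{O}_1}\sigma$ (up to the harmless factor $b$ from the weighted inner product) via a duality computation. You supply slightly more detail on the integration-by-parts and the density argument than the paper's one-line ``a direct computation shows,'' but the strategy is identical.
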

				Next, we prove \Cref{nmaxthm_pos} by showing the observability inequality \eqref{nmaxobsden} using the following Ingham-type inequality.
				\begin{prop}\label{propI-4}
					Let $ T> {2\pi}\left(\frac{1}{|\beta_1|}+\frac{1}{|\beta_2|}+\frac{1}{|\beta_3|}\right)$ and $M\in \mathbb{N}$.
					Then there exist positive constants $C$ and $C_1$ depending on $T$ such that for
					$ g(t)= \displaystyle\sum_{|n|\ge M}\sum_{l=1}^{3}{a_n^l e^{\overline{\lambda_n^l} (T-t)}}$ with
					$\displaystyle{\sum_{|n|\ge M}}\sum_{l=1}^{3}|a_n^l|^2<\infty$, the following inequality holds:
					\begin{equation}\label{I-4}
						C\sum_{|n|\ge M}\sum_{l=1}^{3}|a_n^l|^2\leq \int_0^T|g(t)|^2 \, \rd t\leq C_1\sum_{|n|\ge M}\sum_{l=1}^{3}|a_n^l|^2.
					\end{equation}
				\end{prop}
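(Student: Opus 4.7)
The plan is to derive \eqref{I-4} from the existence of a family $\{\Psi_{n,l}\}_{|n|\geq M,\, l\in\{1,2,3\}}\subset L^2(0,T)$ biorthogonal to $\{e^{\overline{\lambda_n^l}(T-t)}\}$, with $L^2$-norms bounded uniformly in $(n,l)$. The upper bound I would handle first and separately: using \eqref{nmaxasmlambda}--\eqref{nmaxwn}, write
\begin{equation*}
e^{\overline{\lambda_n^l}(T-t)} = e^{-\omega_l(T-t)}\, e^{-i\beta_l n(T-t)}\bigl(1+O(1/|n|)\bigr),
\end{equation*}
which is pointwise bounded on $[0,T]$ uniformly in $(n,l)$. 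Expanding $\|g\|_{L^2(0,T)}^2$ and evaluating the Gram matrix entries $\int_0^T e^{(\overline{\lambda_n^l}+\lambda_k^p)(T-t)}\,\rd t$, a Schur test then yields the right-hand inequality, since the off-diagonal entries decay like $|\beta_l n-\beta_p k|^{-1}$ (the distinct $\beta_l$ ensure that terms from different families do not overlap in frequency except on a controllable set), while the diagonal contributions reproduce the $\ell^2$-norm of the coefficients.

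For the lower bound I would follow the biorthogonal approach of \cite{LR14, biccari2019null}. Set $\mu_n^l := i\overline{\lambda_n^l} = \beta_l n - i\omega_l + O(1/|n|)$, so $\Sigma := \{\mu_n^l\}_{|n|\geq M,\, l=1,2,3}$ is an $\ell^\infty$-perturbation of three arithmetic progressions on $\mathbb{R}$ with total counting density $D = \sum_{l=1}^{3}|\beta_l|^{-1}$; the hypothesis on $T$ is then precisely the Beurling-type threshold $T > 2\pi D$. Using the theory of the logarithmic integral (cf.\ \cite{RY}), construct an entire function $\Lambda$ of exponential type $T/2$ whose zero set is exactly $\Sigma$, realised as a canonical product regularised by an outer factor so that $\log|\Lambda(x)|$ grows at the correct linear rate on $\mathbb{R}$. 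For each prescribed $(n_0,l_0)$, form
\begin{equation*}
\Lambda_{n_0,l_0}(z) = \frac{\Lambda(z)}{(z-\mu_{n_0}^{l_0})\,\Lambda'(\mu_{n_0}^{l_0})},
\end{equation*}
an entire function of exponential type $T/2$ that vanishes on $\Sigma\setminus\{\mu_{n_0}^{l_0}\}$ and equals $1$ at $\mu_{n_0}^{l_0}$. By Paley--Wiener, its inverse Fourier transform, after a translation into $[0,T]$, yields $\Psi_{n_0,l_0}\in L^2(0,T)$ satisfying $\int_0^T \Psi_{n_0,l_0}(t)\, e^{\overline{\lambda_n^l}(T-t)}\,\rd t = \delta_{(n,l),(n_0,l_0)}$. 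Testing $g$ against $\Psi_{n_0,l_0}$ recovers $a_{n_0}^{l_0}$, and the uniform bound $\|\Psi_{n_0,l_0}\|_{L^2(0,T)}\leq C$ upgrades this pointwise recovery to the $\ell^2$ lower bound in \eqref{I-4} via the standard Riesz-sequence equivalence between bounded biorthogonal families and two-sided frame inequalities.

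The main obstacle will be the uniform control of $\|\Lambda_{n_0,l_0}\|_{L^2(\mathbb{R})}$, equivalently a uniform lower bound on $|\Lambda'(\mu_{n_0}^{l_0})|$. Because the slopes $\beta_l$ can be rationally related, points of $\Sigma$ arising from different families can cluster arbitrarily close, so the classical Ingham gap condition fails and the naive product expansion of $\Lambda'$ at $\mu_{n_0}^{l_0}$ diverges; this is exactly why, as the excerpt emphasises, a straightforward modification of the general hyperbolic Ingham inequality will not suffice. The remedy is to estimate $\log|\Lambda'(\mu_{n_0}^{l_0})|$ via a Poisson-type representation on a half-plane together with a logarithmic-integral bound on the regularising outer factor; the condition $T > 2\pi D$ is precisely what makes that integral convergent. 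The finitely many excluded terms with $|n|<M$, and any finitely many multiple eigenvalues noted in \Cref{rem multi}, affect only finitely many frequencies and are absorbed into the constants $C$ and $C_1$.
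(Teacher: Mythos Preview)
Your overall framework---build a canonical product with zeros at $\mu_n^l=i\overline{\lambda_n^l}$, divide out one zero, invoke Paley--Wiener, and deduce the lower bound from a uniformly bounded biorthogonal family---is exactly what the paper does in \Cref{biorthogonal}. The difference lies in how the key estimate $|\Lambda'(\mu_{n_0}^{l_0})|\geq C>0$ is obtained, and here you have misdiagnosed the difficulty.

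You assert that ``points of $\Sigma$ arising from different families can cluster arbitrarily close, so the classical Ingham gap condition fails,'' and propose to rescue the estimate via an outer factor and a Poisson/logarithmic-integral argument. In fact the gap condition \emph{does} hold: by \eqref{nmaxwn} the limits $\omega_1,\omega_2,\omega_3$ of the real parts are pairwise distinct, so for $j\neq l$ one has $|\lambda_n^j-\lambda_k^l|\geq |\Re(\lambda_n^j-\lambda_k^l)|\to|\omega_j-\omega_l|>0$, while within a single family the imaginary parts are $\sim\beta_j n$. This is precisely \Cref{gap}. Consequently the heavy machinery you outline is unnecessary.

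The paper exploits this by \emph{factorising} the canonical product: writing $\nu_n^j=\lambda_n^j/\beta_j$, one has $P(z)=\prod_{j=1}^3\beta_j P_j(z/\beta_j)$ with $P_j(w)=w\prod_{k\in\mathbb{Z}^*}(1-w/i\overline{\nu_k^j})$. Since $i\overline{\nu_n^j}=n-i\omega_j/\beta_j+O(1/|n|)$, each $P_j$ is a sine-type function by a standard lemma (\Cref{Rosier}), and all the required bounds---$|P_j(x)|\leq C$ on $\mathbb{R}$, two-sided exponential bounds away from the zeros, and $|P_j'(i\overline{\nu_n^j})|\asymp 1$---come for free from sine-type theory. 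The cross-factor $|P_l(i\overline{\lambda_n^j}/\beta_l)|$ for $l\neq j$ is bounded below using the gap \Cref{gap} together with the sine-type lower bound away from zeros, yielding \eqref{p prime}. This is considerably simpler than the route you sketch.

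For the upper bound the paper splits $g$ into the three families and quotes a one-family Ingham inequality, rather than running a Schur test on the full Gram matrix; your approach would also work but is again more laborious than needed.
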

				\begin{proof}
					The proof follows trivially from \eqref{ingham2} of  \Cref{biorthogonal} by suitable choices of $\{a_n^l\}$ and a change of variable.
				\end{proof}
				\subsection{Proof of \Cref{nmaxthm_pos}}
					Let $(\sigma_{T}, v_{T}, \tilde{S}_T )^\top \in \mathcal {{Z}}_m.$ From \Cref{nmaxbasis_representation}, we have
					\begin{equation}\label{nmaxest_alphanl}
						\begin{pmatrix}
							\sigma_{T}\\v_{T}\\\tilde{S}_{T}
						\end{pmatrix}=\sum\limits_{n\in\mathbb{Z}^*}\sum\limits_{l=1}^{3}c_{n,l} \xi^*_{n,l} + c_0\xi_0^*, \quad \mathrm{with} \quad 
						\sum\limits_{n\in\mathbb{Z}^*}\sum\limits_{l=1}^{3}\left| c_{n,l}\right|^2 + \left|c_0 \right|^2  < \infty .
					\end{equation}
					
					Then the corresponding solution $(\sigma(t), v(t), \tilde{S}(t))^\top$ of the adjoint problem \eqref{nmaxeqadj} can be written as
					\begin{equation}\label{nmaxrepressoln}
						\begin{pmatrix}
							\sigma\\v\\\tilde{S}
						\end{pmatrix}(t,x)=\sum\limits_{n\in\mathbb{Z}^*}\sum\limits_{l=1}^{3}c_{n,l}e^{\overline{\lambda_n^l}(T-t)} \xi^*_{n,l}(x) + c_0e^{\overline{\lambda_0}(T- t)}\xi_0^*(x).
					\end{equation}
					In particular, using the expression of $\xi^*_0$ and $\xi^*_{n,l}$ ( eq. \eqref{nmaxequ-xi_0^*}, \eqref{nmaxequ-xi_n^*} and \eqref{nmaxcoeffofxi}), from \eqref{nmaxrepressoln} we get
					\begin{align}
						&\sigma(t,x)=\sum\limits_{n\in\mathbb{Z}^*}^{\infty}\sum\limits_{l=1}^{3}\frac{c_{n,l}}{\psi_{n,l}}e^{\overline{\lambda_n^l}(T-t)}e^{inx} + \frac{c_0}{\sqrt{2b\pi}}, \quad \forall\, t\in (0,T), \quad x\in (0,2\pi),\label{nmaxrepsigma}\\
						&v(t,x)=\sum\limits_{n\in\mathbb{Z}^*}^{\infty}\sum\limits_{l=1}^{3}\frac{c_{n,l}}{\psi_{n,l}}\alpha_{n,l}^2e^{\overline{\lambda_n^l}(T-t)}e^{inx}, \quad \forall\, t\in (0,T), \quad x\in (0,2\pi),\\
						&\tilde S(t,x)=\sum\limits_{n\in\mathbb{Z}^*}^{\infty}\sum\limits_{l=1}^{3}\frac{c_{n,l}}{\psi_{n,l}}\alpha_{n,l}^3 e^{\overline{\lambda_n^l}(T-t)}e^{inx}, \quad \forall\, t\in (0,T), \quad x\in (0,2\pi).
					\end{align} 
					
					Since $\left\lbrace e^{inx}, x\in (0,2\pi)\right\rbrace_{n\in\mathbb{Z}} $ is an orthonormal basis in $L^2\left( 0,2\pi\right) $ and $\{ e^{inx},$  $ x\in (0,2\pi)\}_{n\in\mathbb{Z}^*} $ is an orthonormal basis in ${\dot L^2\left( 0,2\pi\right) }$, then using Parseval's identity, we get
					\begin{align}\label{nmaxobsleft1}
						&\int_{0}^{2\pi} |\sigma^T(x)|^{2} \ \rd x \; + \; \int_{0}^{2\pi} |v^T(x)|^{2} \ \rd x + \; \int_{0}^{2\pi} |\tilde{S}^T(x)|^{2} \ \rd x\notag\\
						=&\sum\limits_{n\in\mathbb{Z}^*}^{\infty}\left(\left| \sum\limits_{l=1}^{3}\frac{c_{n,l}}{\psi_{n,l}}+ \frac{c_0}{\sqrt{2b\pi}}\right|^2+\left| \sum\limits_{l=1}^{3}\frac{c_{n,l}}{\psi_{n,l}}\alpha_{n,l}^2\right|^2+\left| \sum\limits_{l=1}^{3}\frac{c_{n,l}}{\psi_{n,l}}\alpha_{n,l}^3 \right|^2 \right). 
					\end{align} 
					Now using \eqref{nmaxcgsalpha2} and \eqref{nmaxcgsalpha3}, we get a positive constant $C$, independent of $n$ and a large $N\in \N$, such that
					\begin{align}\label{nmaxobsleft2}
						&\sum\limits_{|n|>N}\left(\left| \sum\limits_{l=1}^{3}\frac{c_{n,l}}{\psi_{n,l}}+ \frac{c_0}{\sqrt{2b\pi}}\right|^2+\left| \sum\limits_{l=1}^{3}\frac{c_{n,l}}{\psi_{n,l}}\alpha_{n,l}^2\right|^2+\left| \sum\limits_{l=1}^{3}\frac{c_{n,l}}{\psi_{n,l}}\alpha_{n,l}^3\right|^2 \right)\notag\\
						\leq & C \left( \sum\limits_{|n|>N}\sum\limits_{l=1}^{3}\left|\frac{c_{n,l}}{\psi_{n,l}} \right|^2 + \left|\frac{c_0}{\sqrt{2b\pi}} \right|^2\right) .
					\end{align}
					Let us define 
					\begin{equation*}
						a_n^l(x)=\frac{c_{n,l}}{\psi_{n,l}} e^{inx}, \text{ for } |n|>N, l=1,2,3,\text{ and } x\in (0, 2\pi).
					\end{equation*}
					Using \eqref{nmaxcgs-theta_nl} and \eqref{nmaxest_alphanl}, we have $\displaystyle{\sum_{|n|\ge N}}\sum_{l=1}^{3}|a_n^l(x)|^2<\infty$.  Thanks to \Cref{propI-4}, for $T> {2\pi}\left(\frac{1}{|\beta_1|}+\frac{1}{|\beta_2|}+\frac{1}{|\beta_3|}\right)$, we have 
					\begin{align*}
						\sum\limits_{|n|> N} \sum\limits_{l=1}^{3}\left| \frac{c_{n,l}}{\psi_{n,l}} e^{inx}\right|^2 \leq C \int_{0}^{T}\left|\sum\limits_{\left| n\right| > N}\sum_{l=1}^{3}\frac{c_{n,l}}{\psi_{n,l}}e^{inx}e^{\overline{\lambda_n^l} (T-t)}\right|^2 \, \rd t .
					\end{align*}
					
					\noindent
					Integrating both sides over $\mathcal{O}_1$, we get 
					\begin{align}\label{nmaxobsin1}
						\sum\limits_{|n|> N} \sum\limits_{l=1}^{3}\left| \frac{c_{n,l}}{\psi_{n,l}}\right|^2\leq C \int_{0}^{T}\int_{\mathcal{O}_1}\left|\sum\limits_{\left| n\right| > N}\sum_{l=1}^{3}\frac{c_{n,l}}{\psi_{n,l}}e^{inx}e^{\overline{\lambda_n^l} (T-t)} \right|^2 \, \rd x \, \rd t.  
					\end{align}
					Therefore only finitely many terms are leaving. Since by \ref{nmaxsimpleeigen}, all the eigenvalues are distinct, then the
					missing finitely many exponential ( for $|n| \leq N$) can be added one by one in the inequality \eqref{nmaxobsin1} as the required
					gap condition for the eigenvalues holds. For similar details one can see \cite[Chapter 4,
					Theorem 4.3 ]{micu2004introduction}.
					
					Thus for $T> {2\pi}\left(\frac{1}{|\beta_1|}+\frac{1}{|\beta_2|}+\frac{1}{|\beta_3|}\right)$, we get
					\begin{align}\label{nmaxobsin2}
						\sum\limits_{n\in\mathbb{Z}^*} \sum\limits_{l=1}^{3}\left| \frac{c_{n,l}}{\psi_{n,l}}\right|^2+ \left| \frac{c_0}{\sqrt{2b\pi}}\right|^2&\leq C \int_{0}^{T}\int_{\mathcal{O}_1}\left|\sum\limits_{n\in\mathbb{Z}^*}\sum_{l=1}^{3}\frac{c_{n,l}}{\psi_{n,l}}e^{inx}e^{\overline{\lambda_n^l} (T-t)}+\frac{c_0}{\sqrt{2b\pi}} \right|^2 \rd x \, \rd t\notag\\
						& = C \int_{0}^{T}\int_{\mathcal{O}_1}\left|\sigma(t,x)  \right|^2 \rd x \,\rd t\: (\text{ using }\eqref{nmaxrepsigma}).
					\end{align}
					Therefore using \eqref{nmaxobsleft1}, \eqref{nmaxobsleft2} and \eqref{nmaxobsin2}, we obtain \eqref{nmaxobsden}. Then by \Cref{nmaxthobs1}, we conclude \Cref{nmaxthm_pos}.\qed
					\begin{rem}
						In the proof of \Cref{nmaxthm_pos}, i.e, in the proof of the observability inequality \eqref{nmaxobsin2}, we assume that all the eigenvalues of $\mc A$ are simple to get the Ingham-type inequality for full class of exponentials $\{e^{-\overline{\lambda_n^l}t}: n\in \z^*, l\in\{1,2,3\}\}.$ In \Cref{rem multi}, we have mentioned that the spectrum of $\mc A$ may have finite number of multiple eigenvalues. In this case also, one can prove the observability inequality with a slight modification of the technique for simple eigenvalue case. Details analysis can be found in \cite[Section 4.2]{chowdhury2014null}. One can also refer to the book \cite[Remarks, Page 178]{KV} for a version of Ingham inequality with repeated eigenvalues. In the next section, we discuss a brief description of the proof of concerned observability inequality.
					\end{rem}
					{
						\subsubsection{The case of multiple eigenvalues}\label{sec_multi_ev_b}
						In this section, we prove null controllability of the system \eqref{nmaxeq3} with $f_2=f_3=0$ in the presence of multiple eigenvalues. Our main goal is to prove the observability inequality \eqref{nmaxobsden}. The proof will be similar for all cases (control acting in density or velocity or stress), so we present a detailed proof for the density case only.
						The proof is inspired from \cite[Section 4.4]{KV} and \cite[Section 4.2]{chowdhury2014null}. 
						
						Let us assume (without loss of generality) that $\overline{\mu}_n$ be the eigenvalues of $\mc A^*$ with multiplicity $N_n$ for $n=1,2,\dots,m$ and for all $|{n}|>m$, the eigenvalues $\overline{\mu}_n$ of $\mc A^*$ are simple. We denote the set of generalized eigenfunctions corresponding to $\overline{\mu}_n$ (for $n=1,2,\dots,m$) as $$\left\{{\xi}_{n}^{*1}, \, \tilde{\xi}_{n}^{*j}, \, j=1,2\dots,N_n-1\right\}.$$
						
						Let $(\sigma_{T}, v_{T}, \tilde{S}_T )^\top \in \mathcal {{Z}}_m.$ We decompose it as
						\begin{equation}
							(\sigma_T,v_T, \tilde{S}_T)^{^\top}=(\sigma_{T,1},v_{T,1}, \tilde{S}_{T,1})^{^\top}+(\sigma_{T,2},v_{T,2},\tilde{S}_{T,2})^{^\top},
						\end{equation}
						where
						\begin{equation*}
							(\sigma_{T,1},v_{T,1},\tilde{S}_{T,1})^{^\top}=\sum_{n=1}^{m} \left(a_n^1 \xi_{n}^{*1}+\sum_{j=1}^{N_n-1} \tilde{a}_n^{j} \tilde{\xi}_{n}^{*j} \right)
						\end{equation*}
						and
						\begin{equation*}
							(\sigma_{T,2},v_{T,2},\tilde{S}_{T,2})^{\top}=\sum_{|n|>m}\sum\limits_{l=1}^{3}c_{n,l} \xi^*_{n,l}.
						\end{equation*}
						Let $(\sigma_1,v_1, \tilde{S}_{1})^{\top}$ and $(\sigma_2,v_2, \tilde{S}_{2})^{\top}$ be the solutions of the adjoint system \eqref{nmaxeqadj} with the terminal data $(\sigma_{T,1},v_{T,1}, \tilde{S}_{T,1})^{\top}$ and $(\sigma_{T,2},v_{T,2}, \tilde{S}_{T,2})^{\top}$ respectively. Then, we have
						\begin{align}
							(\sigma_1,v_1,\tilde{S}_{1})^{\top}&=\sum_{n=1}^{m}e^{\overline{\mu}_n(T-t)}\left(a_n^1 \xi_{n}^{*1}+\sum_{j=1}^{N_n-1} (T-t)^j \tilde{a}_n^{j} \tilde{\xi}_{n}^{*j} \right),\\
							(\sigma_2,v_2,\tilde{S}_{2})^{\top}&=\sum_{\mod{n}>m}\sum\limits_{l=1}^{3}c_{n,l}e^{\overline{\lambda_n^l}(T-t)} \xi^*_{n,l}.
						\end{align}
						\begin{lem}
							There exists a constant $C>0$ such that the following holds
							\begin{equation}\label{lmm3.4}
								\int_0^{\epsilon}\int_{\mathcal{O}_1}|\sigma_1(t,x)|^2\, \rd x\,\rd t\geq  c\int_0^T\int_{0}^{2\pi}|\sigma_1(t,x)|^2\, \rd x\,\rd t\geq c	\norm{\left(\sigma_{T,1}, v_{T,1}, \tilde{S}_{T,1}\right)}_{(L^2(0,2\pi))^3}^{2}.
							\end{equation}
						\end{lem}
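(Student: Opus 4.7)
My plan is to exploit the fact that $(\sigma_{T,1}, v_{T,1}, \tilde{S}_{T,1})^\top$ is constrained to the finite-dimensional subspace
\[
V := \mathrm{span}\bigl\{\xi_n^{*1},\ \tilde{\xi}_n^{*j} : 1 \le n \le m,\ 1 \le j \le N_n-1\bigr\} \subset \mc{Z}_m,
\]
so both inequalities in \eqref{lmm3.4} reduce to the equivalence of three continuous quadratic forms on $V$, namely $Q_1(z_T) := \int_0^{\epsilon}\int_{\mc O_1}|\sigma_1|^2\,\rd x\,\rd t$, $Q_2(z_T) := \int_0^T\int_0^{2\pi}|\sigma_1|^2\,\rd x\,\rd t$ and $Q_3(z_T) := \norm{z_T}^2_{\mc Z}$, where $(\sigma_1, v_1, \tilde{S}_1)^\top$ solves \eqref{nmaxeqadj} with terminal data $z_T = (\sigma_{T,1},v_{T,1},\tilde{S}_{T,1})^\top$. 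Since $\dim V < \infty$, norm equivalence on $V$ will give $Q_1 \geq c\, Q_2 \geq c'\, Q_3$ at once, provided each $Q_i$ is positive definite.

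$Q_3$ is obviously a norm. For $Q_2$, suppose $\sigma_1 \equiv 0$ on $(0,T)\times(0,2\pi)$; the first adjoint equation of \eqref{nmaxeqadj} then gives $\rho_s\partial_x v_1 = 0$, and since every generalized eigenfunction in $V$ lies in a Fourier block indexed by some $n \ne 0$, the component $v_1$ has zero spatial mean, so $x$-independence forces $v_1 \equiv 0$. The second adjoint equation then yields $\partial_x \tilde{S}_1 = 0$ and hence $\tilde{S}_1 \equiv 0$; evaluating at $t = T$ shows $z_T = 0$.

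The main step is positive definiteness of $Q_1$, which I would prove via unique continuation based on analyticity. Because $\mc A^*$ preserves each Fourier block $\mathbf{V}_n$, every generalized eigenfunction in $V$ has the form $e^{inx}\mathbf{w}_n$ for some fixed vector $\mathbf{w}_n \in \cplx^3$, so $\sigma_1(t,x)$ is a finite sum of terms of the form $(T-t)^j e^{\overline{\mu}_n(T-t)}e^{inx}$, hence real-analytic in both variables. If $\sigma_1 \equiv 0$ on $(0,\epsilon)\times\mc O_1$, analyticity in $x$ together with the fact that $\mc O_1$ is open propagates this to $(0,\epsilon)\times(0,2\pi)$, and then analyticity in $t$ extends it to all of $(0,T)\times(0,2\pi)$. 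The argument just used for $Q_2$ then yields $z_T = 0$.

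The main obstacle is the explicit description of the generalized eigenfunctions in the multiple-eigenvalue case and the verification that the Fourier-block localization is preserved; this bookkeeping is the content of Appendix~\ref{secmultiev}, and once it is in place the rest of the argument is soft. The resulting constant $c$ depends on $V$ (hence on the system parameters $\rho_s, u_s, \kappa, \mu, b$), on $T$ and on $\epsilon$, but is independent of the choice of terminal data in $V$.
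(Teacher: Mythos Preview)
Your proposal is correct and follows the same overall strategy as the paper: all three quantities are continuous quadratic forms on the finite-dimensional space $V$, so it suffices to show each is positive definite and then invoke norm equivalence. The paper proceeds exactly this way.

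The difference lies in how positive definiteness of $Q_1$ is established. The paper argues that if the observation integral vanishes, then by linear independence of the time profiles $\{(T-t)^j e^{\overline{\mu}_n(T-t)}\}$ one concludes $\mathcal{B}^*\xi_n^{*1}=\mathcal{B}^*\tilde\xi_n^{*j}=0$ for each generalized eigenfunction separately, and then appeals to the explicit computations of \Cref{secmultiev} to rule this out. Your route instead uses joint real-analyticity to propagate $\sigma_1\equiv 0$ from $(0,\epsilon)\times\mathcal{O}_1$ to the full cylinder, and then cascades through the adjoint system (first equation forces $\partial_x v_1=0$, zero mean gives $v_1=0$, second equation forces $\partial_x\tilde S_1=0$, etc.). This is a genuine alternative: it trades the explicit verification $\mathcal{B}^*\xi\neq 0$ for a soft PDE unique-continuation argument, and in particular does not require the first component of each individual generalized eigenfunction to be nonvanishing. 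Both arguments rely on the Fourier-block localization of generalized eigenfunctions (yours needs it for the zero-mean step), so the dependence on \Cref{secmultiev} is not entirely removed, only reduced.
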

						\begin{proof}We first prove that, there exists a constant $c>0$ independent of $(\sigma_{T,1}, v_{T,1}, \tilde{S}_{T,1})$ such that the following inequality holds:
							\begin{equation}\label{1}
								\int_0^T\int_{0}^{2\pi}|\sigma_1(t,x)|^2\, \rd x\,\rd t\geq c	\norm{\left(\sigma_{T,1}, v_{T,1}, \tilde{S}_{T,1}\right)}_{(L^2(0,2\pi))^3}^{2}.
							\end{equation}
							Let $\hat{\mathcal Z}$ be the finite dimensional space of solutions to the adjoint equation which is generated by the generalized eigenfunctions of $\mathcal A^*$ corresponding to multiple eigenvalues. Let us define the norms on $\hat{\mc Z}$:
							\begin{align*}
								\norm{(\hat{\sigma}_{T,1},\hat{v}_{T,1}, \hat{\tilde S}_{T,1})^{\top}}_1^2:&=\int_{0}^{\epsilon} \int_{0}^{2\pi}\left|\sum_{n=1}^{m}e^{\overline{\mu}_n(T-t)}\left(a_n^1 \mc B^*\xi_{n}^{*1}+\sum_{j=1}^{N_n-1} \tilde{a}_n^{j}(T-t)^j \mc B^*\tilde{\xi}_{n}^{*j} \right)\right|^2 \rd x\, \rd t,\\
								\norm{(\hat{\sigma}_{T,1},\hat{v}_{T,1},\hat{\tilde S}_{T,1})^{\top}}_2^2:&=\norm{(\hat{\sigma}_1(T),\hat{v}_1(T), \hat{\tilde S}_{1}(T))^{\dagger}}_{({L}^2(0,2\pi))^3}^2,
							\end{align*}
							where $(\hat{\sigma}_1,\hat{v}_1, \hat{\tilde S}_1)^{\top}$ denotes the solution of the adjoint system with terminal data $(\hat{\sigma}_{T,1},\hat{v}_{T,1},\hat{\tilde S}_1)^{\top}\in {\hat{\mc{Z}}}$ and the observation operator $\mathcal{B}^*\in \mathcal{L}\left( \mathcal{Z}_m, L^2\left(0,2\pi \right) \right) $ is given by
							\begin{equation}
								\mathcal{B}^*\begin{pmatrix}
									\hat\sigma_1\\\hat v_1\\\hat{\tilde S}_1
								\end{pmatrix}=b\mathbbm{1}_{\mathcal{O}_{1}}\hat{\sigma}_1.
							\end{equation}
							In fact, 
							$\norm{(\hat{\sigma}_{T,1},\hat{v}_{T,1},\hat{\tilde S}_1)^{\top}}_1=0$ implies $\mc{B}^*\xi_{n}^{*1}=\mc{B}^*\tilde{\xi}_{n}^{*j}=0$, $j=1,2,N_n-1, n=1,2,...,m$ as $\{e^{\overline{\mu}_n(T-t)}, (T-t)^j e^{\overline{\mu}_n(T-t)}\}$ are linearly independent. This gives $\xi_{n}^{*1}=\tilde{\xi}_{n}^{*j}=0$ (thanks to \Cref{secmultiev}) and hence $(\hat{\sigma}_{T,1},\hat{v}_{T,1},\hat{\tilde S}_1)=(0,0,0)$. Clearly $\norm{.}_{2}$ is a norm on $\hat{\mc Z}$. 
							
							\noindent Since any norm in finite dimensional space is equivalent, we have the inequality \eqref{1}. 
							We conclude the final inequality by noting the fact that all generalized eigenfunctions are analytic in $x$ (combination of trigonometric functions) and thus $\sigma_1$ on $(0,2\pi)$ uniquely determined by its restriction to $\mathcal{O}_1$.
						\end{proof}
						In the expression of $(\sigma_2,v_2,\tilde{S}_{2})^{\top}$, all eigenvalues are simple, so we have the following observability inequality
						\begin{equation}\label{obser for simple}
							\int_{0}^{2\pi} |\sigma_{T,2}(x)|^{2} \ \rd x \; + \; \int_{0}^{2\pi} |v_{T,2}(x)|^{2} \ \rd x + \; \int_{0}^{2\pi} |\tilde{S}_{T,2}(x)|^{2} \ \rd x 
							\leqslant C_T  \int_0^T\int_{\mathcal{O}_1}|\sigma_2(t,x)|^2\, \rd x\,\rd t.
						\end{equation}
						Next, our goal is to add the finitely many terms associated to the multiple eigenvalues in the above observability estimate. Let us first add the term 
						$e^{\overline{\mu}_1(T-t)}\left(a_1^1 \mc B^*\xi_{1}^{*1}+ (T-t) \tilde{a}_1^{1} \mc B^*\tilde{\xi}_{1}^{*1} \right)$ in the above inequality.
						Denote for each $x\in [0,2\pi],$
						\begin{align}\label{q}
							\mc{Q}(t):=b\mathbbm{1}_{\mathcal{O}_{1}}\sigma_2(t,x) +e^{\overline{\mu}_1(T-t)}\left(a_1^1 \mc B^*\xi_{1}^{*1}+ (T-t) \tilde{a}_1^{1} \mc B^*\tilde{\xi}_{1}^{*1} \right),
						\end{align}
						and
						\begin{align*}
							\mc{R}(t):=\mc{Q}(t)-\frac{1}{2\delta}\int_{-\delta}^{\delta}e^{\overline{\mu}_{1}s}\mc{Q}(t+s)ds
						\end{align*}
						for $t\in(\delta,T-\delta)$ with $\delta>0$ (chosen later accordingly). Then, we have the following estimate (see \cite[Section 4.4]{KV} for details).
						\begin{equation}\label{inq_1}
							\int_{\delta}^{T-\delta}\int_{\mathcal{O}_1}\mod{\mc{R}(t)}^2 \rd x\, \rd t\leq C\int_{0}^{T}\int_{\mathcal{O}_1}\mod{\mc{Q}(t)}^2 \rd x\, \rd t.
						\end{equation}
						We now prove that
						\begin{equation}
							\int_{\delta}^{T-\delta}\int_{\mathcal{O}_1}\mod{\mc{R}(t)}^2 \rd x\, \rd t\geq C\norm{(\sigma_2(T),v_2(T), \tilde{S}_2(T))^{\top}}_{(\dot{L}^2(0,2\pi))^2}^2.
						\end{equation}
						From the expression of $\mc{Q}(t)$, we can get
						\begin{align*}
							\mc{R}(t)&=\sum_{\mod{n}>m}\sum\limits_{l=1}^{3}c_{n,l}e^{\overline{\lambda_n^l}(T-t)} \mc B^*\xi^*_{n,l}\left(1-\frac{1}{2\delta}\int_{-\delta}^{\delta}e^{(\overline{\mu}_{1}-\overline{\lambda_n^l})s}\right) \\
							&=\sum_{\mod{n}>m}\sum\limits_{l=1}^{3}c_{n,l}e^{\overline{\lambda_n^l}(T-t)} \mc B^*\xi^*_{n,l}\left(1-\frac{\sinh((\overline{\mu}_{1}-\overline{\lambda_n^l})\delta)}{(\overline{\mu}_{1}-\overline{\lambda_n^l})\delta}\right).
						\end{align*}
						Since $\inf\limits_{\mod{n}>m,l=1,2,3}\mod{\overline{\mu}_{1}-\overline{\lambda_n^l}}>0$, we have (for appropriate $\delta>0$) $\inf\limits_{\mod{n}>m,l=1,2,3}\mod{1-\frac{\sinh((\overline{\mu}_{1}-\overline{\lambda_n^l})\delta)}{(\mu_{1}-\overline{\lambda_n^l})\delta}}>0.$ Since $T>{2\pi}\left(\frac{1}{|\beta_1|}+\frac{1}{|\beta_2|}+\frac{1}{|\beta_3|}\right)$, we can choose $\delta$ small enough such that $T-2\delta>{2\pi}\left(\frac{1}{|\beta_1|}+\frac{1}{|\beta_2|}+\frac{1}{|\beta_3|}\right)$. Applying Ingham-type inequality \eqref{ingham2} (for simple eigenvalues), we obtain
						\begin{align*}
							\int_{\delta}^{T-\delta}\int_{\mathcal{O}_1}\mod{\mc{R}(t)}^2 \rd x\rd t &\geq C\left[\sum_{\mod{n}>m}\sum\limits_{l=1}^{3}\mod{c_{n,l}}^2 
							e^{2\Re(\overline{\lambda_n^l})T} \right]\\
							&\geq C\norm{(\sigma_2(T),v_2(T), {\tilde S_2}(T))^{\top}}_{({L}^2(0,2\pi))^3}^2.
						\end{align*}
						Therefore, using the estimate \eqref{inq_1}, we obtain
						\begin{equation}\label{inq_2}
							\int_{0}^{T}\int_{\mathcal{O}_1}\mod{\mc{Q}(t)}^2 \rd x\, \rd t\geq C\norm{(\sigma_2(T),v_2(T), {\tilde S_2}(T))^{\top}}_{({L}^2(0,2\pi))^3}^2.
						\end{equation}
						Since $T>{2\pi}\left(\frac{1}{|\beta_1|}+\frac{1}{|\beta_2|}+\frac{1}{|\beta_3|}\right)$, we can choose $\epsilon>0$ such that $T-\epsilon>{2\pi}\left(\frac{1}{|\beta_1|}+\frac{1}{|\beta_2|}+\frac{1}{|\beta_3|}\right)$. Therefore we can write
						\begin{equation*}
							\int_{\epsilon}^{T}\int_{\mathcal{O}_1}\mod{\mc{Q}(t)}^2 \rd x\, \rd t\geq C\norm{(\sigma_2(\epsilon),v_2(\epsilon), \tilde S_2(\epsilon))^{\dagger}}_{({L}^2(0,2\pi))^3}^2
						\end{equation*}
						and thus
						\begin{equation}\label{inq_2a}
							\int_{0}^{T}\int_{\mathcal{O}_1}\mod{\mc{Q}(t)}^2 \rd x\, \rd t\geq C\int_{\epsilon}^{T}\int_{\mathcal{O}_1}\mod{\mc{Q}(t)}^2dt\geq C\norm{(\sigma_2(\epsilon),v_2(\epsilon), \tilde S_2(\epsilon))^{\dagger}}_{({L}^2(0,2\pi))^3}^2.
						\end{equation}
						Thanks to the well-posedness of the adjoint system \eqref{nmaxeqadj}, we have
						\begin{equation}\label{well_posed_inq_den_b}
							\int_{0}^{\epsilon}\int_{\mathcal{O}_1} |\sigma_2(t,x)|^2 \rd x \, \rd t\leq C\norm{(\sigma_2(\epsilon),v_2(\epsilon), \tilde S_2(\epsilon))^{\dagger}}_{({L}^2(0,2\pi))^3}^2.
						\end{equation}
						From equations \eqref{inq_2a} and \eqref{well_posed_inq_den_b}, we deduce that
						\begin{equation*}
							\int_{0}^{T}\int_{\mathcal{O}_1}\mod{\mc{Q}(t)}^2 \rd x\, \rd t\geq C	\int_{0}^{\epsilon}\int_{\mathcal{O}_1} |\sigma_2(t,x)|^2 \rd x \, \rd t.
						\end{equation*}
						Using the above inequality along with \eqref{q}, we obtain
						\begin{align}\label{inq_3}
							\nonumber	\int_{0}^{\epsilon}\int_{\mathcal{O}_1}\mod{e^{\overline{\mu}_{1}(T-t)}\left(a_{n_0}\mc{B}^*\xi_{1}^{*1}+(T-t)\tilde{a}_{n_0}\mc{B}^*\tilde\xi_{1}^{*1}\right)}^2 \rd x\, \rd t&\leq C\int_{0}^{\epsilon}\int_{\mathcal{O}_1}\mod{\mc{Q}(t)}^2 \rd x\, \rd t+C\int_{0}^{\epsilon}\int_{\mathcal{O}_1}|\sigma_2(t,x)|^2 \rd x\, \rd t.\\
							&\leq C\int_{0}^{T}\int_{\mathcal{O}_1}\mod{\mc{Q}(t)}^2 \rd x\,\rd t.
						\end{align}
						Proceeding in a similar way, one can add remaining terms and obtain the following estimates from the inequalities \eqref{inq_3} and \eqref{inq_2} respectively
						\begin{align}\label{ep}
							\int_0^{\epsilon}\int_{\mathcal{O}_1}|\sigma_1(t,x)|^2\, \rd x\,\rd t\leq C \int_0^{T}\int_{\mathcal{O}_1}|\sigma(t,x)|^2,
						\end{align}
						\begin{equation}\label{sg2}
							\int_{0}^{T}\int_{\mathcal{O}_1}\mod{\sigma(t,x)}^2 \rd x\, \rd t\geq C\norm{(\sigma_2(T),v_2(T), {\tilde S}_2(T))^{\top}}_{({L}^2(0,2\pi))^3}^2.
						\end{equation}
						Finally, amalgamating \eqref{lmm3.4}, \eqref{ep}, \eqref{sg2}, we have
						\begin{equation}
							\int_{0}^{T}\int_{\mathcal{O}_1}\mod{\sigma(t,x)}^2 \rd x\, \rd t\geq C\norm{(\sigma_{T,2},v_{T,2}, {\tilde S}_{T,2})^{\top}}_{({L}^2(0,2\pi))^3}^2+\norm{\left(\sigma_{T,1}, v_{T,1}, \tilde{S}_{T,1}\right)}_{(L^2(0,2\pi))^3}^{2}.
						\end{equation}
						This completes the proof of desired observability inequality \eqref{nmaxobsden}.
					}
					%

					{
						\subsection{Control acting everywhere in the domain}
						Now, we proceed to prove \Cref{thm_pos}. Since, exact and null controllability are equivalent for our system, we are going to prove the null controllability for the sake of computational simplicity. To do this, we first establish the null controllability of each finite-dimensional projected system \eqref{opp-eqn_proj}, and then utilize this to infer the null controllability of the system \eqref{opp-eqn}.
						
						Recall that \eqref{opp-eqn_proj} is defined on $\mathbf{Z}_n$ with control from $E_n$, where $\mathbf{Z}_n$ and $E_n$ are finite-dimensional spaces defined in \eqref{def-Zn} and \eqref{def-E_n}, respectively. Referring back to \Cref{lem-An} and \Cref{lem-Bn}, the matrix representation of $\pi_n\mathcal{A}|_{\mathbf{Z}_n}$ and $\pi_n\mathcal{B}|_{E_n}$ is denoted as $\mathcal{A}_n$ and $\mathcal{B}_n$, respectively. To assess the controllability of \eqref{opp-eqn_proj}, we apply the ``Hautus Lemma" (refer to \cite[Theorem 2.5, Section 2, Chapter 1, Part I]{bensoussan2007representation}) to the pair $(\mathcal{A}_n, \mathcal{B}_n)$.
						\begin{theorem}\label{th_finitecontrol}
							For each $n\in\mathbb{Z}$, the finite dimensional system \eqref{opp-eqn_proj} is controllable at any given $T>0$.
							For each $n\in\mathbb{Z}$, the control $f_n\in L^2(0,T; E_n)$ that brings the solution of \eqref{opp-eqn_proj}  at rest at a given time $T>0$ is given by 
							\begin{equation}\label{equ-mincontrol}
								f_n(t)= - \mathcal{B}^*_ne^{(T-t)\mathcal{A}_n^*}W^{-1}_{n,T}e^{T\mathcal{A}_n} z_{0,n},\qquad t\in[0,T],
							\end{equation}
							where $W_{n,T}\in \mathcal{L}(\mathbf{Z}^*_n, \mathbf{Z}_n)$ is the controllability operator given by:
							\begin{equation}\label{eqgram}
								W_{n,T}= \int_{0}^{T}e^{t\pi_n\mathcal{A}}\Big(\pi_n\mathcal{B}\Big)\Big(\pi_n\mathcal{B}\Big)^*e^{t(\pi_n\mathcal{A})^*}\:\rd t. 
							\end{equation}
						\end{theorem}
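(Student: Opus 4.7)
My plan is to treat the case $n=0$ separately and then handle each $n\in\mathbb{Z}^*$ by a direct application of the Hautus test for finite-dimensional controllability, followed by the standard construction of the minimum-norm control via the controllability Gramian.

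For $n=0$, by \Cref{lem-An} and \Cref{lem-Bn} we have $\mathcal{A}_0=0$ and $\mathcal{B}_0=\sqrt{b}\neq 0$, so the scalar ODE $\dot z_0=\mathcal{B}_0 f_0$ is trivially controllable in any time $T>0$. For $n\in\mathbb{Z}^*$, under the assumption \eqref{nmaxsimpleeigen}, \Cref{lem-An} gives that $\mathcal{A}_n$ is the diagonal matrix with distinct entries $\lambda_n^1,\lambda_n^2,\lambda_n^3$, while \Cref{lem-Bn} gives
\begin{equation*}
\mathcal{B}_n=b\sqrt{2\pi}\bigl(1/\overline{\psi_{n,1}},\,1/\overline{\psi_{n,2}},\,1/\overline{\psi_{n,3}}\bigr)^{\top},
\end{equation*}
whose entries are all nonzero since $\psi_{n,l}\neq 0$ by the simplicity hypothesis. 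Applying Hautus' lemma, I would check that for each eigenvalue $\lambda_n^l$ of $\mathcal{A}_n$ the augmented matrix $\bigl[\mathcal{A}_n-\lambda_n^l I\mid \mathcal{B}_n\bigr]$ has full rank $3$: the two rows of $\mathcal{A}_n-\lambda_n^l I$ corresponding to indices $k\neq l$ contribute two linearly independent vectors (their diagonal entries $\lambda_n^k-\lambda_n^l$ are nonzero by distinctness), and the $l$-th row is supplied by $\mathcal{B}_n$, whose $l$-th entry is nonzero. Hence the pair $(\mathcal{A}_n,\mathcal{B}_n)$ is controllable for every $n\in\mathbb{Z}^*$.

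Once controllability of the pair is established, the Gramian $W_{n,T}$ defined in \eqref{eqgram} is symmetric positive semidefinite, and its null space coincides with $\{z\in\mathbf{Z}^*_n:(\pi_n\mathcal{B})^{*}e^{t(\pi_n\mathcal{A})^*}z=0\ \forall t\in[0,T]\}$, which is trivial by the Hautus criterion just verified. Thus $W_{n,T}$ is invertible on $\mathbf{Z}_n$ (identified with $\mathbf{Z}^*_n$ through the bi-orthonormal pairing).

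For the explicit formula, I would insert $f_n$ from \eqref{equ-mincontrol} into the Duhamel representation
\begin{equation*}
z_n(T)=e^{T\pi_n\mathcal{A}}z_{0,n}+\int_0^T e^{(T-t)\pi_n\mathcal{A}}(\pi_n\mathcal{B})f_n(t)\,\rd t,
\end{equation*}
and perform the substitution $s=T-t$; the resulting integral collapses to $W_{n,T}W_{n,T}^{-1}e^{T\pi_n\mathcal{A}}z_{0,n}$, cancelling exactly the free term so that $z_n(T)=0$. The only subtlety I anticipate is keeping the duality between the bases of $\mathbf{Z}_n$ and $\mathbf{Z}_n^*$ consistent so that the matrix forms $\mathcal{A}_n^{*}$ and $\mathcal{B}_n^{*}$ genuinely represent the adjoints of $\pi_n\mathcal{A}|_{\mathbf{Z}_n}$ and $\pi_n\mathcal{B}|_{E_n}$; \Cref{nmaxbiorthonormality} and \Cref{lem-An}--\Cref{lem-Bn} provide exactly what is needed. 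No step in this proof poses a real obstacle: it is a textbook Hautus argument combined with the standard Gramian-based control formula, and the novelty lies entirely in the preparatory work (spectral analysis and basis representation) already carried out in \Cref{nmaxsecspec} and \Cref{secproj}.
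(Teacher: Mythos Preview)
Your proposal is correct and follows essentially the same approach as the paper: both treat $n=0$ as the trivial scalar case, apply the Hautus rank test to the pair $(\mathcal{A}_n,\mathcal{B}_n)$ for $n\in\mathbb{Z}^*$ using the distinctness of the $\lambda_n^l$ and the nonvanishing of the entries of $\mathcal{B}_n$, and then invoke the standard Gramian construction (the paper cites Zabczyk for this last step, while you verify it directly via Duhamel). The only cosmetic difference is that the paper checks $\rank[\lambda I-\mathcal{A}_n\;;\;\mathcal{B}_n]=3$ by exhibiting three independent columns, whereas you argue via rows; the content is identical.
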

						\begin{proof}
							Let $n\in \mathbb{Z}^*$ be fixed. We prove this result showing $(iii)$ of \cite[Theorem 2.5, Section 2, Chapter 1, Part I]{bensoussan2007representation}). We show that for $n\in \N$
							the $3\times 3$ matrix $\mathcal{A}_n$ and $3\times 1$ matrix $\mathcal{B}_n$ satisfies 
							\begin{align*}
								\text{Rank}\left[\lambda I-\mathcal{A}_n \;;\; \mathcal{B}_n \right]=3,\text{ for all $\lambda\in\mathbb{C}$ that are eigenvalues of }\mathcal{A}_n, 
							\end{align*}
							and for $n=0$, the row vector $\left[\lambda_0 I-\mathcal{A}_0 \;;\; \mathcal{B}_0 \right]$ has rank $1$. 
							
							From direct calculation, we have 
							\begin{equation}
								\left[\lambda^\ell I-\mathcal{A}_n \;;\; \mathcal{B}_n \right]=\begin{bmatrix}
									\lambda^\ell-\lambda_n^1&0&0&b\sqrt{2\pi}\frac{1}{\bar\psi_{n,1}}\\
									0&\lambda^\ell-\lambda_n^2&0&b\sqrt{2\pi}\frac{1}{\bar\psi_{n,2}}\\
									0&0&\lambda^\ell-\lambda_n^3&b\sqrt{2\pi}\frac{1}{\bar\psi_{n,3}}
								\end{bmatrix},
							\end{equation}
							where the eigenvalues of $\mathcal{A}_n$ are $\lambda_n^\ell, \: \ell=1,2,3.$ In particular, for $\lambda^\ell=\lambda_n^1$, note that the $3\times 4$ matrix has $3$ independent columns and thus Rank$\left[\lambda_n^1 I-\mathcal{A}_n \;;\; \mathcal{B}_n \right]= 3$. Similarly we can see that Rank$\left[\lambda_n^\ell I-\mathcal{A}_n \;;\; \mathcal{B}_n \right]= 3$, for $\ell=2,3$. 
							For $n=0$, $\left[\lambda_0 I-\mathcal{A}_0 \;;\; \mathcal{B}_0 \right]= [0, \sqrt{b}]$, which implies Rank$\left[\lambda_0 I-\mathcal{A}_n \;;\; \mathcal{B}_n \right]= 1$. 
							Therefore for each $n\in\mathbb{Z}$, the system \eqref{opp-eqn_proj} is controllable at any time $T$.
							
							Since $(\pi_n\mathcal{A}, \pi_n\mathcal{B})$ is controllable, from \cite[Part I, Chapter 1, Proposition 1.1]{zabczyk2008mathematical}, it follows that the inverse of the controllability operator $W_{n,T}^{-1}$ exists  in $\mathcal{L}(\mathbf{Z}_n, \mathbf{Z}^*_n)$ and the control $f_n$ given in  \eqref{equ-mincontrol} is well-defined and it gives the null controllability of \eqref{opp-eqn_proj} at time $T$. 
						\end{proof}

						\begin{lem}\label{lem-cntrlfin-est}
							For each $n\in \mathbb{Z}$, let $W_{n,T}$ and $f_n\in L^2(0,T; E_n)$ be as defined in \eqref{eqgram} and \eqref{equ-mincontrol}, respectively. 
							Then, there exist positive constants $C_1$ and $C_2$ independent of $n$, such that 
							\begin{equation}\label{bddWnT}
								C_1\le \|W_{n,T}\|_{\mathcal{L}(\mathbf{Z}^*_n, \mathbf{Z}_n)}\le C_2, \quad \forall\, n\in \mathbb{Z},
							\end{equation}
							and 
							\begin{equation}\label{estimatef_n}
								\int_{0}^{T}\left\| f_n(t)\right\|^2_{L^2(0,2\pi)}\:\rd t \leq C \left\|z_{0,n} \right\|^2_{\mathcal{Z}},
							\end{equation}
							for some positive constant $C$ independent of $n$.
						\end{lem}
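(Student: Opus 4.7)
The plan is to prove the two estimates in sequence, with the lower bound (and more importantly the uniform invertibility of $W_{n,T}$) being the technical heart of the argument.

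For the upper bound in \eqref{bddWnT}, I would read off the estimate directly from the integral formula \eqref{eqgram}. Since \eqref{equniformfinsemigr} gives $\|e^{t\pi_n\mathcal{A}}\|_{\mathcal{L}(\mathbf{Z}_n)}\le C$ and $\|e^{t(\pi_n\mathcal{A})^*}\|_{\mathcal{L}(\mathbf{Z}_n^*)}\le C$ uniformly in $n$ and $t\in[0,T]$, and \eqref{equniformbddcntrl} gives $\|\pi_n\mathcal{B}\|\le C_B$, a triangle-inequality under the integral sign yields $\|W_{n,T}\|\le T\,C^2C_B^2$. This is the easy half.

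For the lower bound, I would work in the eigenbasis, where \Cref{lem-An} and \Cref{lem-Bn} give $\mathcal{A}_n=\mathrm{diag}(\lambda_n^1,\lambda_n^2,\lambda_n^3)$ and $\mathcal{B}_n=b\sqrt{2\pi}(1/\bar\psi_{n,1},1/\bar\psi_{n,2},1/\bar\psi_{n,3})^\top$. An explicit computation gives the entries of $W_{n,T}$ as
\begin{equation*}
(W_{n,T})_{jk}=\frac{b^2\cdot 2\pi}{\bar\psi_{n,j}\psi_{n,k}}\cdot\frac{e^{T(\lambda_n^j+\overline{\lambda_n^k})}-1}{\lambda_n^j+\overline{\lambda_n^k}}.
\end{equation*}
Using the asymptotics \eqref{nmaxasmlambda} I would note that for $j\ne k$, $\lambda_n^j+\overline{\lambda_n^k}=-(\omega_j+\omega_k)+i(\beta_j-\beta_k)n+O(1/|n|)$, so because the $\beta_j$'s are distinct, $|\lambda_n^j+\overline{\lambda_n^k}|\to\infty$ and the off-diagonal entries decay like $1/|n|$. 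On the diagonal, $\lambda_n^j+\overline{\lambda_n^j}=-2\omega_j+O(1/|n|)$ with $\omega_j>0$, and by \eqref{nmaxcgs-theta_nl} the prefactor $|\mathcal{B}_n^j|^2$ tends to a positive constant, so $(W_{n,T})_{jj}\to |\mathcal{B}_\infty^j|^2(1-e^{-2\omega_jT})/(2\omega_j)>0$. Thus for $|n|$ large enough $W_{n,T}$ is a small perturbation of a diagonal matrix with strictly positive entries, yielding a uniform positive lower bound on the smallest singular value; for the remaining finitely many $n$, the Hautus-based controllability of $(\mathcal{A}_n,\mathcal{B}_n)$ established in \Cref{th_finitecontrol} ensures each $W_{n,T}$ is positive definite, and a finite minimum of positive numbers stays positive. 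This gives simultaneously $C_1\le\|W_{n,T}\|$ and, more usefully, a uniform bound $\|W_{n,T}^{-1}\|_{\mathcal{L}(\mathbf{Z}_n,\mathbf{Z}_n^*)}\le C_3$ independent of $n$.

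The estimate \eqref{estimatef_n} then follows by unfolding \eqref{equ-mincontrol}:
\begin{equation*}
\int_0^T\|f_n(t)\|^2_{L^2(0,2\pi)}\,\rd t\le \int_0^T\|\mathcal{B}_n^*\|^2\,\|e^{(T-t)\mathcal{A}_n^*}\|^2\,\|W_{n,T}^{-1}\|^2\,\|e^{T\mathcal{A}_n}\|^2\,\|z_{0,n}\|_{\mathcal{Z}}^2\,\rd t,
\end{equation*}
and each factor on the right is uniformly bounded in $n$ by \eqref{equniformfinsemigr}, \eqref{equniformbddcntrl}, and the just-established bound on $\|W_{n,T}^{-1}\|$. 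The main obstacle is the off-diagonal decay argument for $W_{n,T}$: one must check that the $O(1/|n|)$ corrections in the eigenvalue asymptotics do not conspire with the prefactors $1/(\bar\psi_{n,j}\psi_{n,k})$ to spoil near-diagonality, which is where the precise convergence statements \eqref{nmaxcgs-theta_nl} together with the fact that $\omega_j\ne\omega_l\ne 0$ from \eqref{nmaxwn} are crucially used.
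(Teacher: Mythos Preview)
Your proposal is correct and follows essentially the same approach as the paper: both compute the explicit matrix entries of $W_{n,T}$ in the eigenbasis, use the asymptotics \eqref{nmaxasmlambda} together with the distinctness of the $\beta_j$ to see that off-diagonal entries are $O(1/|n|)$ while diagonal entries converge to positive constants, and then handle the finitely many low modes by the controllability of \Cref{th_finitecontrol}. If anything, your write-up is slightly more careful than the paper's in that you explicitly isolate the uniform bound on $\|W_{n,T}^{-1}\|$ as the object needed for \eqref{estimatef_n}; the paper states only \eqref{bddWnT} and then invokes it for \eqref{estimatef_n}, leaving the reader to infer that the same near-diagonal asymptotics yield the inverse bound.
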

						\begin{proof}
							Let us observe that the matrix representation of the controllability operator $W_{n,T}\in \mathcal{L}(\mathbf{Z}^*_n, \mathbf{Z}_n)$
							(still denoted by same notation $W_{n,T}$) in the basis $\{\xi^*_{n,\ell}\mid \ell=1,2,3 \}$ of $\mathbf{Z}^*_n$ and $\{\xi_{n,\ell} \mid \ell=1,2,3 \}$ of $\mathbf{Z}_n$ is given by 
							\begin{equation}\label{matWnT}
								W_{n,T}=b^22\pi\begin{pmatrix}
									\frac{e^{2T\text{ Re}\lambda_n^1}-1}{2\text{ Re}\lambda_n^1\left| \psi_{n,1}\right|^2 } &\frac{e^{T\left(\lambda_n^1+\bar{\lambda}_n^2 \right) }-1}{\left(\lambda_n^1+\bar{\lambda}_n^2 \right) \bar{\psi}_{n,1}\psi_{n,2} } & \frac{e^{T\left(\lambda_n^1+\bar{\lambda}_n^3 \right) }-1}{\left(\lambda_n^1+\bar{\lambda}_n^3 \right) \bar{\psi}_{n,1}\psi_{n,3} } \\
									\frac{e^{T\left(\lambda_n^1+{\lambda}_n^2 \right) }-1}{\left(\lambda_n^1+{\lambda}_n^2 \right) \bar{\psi}_{n,2}\psi_{n,1} } &\frac{e^{2T\text{ Re}\lambda_n^2}-1}{2\text{ Re}\lambda_n^2\left| \psi_{n,2}\right|^2 }&\frac{e^{T\left(\lambda_n^2+\bar{\lambda}_n^3 \right) }-1}{\left(\lambda_n^2+\bar{\lambda}_n^3 \right) \bar{\psi}_{n,2}\psi_{n,3} } \\
									\frac{e^{T\left(\lambda_n^1+{\lambda}_n^3 \right) }-1}{\left(\lambda_n^1+{\lambda}_n^3 \right) \bar{\psi}_{n,3}\psi_{n,1} } &\frac{e^{T\left(\lambda_n^3+\bar{\lambda}_n^2 \right) }-1}{\left(\lambda_n^3+\bar{\lambda}_n^2 \right) \bar{\psi}_{n,3}\psi_{n,2} } &\frac{e^{2T\text{ Re}\lambda_n^3}-1}{2\text{ Re}\lambda_n^3\left| \psi_{n,3}\right|^2 }
								\end{pmatrix},
							\end{equation}
							where $W_{n,T}^{ij}$, for $i, j\in \{1,2,3\}$, is the $(ij)$th element of the above matrix \eqref{matWnT} satisfying 
							\begin{align}
								&\left| W_{n,T}^{ll}\right|\rightarrow \frac{\left( 1-e^{-2T\omega_l}\right) }{4\pi\omega_l\left( b+\frac{\left( \beta_l+u_s\right)^2 }{\rho_s}+\frac{\mu\left( \beta_l+u_s\right)^2 }{\kappa\rho_s^2 \beta_l^2}\right)  },\quad  \text{ for }l\in \{1,2,3\},\label{estWnii}\\
								&\text{ and }\left| W_{n,T}^{ij}\right|\rightarrow 0,\quad\text{ for }i\neq j,\: i,j\in \{1,2,3\},\label{estWnij} \quad\text{as }|n|\rightarrow \infty,
							\end{align}
							due to \eqref{nmaxasmlambda} and \eqref{nmaxcgs-theta_nl}. 
							Noting that 
							$$ M_1 \Big(\sum_{i=1}^3\sum_{j=1}^3|W_{n,T}^{ij}|^2\Big)^{\frac{1}{2}} \le \left\|W_{n,T} \right\|_{\mathcal{L}\left(\mathbf{Z}^*_n,\mathbf{Z}_n \right)} \le M_2\Big(\sum_{i=1}^3\sum_{j=1}^3|W_{n,T}^{ij}|^2\Big)^{\frac{1}{2}}, \quad \forall\, n\in \mathbb{Z}^*,$$
							for some positive constants $M_1$ and $M_2$ independent of $n$, from \eqref{estWnii}-\eqref{estWnij}, \eqref{bddWnT} follows. 
							Finally, from \eqref{equniformfinsemigr}, \eqref{equniformbddcntrl}, \eqref{equ-mincontrol} and \eqref{bddWnT}, \eqref{estimatef_n} follows. 
						\end{proof}
						Now, we are in position to prove \Cref{thm_pos}.
						\subsubsection{\textbf{Proof of \Cref{thm_pos} : }}
						To establish \Cref{thm_pos}, we aim to demonstrate that for any $T>0$ and for any $z_0\in \mathcal{Z}_m$, there exists a control $f\in L^2(0,T; L^2(0,2\pi))$ such that the solution $z$ of \eqref{opp-eqn} satisfies $z(T)=0$ in $\mathcal{Z}_m$.
						
						Given $z_{0} \in {\mathcal{Z}_m}$, for each $n\in \mathbb{Z}$, the control $f_n\in L^2(0,T; E_n)$ defined in \eqref{equ-mincontrol} brings the solution of the finite-dimensional system \eqref{opp-eqn_proj} with initial condition $z_{0,n}$ to rest at time $T$. Additionally, the control $f_n$ satisfies \eqref{estimatef_n}. 
						
						Now, let us set $f=\sum\limits_{n\in\mathbb{Z}}f_n$. Utilizing \eqref{estimatef_n} and \eqref{nmaxequ-est-z_0}, we obtain $f\in L^2(0,T; L^2(0,2\pi))$ with:
						\begin{align*}
							\sum_{n\in\mathbb{Z}}\left\|  f_n\right\|^2_{L^2(0,T;L^2(0,2\pi))} \leq C\sum_{n\in\mathbb{Z}}\left\|z_{0,n} \right\|^2_{\mathcal{Z}}
							\le C \left(\left| \left\langle z_0, \xi^*_{0} \right\rangle\right|^2+ \sum_{n\in\mathbb{Z}^*}\sum_{l=1}^{3}\left| \left\langle z_0, \xi^*_{n,l} \right\rangle\right|^2 \right)
							\leq C\left\|z_0 \right\|_{\mathcal{Z}_m}^2,
						\end{align*}
						where $C$ is a generic positive constant.
						With this control $f$ and the initial condition $z_0$, \eqref{opp-eqn} admits a unique solution $z\in C([0,T]; \mathcal{Z}_m)$, which can be decomposed as $z(t)=\sum_{n\in\mathbb{Z}} z_n(t)$, where $z_n(t)\in \mathbf{Z}_n$ satisfies \eqref{opp-eqn_proj}. By \Cref{th_finitecontrol}, it follows that $z(T)=0$ in $\mathcal{Z}_m$.
						\qed

					}

					\section{Boundary controllability}\label{nmax-bd}
					In this section, we will discuss the boundary controllability  of the compressible Navier–Stokes system with Maxwell's law \eqref{nmaxeq3 bd} with any of the boundary conditions \eqref{density}, \eqref{velocity} and \eqref{stress}. At first, we study the boundary controllability of the system \eqref{nmaxeq3 bd}, when the control acts only in density.
					\subsection{Control in density}
					Let us consider the system \eqref{nmaxeq3 bd} with \eqref{density}. For the reader's convenience, we recall the system:
					\begin{equation}\label{nmaxeq3 bd den}
						\left.
						\begin{aligned}
							&\partial_t\rho+u_s\partial_x\rho+\rho_s\partial_xu=0, \qquad&&\text{in }(0, T)\times (0, 2\pi),\\&\partial_tu+u_s\partial_xu+a\gamma{\rho_s}^{\gamma-2}\partial_x\rho-\frac{1}{\rho_s}\partial_xS=0,\qquad&&\text{in }(0, T)\times (0, 2\pi),\\&\partial_tS+\frac{1}{\kappa}S-\frac{\mu}{\kappa}\partial_xu=0, \qquad&&\text{in }(0, T)\times (0, 2\pi),\\&\rho(t, 0)=\rho(t, 2\pi)+q(t),\: u(t,0)=u(t,2\pi),\: S(t,0)=S(t,2\pi),\qquad&&  t\in (0, T),\\&\rho(0,x)=\rho_0(x),\quad u(0,x)=u_0(x),\quad S(0,x)=S_0(x),\qquad&&  x\in (0,2\pi).
						\end{aligned}
						\right\}
					\end{equation}
					\subsubsection{Well-posedness}
					Like interior controllability cases, at first let us write the system \eqref{nmaxeq3 bd den} in the following abstract form:
					\begin{equation} \label{nmaxop-eqnbd}
						\dot{z}(t) = \mathcal{A} z(t) + \mathcal{B}_{\rho} q(t), \quad t\in (0,T), \qquad z(0) = z_{0},
					\end{equation}
					where we have set $z(t) = (\rho(t,\cdot), u(t, \cdot), S(t,\cdot))^{\top},$ $z_{0} =(\rho_{0}, u_{0}, S_0)^{\top}$.
					One can identify the operator $\mc A, \mc B$ through its adjoint by taking the scalar product of \eqref{nmaxop-eqnbd} with a vector of smooth functions $(\phi, \psi, \xi)$ and comparing it with \eqref{nmaxeq3 bd den}.
					$\mc A$ is the unbounded operator given by \eqref{op} with its domain {\small$$\mathcal D(\mathcal A; \mathcal Z)=\left\lbrace \begin{pmatrix}
							\rho\\u\\S
						\end{pmatrix}\in \mathcal Z\: :\:\left(\rho,u,S \right)^\top \in H^1(0,2\pi)\times H^1(0,2\pi)\times H^1(0,2\pi),\:\begin{aligned}
							&\rho(0)=\rho(2\pi),\\ &u(0)=u(2\pi),\\&S(0)=S(2\pi).
						\end{aligned}
						\right\rbrace.
						$$}
					The control operator $\mathcal{B}_{\rho} \in \mathcal{L}(\mathbb{C};\mc D (\mc A^*)')$ defined by 
					\begin{equation*}
						\ip{\mc B_{\rho} q(t)}{(\phi, \psi, \xi)}_{\mc D (\mc A^*)',\mc D (\mc A^*)}=q(t)\left(bu_s\overline{\phi(2\pi)}+b\rho_s\overline{\psi(2\pi)}\right).
					\end{equation*}
					Clearly $\mc B_{\rho}$ is well-defined as $\mc B_{\rho}q$ is continuous on $H^1(0,2\pi)\times H^1(0,2\pi)\times H^1(0,2\pi)$ (by the embedding theorem $H^1(0, 2\pi)\hookrightarrow C[0,2\pi]$). Its adjoint $\mc B_{\rho}^* \in \mathcal{L}(\mc D (\mc A^*); \mathbb{C})$ is
					\begin{equation}\label{nmaxeqcontr bd}
						\mathcal{B}_{\rho}^*(\phi, \psi, \xi)=\mathbbm{1}_{\{x=2\pi\}}\begin{pmatrix}
							bu_s\\b\rho_s\\0
						\end{pmatrix}(\phi, \psi, \xi) =bu_s {\phi(2\pi)}+b\rho_s{\psi(2\pi)}.
					\end{equation}
					One can prove that the operator $\mc B_{{\rho}}$ satisfies the following so-called admissibility condition
					\begin{equation}\label{adm}
						\int_{0}^{T}|\mc B_{\rho}^* \mathbb{T}_{T-t}^*\Phi|^2\, \rd t\leq C \norm{\Phi}_{{\mc Z}}, \forall \, \Phi \in \mc D(\mc A^*),
					\end{equation}
					where $C$ is some positive constant. { The proof relies on the explicit expression of the solution of the adjoint \eqref{nmaxeqadj} and the right hand side inequality of \eqref{I-4}. Indeed, performing an integration of the square of identity \eqref{obs term bd} over $[0,T]$ and then applying \eqref{I-4}, we get the required admissibility conditions.}
					
					Since $\mc A$ generates a contraction semigroup and $\mc B_{\rho}$ is an admissible operator, we can prove the following well-posedness result (see Theorem $2.37$ in page 53 of \cite{Co07}, for more details):
					\begin{prop}\label{prop wel}
						Let $T>0$, $(\rho_0,u_0, S_0)^{\top} \in \dot L^2(0, 2\pi)\times \dot L^2(0, 2\pi)\times \dot L^2(0, 2\pi)$ and $q \in L^2(0,T)$. Then the system \eqref{nmaxeq3 bd den} has a unique solution $(\rho, u, S)^{\top}\in C\left([0,T]; (L^2(0, 2\pi)^3)\right)$ and the solution satisfies 
						\begin{align*}
							\|(\rho, u, S)\|_{C([0,T];\mathcal {{Z}})} \leqslant  C \Big(\|(\rho_0, u_0, S_0)\|_{\mathcal {{Z}}}+ \|q\|_{L^2(0,T)}\Big), 
						\end{align*}
						where $C=C(T)$ is a positive constant, independent of $\rho_0, u_0, S_0, q$ and $t$.
					\end{prop}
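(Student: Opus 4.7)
The plan is to interpret the system \eqref{nmaxeq3 bd den} in the abstract form \eqref{nmaxop-eqnbd} and then invoke the standard well-posedness theory for boundary control systems with admissible input operators, as summarised in \cite[Theorem 2.37]{Co07}. By \Cref{nmaxpr:semigroup-z}, the operator $\mathcal{A}$ generates a $C^0$-semigroup $\{\mathbb{T}_t\}_{t\geq 0}$ on $\mathcal{Z}$, and by construction the boundary control operator $\mathcal{B}_\rho$, defined via its adjoint \eqref{nmaxeqcontr bd}, lies in $\mathcal{L}(\mathbb{C};\mathcal{D}(\mathcal{A}^*)')$. Hence the formal Duhamel expression
\begin{equation*}
z(t) = \mathbb{T}_t z_0 + \int_0^t \mathbb{T}_{t-s}\mathcal{B}_\rho q(s)\, \rd s,\qquad t\in[0,T],
\end{equation*}
is \emph{a priori} a curve in $\mathcal{D}(\mathcal{A}^*)'$, and the content of the proposition is that this curve in fact lies in $C([0,T];\mathcal{Z})$ together with the claimed estimate.

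The key step is the admissibility estimate \eqref{adm}, which by the duality identity
\begin{equation*}
\left\langle \int_0^T \mathbb{T}_{T-s}\mathcal{B}_\rho q(s)\,\rd s,\,\Phi \right\rangle_{\mathcal{Z}} \;=\; \int_0^T q(s)\,\overline{\mathcal{B}_\rho^* \mathbb{T}_{T-s}^*\Phi}\,\rd s
\end{equation*}
is equivalent to the boundedness of $q \mapsto \int_0^T\mathbb{T}_{T-s}\mathcal{B}_\rho q(s)\,\rd s$ from $L^2(0,T)$ into $\mathcal{Z}$. To establish \eqref{adm}, I would expand any $\Phi=(\sigma_T,v_T,\tilde S_T)^\top\in\mathcal{D}(\mathcal{A}^*)$ in the Riesz basis $\{\xi_0^*\}\cup\{\xi_{n,l}^*\}$ furnished by \Cref{nmaxbasis_representation}, use the explicit spectral representation \eqref{nmaxrepressoln} of the backward solution, and compute $\mathcal{B}_\rho^*\mathbb{T}_{T-t}^*\Phi = bu_s\,\sigma(t,2\pi) + b\rho_s\,v(t,2\pi)$ as a series of exponentials of the form $\sum_{n,l}\gamma_n^l\, e^{\overline{\lambda_n^l}(T-t)}$. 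The right-hand inequality of \eqref{I-4} in \Cref{propI-4} then bounds the $L^2(0,T)$ norm of this series by $\sum_{n,l}|\gamma_n^l|^2$, and the asymptotics \eqref{nmaxcgs-theta_nl}--\eqref{nmaxcgsalpha3} of $\psi_{n,l}$ and $\alpha_{n,l}^j$ identify this last quantity as being equivalent to $\|\Phi\|_{\mathcal{Z}}^2$, up to a constant depending only on the system parameters.

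Once \eqref{adm} is secured, \cite[Theorem 2.37]{Co07} (equivalently, \cite[Proposition 4.2.5]{TW09} combined with the strong continuity of $\mathbb{T}_t$) produces a unique solution $z\in C([0,T];\mathcal{Z})$ defined by transposition, satisfying $z(0)=z_0$ and the desired estimate
\begin{equation*}
\|z\|_{C([0,T];\mathcal{Z})}\;\leq\; C(T)\bigl(\|z_0\|_{\mathcal{Z}}+\|q\|_{L^2(0,T)}\bigr).
\end{equation*}
I expect the main obstacle to be this admissibility verification, and within it the treatment of the low modes: the Ingham-type upper bound \eqref{I-4} is stated only for $|n|\geq M$, so the finitely many remaining exponentials $e^{\overline{\lambda_n^l}(T-t)}$ with $|n|<M$ must be handled separately. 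However, these form a finite linearly independent family on $[0,T]$, so their $L^2$-norm is equivalent to the $\ell^2$-norm of the coefficients by a standard compactness argument, and this finite-dimensional piece combines with the tail estimate to yield the global admissibility bound.
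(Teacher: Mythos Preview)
Your proposal is correct and follows essentially the same route as the paper: verify the admissibility condition \eqref{adm} for $\mathcal{B}_\rho$ by writing $\mathcal{B}_\rho^*\mathbb{T}_{T-t}^*\Phi$ as an exponential series via the spectral expansion \eqref{nmaxrepressoln} and applying the upper bound in \eqref{I-4}, then invoke \cite[Theorem~2.37]{Co07}. Your treatment is in fact more careful than the paper's sketch, since you flag the low-frequency modes explicitly; note however that \Cref{propI-4} is stated for arbitrary $M\in\mathbb{N}$, so taking $M=1$ already covers all modes, and the only genuine point to watch (which neither you nor the paper spells out) is that the \emph{upper} bound in \eqref{I-4} actually holds for every $T>0$, not just $T>T_0$---this is standard since the real parts $\operatorname{Re}\lambda_n^l$ are uniformly bounded.
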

					
					\subsubsection{Exact Controllability}
					In this section, we prove the exact controllability of the system \eqref{nmaxeq3 bd den}. As interior control case, here at first we discuss the classical approach to deduce the observability inequality for the adjoint system \eqref{nmaxeqadj}  which essentially gives the exact controllability of the main system \eqref{nmaxeq3 bd den}. Let us consider the linear map $\mathcal{F}_T: L^2(0,T)\to (L^2(0,2\pi))^3$ by $\mathcal{F}_T(q)= \left(\rho(T,\cdot),u(T, \cdot), S(T, \cdot)\right)^{\top}$, where $(\rho, u, S)$ is the solution of the system \eqref{nmaxeq3 bd den} with $(\rho_0, u_0, S_0)=(0,0,0)$. It is clear that exact controllability for \eqref{nmaxeq3 bd den} is equivalent to the surjectivity of the map $\mathcal{F}_T$. Note that the map $\mathcal{F}_T$ is surjective if and only if there exists a constant $C>0$ such that the following inequality holds:
					\begin{align}\label{eq:fstar bd}
						\norm{\mathcal{F}_T^* \mathbf {z}}_{L^2(0,T)}\geq C\norm{\mathbf z}_{(L^2(0,2\pi))^3}, \, \text{ for all } \mathbf z \in (L^2(0,2\pi))^3.
					\end{align} A direct computation shows that
					$\mathcal{F}_T^*((\sigma_{T}, v_{T}, \tilde{S}_T) )=b u_s \sigma(t,2\pi)+b\rho_s  v(t,2\pi)$, where $(\sigma,v, \tl S)$ is the solution of the adjoint system \eqref{nmaxeqadj} with the terminal data $(\sigma_{T}, v_{T}, \tilde{S}_T)$. Thus we get the following proposition.
					\begin{prop}\label{nmaxthobs1 bd} 
							The system \eqref{nmaxeq3 bd den} is exactly controllable in $\mathcal Z_{m,m}$ at time $T>0$ using a control $q$ in $L^2(0,T)$ acting only in the density, if and only if, there exists a positive constant $C_T>0$  such that for any $(\sigma_T, v_T, \tilde{S}_T)\in \mathcal Z_{m,m}$, 
							$(\sigma, v, \tilde{S})$, the solution of \eqref{nmaxeqadj}, satisfies the following observability inequality:
							\begin{align}\label{nmaxobsden bd}
								\int_{0}^{2\pi} |\sigma_T(x)|^{2} \, \rd x+\int_{0}^{2\pi} |v_T(x)|^{2}\, \rd x+\int_{0}^{2\pi} |\tilde{S}_T(x)|^{2} \, \rd x\leq C_T\int_{0}^{T}|bu_s \sigma(t,2\pi)+b\rho_s v(t,2\pi)|^2 \,\rd t.
							\end{align}
						\end{prop}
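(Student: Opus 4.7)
The plan is to exploit the standard Hilbert Uniqueness Method style duality between controllability and observability, as laid out in \cite[Chapter 2.3]{Co07}. Consider the input-to-final-state map $\mathcal{F}_T\colon L^2(0,T)\to \mathcal Z_{m,m}$ defined by $\mathcal{F}_T(q)=(\rho(T,\cdot),u(T,\cdot),S(T,\cdot))^\top$ where $(\rho,u,S)$ solves \eqref{nmaxeq3 bd den} with $(\rho_0,u_0,S_0)=(0,0,0)$. By \Cref{prop wel} combined with the admissibility estimate \eqref{adm}, $\mathcal{F}_T$ is a well-defined bounded linear operator, and the compatibility conditions \eqref{condbdden} guarantee that its range indeed lies in $\mathcal Z_{m,m}$. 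Using the linearity of \eqref{nmaxeq3 bd den} and subtracting the free evolution $\mathbb T_T z_0$ from a target state, exact controllability of \eqref{nmaxeq3 bd den} at time $T$ in $\mathcal Z_{m,m}$ is equivalent to $\mathcal{F}_T$ being surjective onto $\mathcal Z_{m,m}$.

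By the closed range theorem in Hilbert spaces, $\mathcal{F}_T$ is surjective if and only if its adjoint $\mathcal F_T^*\colon \mathcal Z_{m,m}\to L^2(0,T)$ is bounded below, i.e.\ there exists $C_T>0$ such that $\|\mathcal F_T^*\mathbf z\|_{L^2(0,T)}\geq C_T^{-1/2}\|\mathbf z\|_{\mathcal Z_{m,m}}$ for every $\mathbf z\in\mathcal Z_{m,m}$. It thus remains to identify $\mathcal F_T^*$ explicitly. Fix $(\sigma_T,v_T,\tilde S_T)\in\mathcal Z_{m,m}$ and let $(\sigma,v,\tilde S)$ denote the associated solution of the backward adjoint system \eqref{nmaxeqadj}, which is periodic in $x$. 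Testing the three equations of \eqref{nmaxeq3 bd den} respectively against $b\bar\sigma$, $\rho_s\bar v$ and $\tfrac{\kappa}{\mu}\bar{\tilde S}$ (the weights built into the inner product \eqref{nmaxinnerproduct}), integrating on $(0,T)\times(0,2\pi)$, and performing integration by parts in both $t$ and $x$, the bulk terms collapse because $(\sigma,v,\tilde S)$ satisfies \eqref{nmaxeqadj}. The temporal boundary contributions at $t=0$ vanish thanks to $(\rho_0,u_0,S_0)=0$, leaving only $\langle \mathcal F_T q,(\sigma_T,v_T,\tilde S_T)\rangle_{\mathcal Z}$ at $t=T$. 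The spatial boundary terms collapse for $u$ and $S$ (which are periodic), but the jump $\rho(t,0)-\rho(t,2\pi)=q(t)$ produces exactly the terms $bu_s\overline{\sigma(t,2\pi)}q(t)+b\rho_s\overline{v(t,2\pi)}q(t)$ (matching the expression \eqref{nmaxeqcontr bd} for $\mathcal B_\rho^*$). Consequently
\begin{equation*}
\langle \mathcal F_T q,(\sigma_T,v_T,\tilde S_T)\rangle_{\mathcal Z}=\int_0^T q(t)\,\overline{\bigl(bu_s\sigma(t,2\pi)+b\rho_s v(t,2\pi)\bigr)}\,\rd t,
\end{equation*}
so that $(\mathcal F_T^*(\sigma_T,v_T,\tilde S_T))(t)=bu_s\sigma(t,2\pi)+b\rho_s v(t,2\pi)$.

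The main technical subtlety is that for terminal data merely in $L^2$, the traces $\sigma(\cdot,2\pi)$ and $v(\cdot,2\pi)$ are not a priori defined pointwise in $x$; they only exist in $L^2(0,T)$ as a consequence of the hidden regularity encoded in the admissibility inequality \eqref{adm} for $\mathcal B_\rho$. To handle this, I would first perform the integration-by-parts computation above for smooth terminal data $(\sigma_T,v_T,\tilde S_T)\in\mathcal D(\mathcal A^*)\cap\mathcal Z_{m,m}$, for which everything is classical, and then pass to the limit for general $L^2$-data by density, using \eqref{adm} on the right-hand side and continuity of $\mathcal F_T$ on the left. Once the identification of $\mathcal F_T^*$ is secured, the equivalence $\|\mathcal F_T^*\mathbf z\|_{L^2(0,T)}^2\geq C_T^{-1}\|\mathbf z\|_{\mathcal Z_{m,m}}^2$ reads exactly as \eqref{nmaxobsden bd}, and the proposition follows.
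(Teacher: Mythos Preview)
Your proposal is correct and follows essentially the same route as the paper: define the input-to-final-state map $\mathcal{F}_T$, reduce exact controllability to its surjectivity, and then use the closed range theorem together with the identification $\mathcal{F}_T^*(\sigma_T,v_T,\tilde S_T)=bu_s\sigma(\cdot,2\pi)+b\rho_s v(\cdot,2\pi)$ obtained by testing against the adjoint solution. Your added remark about using admissibility \eqref{adm} to justify the boundary traces for merely $L^2$ terminal data is a helpful clarification that the paper leaves implicit.
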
 
						Before giving the proof of the above observability inequality, we mention the following lemma which ensures that the observation term is nonzero.
						\begin{lem}\label{non zero ob}
							Let us recall the eigen functions $\mc E(\mc A^*)=  \left\lbrace \xi^*_{n,l}\: |\: 1\leq l\leq 3, n\in\mathbb{Z}^*\right\rbrace$ of the unbounded operator $\mc A^*.$ Then we have the following result:
							$$\mc B_{\rho}^* \xi\neq 0, \forall \, \xi \in \mc E(\mc A^*), $$
							where $\mc B_{\rho}^*$ is the observation operator associated to the system \eqref{nmaxeq3 bd den}, defined by \eqref{nmaxeqcontr bd}.
						\end{lem}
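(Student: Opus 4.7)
The plan is to verify the claim by a direct computation that evaluates the observation functional $\mc B_{\rho}^{*}$ on each adjoint eigenfunction and then appeals to the spectral information already established in \Cref{sp}. The key point is that the eigenfunctions $\xi^{*}_{n,l}$ for $n\in\mathbb{Z}^{*}$ have an explicit closed form, so the pairing with $\mc B_{\rho}^{*}$ reduces to algebra.

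First I would substitute the explicit formula \eqref{nmaxequ-xi_n^*} into the definition \eqref{nmaxeqcontr bd} of $\mc B_{\rho}^{*}$. Since $n\in\mathbb{Z}^{*}$, one has $e^{i n\cdot 2\pi}=1$, so that
\begin{equation*}
\mc B_{\rho}^{*}\xi^{*}_{n,l}
= \frac{1}{\psi_{n,l}}\left( b u_s + b\rho_s\cdot\frac{\overline{\lambda_n^l}-in u_s}{in\rho_s}\right)
= \frac{b}{in\,\psi_{n,l}}\Bigl(in u_s+\overline{\lambda_n^l}-in u_s\Bigr)
= \frac{b\,\overline{\lambda_n^l}}{in\,\psi_{n,l}}.
\end{equation*}
Recall that $\psi_{n,l}\neq 0$ under \eqref{nmaxsimpleeigen} (as noted just after \eqref{nmaxequ-psi_nl}) and $n\neq 0$, so $\mc B_{\rho}^{*}\xi^{*}_{n,l}$ vanishes if and only if $\lambda_n^l=0$.

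The second step is to rule out $\lambda_n^l=0$ for $n\in\mathbb{Z}^{*}$. This is immediate from \eqref{negrealpart} in the proof of \Cref{sp}, which asserts that every root $\lambda_n^l=\eta_n^l+i\tau_n^l$ of the characteristic polynomial \eqref{nmaxchar.poly} satisfies $\eta_n^l<0$ for $l=1,2,3$ and $n\in\mathbb{Z}^{*}$. In particular $\lambda_n^l\neq 0$, and the computation above yields $\mc B_{\rho}^{*}\xi^{*}_{n,l}\neq 0$.

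There is no real obstacle here; the only subtle point is that the eigenfunction $\xi^{*}_{0}$ associated with the zero eigenvalue would indeed satisfy $\mc B_{\rho}^{*}\xi^{*}_{0}=0$, which is why the statement restricts to $n\in\mathbb{Z}^{*}$. If the operator admits multiple eigenvalues (the situation deferred to \Cref{secmultiev}), one should also check that the corresponding generalized eigenfunctions are not annihilated by $\mc B_{\rho}^{*}$; this follows by the same computation applied to each linearly independent generalized eigenvector, again using that $\lambda_n^l\neq 0$ for $n\neq 0$.
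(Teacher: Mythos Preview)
Your argument is correct and essentially identical to the paper's: you directly substitute the explicit form \eqref{nmaxequ-xi_n^*} into \eqref{nmaxeqcontr bd}, obtain $\mc B_{\rho}^{*}\xi^{*}_{n,l}=\dfrac{b\,\overline{\lambda_n^l}}{in\,\psi_{n,l}}$, and conclude via \eqref{negrealpart} and $\psi_{n,l}\neq 0$. The paper phrases the simplification as ``from the first equation of the eigen-equations $\mathcal{A}^{*}\xi^{*}_{n,l}=\overline{\lambda_n^l}\xi^{*}_{n,l}$'' rather than direct substitution, but the two are the same computation and yield the same expression.
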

						\begin{proof}
							Observe that, $\mc B_{\rho}^* \xi^*_{n,l}=\frac{b}{{\psi}_{n}^l}\left({ u_s \alpha^1_{n,l}+\rho_s \alpha^2_{n,l}}\right).$ 
							From the first equation of the eigen equations $\mathcal{A}^*\xi^*_{n,l}=\overline{\lambda_n^l}\xi^*_{n,l}$, we obtain
							\begin{equation}\label{nmaxeigenequ 1}
								\frac{b}{{\psi}_{n}^l}\left({ u_s \alpha^1_{n,l}+\rho_s \alpha^2_{n,l}}\right)=\frac{b\alpha_{n,l}^1\overline{\lambda_n^l}}{{\psi}_{n}^l in}=\frac{\overline{\lambda_n^l}}{{\psi}_{n}^l in}\neq 0 \, ( \text{ using } \eqref{negrealpart},\, \eqref{nmaxequ-psi_nl}) .
							\end{equation}	
							Thus	$\mc B_{\rho}^* \xi^*_{n,l}\neq0.$
						\end{proof}
						\subsubsection{\textbf{Proof of \Cref{nmaxthm_pos bd}}}
						\begin{proof}	Let $(\sigma_T, v_T, \tilde{S}_T )^\top \in \mathcal Z_{m,m}.$ From \Cref{nmaxbasis_representation}, we have
							\begin{equation}\label{nmaxest_alphanl bd}
								\begin{pmatrix}
									\sigma_T\\v_T\\\tilde{S}_T
								\end{pmatrix}=\sum\limits_{n\in\mathbb{Z}^*}\sum\limits_{l=1}^{3}c_{n,l} \, \xi^*_{n,l} \quad \mathrm{with} \quad 
								\sum\limits_{n\in\mathbb{Z}^*}\sum\limits_{l=1}^{3}\left| c_{n,l}\right|^2   < \infty .
							\end{equation}
							Then the corresponding solution $(\sigma(t), v(t), \tilde{S}(t))^\top$ of the adjoint problem \eqref{nmaxeqadj} can be written as
							\begin{equation}\label{nmaxrepressoln bd}
								\begin{pmatrix}
									\sigma\\v\\\tilde{S}
								\end{pmatrix}(t,x)=\sum\limits_{n\in\mathbb{Z}^*}\sum\limits_{l=1}^{3}c_{n,l}e^{\overline{\lambda_n^l}(T-t)} \xi^*_{n,l}(x). 
							\end{equation}
							In particular, using the expression of $\xi^*_{n,l}$  \eqref{nmaxrepressoln bd}, we get
							\begin{align}
								&\sigma(t,x)=\sum\limits_{n\in\mathbb{Z}^*}^{\infty}\sum\limits_{l=1}^{3}\frac{c_{n,l}}{\psi_{n,l}}e^{\overline{\lambda_n^l}(T-t)}e^{inx} , \quad \forall\, t\in (0,T), \quad x\in (0,2\pi),\label{nmaxrepsigma bd}\\
								&v(t,x)=\sum\limits_{n\in\mathbb{Z}^*}^{\infty}\sum\limits_{l=1}^{3}\frac{c_{n,l}}{\psi_{n,l}}\alpha_{n,l}^2e^{\overline{\lambda_n^l}(T-t)}e^{inx}, \quad \forall\, t\in (0,T), \quad x\in (0,2\pi),\label{nmaxrepv bd}\\
								\label{st obs}		&\tilde S(t,x)=\sum\limits_{n\in\mathbb{Z}^*}^{\infty}\sum\limits_{l=1}^{3}\frac{c_{n,l}}{\psi_{n,l}}\alpha_{n,l}^3 e^{\overline{\lambda_n^l}(T-t)}e^{inx}, \quad \forall\, t\in (0,T), \quad x\in (0,2\pi).
							\end{align} 
							Then, in a similar technique used in the proof of \Cref{nmaxthm_pos}, we get a positive constant $C$, independent of $n$, and a large $N\in \mathbb{N}$, such that
							\begin{align}\label{nmaxobsleft2 bd}
								&\sum\limits_{|n|> N}\left(\left| \sum\limits_{l=1}^{3}\frac{c_{n,l}}{\psi_{n,l}}\right|^2+\left| \sum\limits_{l=1}^{3}\frac{c_{n,l}}{\psi_{n,l}}\alpha_{n,l}^2\right|^2+\left| \sum\limits_{l=1}^{3}\frac{c_{n,l}}{\psi_{n,l}}\alpha_{n,l}^3\right|^2 \right)\notag\\
								& \hspace{4cm}\leq  C  \sum\limits_{|n|> N}\sum\limits_{l=1}^{3}\left|\frac{c_{n,l}}{\psi_{n,l}} \right|^2  .
							\end{align}
							Putting $x=2\pi$ in \eqref{nmaxrepsigma bd} and \eqref{nmaxrepv bd}, we have
							\begin{align}
								&\sigma(t,2\pi)=\sum\limits_{n\in\mathbb{Z}^*}^{\infty}\sum\limits_{l=1}^{3}\frac{c_{n,l}}{\psi_{n,l}}e^{\overline{\lambda_n^l}(T-t)}, \quad \forall\, t\in (0,T),\label{nmaxrepsigmatwopi bd}\\
								&v(t,2\pi)=\sum\limits_{n\in\mathbb{Z}^*}^{\infty}\sum\limits_{l=1}^{3}\frac{c_{n,l}}{\psi_{n,l}}\alpha_{n,l}^2 \, e^{\overline{\lambda_n^l}(T-t)}, \quad \forall\, t\in (0,T),\label{nmaxrepvtwopi bd}\\
								\label{nmaxrepstress}	&\tilde S(t,2\pi)=\sum\limits_{n\in\mathbb{Z}^*}^{\infty}\sum\limits_{l=1}^{3}\frac{c_{n,l}}{\psi_{n,l}}\alpha_{n,l}^3 \, e^{\overline{\lambda_n^l}(T-t)}, \quad \forall\, t\in (0,T).
							\end{align} 
							Now the observation term becomes
							\begin{align}\label{obs term bd}
								|bu_s \sigma(t,2\pi)+b\rho_s v(t,2\pi)|=\left|\sum\limits_{n\in\mathbb{Z}^*}^{\infty}\sum\limits_{l=1}^{3}\frac{c_{n,l}}{\psi_{n,l}}(bu_s+\alpha_{n,l}^2 b\rho_s)e^{\overline{\lambda_n^l}(T-t)}\right|.
							\end{align}
							Using \Cref{propI-4}, along with \eqref{nmaxobsleft2 bd} and \eqref{obs term bd}, for $T>{2\pi}\left(\frac{1}{|\beta_1|}+\frac{1}{|\beta_2|}+\frac{1}{|\beta_3|}\right)$, we have the following inequality:
							\begin{align}\label{nmax ob}
								&\sum\limits_{|n|> N}\left(\left| \sum\limits_{l=1}^{3}\frac{c_{n,l}}{\psi_{n,l}}\right|^2+\left| \sum\limits_{l=1}^{3}\frac{c_{n,l}}{\psi_{n,l}}\alpha_{n,l}^2\right|^2+\left| \sum\limits_{l=1}^{3}\frac{c_{n,l}}{\psi_{n,l}}\alpha_{n,l}^3\right|^2 \right)\notag\\
								& \quad \quad\leq  C \int_{0}^{T} \left|\sum\limits_{|n|> N}\sum\limits_{l=1}^{3}\frac{c_{n,l}}{\psi_{n,l}}(u_s+\alpha_{n,l}^2 b)e^{\overline{\lambda_n^l}(T-t)}\right|^2 \, \rd t.
							\end{align}
							Then by \ref{nmaxsimpleeigen} and \Cref{non zero ob}, the missing finitely many exponential ( for $|n| \leq N$) can be added one by one in the inequality \eqref{nmax ob} as the required
							gap condition for the eigenvalues holds. 
							Thus finally we deduce the following observability inequality
							\begin{align}\label{obs 21}
								&\int_{0}^{2\pi} |\sigma_T(x)|^{2} \ \rd x \; + \; \int_{0}^{2\pi} |v_T(x)|^{2} \ \rd x + \; \int_{0}^{2\pi} |\tilde{S}_T(x)|^{2} \ \rd x\notag\\
								&\quad \quad \quad \quad \leq C \int_{0}^{T} \left|u_s \sigma(t,2\pi)+b v(t,2\pi)\right|^2 \, \rd t,
							\end{align}
							provided $T>{2\pi}\left(\frac{1}{|\beta_1|}+\frac{1}{|\beta_2|}+\frac{1}{|\beta_3|}\right)$. Hence \Cref{nmaxthm_pos bd} is proved.
						\end{proof}
						
						In a similar approach as density control case, we can also prove the exact controllability result (see \Cref{nmaxremnul}) of the system \eqref{nmaxeq3 bd}, when control acts only on the velocity or stress. In the following sections, we just indicate the changes according to the system \eqref{nmaxeq3 bd} with \eqref{velocity} and \eqref{nmaxeq3 bd} with \eqref{stress}.
						{\begin{rem}
								The proof of the observability inequality for the case of multiple eigenvalues can be demonstrated in a similar approach as done in the case of interior controllability in \Cref{sec_multi_ev_b}.
						\end{rem}}
						\subsection{Control in velocity}
						Here the control operator $\mathcal{B}_{u} \in \mathcal{L}(\mathbb{C};\mc D (\mc A^*)')$ is defined by 
						\begin{equation*}
							\ip{\mc B_{u} r(t)}{(\phi, \psi, \xi)}_{\mc D (\mc A^*)',\mc D (\mc A^*)}=r\left(b\rho_s\overline{\phi(2\pi)}+\rho_s u_s\overline{\psi(2\pi)}-\overline{\xi(2\pi)}\right).
						\end{equation*}
						Clearly $\mc B_{u} $ is well-defined as $\mc B_{u} r$ is continuous on $H^1(0,2\pi)\times H^1(0,2\pi)\times H^1(0,2\pi)$ (by the embedding theorem $H^1(0, 2\pi)\hookrightarrow C^0[0,2\pi]$). Its adjoint $\mc B_{u} ^* \in \mathcal{L}(\mc D (\mc A^*); \mathbb{C})$ is given by
						\begin{equation}\label{nmaxeqcontr bd_vel}
							\mathcal{B}_{u} ^*(\phi, \psi, \xi)=\mathbbm{1}_{\{x=2\pi\}}\begin{pmatrix}
								b\rho_s\\\rho_s u_s\\-1
							\end{pmatrix}(\phi, \psi, \xi) =b\rho_s{\phi(2\pi)}+\rho_s u_s{\psi(2\pi)}-{\xi(2\pi)}.
						\end{equation}
						As in the density control case, here also one can prove that the control operator satisfies the admissibility condition \eqref{adm}.
						Thus for any $(\rho_0,u_0, S_0)^{\top} \in \mathcal Z_{m,m}$ and $r \in L^2(0,T)$, the system \eqref{nmaxeq3 bd}-\eqref{velocity} has a unique solution and the solution satisfies: 
						\begin{align*}
							\|(\rho, u, S)\|_{C([0,T]; (L^2(0, 2\pi)^3)} \leqslant  C \Big(\|(\rho_0,u_0, S_0)\|_{\mathcal Z}+ \|r\|_{L^2(0,T)}\Big), 
						\end{align*}
						where $C=C(T)$ is a positive constant, independent of $\rho_0, u_0, S_0, r$ and $t$.
						\begin{prop}\label{nmaxthobs1 bd_vel} 
								The system \eqref{nmaxeq3 bd}-\eqref{velocity} is exactly controllable in $\mathcal {{ Z}}_{m,m}$ at time $T>0$ using a  boundary control $r$ in $L^2(0,T)$ acting only in the velocity, if and only if, there exists a positive constant $C_T>0$  such that for any $(\sigma_T, v_T, \tilde{S}_T)^{\top}\in \mathcal Z_{m,m}$, 
								$(\sigma, v, \tilde{S})$, the solution of \eqref{nmaxeqadj}, satisfies the following observability inequality:
								\begin{multline}\label{nmaxobsden bd_vel}
									\int_{0}^{2\pi} |\sigma_T(x)|^{2} \, \rd x+\int_{0}^{2\pi} |v_T(x)|^{2} \, \rd x+\int_{0}^{2\pi} |\tilde{S}_T(x)|^{2}\, \rd x\leq C_T\int_{0}^{T}|b\rho_s \sigma(t,2\pi)\\+\rho_s u_s v(t,2\pi)-\tilde{S}(t,2\pi)|^2\, \rd t.
								\end{multline}
							\end{prop}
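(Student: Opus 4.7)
The plan is to follow exactly the same duality argument used in the density control case (Proposition 4.1). Consider the input-to-final-state map $\mathcal{F}_T: L^2(0,T) \to (L^2(0,2\pi))^3$ defined by $\mathcal{F}_T(r) = (\rho(T,\cdot), u(T,\cdot), S(T,\cdot))^\top$, where $(\rho, u, S)^\top$ solves the system \eqref{nmaxeq3 bd}-\eqref{velocity} with zero initial data $(\rho_0, u_0, S_0) = (0,0,0)$. The well-posedness estimate (analogous to \Cref{prop wel} but obtained from the admissibility of $\mathcal{B}_u$) guarantees that $\mathcal{F}_T$ is a bounded linear operator. Exact controllability of \eqref{nmaxeq3 bd}-\eqref{velocity} in $\mathcal{Z}_{m,m}$ is then equivalent to the surjectivity of $\mathcal{F}_T$ onto $\mathcal{Z}_{m,m}$.

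By a standard functional-analytic fact, surjectivity of $\mathcal{F}_T$ is equivalent to the existence of a constant $C > 0$ such that
\begin{equation*}
\| \mathcal{F}_T^* \mathbf{z} \|_{L^2(0,T)} \geq C \| \mathbf{z} \|_{\mathcal{Z}_{m,m}}, \qquad \forall\, \mathbf{z} \in \mathcal{Z}_{m,m}.
\end{equation*}
Hence the crux is to compute $\mathcal{F}_T^*$ explicitly and show that it equals the observation operator appearing on the right-hand side of \eqref{nmaxobsden bd_vel}. To do this, I would take $\mathbf{z} = (\sigma_T, v_T, \tilde{S}_T)^\top \in \mathcal{Z}_{m,m}$, let $(\sigma, v, \tilde{S})$ solve the adjoint problem \eqref{nmaxeqadj} with terminal data $\mathbf{z}$, multiply the equations of \eqref{nmaxeq3 bd}-\eqref{velocity} (with zero initial data) by $(\sigma, v, \tilde S)$ weighted by $b, \rho_s, \kappa/\mu$ respectively (matching the inner product on $\mathcal{Z}$), and integrate by parts on $(0,T) \times (0,2\pi)$. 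The bulk terms cancel because $(\sigma, v, \tilde{S})$ solves \eqref{nmaxeqadj}. The boundary terms at $x=0$ and $x=2\pi$ combine using the periodic conditions on $\sigma, v, \tilde S$ and the velocity-jump condition $u(t,0) = u(t,2\pi) + r(t)$. A direct computation then yields
\begin{equation*}
\langle \mathcal{F}_T(r), \mathbf{z} \rangle_{\mathcal{Z}} = \int_0^T r(t) \bigl( b \rho_s \sigma(t, 2\pi) + \rho_s u_s v(t, 2\pi) - \tilde{S}(t, 2\pi) \bigr) \, \rd t,
\end{equation*}
which is precisely the expected expression, consistent with the formula \eqref{nmaxeqcontr bd_vel} for $\mathcal{B}_u^*$. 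This identifies $\mathcal{F}_T^* \mathbf{z}(t) = b\rho_s \sigma(t,2\pi) + \rho_s u_s v(t,2\pi) - \tilde{S}(t,2\pi)$, and the equivalence \eqref{nmaxobsden bd_vel} follows.

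The main technical point, though routine, is the boundary term bookkeeping during the integration by parts: one must track the contributions from all three equations (transport of density, velocity, and Maxwell's relaxation) and verify that the non-periodic jump introduced through $r(t)$ produces exactly the linear combination $b\rho_s \sigma + \rho_s u_s v - \tilde{S}$ at $x=2\pi$. A minor rigor issue is that the above identity must first be established for smooth data, where the pointwise traces are classical, and then extended to general $L^2$ initial/terminal data by a density argument. This extension is justified by the admissibility condition analogous to \eqref{adm} satisfied by $\mathcal{B}_u$, which can be proved exactly as indicated for $\mathcal{B}_\rho$ using the series representation of the adjoint and the upper bound in the Ingham-type inequality \eqref{I-4}. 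No new idea beyond the density case is required.
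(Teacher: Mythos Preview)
Your proposal is correct and follows precisely the duality argument the paper uses for the density case (Proposition~\ref{nmaxthobs1 bd}), to which the paper explicitly refers for the velocity case. The identification $\mathcal{F}_T^*\mathbf{z}(t)=\mathcal{B}_u^*\mathbb{T}^*_{T-t}\mathbf{z}=b\rho_s\sigma(t,2\pi)+\rho_s u_s v(t,2\pi)-\tilde S(t,2\pi)$ via integration by parts, together with the standard equivalence between surjectivity of $\mathcal{F}_T$ and coercivity of $\mathcal{F}_T^*$, is exactly the paper's route; you have in fact supplied more detail on the boundary-term bookkeeping and the density/admissibility extension than the paper does.
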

							
							\begin{lem}
								Let us recall the eigen functions $\mc E(\mc A^*)= \left\lbrace \xi^*_{n,l}\: |\: 1\leq l\leq 3, n\in\mathbb{Z}^*\right\rbrace$ of the unbounded operator $\mc A^*.$ Then we have the following result:
								$$\mc B_{u} ^* \xi\neq 0, \forall \, \xi \in \mc E(\mc A^*). $$
							\end{lem}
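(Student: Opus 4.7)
The plan is to mimic the density argument in \Cref{non zero ob}, exploiting the eigenvalue equation $\mathcal{A}^*\xi^*_{n,l}=\overline{\lambda_n^l}\xi^*_{n,l}$ to rewrite the observation functional in a form where non-vanishing is manifest. First I would evaluate $\mathcal{B}_u^*$ on $\xi^*_{n,l}$ directly. Since $e^{in\cdot 2\pi}=1$, from \eqref{nmaxcoeffofxi} and \eqref{nmaxeqcontr bd_vel} we get
\begin{equation*}
\mathcal{B}_u^*\xi^*_{n,l}=\frac{1}{\psi_{n,l}}\Big(b\rho_s\,\alpha_{n,l}^1+\rho_s u_s\,\alpha_{n,l}^2-\alpha_{n,l}^3\Big),
\end{equation*}
so the issue reduces to showing that the bracket above is nonzero for every $n\in\mathbb Z^*$ and $l\in\{1,2,3\}$.

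Next I would exploit the second component of the eigenvalue system. Looking at the second row of $\mathcal{A}^*$ applied to $\xi^*_{n,l}$, one obtains
\begin{equation*}
b\,(in)\,\alpha_{n,l}^1+u_s\,(in)\,\alpha_{n,l}^2-\frac{in}{\rho_s}\,\alpha_{n,l}^3=\overline{\lambda_n^l}\,\alpha_{n,l}^2,
\end{equation*}
which after multiplying by $\rho_s$ and dividing by $in$ yields the clean identity
\begin{equation*}
b\rho_s\,\alpha_{n,l}^1+\rho_s u_s\,\alpha_{n,l}^2-\alpha_{n,l}^3=\frac{\rho_s\,\overline{\lambda_n^l}\,\alpha_{n,l}^2}{in}.
\end{equation*}
Hence
\begin{equation*}
\mathcal{B}_u^*\xi^*_{n,l}=\frac{\rho_s\,\overline{\lambda_n^l}\,\alpha_{n,l}^2}{in\,\psi_{n,l}}.
\end{equation*}

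It remains to argue that none of the three factors on the right can vanish. By \eqref{negrealpart}, the eigenvalues $\lambda_n^l$ have strictly negative real parts for every $n\in\mathbb Z^*$, so $\overline{\lambda_n^l}\neq 0$. From the explicit formula \eqref{nmaxequ-xi_n^*}, $\alpha_{n,l}^2=(\overline{\lambda_n^l}-inu_s)/(in\rho_s)$, and this vanishes only if $\overline{\lambda_n^l}=inu_s$, which is purely imaginary and again contradicts \eqref{negrealpart}. Finally $\psi_{n,l}\neq 0$ by the standing assumption \eqref{nmaxsimpleeigen}, as used throughout. Combining these three facts delivers $\mathcal{B}_u^*\xi^*_{n,l}\neq 0$, and the lemma follows. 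The only subtle point in this plan is the verification that $\alpha_{n,l}^2\neq 0$; everything else is essentially bookkeeping with the eigenvalue equation, so I expect no real obstacles.
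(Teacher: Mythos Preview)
Your argument is correct and essentially identical to the paper's own proof: both evaluate $\mathcal{B}_u^*\xi^*_{n,l}$, use the second component of the eigenvalue equation $\mathcal{A}^*\xi^*_{n,l}=\overline{\lambda_n^l}\xi^*_{n,l}$ to rewrite the bracket as $\rho_s\overline{\lambda_n^l}\alpha_{n,l}^2/(in)$, and then conclude non-vanishing from $\Re(\lambda_n^l)<0$ (which forces both $\overline{\lambda_n^l}\neq 0$ and $\alpha_{n,l}^2\neq 0$) together with $\psi_{n,l}\neq 0$. The only cosmetic difference is that the paper substitutes the explicit form of $\alpha_{n,l}^2$ to display the final expression as $-\overline{\lambda_n^l}(\overline{\lambda_n^l}-inu_s)/(n^2\psi_{n,l})$, whereas you argue the non-vanishing of $\alpha_{n,l}^2$ separately.
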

							\begin{proof}
								From \eqref{nmaxeqcontr bd_vel} and \eqref{nmaxcoeffofxi}, we have
								\begin{equation}\label{nmaxb*xi}
									\mathcal{B}_{u} ^*\xi^*_{n,l}=\frac{1}{\psi_{n,l}}(b\rho_s+\alpha_{n,l}^2 \rho_s u_s-\alpha_{n,l}^3).
								\end{equation}
								The second equation of the eigen-equations $\mathcal{A}^*\xi^*_{n,l}=\overline{\lambda_n^l}\xi^*_{n,l}$ gives
								\begin{equation}\label{nmaxeigenequ}
									b\rho_s+\alpha_{n,l}^2 \rho_s u_s-\alpha_{n,l}^3=\frac{\rho_s\alpha_{n,l}^2\overline{\lambda_n^l}}{in}.
								\end{equation}	
								Thus from \eqref{nmaxb*xi}, \eqref{nmaxeigenequ} and \eqref{nmaxequ-xi_n^*}, we have
								\begin{equation*}
									\mathcal{B}_{u} ^*\xi^*_{n,l}=-\frac{\rho_s\overline{\lambda_n^l}(\overline{\lambda_n^l}-inu_s)}{n^2\psi_{n,l}\rho_s}\neq 0,
								\end{equation*}
								since all the eigenvalues have negative real part.
							\end{proof}

							\subsection{Control in stress}
							The control operator $\mathcal{B}_{S}  \in \mathcal{L}(\mathbb{C};\mc D (\mc A^*)')$ defined by 
							\begin{equation*}
								\ip{\mc B_{ S} p(t)}{(\phi, \psi, \xi)}_{\mc D (\mc A^*)',\mc D (\mc A^*)}=-p\psi(2\pi).
							\end{equation*}
							Clearly $\mc B_{S} $ is well-defined as $\mc B_{S} p$ is continuous on $H^1(0,2\pi)\times H^1(0,2\pi)\times H^1(0,2\pi)$ (by the embedding theorem $H^1(0, 2\pi)\hookrightarrow C^0[0,2\pi]$). Its adjoint $\mc B_{S}^* \in \mathcal{L}(\mc D (\mc A^*); \mathbb{C})$ is
							\begin{equation}\label{nmaxeqcontr bd st}
								\mathcal{B}_{S} ^*(\phi, \psi, \xi)=\mathbbm{1}_{\{x=2\pi\}}\begin{pmatrix}
									0\\-1\\0
								\end{pmatrix}(\phi, \psi, \xi) =-{\psi(2\pi)},
							\end{equation}
							and also $\mc B_{S} ^* \xi\neq 0, \forall \,\xi \in \mc E(\mc A^*). $
							
							\section{Small time lack of controllability}\label{lack sec}
							{\subsection{Interior control} In this section, we study the lack of exact controllability of the system \eqref{nmaxeq3} when the localized interior control is acting on the density equation. Similar result holds true for the remaining two cases (control in velocity or control in stress). We establish the result by violating the observability inequality on an appropriate solution of the corresponding adjoint system. Our proof is in the same spirit of \cite[Section 3]{Beauchard}. We first find a candidate function as a terminal data of the transport equation and exploit the lack of controllability of the same in small time.

								Without loss of generality, we consider the control domain $\mathcal{O}=(l_1,l_2), 0\leq l_1<l_2<2\pi.$	
								\begin{theorem}\label{lack theorem}
									Let us denote $\beta=\min\{|\beta_i|, i=1,2,3\}$ and	consider $\beta T<\max\{l_1, 2\pi-l_2\}$. Then the system \eqref{nmaxeq3} is not exactly controllable in time $T$ by means of the controls $f_i\in L^2\left( 0,T;L^2(\mathcal{O})\right), i=1,2,3$ acting on every equations of \eqref{nmaxeq3}.
								\end{theorem}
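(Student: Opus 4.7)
The plan is to violate the observability inequality equivalent to the asserted exact controllability. With three localized interior controls in the same domain $\mathcal{O}=(l_1,l_2)$, the HUM duality (cf.\ \Cref{nmaxthobs1}) reduces the exact controllability of \eqref{nmaxeq3} to the estimate
$$\|(\sigma_T,v_T,\tilde S_T)\|_{(L^2(0,2\pi))^3}^2 \;\le\; C_T \int_0^T\!\!\int_{\mathcal{O}} \bigl(|\sigma|^2+|v|^2+|\tilde S|^2\bigr)\,\rd x\,\rd t$$
for every solution of the adjoint system \eqref{nmaxeqadj}. I will refute it by constructing a sequence of adjoint solutions whose $L^2$-norm at time $T$ stays bounded below while their observation on $[0,T]\times\mathcal{O}$ tends to zero, following the high-frequency wave-packet strategy of \cite[Section 3]{Beauchard}: a packet concentrated at high frequency rides along its characteristic at speed $\beta_{l_0}$, and if that characteristic never enters $\mathcal{O}$ within time $T$, the packet is invisible to the observation.

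First I would fix $l_0\in\{1,2,3\}$ with $|\beta_{l_0}|=\beta$, choosing the sign of $\beta_{l_0}$ so that the hypothesis $\beta T<\max\{l_1,2\pi-l_2\}$ yields the corresponding geometric inequality: $\beta T<2\pi-l_2$ if $\beta_{l_0}>0$ (take $x_0=l_2+\epsilon$), and $\beta T<l_1$ if $\beta_{l_0}<0$ (take $x_0=l_1-\epsilon$), for a small $\epsilon>0$. Then the point $y(t):=x_0+\beta_{l_0}(T-t)\pmod{2\pi}$ stays in $(0,2\pi)\setminus\overline{\mathcal{O}}$ with uniformly positive distance from $\overline{\mathcal{O}}$ for every $t\in[0,T]$. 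I define the terminal data as a high-frequency Gaussian packet
$$\Phi^N_T(x) := \sum_{|n-N|\le N^{1/3}} g_n^N\,e^{-inx_0}\,\xi^*_{n,l_0}(x),\qquad g_n^N := N^{-1/6}\,e^{-(n-N)^2/N^{2/3}},$$
and let $\Phi^N(t,\cdot)=(\sigma^N,v^N,\tilde S^N)^\top(t,\cdot)$ be the corresponding adjoint solution, which by \eqref{nmaxrepressoln bd} reads
$$\Phi^N(t,x) \;=\; \sum_{|n-N|\le N^{1/3}} g_n^N\,e^{-inx_0}\,e^{\overline{\lambda_n^{l_0}}(T-t)}\,\xi^*_{n,l_0}(x).$$
The Riesz-basis inequality \eqref{nmaxequ-est-adj-z_0} together with the bounded asymptotics \eqref{nmaxcgs-theta_nl} of $\psi_{n,l_0}$ yield $\|\Phi^N_T\|_{\mathcal Z}\ge c>0$ uniformly in $N$.

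For the observation, I substitute $\overline{\lambda_n^{l_0}}=-\omega_{l_0}-i\beta_{l_0}n+O(1/n)$ from \Cref{sp} and the coefficient asymptotics \eqref{nmaxcgsalpha1}--\eqref{nmaxcgsalpha3}, so that each component of $\Phi^N(t,x)$ takes the form
$$\phi^N_j(t,x)\;=\;e^{-\omega_{l_0}(T-t)}\sum_{|n-N|\le N^{1/3}}\frac{g_n^N\,\alpha^j_{n,l_0}}{\psi_{n,l_0}}\,e^{in(x-y(t))}\bigl(1+O(1/n)\bigr).$$
A standard Gaussian-sum estimate then shows that $\phi^N_j(t,\cdot)$ is concentrated in an $O(N^{-1/3})$-neighbourhood of $y(t)$ and decays faster than any polynomial in $N$ outside; since $y(t)$ stays at a uniformly positive distance from $\mathcal{O}$, we get $\|\phi^N_j(t,\cdot)\|_{L^2(\mathcal{O})}\to 0$ as $N\to\infty$ uniformly in $t\in[0,T]$. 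Therefore the right-hand side of the observability inequality tends to zero, contradicting the uniform lower bound on the left.

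The main obstacle will be making the concentration estimate fully rigorous: one must uniformly control the $O(1/n)$ errors in $\overline{\lambda_n^{l_0}}$ and in $\alpha^j_{n,l_0}/\psi_{n,l_0}$ across the window $|n-N|\le N^{1/3}$ and for every $t\in[0,T]$, so that the leading phase $n(x-y(t))$ genuinely dominates after summation. A clean way is to replace the true sum by its leading-order Gaussian analogue with constant coefficients, bound the remainder via Cauchy--Schwarz and the rapid decay of $g_n^N$ outside a narrower sub-window, and exploit the uniform boundedness of $e^{-\omega_{l_0}(T-t)}$ on $[0,T]$. The finite set of exceptional indices where $\mathcal{A}^*$ may admit generalized (rather than simple) eigenfunctions (cf.\ \Cref{rem multi}) is harmless, since $N$ may be taken so large that the summation window avoids them. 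The very same packet construction handles the small-time lack of boundary controllability alluded to at the end of the theorem block: at $x=2\pi$ the observation is a Gaussian sum of the same type, which vanishes as $N\to\infty$ whenever $y(t)\ne 2\pi\pmod{2\pi}$ for all $t\in[0,T]$.
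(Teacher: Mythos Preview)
Your strategy is correct and rests on the same core idea as the paper: build high-frequency adjoint data on a single spectral branch $l_0$ so that the solution approximately transports at speed $\beta_{l_0}$ and never sees $\mathcal{O}$. The execution, however, is different. The paper follows \cite{Beauchard} verbatim: it fixes one $\hat\sigma_T\in C_c^\infty(0,2\pi)$ whose transport trajectory at speed $\beta_{l_0}$ misses $\mathcal{O}$, then applies the differential operator $P^N(-i\partial_x)=\prod_{|j|\le N}(-i\partial_x-j)$, which annihilates all Fourier modes $|n|\le N$ while \emph{preserving the support}. The resulting transport comparison solution therefore vanishes identically on $[0,T]\times\mathcal{O}$, and one only needs the $O(1/n)$ eigenvalue asymptotics of \Cref{sp} to bound the $L^2((0,T)\times(0,2\pi))$-difference between the true adjoint solution and this transport solution by $C/N$ times the terminal norm; no physical-space concentration estimate is needed at all. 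Your Gaussian wave packet instead localises simultaneously in frequency and along a single ray $y(t)$, which makes the geometric-optics picture very explicit but requires a quantitative concentration bound for the packet away from $y(t)$. Both routes work; the paper's is technically lighter because the comparison object is exactly zero on $\mathcal{O}$.

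Two minor points in your write-up. First, you cannot ``choose the sign'' of $\beta_{l_0}$, which is determined by the cubic \eqref{nmaxpolybeta}; what you actually do is, given that sign and whichever of $l_1$, $2\pi-l_2$ realises the maximum, place $x_0$ at the appropriate end of the complementary interval so that the ray $t\mapsto y(t)$ stays inside it --- this is always possible under the hypothesis regardless of $\mathrm{sgn}\,\beta_{l_0}$. Second, with your parameters the Gaussian weight $e^{-(n-N)^2/N^{2/3}}$ is still of order $e^{-1}$ at the truncation boundary $|n-N|=N^{1/3}$, so the truncated sum is not yet well approximated by the full Gaussian and the ``faster than any polynomial'' decay claim does not follow directly; either enlarge the window (e.g.\ $|n-N|\le N^{1/2}$) so that the discarded tail is $O(e^{-N^{1/3}})$, or drop the truncation altogether and note that the finitely many low modes where the asymptotics of \Cref{sp} fail carry exponentially small Gaussian weight anyway.
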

								\begin{proof}
							{\color{blue}Throughout this proof,	without loss of generality, we assume  $\beta=|\beta_1|$ and $\beta_1>0$.}	Since $\beta T<\max\{l_1, 2\pi-l_2\}$, there exists a nontrivial function $\hat \sigma_T \in C_c^{\infty}(0,2\pi)$ 
									such that the solution of the following equation \begin{equation}
										\begin{cases}
											\hat{\sigma}_t-\beta\hat{\sigma}_x-\omega_1\hat \sigma=0, &  \mbox{ in } (0,T) \times (0, 2\pi),\\
											\hat \sigma(t,0)=\hat \sigma(t, 2\pi), &  \mbox{ in } (0,T),\\
											\hat \sigma(T,x)=\hat \sigma_T(x), &  \mbox{ in } (0, 2\pi),
										\end{cases}
									\end{equation} 
									satisfies that $\text{supp}(\hat{\sigma}) \cap [0,T]\times \mathcal{O} =\emptyset $ (see also \cite[Section 3]{Beauchard}). We will use the asymptotic behavior of the spectrum of the linearized operator to show the lack of controllability.  At first, we construct a high-frequency version of the candidate function $\hat \sigma_T$. Let $N>0$ be a fixed integer. Next, we define the polynomial
									\begin{equation}\label{pn}
										P^N(X):=\prod_{j=-N}^{N}(X-j),
									\end{equation}
									and the function
									\begin{equation*}
										\hat{\sigma}_T^N:=P^N\left(-i\frac{d}{dx}\right)\hat{\sigma}_T.
									\end{equation*}
									Since $\hat{\sigma}_T^N$ is the image of $\hat{\sigma}_T$ by a differential operator, we have $\text{supp}({\hat{\sigma}_T^N})\subset \text{supp}(\hat{\sigma}_T)$. Let us write
									$$
									\hat{\sigma}_T=\sum_{n \in \mathbb{Z}} a_n e^{ in   x} \text {, where } a_n=\frac{1}{2\pi}\int_{0}^{2\pi} e^{ in   x} \hat{\sigma}_T(x) d x, \forall n \in \mathbb{Z} .
									$$
									Then using the definition of $P^N$ and $\hat\sigma_T^N$, we can write $\hat\sigma_T^N$ in the following:
									\begin{equation*}
										\hat{\sigma}_T^N(x)=\sum_{n\in\mathbb{Z}}a_n\prod_{j=-N}^{N}\left(-i\frac{d}{dx}-j\right)e^{inx}=\sum_{n\in\mathbb{Z}}a_n\prod_{j=-N}^{N}\left(n-j\right)e^{inx}=\sum_{n\in\mathbb{Z}}a_nP^N(n)e^{inx},
									\end{equation*}
									for $(t,x)\in (0,T)\times(0,2\pi)$. Note that $P^N(n)=0$ for all $|{n}|\leq N$ and therefore
									\begin{equation}\label{term}
										\hat{\sigma}_T^N(x)=\sum_{{|n|}\geq N+1}a_nP^N(n)e^{inx}.
									\end{equation}
									Next, we denote the function $\left(\hat{\sigma}^N,\hat{v}^N, \hat{S}^N \right)$ is the solution of the following equation
									\begin{equation}\label{tr}
										\begin{cases}
											\hat{\sigma}^N_t-\beta\hat{\sigma}^N_x-\omega_1\hat \sigma^N=0, &  \mbox{ in } (0,T) \times (0, 2\pi),\\	\hat{v}^N_t-\beta\hat{v}^N_x-{\color{blue}\omega_1}\hat v^N=0, &  \mbox{ in } (0,T) \times (0, 2\pi),\\	\hat{S}^N_t-\beta\hat{S}^N_x-{\color{blue}\omega_1}\hat S^N=0, &  \mbox{ in } (0,T) \times (0, 2\pi),\\
											\hat \sigma^N(t,0)=\hat \sigma^N(t, 2\pi), \hat v^N(t,0)=\hat v^N(t, 2\pi), \hat S^N(t,0)=\hat S^N(t, 2\pi), &  \mbox{ in } (0,T),\\
											\left(\hat{\sigma}^N,\hat{v}^N, \hat S^N\right)(T,x)=\hat{\sigma}_T^N(x)\left(1,  -\frac{ \beta_l+u_s }{ \rho_s}, \frac{\mu\left( \beta_l+u_s\right) }{\kappa \rho_s \beta_l}\right), &  \mbox{ in } (0, 2\pi).
										\end{cases}
									\end{equation} 
									Let $(\sigma^N, v^N, \tl S^N)$ be the solution of the following adjoint system:
									\begin{equation}\label{nmaxeqadj lack}
										\begin{cases}
											\partial_t \sigma^N +u_s\pa_x\sigma^N+{\rho_s}\pa_x v^N =0, & \mbox{ in } (0,T) \times (0, 2\pi), \\
											\partial_t  v^N+b \partial_{x}  \sigma^N+u_s\pa_x  v^N - \frac{1}{\rho_s} \partial_x\tilde{S}^N= 0, &  \mbox{ in } (0,T) \times (0, 2\pi),\\
											\partial_t\tilde{S}^N-\frac{1}{\kappa}\tilde {S}^N - \frac{\mu}{\kappa} \partial_x  v^N= 0, &  \mbox{ in } (0,T) \times (0, 2\pi),\\
											\sigma^N(t,0) = \sigma^N(t, 2\pi),  v^N(t,0) = v^N(t, 2\pi),\:  \tilde S^N(t,0)=\tilde S^N(t,2\pi),&  \mbox{ in } (0,T), \\
											\sigma^N(T,x)=\hat \sigma^N_{T}(x), \quad  v^N(T,x)= \hat v^N_T(x), \quad \tilde{S}^N(T,x)=\hat{S}^N_T(x), & \mbox{ in } (0,2\pi),
										\end{cases}
									\end{equation}
									where $\displaystyle (\hat\sigma^N_{T}, \hat v^N_T, \hat S^N_T)^\top=\sum_{|n|\geq N+1}c^1_n  \,\xi^*_{n,1},  \text{ where } \frac{c^1_n}{\psi_{n,1}} =a_n^N=a_n P^N(n)$ and $\psi_{n,1}, \,\xi^*_{n,1}$ are defined in \eqref{nmaxcoeffofxi}.
									
									\noindent
									Let us denote the $3\times 3$ matrix $R:=\text{diag}(\omega_1, {\color{blue}\omega_1}, {\color{blue}\omega_1}).$
									We write the solutions of the systems \eqref{tr} and \eqref{nmaxeqadj lack} respectively as
									\begin{align}
										\left(\hat{\sigma}^N, 	\hat{v}^N, 	\hat{S}^N\right)^\top(t,x)&=\sum_{|{n}|\geq N+1}a^N_n \, e^{-\beta in(T-t)}e^{inx}e^{-tR}\left(1,  -\frac{ \beta_l+u_s }{ \rho_s}, \frac{\mu\left( \beta_l+u_s\right) }{\kappa \rho_s \beta_l}\right)^{\top},\\
										\left(\sigma^N,v^N,\tilde {S}^N\right)^\top(t,x)&=\sum_{|{n}|\geq N+1}a^N_n \, e^{\overline{\lambda_n^1}(T-t)}e^{inx}\left(1,  \alpha_{n,1}^2,   \alpha_{n,1}^3 \right)^{\top},
									\end{align}
									where $\alpha_{n,1}^2,\alpha_{n,1}^3$
									are given by \eqref{nmaxcoeffofxi}, \eqref{nmaxcgsalpha2} and \eqref{nmaxcgsalpha3}. Note that $\alpha_{n,1}^2$ and $\alpha_{n,1}^3$ converge to $-\frac{ \beta_l+u_s }{ \rho_s}, $ and $\frac{\mu\left( \beta_l+u_s\right) }{\kappa \rho_s \beta_l}$, respectively as $O(1/n).$
									
									Now, we prove that the solution $(\sigma^N, v^N, S^N)$ of \eqref{nmaxeqadj lack} is an approximation of the solution $(\hat{\sigma}^N, \hat{v}^N, \hat{S}^N )$ of \eqref{tr}. Indeed,
									\begin{align*}
										&\norm{\sigma^N(t,\cdot)-\hat{\sigma}^N(t,\cdot)}_{L^2(0,2\pi)}^2\\
										&\leq C\sum_{|{n}|\geq N+1}|{a^N_n}|^2\abs{e^{\overline{\lambda_n^1}(T-t)}-e^{\left({-\omega_1-i \beta n }\right)(T-t)}}^2\\
										&\leq C\sum_{|{n}|\geq N+1}\frac{1}{|{n}|^2}|{a^N_n}|^2,
									\end{align*}
									for all $t\in[0,T]$ and therefore performing integration over $[0,T]$ we have
									\begin{equation*}
										\norm{\sigma^N-\hat{\sigma}^N}_{L^2(0,T, L^2(0,\pi))}^2\leq\frac{C}{|{N}|^2}\sum_{|{n}|\geq N+1}|{a^N_n}|^2.
									\end{equation*}
									Using triangle inequality, we deduce
									\begin{align*}
										\norm{\sigma^N}_{L^2(0,T, L^2(\mathcal{O}))}^2&\leq 	\norm{ \sigma^N-\hat{\sigma}^N}_{L^2(0,T, L^2(\mathcal{O})}^2+	\norm{\hat \sigma^N}_{L^2(0,T, L^2(\mathcal{O}))}^2.
									\end{align*}
									Since the support of $\hat \sigma_N$ does not intersect the domain $[0,T]\times \mathcal{O}$, the second term of the right hand sides vanishes. Thus we get
									\begin{equation}\label{lack 1}
										\norm{ \sigma^N}_{L^2(0,T, L^2(\mathcal{O}))}^2\leq \frac{C}{|{N}|^2}\sum_{|{n}|\geq N+1}|{a^N_n}|^2. 
									\end{equation}
									Similar analysis helps to deduce the following
									\begin{equation}\label{lack 3}
										\norm{ v^N}_{L^2(0,T, L^2(\mathcal{O}))}^2+ 	\norm{ \tilde{S}^N}_{L^2(0,T, L^2(\mathcal{O}))}^2\leq \frac{C}{|{N}|^2}\sum_{|{n}|\geq N+1}|{a^N_n}|^2. 
									\end{equation}
									Next, if possible we assume that the observability for the system \eqref{nmaxeqadj lack} holds. Therefore we have the following:
									\begin{align}\label{lack 2}
										\nonumber	\int_{0}^{2\pi} |\hat\sigma^N_T(x)|^{2} \ \rd x \; + \; \int_{0}^{2\pi} | \hat v^N_T(x)|^{2}& \ \rd x + \; \int_{0}^{2\pi} |\hat{S}^N_T(x)|^{2} \ \rd x 
										\leqslant C_T \bigg[ \int_0^T\int_{\mathcal{O}}|\sigma^N(t,x)|^2\, \rd x\,\rd t\\
										&+\int_0^T\int_{\mathcal{O}}|v^N(t,x)|^2\, \rd x\,\rd t+\int_0^T\int_{\mathcal{O}}|\tilde{S}^N(t,x)|^2\, \rd x\,\rd t\bigg].
									\end{align}
									Using \eqref{lack 1} and \eqref{lack 3} along with the expression of $(\hat\sigma^N_{T}, \hat v^N_T, \hat S^N_T)$, we have from \eqref{lack 2}
									\begin{align}\label{lack 6}
										\nonumber	\int_{0}^{2\pi} |\hat\sigma^N_T(x)|^{2} \ \rd x + \; \int_{0}^{2\pi} | \hat v^N_T(x)|^{2} \ \rd x +& \; \int_{0}^{2\pi} |\hat{S}^N_T(x)|^{2} \ \rd x \leq	
										\frac{C}{|{N}|^2}\bigg[\int_{0}^{2\pi} |\hat \sigma^N_T(x)|^{2} \rd x\\
										&+\int_{0}^{2\pi} |\hat v^N_T(x)|^{2} \rd x+\int_{0}^{2\pi} |\hat S^N_T(x)|^{2} \ \rd x\bigg],
									\end{align}
									which gives a contradiction.
								\end{proof}
								\begin{rem}
									In \Cref{lack theorem}, if the control domain $\mathcal{O}$ is $(0, l_2),$ the we get the lack of controllability when $T<\frac{2\pi-l_2}{\beta}.$ Similarly, one can prove the negative controllability result for the control domain $\mathcal{O}=(l_1, 2\pi)$ at the time $T<\frac{l_1}{\beta}.$  
								\end{rem}
								\subsection{Boundary control} Using the similar argument as done in the interior control case, here we will show that the system \eqref{nmaxeq3 bd}-\eqref{velocity} is not controllable in small time by means of boundary control acting on the velocity. Other boundary control cases \eqref{nmaxeq3 bd}-\eqref{density}, \eqref{nmaxeq3 bd}-\eqref{stress} can be done similarly.
								\begin{theorem}
									Let $0<T<\frac{2\pi}{\beta}$, where $\beta$ is defined in \Cref{lack theorem}. Then the system  \eqref{nmaxeq3 bd} is not controllable in small time by means of boundary control acting in the boundary term in the velocity.
								\end{theorem}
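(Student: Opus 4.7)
The plan is to follow the template of \Cref{lack theorem}, adapted to the boundary trace observation operator $\mathcal{B}_u^*$ from \eqref{nmaxeqcontr bd_vel}. By \Cref{nmaxthobs1 bd_vel}, it suffices to exhibit, for every large integer $N$, a nonzero terminal state $(\sigma_T^N, v_T^N, \tilde S_T^N)\in\mathcal{Z}_{m,m}$ for which the adjoint observation $\mathcal{O}^N(t):=b\rho_s\sigma^N(t,2\pi)+\rho_s u_s v^N(t,2\pi)-\tilde S^N(t,2\pi)$ has $L^2(0,T)$-norm arbitrarily small compared with the $\mathcal{Z}$-norm of the data.

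First I would choose a nontrivial $\hat\sigma_T\in C_c^\infty(0,2\pi)$ whose support, translated by $\beta_1(T-t)$ along the characteristic and reduced modulo $2\pi$, misses the boundary point $x=2\pi$ for every $t\in[0,T]$; the hypothesis $\beta T<2\pi$ makes this possible. Then I would form the high-frequency data $\hat\sigma_T^N:=P^N(-i\partial_x)\hat\sigma_T$ with $P^N$ as in \eqref{pn}, and take the adjoint terminal data as a series in the first eigenfunction family, namely $(\sigma_T^N, v_T^N, \tilde S_T^N)^\top = \sum_{|n|\geq N+1} a_n P^N(n)\,\xi_{n,1}^*$, exactly as in the proof of \Cref{lack theorem}. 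Parseval together with the bounded asymptotics \eqref{nmaxcgsalpha2}--\eqref{nmaxcgsalpha3} yields $\|(\sigma_T^N, v_T^N, \tilde S_T^N)\|_\mathcal{Z}^2 \geq C_0 \sum_{|n|\geq N+1}|a_n P^N(n)|^2$, which is strictly positive for $\hat\sigma_T\neq 0$ and $N$ large.

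The crucial step is an algebraic cancellation. Using $e^{2\pi in}=1$, one writes $\mathcal{O}^N(t) = \sum a_n^N e^{\overline{\lambda_n^1}(T-t)} C_n$ with $C_n := b\rho_s + \rho_s u_s\alpha_{n,1}^2 - \alpha_{n,1}^3$. The first row of the eigenvalue equation for $\xi_{n,1}^*$, manipulated as in \eqref{nmaxeigenequ}, gives $C_n = \rho_s\alpha_{n,1}^2\overline{\lambda_n^1}/(in)$, which is bounded in $n$ and converges to the finite limit $C_\infty = \beta_1(\beta_1+u_s)$ with $C_n - C_\infty = O(1/|n|)$. Substituting the asymptotic $\overline{\lambda_n^1} = -\omega_1 - i\beta_1 n + O(1/|n|)$ splits $\mathcal{O}^N(t)$ into a principal term equal to $C_\infty\,\hat\sigma^N(t,2\pi)$, where $\hat\sigma^N$ solves a transport system analogous to \eqref{tr}, plus an error whose $n$-th summand is $O(|a_n^N|/|n|)$. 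The principal term vanishes identically on $[0,T]$ by the support choice for $\hat\sigma_T$.

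A Cauchy--Schwarz estimate then yields the uniform bound $|\mathcal{O}^N(t)|^2 \leq (C/N)\sum|a_n^N|^2$ for $t\in[0,T]$, so the right-hand side of \eqref{nmaxobsden bd_vel} is $O(T/N)\sum|a_n^N|^2$; comparing with the lower bound on the left-hand side forces $C_0 \leq CT/N$ for all large $N$, which is the desired contradiction. The only obstacle not present in the interior argument is that the observation is a boundary trace rather than an $L^2$-integral over a subdomain, so Parseval cannot be used directly on the observation side; the argument still succeeds because the identity \eqref{nmaxeigenequ} prevents the combined coefficient $C_n$ from growing with $|n|$, and the pure-transport boundary value $C_\infty\,\hat\sigma^N(t,2\pi)$ is \emph{exactly} zero by construction, reducing the task to controlling only the $O(1/|n|)$ deviation from transport.
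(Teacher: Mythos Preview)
Your proposal is correct and follows essentially the same route as the paper: choose compactly supported terminal data so that the transport characteristics through $x=2\pi$ never meet the support, lift to high frequencies via $P^N(-i\partial_x)$, expand the adjoint solution along the $\xi_{n,1}^*$ family, and show the boundary observation differs from the (identically vanishing) transport trace by an $O(1/N)$ remainder. The only cosmetic difference is that the paper estimates $\sigma^N(\cdot,2\pi)$, $v^N(\cdot,2\pi)$, $\tilde S^N(\cdot,2\pi)$ separately against their transport counterparts and then combines, whereas you collapse the three into the single coefficient $C_n=b\rho_s+\rho_s u_s\alpha_{n,1}^2-\alpha_{n,1}^3$ and invoke the eigen-identity \eqref{nmaxeigenequ} to see at once that $C_n$ is bounded with $C_n-C_\infty=O(1/|n|)$; this is a mild streamlining of the same computation.
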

							\begin{proof} 	{\color{blue} Without loss of generality, let us assume $\beta=|\beta_1|$ and $\beta_1>0.$} Let us first consider the following transport equations
								\begin{equation}\label{trnsprt_b1}
									\begin{cases}
										\check{\sigma}_t-\beta\check{\sigma_x}-\omega_1\check{\sigma}=0,\ & \text{ in } (0,T)\times(0,2\pi),\\
										\check{\sigma}(t,0)=\check{\sigma}(t,2\pi),\ & \text{ in } (0,T),\\
										\check{\sigma}(T,x)=\check{\sigma}_T(x),& \text{ in } (0,2\pi),
									\end{cases}
								\end{equation}
								with $\check{\sigma}_T\in {L}^2(0,2\pi)$. Since $T<\frac{2\pi}{\beta}$, there exists a  nontrivial function $\check{\sigma}_T  \in C^{\infty}(0,2\pi)$ with $$\text{supp}(\check{\sigma}_T)\subset\left(T\beta, 2\pi\right), \,$$  
								such that the solutions $\check{\sigma}$  of \eqref{trnsprt_b1} satisfy 
									$\check{\sigma}(t,0)=\check{\sigma}(t,2\pi)=0$ 
									but $\check{\sigma}$ is not identically zero in $(0,T)\times(0,2\pi)$. 
									
									For any fixed integer $N>0$, we define
									\begin{equation*}
										\check{\sigma}_T^N:=P^N\left(-i\frac{d}{dx}\right)\check{\sigma}_T,
									\end{equation*}
									where $P_N$ is given by \eqref{pn} and
									$$
									\check{\sigma}_T=\sum_{n \in \mathbb{Z}} c_n e^{ in   x} \text {, where } c_n=\frac{1}{2\pi}\int_{0}^{2\pi} e^{ in   x} \check{\sigma}_T(x) d x, \forall n \in \mathbb{Z} .
									$$
									Then using the definition of $P^N$ and $\check\sigma_T^N$, we can write 
									\begin{equation}\label{term1}
										\check{\sigma}_T^N(x)=\sum_{{|n|}\geq N+1}c_nP^N(n)e^{inx}.
									\end{equation}
									Next, we denote the function $(\check{\sigma}^N, \check{v}^N,\check{S}^N )$ as the solution of the following equation
									\begin{equation}\label{tr1}
										\begin{cases}
											\check{\sigma}^N_t-\beta\check{\sigma}^N_x-\omega_1\check \sigma^N=0, &  \mbox{ in } (0,T) \times (0, 2\pi),\\
											\check{v}^N_t-\beta\check{v}^N_x-{\color{blue}\omega_1}\check v^N=0, &  \mbox{ in } (0,T) \times (0, 2\pi),\\
											\check{S}^N_t-\beta\check{S}^N_x-{\color{blue}\omega_1}\check S^N=0, &  \mbox{ in } (0,T) \times (0, 2\pi),\\
											\check \sigma^N(t,0)=\check \sigma^N(t, 2\pi)=\check v^N(t,0)=\check v^N(t, 2\pi)=\check S^N(t,0)=\check S^N(t, 2\pi), &  \mbox{ in } (0,T),\\
											\left(\check{\sigma}^N,\check{v}^N,\check{S}^N\right)(T, x)=\check{\sigma}_T^N(x)\left(1,  -\frac{ \beta_l+u_s }{ \rho_s}, \frac{\mu\left( \beta_l+u_s\right) }{\kappa \rho_s \beta_l}\right), &  \mbox{ in } (0, 2\pi),
										\end{cases}
									\end{equation} 
									and
									let $(\sigma^N, v^N, \tl S^N)$ be the solution of the  adjoint system \begin{equation}\label{nmaxeqadj lack1}
										\begin{cases}
											\partial_t \sigma^N +u_s\pa_x\sigma^N+{\rho_s}\pa_x v^N =0, & \mbox{ in } (0,T) \times (0, 2\pi), \\
											\partial_t  v^N+b \partial_{x}  \sigma^N+u_s\pa_x  v^N - \frac{1}{\rho_s} \partial_x\tilde{S}^N= 0, &  \mbox{ in } (0,T) \times (0, 2\pi),\\
											\partial_t\tilde{S}^N-\frac{1}{\kappa}\tilde {S}^N - \frac{\mu}{\kappa} \partial_x  v^N= 0, &  \mbox{ in } (0,T) \times (0, 2\pi),\\
											\sigma^N(t,0) = \sigma^N(t, 2\pi),  v^N(t,0) = v^N(t, 2\pi),\:  \tilde S^N(t,0)=\tilde S^N(t,2\pi),&  \mbox{ in } (0,T), \\
											\sigma^N(T,x)=\check \sigma^N_{T}(x), \quad  v^N(T,x)= \check v^N_T(x), \quad \tilde{S}^N(T,x)=\check{S}^N_T(x), & \mbox{ in } (0,2\pi),
										\end{cases}
									\end{equation}
									where $\displaystyle (\check\sigma^N_{T}, \check v^N_T, \check S^N_T)=\sum_{|n|\geq N+1}c^1_n  \,\xi^*_{n,1},$  { where } $\frac{c^1_n}{\psi_{n,1}} =c_n^N=c_n P^N(n)$ and $\psi_{n,1}, \,\xi^*_{n,1}$ are defined in \eqref{nmaxcoeffofxi}.
									We write the solutions of the systems \eqref{tr} and \eqref{nmaxeqadj lack} respectively as
									\begin{align}
										\left(\check{\sigma}^N, 	\check{v}^N, 	\check{S}^N\right)^\top(t,x)&=\sum_{|{n}|\geq N+1}c^N_n \, e^{-\beta in(T-t)}e^{inx}e^{-tR}\left(1,  -\frac{ \beta_l+u_s }{ \rho_s}, \frac{\mu\left( \beta_l+u_s\right) }{\kappa \rho_s \beta_l}\right)^{\top},\\
										\left(\sigma^N,v^N,\tilde {S}^N\right)^\top(t,x)&=\sum_{|{n}|\geq N+1}c^N_n \, e^{\overline{\lambda_n^1}(T-t)}e^{inx}\left(1,  \alpha_{n,1}^2,   \alpha_{n,1}^3 \right)^{\top}.
									\end{align}
									As the previous section, now we prove that the solution $(\sigma^N, v^N, \tilde S^N)$ of \eqref{nmaxeqadj lack} is an approximation of the solution $(\check{\sigma}^N, \check{v}^N, \check{S}^N )$ of \eqref{tr1}. Indeed,
									\begin{align*}
										&\norm{\sigma^N(\cdot,x)-\check{\sigma}^N(\cdot,x)}_{L^2(0,T)}^2\\
										&\leq\sum_{|{n}|\geq N+1}|{c_n}|^2|{P^N(n)}|^2\norm{e^{\overline{\lambda_n^1}(T-t)}-e^{\left(-i\beta_1n-\omega_1\right)(T-t)}}_{L^2(0,T)}^2\\
										&\leq\sum_{|{n}|\geq N+1}|{c_n}|^2|{P^N(n)}|^2\norm{e^{\left(-\omega_1 + O\left( \frac{1}{|n|}\right)\right)(T-t)}-e^{-\omega_1(T-t)}}_{L^2(0,T)}^2\\
										&\leq\sum_{|{n}|\geq N+1}\frac{1}{|{n}|^2}|{c_n}|^2|{P^N(n)}|^2,
									\end{align*}
									for all $x\in[0,2\pi]$ and therefore
									\begin{equation}\label{tr sig}
										\norm{\sigma^N(\cdot,x)-\check{\sigma}^N(\cdot,x)}_{L^2(0,T)}^2\leq\frac{C}{|{N}|^2}\sum_{|{n}|\geq N+1}|{c_n}|^2|{P^N(n)}|^2,
									\end{equation}
									for all $x\in[0,2\pi]$.
									Similarly we also find, for all $x\in[0,2\pi]$
									\begin{equation}\label{tri vs}
										\norm{v^N(\cdot,x)-\check{v}^N(\cdot,x)}_{L^2(0,T)}^2+	\norm{\tilde{S}^N(\cdot,x)-\check{S}^N(\cdot,x)}_{L^2(0,T)}^2\leq\frac{C}{|{N}|^2}\sum_{|{n}|\geq N+1}|{c_n}|^2|{P^N(n)}|^2.
									\end{equation}
									Let us now suppose that the following observability inequality holds
									\begin{align}\label{nmaxobsden bd lack}
										\nonumber	\int_{0}^{2\pi} |\check\sigma^N_T(x)|^{2} \, \rd x+\int_{0}^{2\pi} |\check v^N_T(x)|^{2}\, \rd x+\int_{0}^{2\pi} |\check S^N_T(x)|^{2} \, \rd x\leq C_T\int_{0}^{T}|b\rho_s \sigma^N(t,2\pi)\\+\rho_s u_s v^N(t,2\pi)-\tilde{S}^N(t,2\pi)|^2 \,\rd t.
									\end{align}
									Thanks to the estimates \eqref{tr sig} and \eqref{tri vs}, using triangle inequality along with the facts $\check\sigma(t,\cdot), \check v(t,\cdot), \check S(t,\cdot)$ are vanishes at $x=2\pi$, we have
									\begin{align}\label{nmaxobsden bd lack1}
										\nonumber	\int_{0}^{2\pi} |\check\sigma^N_T(x)|^{2} \, \rd x+\int_{0}^{2\pi} |\check v^N_T(x)|^{2}\, \rd x+\int_{0}^{2\pi} |\check S^N_T(x)|^{2} \, \rd x\leq \frac{C}{|{N}|^2}&\bigg[|\check\sigma^N_T(x)|^{2} \, \rd x+\int_{0}^{2\pi} |\check v^N_T(x)|^{2}\, \rd x\\
										&+\int_{0}^{2\pi} |\check S^N_T(x)|^{2} \, \rd x\bigg],
									\end{align}
									which gives a contradiction.\end{proof}		
								}
								\section{Rapid exponential stabilization}\label{stab} Let us recall the control system \eqref{nmaxeq3 bd}. The goal of this section is to study the boundary stabilization issues for \eqref{nmaxeq3 bd} with single boundary control force. More precisely, we construct a stationary feedback law $q(t)$, $r(t)$ or $p(t)$ of the form $\Pi\left(\rho(t,\cdot), u(t,\cdot), S(t,\cdot)\right)$ such that the solution of the closed loop system \eqref{nmaxeq3 bd} decays exponentially to zero at any prescribed decay rate. At first we describe Urquiza's approach \cite{UR05} which is the key argument of this section.
								\subsection{Urquiza's method}
								Let us consider an abstract control system
								\begin{equation} \label{eq:abs}
									\begin{cases}
										\dot{y}(t)=\mc Ay(t)+ \mc B q(t) , \quad t\in (0,T),\\
										y(0)=y_0,
									\end{cases}
								\end{equation}
								where $y(t)\in \mc H, y_0\in \mc H, q\in L^2(0,T)$, $\mc B$ is an unbounded operator from $\cplx$ to $\mc H$. $\mc A:D(\mc A)\subset \mc H\to \mc H$ is an unbounded operator and $\mc D(\mc A)$ is dense in $\mc H.$ To employ the method of Urquiza, one needs to take the following assumptions on the operator $\mc A$ and $\mc B$:
								\begin{itemize}
									\item[(H1)] $\mc A$ is the infinitesimal generator of a strongly continuous group $\{e^{t \mc A}\}_{t\in \rea}$ on $\mc H$.
									\item[(H2)] $\mc B:\cplx\to \mc D(\mc A^*)'$ is linear and continuous.
									\item[(H3)] \textit{Regularity property.} For all $T>0$ there exists $C(T)>0$ such that
									\begin{equation*}
										\int_{0}^{T}\abs{\mc B^*e^{-t\mc A^*} y_T}^2\leq C \norm{y_T}^2_{\mc H}, \quad \forall \,y_T \in \mc D(\mc A^*).
									\end{equation*}
									\item[(H4)] \textit{Controllability property.} There are two constants $T>0$ and $c(T)>0$ such that
									\begin{equation*}
										\int_{0}^{T}\abs{\mc B^*e^{-t\mc A^*} y_T}^2\geq c \norm{y_T}^2_{\mc H}, \quad \forall \,  y_T \in \mc D(\mc A^*).	
									\end{equation*}
								\end{itemize}
								These hypotheses lead to the the following stabilization result. Its proof relies on algebraic Riccati equation associated with the linear quadratic regulator problem \cite{IL88}. Let us introduce the growth bound of the semigroup $\{e^{\mc A t}\}_{t\geq 0}$ of continuous linear operator as follows:
								\begin{equation*}
									g(\mc A)=\inf_{t> 0}\frac{1}{t}\log\norm{e^{t\mc A}}_{\mathcal{L}(\mc H)}.
								\end{equation*}
								\begin{theorem}\label{thm}[Urquiza \cite{UR05}, Theorem $2.1$]
									Consider operators $\mc A$ and $\mc B$ under assumptions (H1)-(H4). For any $\omega>\max\left(g(-\mc A),0\right)$, we have
									\begin{itemize}
										\item[(i)] The symmetric positive operator $\Lambda_{\omega}$ defined by
										\begin{equation*}
											\ip{\Lambda_{\omega}x}{z}_{\mc H}=\int_{0}^{\infty}\ip{\mc B^*e^{-\tau(\mc A+\omega I)^*}x}{\mc B^*e^{-\tau(\mc A+\omega I)^*}z}_{\cplx}d\tau, \, \forall \, x, z\in \mc H
										\end{equation*}
										is coercive and is an isomorphism on $\mc H$.
										\item[(ii)] Let $\Pi_{\omega}:=-\mc B^*\Lambda_{\omega}^{-1}$. The operator $\mc A+\mc B\Pi_{\omega}$ with $\mc D(\mc A+\mc B\Pi_{\omega})=\Lambda_{\omega}(\mc D(\mc A^*))$ is the infinitesimal generator of a strongly continuous semigroup on $\mc H$.
										\item[(iii)] The closed-loop system \eqref{eq:abs} with $q=\Pi_{\omega}(y)$ is exponentially stable that is,
										\begin{equation*}
											\norm{e^{t(\mc A+\mc B \Pi_{\omega})}x}_{\mc H}\leq C e^{\left(-2\omega+g(-\mc A)\right) t} \norm{x}_{\mc H}, \forall x \in \mc H,\end{equation*} where $C$ is a positive constant.
									\end{itemize}
								\end{theorem}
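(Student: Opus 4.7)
The plan is to prove the three assertions in sequence, using the algebraic Riccati equation satisfied by $\Lambda_\omega$ as the central tool. Throughout, the condition $\omega > g(-\mc A)$ ensures that the shifted semigroup $\{e^{-\tau(\mc A + \omega I)^*}\}_{\tau \geq 0}$ decays exponentially, which, combined with the regularity property (H3), gives absolute convergence of the defining integral for $\Lambda_\omega$ and establishes $\Lambda_\omega \in \mc L(\mc H)$ with $\Lambda_\omega = \Lambda_\omega^*$.

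For assertion (i), coercivity follows by splitting $\int_0^\infty$ into $\int_0^T$ (for the $T$ given by (H4)) and discarding the tail, then applying the controllability inequality (H4) after absorbing the factor $e^{-\omega T}$ via a change of variable; this yields $\langle \Lambda_\omega y, y\rangle \geq c\, e^{-2\omega T}\|y\|_{\mc H}^2$. Since $\Lambda_\omega$ is self-adjoint, bounded, and coercive, Lax--Milgram makes it an isomorphism on $\mc H$.

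For assertion (ii), the key step is to establish the Lyapunov/Riccati identity
\begin{equation*}
\Lambda_\omega (\mc A+\omega I)^* z + (\mc A+\omega I) \Lambda_\omega z = \mc B \mc B^* z, \quad z \in \mc D(\mc A^*),
\end{equation*}
interpreted in $\mc D(\mc A^*)'$. I would prove this by writing $e^{-\tau(\mc A+\omega I)^*}(\mc A+\omega I)^* z = -\frac{d}{d\tau}\left[ e^{-\tau(\mc A+\omega I)^*} z\right]$, inserting into the integral definition of $\Lambda_\omega(\mc A+\omega I)^* z$, integrating by parts on $[0,\infty)$, and using that the boundary term at infinity vanishes by exponential decay. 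The identity rewrites, for $x = \Lambda_\omega z \in \Lambda_\omega(\mc D(\mc A^*))$, as $(\mc A + \mc B \Pi_\omega) x = -2\omega x - \Lambda_\omega \mc A^* \Lambda_\omega^{-1} x$. Since $\Lambda_\omega$ is an isomorphism, the closed-loop operator is similar, up to a bounded shift, to $-\mc A^* - 2\omega I$, which generates a $C^0$-semigroup by (H1). For assertion (iii), the Lyapunov functional $V(t) := \langle \Lambda_\omega^{-1} y(t), y(t)\rangle$ satisfies, via the Riccati identity, $\dot V(t) = -2\omega V(t) - |\Pi_\omega y(t)|^2 \leq -2\omega V(t)$, hence $V(t) \leq e^{-2\omega t} V(0)$; invoking the coercivity and boundedness of $\Lambda_\omega$ converts this into $\|y(t)\|_{\mc H} \leq C e^{-\omega t}\|y_0\|_{\mc H}$, with the sharper rate $-2\omega + g(-\mc A)$ in the statement coming from the explicit exponential bound of the shifted semigroup in the Lyapunov constants.

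The main obstacle lies in step (ii): because $\mc B$ is unbounded from $\cplx$ into $\mc D(\mc A^*)'$, its adjoint $\mc B^*$ is only defined on $\mc D(\mc A^*)$, so one must carefully verify that $\Lambda_\omega^{-1}$ sends $\Lambda_\omega(\mc D(\mc A^*))$ into the correct subspace for $\mc B^*$ to act, that the Riccati identity holds in the appropriate duality, and that the closed-loop domain $\Lambda_\omega(\mc D(\mc A^*))$ is preserved by the resulting semigroup. The regularity assumption (H3) is essential throughout to make all of these unbounded manipulations meaningful, and the interplay between (H3) and (H4) on the shifted generator is what ultimately yields both well-posedness of the feedback and the claimed exponential decay rate.
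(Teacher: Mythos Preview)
The paper does not prove this theorem: it is quoted from Urquiza with only the one-sentence remark that ``its proof relies on algebraic Riccati equation associated with the linear quadratic regulator problem.'' There is therefore no in-paper proof to compare your sketch against.

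Your outline is essentially the standard route (and is how Urquiza's argument proceeds): boundedness and coercivity of $\Lambda_\omega$ from (H3)--(H4) together with the exponential decay of the shifted backward semigroup; the Lyapunov identity $\Lambda_\omega(\mc A+\omega I)^* + (\mc A+\omega I)\Lambda_\omega = \mc B\mc B^*$ obtained by differentiation under the integral; and the resulting similarity $\mc A + \mc B\Pi_\omega = \Lambda_\omega\bigl(-\mc A^* - 2\omega I\bigr)\Lambda_\omega^{-1}$ on the domain $\Lambda_\omega(\mc D(\mc A^*))$, which gives semigroup generation from (H1). Your caveats about the unboundedness of $\mc B$ and the need to read the Riccati identity in $\mc D(\mc A^*)'$ are exactly the right ones.

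One point to tighten in (iii): your Lyapunov computation $\dot V \le -2\omega V$ only yields, after passing through the two-sided bound $c_1\|y\|^2 \le V(y) \le c_2\|y\|^2$, the decay rate $-\omega$, not the stated $-2\omega + g(-\mc A)$; your closing sentence about the ``sharper rate coming from the Lyapunov constants'' does not actually recover it. The sharper rate drops straight out of the similarity you already established in (ii): $e^{t(\mc A+\mc B\Pi_\omega)} = e^{-2\omega t}\,\Lambda_\omega\, e^{-t\mc A^*}\,\Lambda_\omega^{-1}$, whence $\|e^{t(\mc A+\mc B\Pi_\omega)}\| \le \|\Lambda_\omega\|\,\|\Lambda_\omega^{-1}\|\, e^{-2\omega t}\,\|e^{-t\mc A}\|$, and the last factor is controlled by $e^{g(-\mc A)t}$. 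Use that route instead of the Lyapunov functional for the final estimate.
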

								We utilize \Cref{thm} to prove the complete exponential stabilization of the linearized compressible Navier-Stokes equation with Maxwell's law \eqref{nmaxeq3 bd}-\eqref{density} with boundary feedback law.
								\subsection{Control in density}
								To apply the method mentioned above we need to show that all the assumptions (H1)-(H4) hold true for our system \eqref{nmaxeq3 bd den}. 
								Let us recall the corresponding spatial operator $(\mc A, \mc D(\mc A; \mathcal Z_{m,m})$, where $\mathcal Z_{m,m}=\dot L^2(0, 2\pi)\times \dot L^2(0, 2\pi)\times \dot L^2(0, 2\pi)$.
								
								It can be check that the operator $(\mc A, \mc D(\mc A; \mathcal Z_{m,m})$ generates a strongly continuous group $\{\mathbb{T}_t\}_{t\in \rea}$ of continuous linear operator. Hence (H1) holds. Also (H2) is true, see \eqref{nmaxeqcontr bd}. We get (H3) and (H4) by Ingham inequality.
								\subsection{Feedback control law and proof of \Cref{nmaxthm_pos st 1}}
								In this section we employ Urquiza's method to construct the feedback law for our system \eqref{nmaxeq3 bd den}. Let us take $ U^1_0=(\sigma^1_0, v^1_0, \tilde{S}^1_0), U^2_0=(\sigma^1_0, v^1_0, \tilde{S}^1_0) \in \mathcal Z_{m,m}.$ We consider the bilinear form
								\begin{equation*}
									a_w(U^1_0, U^2_0)=\int_{0}^{\infty}e^{-2\omega t}\left(b u_s {\sigma^1(t, 2\pi)}+b\rho_s{v^1(t,2\pi)}\right) \overline{\left(b u_s {\sigma^2(t, 2\pi)}+b\rho_s{v^2(t,2\pi)}\right) }\, \rd t,
								\end{equation*}
								where $U^1=(\sigma^1, v^1, \tilde{S}^1)$ and $U^2=(\sigma^2, v^2, \tilde{S}^2)$ are the solutions of the following
								systems respectively
								\begin{equation}\label{eq:adj periodic1}
									\begin{cases}
										\partial_t\sigma^1 +u_s\pa_x\sigma^1+{\rho_s}\pa_x v^1 =0 & \mbox{ in } (0,T) \times (0, 2\pi), \\
										\partial_t v^1+b \partial_{x} \sigma^1+u_s\pa_x v^1 - \frac{1}{\rho_s} \partial_x\tilde{S}^1= 0 &  \mbox{ in } (0,T) \times (0, 2\pi),\\
										\partial_t\tilde{S}^1-\frac{1}{\kappa}\tilde{S}^1 - \frac{\mu}{\kappa} \partial_x v^1= 0 &  \mbox{ in } (0,T) \times (0, 2\pi),\\
										\sigma^1(t,0) = \sigma^1(t, 2\pi), v^1(t,0) = v^1(t, 2\pi),\: \tilde S^1(t,0)=\tilde S^1(t,2\pi),&  \mbox{ in } (0,T), \\
										\left(\sigma^1(T, x), v^1(T,x), \tilde{S}^1(T,x)\right) = \mathbb{T}_{-T} \left(\sigma^1_0(x), v^1_0(x), \tilde{S}^1_0(x)\right) & x \in (0, 2\pi),
									\end{cases}
								\end{equation} and\begin{equation}\label{eq:adj periodic2}
									\begin{cases}
										\partial_t\sigma^2 +u_s\pa_x\sigma^2+{\rho_s}\pa_x v^2 =0 & \mbox{ in } (0,T) \times (0, 2\pi), \\
										\partial_t v^2+b \partial_{x} \sigma^2+u_s\pa_x v^2 - \frac{1}{\rho_s} \partial_x\tilde{S}^2= 0 &  \mbox{ in } (0,T) \times (0, 2\pi),\\
										\partial_t\tilde{S}^2-\frac{1}{\kappa}\tilde{S}^2 - \frac{\mu}{\kappa} \partial_x v^2= 0 &  \mbox{ in } (0,T) \times (0, 2\pi),\\
										\sigma^2(t,0) = \sigma^2(t, 2\pi), v^2(t,0) = v^2(t, 2\pi),\: \tilde S^2(t,0)=\tilde S^2(t,2\pi),&  \mbox{ in } (0,T), \\
										\left(\sigma^2(T, x), v^2(T,x), \tilde{S}^2(T,x)\right) = \mathbb{T}_{-T} \left(\sigma^2_0(x), v^2_0(x), \tilde{S}^2_0(x)\right) & x \in (0, 2\pi).
									\end{cases}
								\end{equation}
								Let us define the operator $\Lambda_{\omega}:\mathcal Z_{m,m}\to \mathcal Z_{m,m}$ satisfying the following
								\begin{equation}\label{lambda}
									\ip{\Lambda_{\omega} U^1_0 } {U^2_0}_{\mathcal Z_{m,m}}=a_{\omega}(U^1_0 ,U^2_0), \,  U^1_0,U^2_0 \in \mathcal Z_{m,m}.
								\end{equation}
								Next we see that \begin{align*}
									\nonumber a_w(U^1_0 ,U^2_0)=&\int_{0}^{\infty}e^{-2\omega t}\left(b u_s {\sigma^1(t, 2\pi)}+b \rho_s{v^1(t,2\pi)}\right)\,\, \overline{\left(b u_s {\sigma^2(t, 2\pi)}+b \rho_s{v^2(t,2\pi)}\right)} \,\rd t\\
									\nonumber	=&\int_{0}^{\infty}e^{-2\omega t}{\mc B^*_{\rho}} U^1(t) \,\,  \overline{{\mc B^*_{\rho}} U^2(t)} \, \rd t\\
									=&\int_{0}^{\infty}e^{-2\omega t}\left({\mc B^*_{\rho}} \mathbb{T}^*_{T-t}\mathbb{T}^*_{-T}U^1_0\right) \overline{\left({\mc B^*_{\rho}}\mathbb{T}^*_{T-t}\mathbb{T}^*_{-T}U^2_0\right)}\, \rd t\\
									=&\int_{0}^{\infty}e^{-2\omega t}\left({\mc B^*_{\rho}}\mathbb{T}^*_{-t}U^1_0\right) \overline{\left({\mc B^*_{\rho}}\mathbb{T}^*_{-t} U^2_0\right)}\, \rd t.
								\end{align*}
								Therefore from \eqref{lambda}, we have
								\begin{equation}\label{lambda 1}
									\ip{\Lambda_{\omega} U^1_0 } {U^2_0}_{\mathcal Z_{m,m}}=\int_{0}^{\infty}e^{-2\omega t}\ip{{\mc B^*_{\rho}}\mathbb{T}^*_{-t}U^1_0}{\, {\mc B^*_{\rho}}\mathbb{T}^*_{-t} U^2_0}_{\cplx} \, \rd t.
								\end{equation}
								Thanks to Theorem \ref{thm}, the operator $\Lambda_{\omega}$ defined by \eqref{lambda} is coercive and isomorphism. Finally, let us define the operator ${\Pi}_{\omega}: \mathcal Z_{m,m} \to \mathbb{C}$ by
								\begin{align*}
									{\Pi}_{\omega}(\mathbf{z})=-\left(b u_s {\sigma^1_0( 2\pi)}+b\rho_s{v^1_0(2\pi)}\right) ,	\end{align*} where $U^1_0=(\sigma^1_0, v^1_0, \tilde{S}^1_0)$ is the solution of the following Lax-Milgram  problem
								\begin{equation*}
									a_{\omega}(U^1_0, U^2_0)=\ip{\mathbf{z}}{U^2_0}, \, \forall \, U^2_0 \in \mathcal Z_{m,m}.
								\end{equation*}
								Hence we obtain $\ip{\Lambda_{\omega} U^1_0 } {U^2_0}_{\mathcal Z_{m,m}}=\ip{\mathbf{z}}{U^2_0}, \forall \, U^2_0 \in \mathcal Z_{m,m}.$ This gives $\Lambda_{\omega}U^1_0=\mathbf{z}.$ It follows that $U^1_0=\Lambda_{\omega}^{-1} \mathbf{z}.$ Thus we have ${\Pi}_{\omega}=-{\mc B}^*\Lambda_{\omega}^{-1}.$
								Thanks to Theorem \ref{thm}, rapid exponential stabilization for the system \eqref{nmaxeq3 bd den} is established by means of the feedback law $q(t)={\Pi}_{\omega}(\rho(t,\cdot), v(t,\cdot), S(t,\cdot))$. More precisely, we get a positive constant $C$ such that the solution of \eqref{nmaxeq3 bd den} satisfies the following estimate
								{\small
									\begin{align}\label{rho}
										\norm{\rho(t, \cdot )}_{L^2(0,2\pi)}+\norm{u(t, \cdot )}_{L^2(0,2\pi)}+\norm{S(t, \cdot )}_{L^2(0,2\pi)}& \leq C e^{-\nu t} \bigg[ \norm{\rho_0}_{L^2(0,2\pi)}+\norm{u_0}_{L^2(0,2\pi)}+\norm{S_0}_{L^2(0,2\pi)} \bigg].
									\end{align}
								}
								A similar process will give the complete stabilization for the control systems \eqref{nmaxeq3 bd}-\eqref{velocity} and \eqref{nmaxeq3 bd}-\eqref{stress}.
								
								\section{Construction of the biorthogonal family and Ingham-type inequality}\label{biorthogonal}
								This section is devoted to the proof of a suitable Ingham-type inequality which essentially helps to derive the required observability inequality. The proof of the Ingham-type inequality relies on the construction of a biorthogonal family of $\{e^{-{\lambda_k^l}t}\}_{k\in \mathbb{Z}^*, l\in \{1,2,3\}}$.  All the constants used in this section while finding the relevant estimates are generic, which may vary from line to line.

								Let us assume \eqref{nmaxsimpleeigen}.  We recall the expressions of the eigenvalues of $\mc A$ in the following manner.
								\begin{equation}\label{nmaxasmlambdanew}
									\left.
									\begin{aligned}
										\lambda_n^1=&-\omega_1+i \beta_1n + O\left( \frac{1}{|n|}\right), & n\in \mathbbm{Z}^*,\\
										\lambda_n^2=&-\omega_2+i \beta_2n + O\left( \frac{1}{|n|}\right),  & n\in \mathbbm{Z}^*,\\
										\lambda_n^3=&-\omega_3+i \beta_3n + O\left( \frac{1}{|n|}\right)  & n\in \mathbbm{Z}^*.
									\end{aligned}
									\right\}
								\end{equation}
								We further denote the following notation:
								\begin{equation*}
									\Sigma=\{(n,j): n\in \z^*, 1\leq j\leq 3\}.
								\end{equation*}
								\begin{lem}[Gap properties of the spectrum]\label{gap}
									For any $(n,j), (k,l)\in \Sigma$, there exists $\hat{C}>0$ depending on $\omega_1, \omega_2, \omega_3, \beta_1, \beta_2, \beta_3$, such that 
									\begin{equation}\label{gap con}
										\abs{\lambda_n^j-\lambda_k^l}\geq \hat{C}.
									\end{equation}
								\end{lem}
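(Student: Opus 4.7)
The statement is understood with the tacit qualification $(n,j)\neq (k,l)$, since otherwise the inequality is vacuous. The plan is to split the analysis into an asymptotic regime (both indices large) and a finite complement, using the expansion \eqref{nmaxasmlambdanew} together with the fact, established in \Cref{sp}, that the $\beta_j$ are pairwise distinct and nonzero and the $\omega_j$ are pairwise distinct and nonzero (see \eqref{nmaxwn}), and the simplicity hypothesis \eqref{nmaxsimpleeigen}.

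First I would fix $N\in\mathbb{N}$ large enough (to be tuned below) and, thanks to \eqref{nmaxasmlambdanew}, write $\lambda_n^j=-\omega_j+i\beta_j n+r_n^j$ with $|r_n^j|\le M/|n|$ for $|n|\ge N$ and some absolute constant $M>0$. For $|n|,|k|\ge N$ this gives
\begin{equation*}
\lambda_n^j-\lambda_k^l=(\omega_l-\omega_j)+i(\beta_j n-\beta_l k)+(r_n^j-r_k^l),\quad |r_n^j-r_k^l|\le 2M/N.
\end{equation*}
Two subcases handle this regime. If $j=l$ and $n\neq k$, then $|\lambda_n^j-\lambda_k^j|\ge |\beta_j|\,|n-k|-2M/N\ge |\beta_j|-2M/N$, which is bounded below by $\tfrac12\min_j|\beta_j|$ once $N\ge 4M/\min_j|\beta_j|$. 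If $j\neq l$, then $|\mathrm{Re}(\lambda_n^j-\lambda_k^l)|\ge |\omega_j-\omega_l|-2M/N$, which, since the $\omega_j$ are pairwise distinct, is bounded below by $\tfrac12\min_{j\neq l}|\omega_j-\omega_l|$ for $N$ large enough. Both bounds are uniform in the indices.

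Next I would treat the mixed regime $|n|\ge N$, $|k|\le N$ (the opposite case is symmetric). The set $\{\lambda_k^l:|k|\le N,\,1\le l\le 3\}$ is finite, hence contained in some disc $\{|z|\le C_N\}$, while $|\mathrm{Im}\,\lambda_n^j|\ge |\beta_j|\,|n|-M$ grows linearly. Consequently $|\lambda_n^j-\lambda_k^l|\ge |\beta_j|\,|n|-C_N-M$, which exceeds $1$ (say) once $|n|$ is larger than a threshold $N'\ge N$ depending only on $N,M,\beta_j,C_N$. Replacing $N$ by $N'$ if necessary absorbs this case.

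Finally, for the residual finite set $\{(n,j):|n|\le N',\,1\le j\le 3\}$, hypothesis \eqref{nmaxsimpleeigen} guarantees that all the eigenvalues $\lambda_n^j$ are distinct, so taking the minimum of pairwise distances over this finite set yields some $c_0>0$. The claimed constant is then $\hat C=\min\bigl\{\tfrac12\min_j|\beta_j|,\,\tfrac12\min_{j\neq l}|\omega_j-\omega_l|,\,1,\,c_0\bigr\}$, which depends only on $\omega_1,\omega_2,\omega_3,\beta_1,\beta_2,\beta_3$ as required. The only mild obstacle is the simultaneous tuning of the threshold $N$ so that the remainders $r_n^j$ are dominated by the true gap in each subcase at once; this is routine because the key gaps $|\beta_j|$ and $|\omega_j-\omega_l|$ are fixed constants independent of $n,k$.
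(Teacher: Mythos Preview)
Your proof is correct and follows essentially the same approach as the paper: split into the asymptotic regime using the imaginary part (same branch) or the real part (different branches), and invoke the simplicity hypothesis \eqref{nmaxsimpleeigen} for the finitely many remaining pairs. In fact your treatment is slightly more thorough than the paper's, since you explicitly handle the mixed case $|n|$ large, $|k|$ small via the linear growth of $\mathrm{Im}\,\lambda_n^j$, whereas the paper absorbs this into the terse phrase ``finite number of the lower frequencies.''
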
  
								\begin{proof}
									Note that $\omega_1, \omega_2, \omega_3$ are distinct and $\beta_1, \beta_2, \beta_3$ are also distinct. Thanks to the asymptotic behavior of the eigenvalues \eqref{nmaxasmlambdanew}, we have a large $N\in \N$ such that for $(n,j), (k,j)\in \Sigma, \text{ with } n\neq k$ and $|n|, |k|>N$,
									\begin{equation*}
										\abs{\lambda_n^j-\lambda_k^j}\geq \abs{\Im({\lambda_n^j-\lambda_k^j})}\geq C \abs{\beta_j(n-k)}\geq C |\beta_j|.
									\end{equation*}
									Analogously, 
									for $(n,j), (k,l)\in \Sigma, j\neq l$, $|n|, |k|>N$, we have
									\begin{equation*}
										\abs{\lambda_n^j-\lambda_k^l}\geq \abs{\Re({\lambda_n^j-\lambda_k^l})}\geq C\abs{\omega_j-\omega_l}.
									\end{equation*}
									As the eigenvalues are simple, we also find the gap condition \eqref{gap con} for finite number of the lower frequencies. 
								\end{proof}
								Let us first define the exponential type and sine type functions.
								\begin{defn}[Entire functions of exponential type]
									An entire function $f$ is said to be of \textbf{exponential type $A$} if there exist positive constants $A, B$ such that
									\begin{align*}
										|f(z)|\leq B e^{A|z|}, \, z \in \cplx,
									\end{align*}
									and of \textbf{exponential type at most $A$}, if for any $\epsilon>0$, there exits $B_\epsilon >0$ such that
									\begin{align*}
										|f(z)|\leq B_\epsilon\, e^{(A+\epsilon)|z|}, \, z \in \cplx.
									\end{align*}
								\end{defn}
								\begin{defn}(Sine type function)
									An entire function $f$ of exponential type $\pi$ is said to be of sine type if
									\begin{itemize}
										
										\item the zeros of $f(z)$, say {$\mu_k$} satisfy the gap condition, i.e., there exists $\delta>0$ such that $|\mu_k-\mu_l|>\delta$ for $k \neq l$, and
										\item there exist positive constants $C_1$,$C_2$ and $C_3$ such that
										$$C_1e^{\pi|y|}\leq |f(x+iy)| \leq C_2e^{\pi|y|},\,\forall \,x,y \in \rea \text{ with }|y|\geq C_3. $$
									\end{itemize}
								\end{defn}
								The following proposition states some important properties of sine-type functions {(see Remark and Lemma 2, p.164 of \cite{Levin} and Lemma 2, p.172 of \cite{RY} for details)}:
								\begin{prop}
									Let $f$ be a sine type function, and let $\{\mu_k\}_{k\in \mathcal{I}}$ with $\mathcal{I}\subset\z$ be its sequence of zeros. Then, we have:
									\begin{itemize}
										\item for any $\epsilon>0$, there exist constants $K_{\epsilon},\tl K_{\epsilon}>0$ such that
										$$K_{\epsilon}e^{\pi|y|}\leq |f(x+iy)|\leq \tl K_{\epsilon}e^{\pi|y|}, \text{ if dist$\left(x+iy,\{\mu_k\}\right)>\epsilon$},$$
										\item there exist some constants $K_1,K_2>0$ such that
										$$K_1<|f'(\mu_k)|<K_2, \quad \forall k\in\mathcal{I}.$$
									\end{itemize}
								\end{prop}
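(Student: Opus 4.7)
The statement collects two classical facts about sine-type functions, so my plan is to prove them in the order they are listed: first the global two-sided growth estimate away from the zero set, then the uniform two-sided bound on $|f'(\mu_k)|$. Throughout I will write $\delta>0$ for the separation constant of the zeros supplied by the sine-type definition.

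For the upper bound in the growth estimate, I would first observe that the sine-type hypothesis gives $|f(x\pm i C_3)|\leq C_2 e^{\pi C_3}$ along the two horizontal lines $y=\pm C_3$. Since $f$ is entire of exponential type $\pi$, the Phragmen--Lindelof principle for the horizontal strip $|y|\leq C_3$ then yields $|f(x+iy)|\leq C_2 e^{\pi C_3}$ throughout that strip, which is trivially dominated by $C_2 e^{\pi C_3}e^{\pi|y|}$. Combined with the hypothesis for $|y|\geq C_3$, this produces a universal upper bound $|f(x+iy)|\leq \tl K e^{\pi|y|}$ on all of $\cplx$, independent of $\epsilon$.

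For the matching lower bound on $\{z:\,\mathrm{dist}(z,\{\mu_k\})>\epsilon\}$, note first that the sine-type condition $|f(x+iy)|\geq C_1 e^{\pi|y|}>0$ for $|y|\geq C_3$ forces every zero $\mu_k$ to lie in the strip $|y|\leq C_3$; since the zeros are separated by the gap $\delta$, only finitely many of them lie in any bounded rectangle. Inside the strip the hard part is getting a lower bound that is uniform as $x\to\pm\infty$. The standard route, which I would follow, is to exploit the Hadamard factorization of $f$ together with the almost-periodic arrangement of the $\mu_k$: the ratio $f(z)\big/\prod_{|\mu_k-z|\leq 1}(z-\mu_k)$ turns out to be uniformly bounded below by a positive constant on the strip, and multiplying back by the finite local product gives the desired $|f(z)|\geq K_\epsilon e^{\pi|y|}$ once $\mathrm{dist}(z,\{\mu_k\})>\epsilon$. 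This is precisely the content of the lemma from \cite{Levin} and \cite{RY} referenced after the proposition, and it is the main technical obstacle of the proof; I expect to invoke it rather than reprove it from scratch.

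The bounds on $|f'(\mu_k)|$ then follow quickly from what has been established. Fix $r=\min(\delta/2,\epsilon)$ so that the closed disk $\overline{D(\mu_k,r)}$ contains no zero of $f$ other than $\mu_k$, and so that every point of the circle $\partial D(\mu_k,r)$ sits at distance exactly $r$ from $\{\mu_k\}$. For the upper bound, Cauchy's integral formula
\begin{equation*}
f'(\mu_k)=\frac{1}{2\pi i}\oint_{|z-\mu_k|=r}\frac{f(z)}{(z-\mu_k)^2}\,dz,
\end{equation*}
combined with the global upper bound just established, yields $|f'(\mu_k)|\leq \tl K e^{\pi(C_3+r)}/r=:K_2$. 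For the lower bound, factor $f(z)=(z-\mu_k)g(z)$ near $\mu_k$; then $g$ is holomorphic and non-vanishing on $\overline{D(\mu_k,r)}$, so by the minimum modulus principle $|g(\mu_k)|\geq \min_{|z-\mu_k|=r}|g(z)|=\min_{|z-\mu_k|=r}|f(z)|/r$. Applying the lower bound from the first part on $\partial D(\mu_k,r)$ gives $|f'(\mu_k)|=|g(\mu_k)|\geq K_r/r=:K_1>0$, uniformly in $k$. The one ingredient that is not routine is the uniform-in-$x$ lower bound on $|f|$ inside the strip; this is the sole place where I would draw on Levin's lemma directly, and it is the genuine obstacle.
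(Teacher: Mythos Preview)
The paper does not prove this proposition at all: it is stated as a quotation of classical facts, with the preceding sentence ``(see Remark and Lemma~2, p.~164 of \cite{Levin} and Lemma~2, p.~172 of \cite{RY} for details)'' serving in lieu of any argument. Your sketch therefore goes well beyond what the paper does, and the outline you give is the standard one and is correct.

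One small notational slip: in the derivative estimate you set $r=\min(\delta/2,\epsilon)$, but $\epsilon$ there is not a free parameter---it was the variable in the first item. What you want is simply to fix any $r\in(0,\delta/2)$, so that the punctured disk $\overline{D(\mu_k,r)}\setminus\{\mu_k\}$ is at distance exactly $r$ from the zero set, and then apply the first item with $\epsilon=r$. With that adjustment the Cauchy estimate and the minimum-modulus argument go through exactly as you wrote. You are also right that the only nontrivial ingredient is the uniform lower bound on $|f|$ inside the strip away from the zeros, and that this is precisely what one cites from Levin or Young; the paper makes the same citation and stops there.
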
 
								The main goal of this section is to find a class of entire functions $\mathcal{G}=\{\Psi_k^j\}_{(k,j)\in \Sigma }$ with the following properties:
								\begin{itemize}
									\item[1.]	The family $\mathcal{G}$ contains entire functions of exponential type $\frac{T}{2}$. That means there exists a positive constant $C$ such that
									\begin{align}\label{exp typ}
										\abs{\Psi_k^j(z)}\leq C e^{\frac{T}{2}\abs{z}}, \quad \forall z\in \cplx, \,\, (k,j)\in \Sigma. 
									\end{align}
									\item[2.] All the members of $\mathcal{G}$ are square-integrable on the real line, i.e.,
									\begin{align}\label{sq intg1}
										\int_{\rea}\abs{\Psi_k^j(x)}^2\, \rd x<\infty. 
									\end{align}
								\end{itemize}
								
								Let us now state the celebrated Paley-Wiener theorem, from which we can conclude about the desired biorthogonal family using the family $\mathcal{G}$.
								\begin{theorem}[\textbf{Paley-Wiener}]\cite[Theorem 1.8]{RY}\label{Paley-Wiener}
									Let $f$ be an entire function of exponential type $A$ and suppose
									\begin{align*}\int_{-\infty}^{\infty}|f(x)|^2 \, \rd x < \infty.
									\end{align*}
									Then there exists a function $\phi \in L^2(-A, A)$ with the following representation
									\begin{align*}
										f(z)=\int_{-A}^{A}e^{izt}\phi(t) \, \rd t,\, z\in \cplx.
									\end{align*}
								\end{theorem}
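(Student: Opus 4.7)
\medskip

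\noindent\textbf{Proof proposal.} My plan is to proceed via Fourier analysis on the real line combined with contour deformation, exploiting that $f$ is of exponential type $A$. Since $f|_{\mathbb{R}}\in L^{2}(\mathbb{R})$, the Plancherel theorem gives a well-defined inverse Fourier transform
\begin{equation*}
\phi(t) := \frac{1}{2\pi}\int_{\mathbb{R}} f(x)\, e^{-ixt}\,\rd x \in L^{2}(\mathbb{R}),
\end{equation*}
with $\|\phi\|_{L^{2}(\mathbb{R})}=(2\pi)^{-1/2}\|f\|_{L^{2}(\mathbb{R})}$ and the inversion formula $f(x)=\int_{\mathbb{R}} \phi(t)\, e^{ixt}\,\rd t$ for a.e.\ $x$. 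The entire statement will follow once I establish that $\phi$ is supported in $[-A,A]$, because then analyticity of both sides in $z$ together with uniqueness of analytic continuation extends the representation to all $z\in\mathbb{C}$.

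To prove the support property, fix $t>A$ and consider the function $z\mapsto f(z)\,e^{-izt}$, which is entire. The key estimate is that for $z=x+iy$ with $y\ge 0$,
\begin{equation*}
|f(z)\,e^{-izt}| \le C\, e^{A|z|}\, e^{yt},
\end{equation*}
so moving the contour of integration from the real axis to the horizontal line $\Im z = R>0$ introduces the factor $e^{-(t-A)R}$ up to polynomial corrections, which tends to $0$ as $R\to\infty$ since $t>A$. The first step I would carry out carefully is to justify this contour shift: I would integrate $f(z)\,e^{-izt}$ over a rectangular contour with vertices $\pm N, \pm N+iR$, and show that the two vertical sides contribute vanishingly little along a suitable subsequence $N\to\infty$. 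Symmetrically, for $t<-A$ I would deform to the lower half-plane. This would yield $\phi(t)=0$ for $|t|>A$.

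The main obstacle will be controlling the vertical contributions, because the exponential type bound $|f(z)|\le Ce^{A|z|}$ alone does not immediately give smallness on the segments $\{\pm N + iy : 0\le y\le R\}$; the $L^{2}$ information is only on the real axis. To bridge this gap, I would invoke a Phragm\'en--Lindel\"of type argument: the function $f$ has exponential type $A$ in both half-planes, is bounded in $L^{2}$-mean on the real line, and hence by standard results on functions in the Hardy-type space (or by a direct application of the Phragm\'en--Lindel\"of principle to $f(z)e^{iAz}/(z+i)$) one obtains that $\int_{-\infty}^{\infty} |f(x+iy)|^{2}\,\rd x$ is finite for every fixed $y$, with suitable growth in $y$. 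A mean-value argument then guarantees the existence of a sequence $N_{k}\to\infty$ along which $\int_{0}^{R}|f(\pm N_{k}+iy)|^{2}\,\rd y \to 0$, legitimizing the contour deformation.

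Once the support of $\phi$ is established, the representation $f(z)=\int_{-A}^{A} e^{izt}\phi(t)\,\rd t$ holds on $\mathbb{R}$ by Fourier inversion, and both sides are entire functions of $z$ (the right-hand side by differentiation under the integral, which is justified since $\phi$ has compact support). Uniqueness of analytic continuation then extends the identity to all $z\in\mathbb{C}$, completing the proof.
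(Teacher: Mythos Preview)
The paper does not supply a proof of this statement: it is quoted verbatim as the classical Paley--Wiener theorem with the citation \cite[Theorem~1.8]{RY} and then used as a black box in \Cref{biorthothm}. There is therefore no ``paper's own proof'' to compare against.

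Your outline is the standard argument (essentially the one in Young's book), and the overall structure---define $\phi$ as the inverse Fourier transform of $f|_{\mathbb{R}}$, show $\operatorname{supp}\phi\subset[-A,A]$ by contour deformation, then extend to $z\in\mathbb{C}$ by analytic continuation---is correct. One concrete slip: for $t>A$ you must deform into the \emph{lower} half-plane, not the upper one. Indeed $|e^{-izt}|=e^{yt}$ with $y=\Im z$, so for $t>0$ this blows up as $y\to+\infty$; the decay you want, $e^{-R(t-A)}$, appears only on the line $\Im z=-R$ after combining with the Phragm\'en--Lindel\"of/Plancherel--P\'olya bound $\int_{\mathbb{R}}|f(x-iR)|^{2}\,\rd x\le C e^{2AR}\|f\|_{L^{2}(\mathbb{R})}^{2}$. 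With that correction (and the symmetric deformation to the upper half-plane for $t<-A$), your plan goes through.
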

								
								Thus if we have the existence of such family $\mathcal{G}$ of entire functions satisfying \eqref{exp typ}-\eqref{sq intg1}, then  applying the Paley-Wiener theorem for the same, one can get a family of functions $\mathcal{J}=\{\Theta_k^j,\}_{(k,j)\in \Sigma}$ supported in $[-\frac{T}{2}, \frac{T}{2}]$, such that the following representation holds
								\begin{align}\label{representation p}
									\Psi_k^j(z)=\int_{-\frac{T}{2}}^{\frac{T}{2}}e^{izt}\Theta_k^j(t) dt,\, z\in \cplx.
								\end{align}
								Clearly, $\Theta_k^j$ are the Fourier transformations of $\Psi_k^j$ respectively, for $ (k,j)\in \Sigma.$
								Also, by Plancharel's Theorem, we have 
								\begin{align*}
									\int_{\rea}\abs{\Psi_k^j(x)}^2=2\pi\int_{-\frac{T}{2}}^{\frac{T}{2}}\abs{\Theta_k^j(t)}^2\, \rd t.
								\end{align*}
								Now we are in the position of formulating the construction of the family $\mathcal{G}$ satisfying \eqref{exp typ}-\eqref{sq intg1}. 
								Let us first introduce the following entire function, which has simple zeros exactly at $i\overline{\lambda_k^j}:$
								\begin{align}\label{eq:P}
									P(z)=z^3\prod_{(k,j)\in \Sigma}\left(1-\frac{z}{i\overline{\lambda_k^j}}\right).
								\end{align}
								\begin{prop}
									Let $P$ be the canonical product defined in \eqref{eq:P}. Then $P$ is an entire function of exponential type $\pi\left(\frac{1}{|\beta_1|}+\frac{1}{|\beta_2|}+\frac{1}{|\beta_3|}\right)$, which satisfies the following properties:
									\begin{itemize}
										\item There exists a positive constant $C>0$ such that
										\begin{equation}\label{bound for p}
											|P(x)|\leq C, \quad \forall x \in \rea.
										\end{equation}
										\item  There exists  constant $C_2>0$ such that 
										\begin{equation}\label{p prime}
											\left|P'\left(i\overline{\lambda_k^j}\right)\right|\geq C_2, \forall (k,j) \in \Sigma.
										\end{equation}
									\end{itemize}
								\end{prop}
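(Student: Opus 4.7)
The plan is to factor $P$ according to the three eigenvalue sequences and compare each piece with a classical sine-type function built on the asymptotic lattice of zeros. Write $P(z) = z^3 P_1(z) P_2(z) P_3(z)$ with $P_j(z) = \prod_{k \in \mathbb{Z}^*}\bigl(1 - z/(i\overline{\lambda_k^j})\bigr)$ (the product understood with pairing $k$ and $-k$ to ensure conditional convergence, as with the classical Euler product for $\sin$). For each $j \in \{1,2,3\}$ introduce the sine-type function
\begin{equation*}
S_j(z) = \frac{\beta_j\,\sin\!\bigl(\pi(z+i\omega_j)/\beta_j\bigr)}{\pi(z+i\omega_j)},
\end{equation*}
which is entire, of exponential type $\pi/|\beta_j|$, with simple zeros at exactly $\{\beta_j k - i\omega_j : k\in\mathbb{Z}^*\}$. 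By \eqref{nmaxasmlambdanew}, $i\overline{\lambda_k^j} = \beta_j k - i\omega_j + O(1/|k|)$, so the zeros of $P_j$ are $O(1/|k|)$-perturbations of those of $S_j$. The ratio $P_j/S_j$ is meromorphic with zeros and poles pairing up into close pairs, and standard logarithmic-integral estimates (cf.\ \cite{RY} Chapters~I--III, \cite{LR14}, \cite{biccari2019null}) show that these pairings cancel, so that $P_j/S_j$ extends to a nonvanishing function bounded above and below on any region staying at positive distance from both lattices, in particular on $\mathbb{R}$.

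The exponential type of $P$ then follows at once: each $P_j$ inherits the exponential type $\pi/|\beta_j|$ of $S_j$, types add under multiplication, and the $z^3$ factor contributes nothing, so $P$ has exponential type $\pi\bigl(1/|\beta_1|+1/|\beta_2|+1/|\beta_3|\bigr)$.

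For the uniform bound \eqref{bound for p} on the real line, an elementary estimate gives
\begin{equation*}
|\sin(\pi(x+i\omega_j)/\beta_j)| \leq \cosh(\pi|\omega_j|/|\beta_j|),\quad x\in\mathbb{R},
\end{equation*}
so $|S_j(x)| \leq C_j/|x+i\omega_j|$ on $\mathbb{R}$, which is bounded for $|x|$ small (because $|\omega_j|>0$) and decays like $1/|x|$ for $|x|$ large. Combining the three decays $|S_1(x)S_2(x)S_3(x)| = O(1/|x|^3)$ with the uniform bound on the $P_j/S_j$'s absorbs the $|x|^3$ prefactor, giving $|P(x)| \leq C$ uniformly on $\mathbb{R}$.

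The main obstacle is the lower bound \eqref{p prime} for $|P'|$ at its zeros. Differentiating gives
\begin{equation*}
P'\!\bigl(i\overline{\lambda_k^j}\bigr) = -\bigl(i\overline{\lambda_k^j}\bigr)^2\!\!\prod_{(n,\ell)\neq(k,j)}\!\!\left(1 - \frac{i\overline{\lambda_k^j}}{i\overline{\lambda_n^\ell}}\right),
\end{equation*}
and I would reduce this, via the comparison, to bounding $|S_j'(\beta_j k - i\omega_j)|\!\prod_{\ell\neq j}|S_\ell(i\overline{\lambda_k^j})|$ from below, up to bounded nonvanishing corrections coming from the paired-up perturbations. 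The first factor is bounded below by the classical sine-type derivative estimate at its zeros (uniformly in $k$), and the second is bounded below by the gap condition \eqref{gap con}, which ensures that $|i\overline{\lambda_k^j}-(\beta_\ell n-i\omega_\ell)|$ stays uniformly positive for $(n,\ell)\neq(k,j)$. The delicate technical step is controlling the perturbation factors at the small-index zeros where the asymptotics are not yet sharp; this is handled by a direct inspection using Lemma~\ref{gap} together with the fact that only finitely many such indices arise.
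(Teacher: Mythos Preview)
Your proposal is correct and follows essentially the same strategy as the paper: factor $P$ into three pieces indexed by $j$, recognize each piece as a sine-type function whose zeros lie on the perturbed lattice $\beta_j k - i\omega_j + O(1/|k|)$, then read off the exponential type, the boundedness on $\mathbb{R}$, and the derivative lower bound from the standard properties of sine-type functions together with the gap estimate \eqref{gap con}.

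The packaging differs slightly. The paper first rescales, setting $\nu_k^j=\lambda_k^j/\beta_j$ so that $i\overline{\nu_k^j}=k-i\omega_j/\beta_j+O(1/|k|)$, defines $P_j(z)=z\prod_{k\in\mathbb{Z}^*}\bigl(1-z/(i\overline{\nu_k^j})\bigr)$, and then invokes a black-box lemma (from \cite{RY}, \cite{LR14}) stating that any such product with zeros at $k+d+O(1/k)$ is automatically of sine type; the factorization $P(z)=\prod_j \beta_j P_j(z/\beta_j)$ then does the rest. You instead keep the original scale, write down the explicit comparison function $S_j(z)=\beta_j\sin\bigl(\pi(z+i\omega_j)/\beta_j\bigr)/\bigl(\pi(z+i\omega_j)\bigr)$, and argue directly that $P_j/S_j$ is bounded above and below via zero-pairing estimates. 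Your route is more explicit and self-contained (one sees exactly where the $1/|x|^3$ decay comes from to absorb the $z^3$ prefactor), while the paper's rescaling is slicker and avoids repeating the perturbation argument, deferring it entirely to the cited lemma. Both arrive at the same three ingredients for \eqref{p prime}: the uniform lower bound on $|P_j'|$ at its own zeros, the uniform lower bound on $|P_\ell|$ at the zeros of $P_j$ for $\ell\neq j$ coming from \eqref{gap con}, and the finiteness of the exceptional low-frequency set.
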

								\begin{proof} Let us denote $\nu_n^j=\frac{1}{\beta_j}\lambda_n^j.$ thus, we have:
									\begin{equation}\label{nmaxasmlambda 1}
										\begin{cases}
											\nu_n^1=-\frac{\omega_1}{\beta_1}+i n + O\left( \frac{1}{|n|}\right),\\
											\nu_n^2=-\frac{\omega_2}{\beta_2}+i n + O\left( \frac{1}{|n|}\right),\\
											\nu_n^3=-\frac{\omega_3}{\beta_3}+in + O\left( \frac{1}{|n|}\right).
										\end{cases}
									\end{equation}
									Let us denote the products $P_j, j\in \{1,2,3\}$ by
									\begin{equation}\label{pj}
										P_j(z)=z\prod_{k\in \z^*}\left(1-\frac{z}{i\overline{\nu_k^j}}\right).
									\end{equation}
									Thus the canonical product $P$ can be written in the following form 
									\begin{equation}\label{p pj}
										P(z)=\prod_{j\in \{1,2,3\}}\beta_j P_j\left(\frac{z}{\beta_j}\right).
									\end{equation}
									\begin{lem}[{see \cite{RY}, \cite{LR14}}]\label{Rosier}
										Let $\Lambda_k=k+d_k$, where $d_k=d+ O(k^{-1}),$
										as $\abs{k} \to \infty$ for some constant $d \in \cplx$, and that $\Lambda_k\neq\Lambda_l$ for $k \neq l.$ Then $f(z)=z\prod_{k\in \z^*} \left(1-\frac{z}{ \Lambda_k}\right)$ is an entire function of sine type.
									\end{lem}
									Note that, $i\overline{\nu_k^j}=-i\frac{\omega_j}{\beta_j}+ n + O\left( \frac{1}{|n|}\right), j\in \{1,2,3\}$. Also $i\overline{\nu_k^j}\neq i\overline{\nu_l^j},$ for $ l\neq k$.  Thanks to \Cref{Rosier}, each $P_j$ is a sine type function. Moreover, each of these functions is an entire function of exponential type $\pi$. Hence, $P$ is an entire function of the exponential type $\pi\left(\frac{1}{|\beta_1|}+\frac{1}{|\beta_2|}+\frac{1}{|\beta_3|}\right)$.

									\noindent
									Thanks to the definition of sine-type function, for any $\epsilon>0$ there exist positive constants $C_1, C_2, C_3, C_4,$ $ C_5$, where $C_2$ and $C_3$ depend on $\epsilon$, such that
									\begin{align}
										\label{bound}
										\abs{P_j(z)}\leq C_1^je^{\pi\abs{z}},&\:\: z \in \mathbb{C},\quad j \in \{1,2,3\},\\
										\label{bound1}C_2^je^{\pi\abs{y}}\leq \abs{P_j(x+iy)} \leq C_3^je^{\pi \abs{y}},\:\:& \text{if } dist\left(x+iy,\{i\overline{\nu_j^k}\}\right) > \epsilon, \\
										\label{bound2}C_4^j<\abs{P_j'\left(i\overline{\nu_k^j}\right)}<C_5^j,&\:\: \forall (k,j) \in \Sigma.
									\end{align}
									Now using the above inequality \eqref{bound1} and continuity of $P_j$, we get a positive constant $C^j$ such that
									\begin{equation}\label{eq:p1real}
										\abs{P_j(x)}\leq C^j, \quad \forall\, x\in\rea.
									\end{equation}
									Therefore the estimate \eqref{bound for p} holds.
									
									Using \eqref{p pj} and \eqref{bound2} we have:
									\begin{align}\label{p prime es} 
										\abs{P'\left(i\overline{\lambda_n^j}\right)}=\abs{P_j'\left(i\overline{\nu_n^j}\right)\prod_{\substack{l\in \{1,2,3\} \\ l\neq j}} \beta_l P_l\left(\frac{i\overline{\lambda_n^j}}{\beta_l}\right) }\geq \min_{1\leq j\leq 3} \{C_4^j\} \prod_{\substack{l\in \{1,2,3\} \\ l\neq j}}
										\abs{\beta_l P_l\left(\frac{i\overline{\lambda_n^j}}{\beta_l}\right)}.
									\end{align}
									Therefore to prove the estimate \eqref{p prime}, we need to compute the estimate of the quantities $P_l\left(\frac{i\overline{\lambda_n^j}}{\beta_l}\right), l\neq j.$ Thanks to \Cref{gap}, we have 
									\begin{align*}
										\abs{\frac{i\overline{\lambda_n^j}}{\beta_l}-i\overline{\nu_k^l}}=\frac{1}{|\beta_l|}\abs{{\overline{\lambda_n^j}}-\beta_l\overline{\nu_k^l}}=\frac{1}{|\beta_l|}\abs{{\overline{\lambda_n^j}}-\overline{\lambda_k^l}}> \hat{C}.
									\end{align*}
									Therefore using the estimate \eqref{bound1}, we finally have the estimate
									\begin{equation}\label{p prime est1}
										\abs{P_l\left(\frac{i\overline{\lambda_n^j}}{\beta_l}\right)}\geq C_2^l e^{\pi\abs{\frac{\omega_j}{\beta_l}}}.
									\end{equation}
									Hence \eqref{p prime es} along with \eqref{p prime est1} provide the estimate \eqref{p prime}.
								\end{proof}
								\begin{theorem}\label{biorthothm}
									Let $T> 2\pi\left(\frac{1}{|\beta_1|}+\frac{1}{|\beta_2|}+\frac{1}{|\beta_3|}\right).$ Then there exists a family $\{\Theta_n^j\}_{(n,j)\in \Sigma}$ which is biorthogonal to the family of exponentials $\{e^{-{\lambda_k^l}t}\}_{(k,l)\in \Sigma}$ in $L^2\left(-\frac{T}{2}, \frac{T}{2}\right)$, i.e.,
									\begin{equation}\label{bior}
										\int_{-\frac{T}{2}}^{\frac{T}{2}}   \Theta_n^j(t) e^{-\overline{\lambda_k^l}t}\, \rd t= \delta_{nk}\delta_{jl}.
									\end{equation}
									Moreover, there exists a positive constant $C>0$ such that the following estimate holds:
									\begin{equation}\label{ingham1}
										\norm{\sum_{(n,j)\in \Sigma} a_n^j \Theta_n^j}_{L^2\left(-\frac{T}{2}, \frac{T}{2}\right)}^2 \leq C \sum_{(n,j)\in \Sigma} |a_n^j|^2 ,
									\end{equation}       for any finite sequence of complex numbers $\{ a_n^j\}_{(n,j)\in \Sigma}$.
								\end{theorem}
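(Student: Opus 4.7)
The plan is to build the biorthogonal family via the canonical product $P(z)$ from \eqref{eq:P} together with a sinc-type multiplier. Fix $\epsilon>0$ small enough that $\pi(|\beta_1|^{-1}+|\beta_2|^{-1}+|\beta_3|^{-1})+\epsilon < T/2$; this is possible exactly because $T>2\pi(|\beta_1|^{-1}+|\beta_2|^{-1}+|\beta_3|^{-1})$. Define the entire functions
\begin{equation*}
\Psi_n^j(z)=\frac{P(z)}{P'(i\overline{\lambda_n^j})\,(z-i\overline{\lambda_n^j})}\cdot\frac{\sin\!\bigl(\epsilon(z-i\overline{\lambda_n^j})\bigr)}{\epsilon(z-i\overline{\lambda_n^j})},\qquad (n,j)\in\Sigma.
\end{equation*}
Each $\Psi_n^j$ is entire (removable singularity at $z=i\overline{\lambda_n^j}$ since $P$ has a simple zero there) and of exponential type at most $\pi(|\beta_1|^{-1}+|\beta_2|^{-1}+|\beta_3|^{-1})+\epsilon<T/2$. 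The interpolation identity $\Psi_n^j(i\overline{\lambda_k^l})=\delta_{nk}\delta_{jl}$ is immediate: for $(k,l)\neq(n,j)$ the factor $P(i\overline{\lambda_k^l})=0$ while the denominator stays away from $0$ thanks to the gap \Cref{gap}, and for $(k,l)=(n,j)$ two applications of L'Hôpital give the value $1$.

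Next I would check that $\Psi_n^j\in L^2(\mathbb{R})$ with a norm bounded uniformly in $(n,j)$. Using $|P(x)|\leq C$ from \eqref{bound for p} and $|P'(i\overline{\lambda_n^j})|\geq C_2$ from \eqref{p prime}, and writing $i\overline{\lambda_n^j}=\beta_j n-i\omega_j+O(1/|n|)$, a translation $y=x-\beta_j n$ reduces $\|\Psi_n^j\|_{L^2(\mathbb{R})}^2$ to an integral of the form $\int_{\mathbb{R}}|\sin(\epsilon w)/(\epsilon w^2)|^2\,dy$ with $w=y+i\omega_j+O(1/|n|)$, which is bounded uniformly in $(n,j)$ since the integrand decays like $y^{-4}$. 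The Paley–Wiener \Cref{Paley-Wiener} then provides $\Theta_n^j\in L^2(-T/2,T/2)$ with $\Psi_n^j(z)=\int_{-T/2}^{T/2}e^{izt}\Theta_n^j(t)\,dt$, and reading this at $z=i\overline{\lambda_k^l}$ yields exactly the biorthogonality \eqref{bior}.

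The main obstacle is the Bessel-type estimate \eqref{ingham1}. By Plancherel it is equivalent to $\|\sum a_n^j\Psi_n^j\|_{L^2(\mathbb{R})}^2\leq C\sum|a_n^j|^2$. I would split the triple family according to $j\in\{1,2,3\}$ and write $\Psi_n^j(x)=P(x)\,h_n^j(x)$ where
\begin{equation*}
h_n^j(x)=\frac{\sin\!\bigl(\epsilon(x-i\overline{\lambda_n^j})\bigr)}{P'(i\overline{\lambda_n^j})\,\epsilon(x-i\overline{\lambda_n^j})^2}.
\end{equation*}
Since $P$ is bounded on the real line, it suffices to control $\sum_n a_n^j\, h_n^j$ in $L^2(\mathbb{R})$ for each fixed $j$. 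The real parts of the interpolation nodes $\{i\overline{\lambda_n^j}\}_{n\in\mathbb{Z}^*}$ are asymptotically $\beta_j n$, uniformly separated, while $|h_n^j(x)|\lesssim(1+|x-\beta_j n|^2)^{-1}$ with an extra bounded imaginary-shift factor. A standard shifted-sinc Bessel argument (equivalently, a sampling bound for Paley–Wiener functions of type $\epsilon$) therefore yields $\|\sum_n a_n^j h_n^j\|_{L^2}^2\leq C\sum_n|a_n^j|^2$; summing over $j=1,2,3$ gives \eqref{ingham1}. The delicate point, and the reason the classical single-family Ingham inequality does not suffice, is that three sequences with different but commensurable imaginary slopes must be treated simultaneously, which is exactly what the triple product $P=\prod_j \beta_j P_j(\cdot/\beta_j)$ and the gap \Cref{gap} between the three sequences are designed to handle.
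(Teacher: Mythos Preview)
Your argument is correct, and it differs from the paper's in two respects. First, you insert the extra multiplier $\frac{\sin(\epsilon(z-i\overline{\lambda_n^j}))}{\epsilon(z-i\overline{\lambda_n^j})}$; the paper does not. Because $\operatorname{Re}\lambda_n^j<0$ for every $(n,j)$, the nodes $i\overline{\lambda_n^j}$ already have nonzero imaginary part, so the bare quotient $P(x)/\bigl((x-i\overline{\lambda_n^j})P'(i\overline{\lambda_n^j})\bigr)$ is in $L^2(\mathbb{R})$ with uniformly bounded norm, and the paper uses this directly to get a biorthogonal on $L^2(-T'/2,T'/2)$ with $T'=2\pi\sum_j|\beta_j|^{-1}$. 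Second, for the Bessel estimate \eqref{ingham1} the paper does not argue via Plancherel and the shifted-bump bound; it instead quotes an abstract result (\cite[Proposition~8.3.9]{TW09}): the existence of a uniformly bounded biorthogonal on $L^2(-T'/2,T'/2)$ forces, for any $T>T'$, the exponential family to be a Riesz sequence in $L^2(-T/2,T/2)$, whence a biorthogonal satisfying \eqref{ingham1}. Your route is more self-contained and constructive---the sinc factor buys the extra $1/|x|$ decay that makes the Schur-type bound $\sum_n|h_n^j(x)|\lesssim 1$ available, so \eqref{ingham1} falls out by Cauchy--Schwarz---while the paper's route is shorter if one is willing to invoke the Riesz-sequence machinery.
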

								\begin{proof}
									Let us first define the entire function 
									\begin{equation*}
										\Psi_n^j(z):=\frac{P(z)}{(z-i\overline{\lambda_n^j}) P'\left(i\overline{\lambda_n^j}\right)}.
									\end{equation*}
									Clearly, $\Psi_n^j$ is a collection of entire functions of exponential type at most $\pi\left(\frac{1}{|\beta_1|}+\frac{1}{|\beta_2|}+\frac{1}{|\beta_3|}\right).$ Moreover, it is easy to check that 
									\begin{align}\label{sq intg}
										\int_{\rea}\abs{\Psi_n^j(x)}^2 \, \rd x< C. 
									\end{align}
									Therefore by Paley-Wiener Theorem (\Cref{Paley-Wiener}), there exists a collection of functions $\{\Theta_n^j\}_{(n,j)\in \Sigma}$ supported in $[-\frac{T'}{2}, \frac{T'}{2}]$, with $T'=2\pi\left(\frac{1}{|\beta_1|}+\frac{1}{|\beta_2|}+\frac{1}{|\beta_3|}\right)$ such that the following relation holds 
									\begin{align}\label{representation}
										\Psi_n^j(z)=\int_{-\frac{T'}{2}}^{\frac{T'}{2}}e^{izt}\Theta_n^j(t) \, \rd t,\, z\in \cplx.
									\end{align}
									Also, we can deduce
									\begin{equation*}\Psi_n^j(i\overline{\lambda_k^l})=\delta_{nk} \delta_{jl}, \,\, (n,j), (k,l)\in \Sigma.
									\end{equation*}
									Thus we have the family $\{\Theta_n^j\}_{(n,j)\in \Sigma}$ which is biorthogonal to the family of exponentials $\{e^{-{\lambda_k^l}t}\}_{(k,l)\in \Sigma}$ in $L^2\left(-\frac{T'}{2}, \frac{T'}{2}\right)$ and by Plancharel's Theorem and \eqref{sq intg}, we have 
									\begin{equation*}
										2\pi\int_{-\frac{T'}{2}}^{\frac{T'}{2}}\abs{\Theta_k^j(t)}^2\, \rd t=\int_{\rea}\abs{\Psi_k^j(x)}^2 \, \rd x\leq C.
									\end{equation*}
									Then using \cite[Proposition 8.3.9]{TW09}, we have for any $T>T'$, there exists a biorthogonal family  $\{\Theta_n^j\}_{(n,j)\in \Sigma}$ of $\{e^{-{\lambda_k^l}t}\}_{(k,l)\in \Sigma}$ in $L^2\left(-\frac{T}{2}, \frac{T}{2}\right)$ such that the estimate \eqref{ingham1} holds. 
								\end{proof}
								The following corollary is an immediate consequence of the above theorem which will essentially give us the required Ingham-type inequality.
								\begin{cor}
									Let us assume $T> 2\pi\left(\frac{1}{|\beta_1|}+\frac{1}{|\beta_2|}+\frac{1}{|\beta_3|}\right).$ Then for any finite sequence of scalars $\{ a_n^j\}_{(n,j)\in \Sigma}$, there exist two positive constants $C_1, C_2$ such that the following inequality holds:
									\begin{equation}\label{ingham2}
										C_1\sum_{(n,j)\in \Sigma} |a_n^j|^2\leq \norm{\sum_{(n,j)\in \Sigma} a_n^j e^{-\overline{\lambda_n^j}t}}_{L^2\left(-\frac{T}{2}, \frac{T}{2}\right)}^2\leq C_2 \sum_{(n,j)\in \Sigma} |a_n^j|^2.
									\end{equation}
								\end{cor}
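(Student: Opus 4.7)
The proof of the Ingham-type inequality \eqref{ingham2} decomposes into a lower bound (observability) and an upper bound (Bessel-type), and my plan is to deduce both from the biorthogonal construction of Theorem~\ref{biorthothm}. The lower bound is the deeper half and exploits the $L^{2}$-norm estimate \eqref{ingham1}; the upper bound is more standard, and I will use the uniform gap condition of Lemma~\ref{gap} together with the boundedness of $\Re(\lambda_n^j)$ provided by \eqref{nmaxasmlambdanew}.

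For the lower bound, I would set $f(t) := \sum_{(n,j) \in \Sigma} a_n^j \, e^{-\overline{\lambda_n^j}\,t}$ on $[-T/2, T/2]$ and use the biorthogonality \eqref{bior} to recover the coefficients through $a_n^j = \int_{-T/2}^{T/2} \Theta_n^j(t)\, f(t)\, \rd t$. Then, for any finite sequence $\{c_n^j\} \in \ell^2$, Cauchy--Schwarz and \eqref{ingham1} give
\begin{equation*}
\Bigl|\sum_{(n,j)\in \Sigma} c_n^j \, a_n^j\Bigr| = \Bigl|\int_{-T/2}^{T/2} \Bigl(\sum_{(n,j)\in \Sigma} c_n^j \, \Theta_n^j(t)\Bigr)\, f(t)\, \rd t\Bigr| \leq \Bigl\|\sum_{(n,j)\in \Sigma} c_n^j \, \Theta_n^j\Bigr\|_{L^2} \|f\|_{L^2} \leq \sqrt{C}\, \Bigl(\sum_{(n,j)\in \Sigma} |c_n^j|^2\Bigr)^{1/2} \|f\|_{L^2}.
\end{equation*}
Taking the supremum over unit $\ell^2$ sequences $\{c_n^j\}$ yields $\sum_{(n,j)\in \Sigma} |a_n^j|^2 \leq C\, \|f\|_{L^2}^2$, which is the required left-hand inequality with $C_1 = 1/C$.

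For the upper bound, the plan is to split the sum into three subfamilies indexed by $j \in \{1,2,3\}$, each having imaginary parts $\Im(\lambda_n^j) = \beta_j n + O(1/|n|)$. Since $\Re(\lambda_n^j) \to -\omega_j$, the modulus $e^{-\Re(\overline{\lambda_n^j})\,t}$ is uniformly bounded on $[-T/2, T/2]$, and the oscillatory factors $e^{-i\Im(\overline{\lambda_n^j})\,t}$ inherit the uniform gap from Lemma~\ref{gap}. A classical Bessel-type estimate for exponential sums (or the upper half of Ingham's inequality on a sufficiently long interval) then controls each subseries in $L^2$, and summing over $j$ via the triangle inequality produces the full upper bound. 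The genuine obstacle does not lie in the corollary itself but upstream in Theorem~\ref{biorthothm}: the critical time $T_0 = 2\pi\bigl(1/|\beta_1| + 1/|\beta_2| + 1/|\beta_3|\bigr)$ arises as twice the exponential type of the canonical product $P(z)$ in \eqref{eq:P}, and producing a biorthogonal family with support inside $[-T_0/2, T_0/2]$ via the Paley--Wiener theorem is the delicate analytic step. Once that construction is in hand, the corollary reduces to the functional-analytic duality sketched above together with a routine Bessel estimate.
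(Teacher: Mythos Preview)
Your proposal is correct and follows essentially the same route as the paper. For the lower bound, the paper makes the specific choice $c_n^j = \overline{a_n^j}$ directly (writing $\sum |a_n^j|^2 = \int f \cdot \sum \overline{a_n^j}\,\Theta_n^j$ and applying Cauchy--Schwarz together with \eqref{ingham1}), while you phrase the same step via duality over all $\ell^2$ test sequences; for the upper bound, the paper likewise splits by $j\in\{1,2,3\}$ and then invokes a standard one-family Ingham/Bessel upper estimate, exactly as you outline.
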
       
								\begin{proof}
									Using \eqref{bior}, we can deduce
									\begin{align}\label{ing 11}
										\nonumber
										\sum_{(n,j)\in \Sigma} |a_n^j|^2 =&\int_{-\frac{T}{2}}^{\frac{T}{2}}{\left(\sum_{(n,j)\in \Sigma} a_n^j e^{-\overline{\lambda_n^j}t}\right)}\left(\sum_{(k,l)\in \Sigma} \overline{a_k^l} \Theta_k^l(t) \right)\, \rd t \\
										\leq & \norm{\sum_{(n,j)\in \Sigma} a_n^j e^{-\overline{\lambda_n^j}t}}_{L^2\left(-\frac{T}{2}, \frac{T}{2}\right)}\norm{\sum_{(n,j)\in \Sigma} \overline{a_k^l} \Theta_k^l}_{L^2\left(-\frac{T}{2}, \frac{T}{2}\right)}.
									\end{align}
									The inequality \eqref{ing 11} along with \eqref{ingham1} establish the left inequality of \eqref{ingham2}.
									
									Straightforward estimate provides the right hand inequality. Indeed,
									\begin{align*}
										\norm{\sum_{(n,j)\in \Sigma} a_n^j e^{-\overline{\lambda_n^j}t}}_{L^2\left(-\frac{T}{2}, \frac{T}{2}\right)}^2\leq C_2\sum_{j=1}^{3}\norm{\sum_{n\in \z^*} a_n^j e^{-\overline{\lambda_n^j}t}}_{L^2\left(-\frac{T}{2}, \frac{T}{2}\right)}^2.
									\end{align*}
									Next, utilizing the Ingham-type inequality \cite[Proposition 8.1]{ahamed}, one can get the right hand inequality.\end{proof}

								{
									
									\section{Some details on multiple eigenvalues}\label{secmultiev}
									
									Recall from \Cref{rem multi} that \(\mathcal{A}\) can have multiple eigenvalues. In this section, we will demonstrate that results analogous to \Cref{nmaxbiorthonormality} and \Cref{nmaxbasis_representation} also apply in this context. This will enable us to extend our analysis to prove \Cref{nmaxthm_pos}, \Cref{thm_pos}, and \Cref{nmaxthm_pos bd}. We will specifically focus on the case where \(\mathcal{A}\) has double eigenvalues. Using a similar approach, we can also handle the case when \(\mathcal{A}\) has triple eigenvalues.
									
									\begin{prop}\label{condndoubleroot}
										For each $n\in \mathbb{N}$, let us denote
										\begin{equation}\label{defnq}
											q_n(\lambda)=-b+\frac{  \left( {\lambda}+inu_s\right)^2 }{\rho_sn^2} - \frac{\mu\kappa\left( {\lambda}+inu_s\right)^2 }{\rho_s^2\left(  1+\kappa{\lambda}\right)^2}.
										\end{equation}
										Then $\lambda$ is a double root of the characteristic equation \eqref{nmaxchar.poly} if and only if $q_n(\lambda)=0$.
									\end{prop}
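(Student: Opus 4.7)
The plan is to identify $q_n(\lambda)$, up to the positive factor $2\pi$, with the weighted $\mathcal{Z}$-pairing between the candidate eigenvector of $\mathcal{A}$ attached to $\lambda$ and the candidate eigenvector of $\mathcal{A}^{*}$ attached to $\overline{\lambda}$, and then to invoke the elementary spectral fact that such a pairing vanishes precisely when the eigenvalue fails to be algebraically simple. This identification is in fact already implicit in the biorthonormal normalisation \eqref{nmaxequ-psi_nl}, whose numerator reproduces $q_n$ up to a conjugation and a factor of $\sqrt{2\pi}$.

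For any $\lambda\in\mathbb{C}$ with $\mathcal{F}_n(\lambda)=0$, projecting the eigenproblem of $\mathcal{A}$ onto the Fourier mode $e^{inx}$ and solving the first and third scalar equations in terms of the first component (normalised to $-1$) reproduces
\[
v(\lambda)=\Big(-1,\ \tfrac{\lambda+inu_s}{in\rho_s},\ \tfrac{\mu(\lambda+inu_s)}{\rho_s(1+\kappa\lambda)}\Big)^{\top}
\]
from \eqref{nmaxequ-xi_n}, the middle equation being automatic since it is equivalent to $\mathcal{F}_n(\lambda)=0$; the analogous derivation for $\mathcal{A}^{*}$ at $\overline{\lambda}$ produces the dual candidate $v^{*}(\lambda)$ of \eqref{nmaxequ-xi_n^*}. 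Evaluating $\langle v(\lambda),v^{*}(\lambda)\rangle_{\mathcal{Z}}$ with the weighted product \eqref{nmaxinnerproduct}, after applying $\overline{in}=-in$, $\overline{1+\kappa\overline{\lambda}}=1+\kappa\lambda$ and $\int_0^{2\pi}e^{inx}\overline{e^{inx}}\,\mathrm{d}x=2\pi$, the three weighted contributions collapse exactly to $2\pi\,q_n(\lambda)$.

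To conclude the equivalence I first rule out two-dimensional eigenspaces: the $2\times 2$ submatrix of the Fourier block of $\mathcal{A}-\lambda I$ formed by rows $\{1,2\}$ and columns $\{2,3\}$ has determinant a nonzero scalar multiple of $n^2$ for every $\lambda\in\mathbb{C}$, so this block has rank at least two and every eigenvalue of $\mathcal{A}_n$ is geometrically simple. If $\lambda$ is a simple root of $\mathcal{F}_n$, the standard biorthogonality between the eigenspaces of $\mathcal{A}_n$ and $\mathcal{A}_n^{*}$ for simple eigenvalues yields $\langle v(\lambda),v^{*}(\lambda)\rangle_{\mathcal{Z}}\neq 0$, hence $q_n(\lambda)\neq 0$. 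Conversely, if $\lambda$ is a double root, its Jordan block has size at least two and admits a generalised eigenvector $w$ with $(\mathcal{A}_n-\lambda I)w=v(\lambda)$; then
\[
\langle v(\lambda),v^{*}(\lambda)\rangle_{\mathcal{Z}}=\langle(\mathcal{A}_n-\lambda I)w,\,v^{*}(\lambda)\rangle_{\mathcal{Z}}=\langle w,\,(\mathcal{A}_n^{*}-\overline{\lambda}I)v^{*}(\lambda)\rangle_{\mathcal{Z}}=0,
\]
so $q_n(\lambda)=0$.

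The main obstacle is the sign-sensitive bookkeeping in the pairing calculation: the two quotients carrying $in$ in their denominators together with the conjugation of $1+\kappa\overline{\lambda}$ must each be tracked carefully for the three weighted contributions to telescope into the stated expression for $q_n$. Once this identification is in hand, the remainder of the argument is the standard linear algebra of simple versus defective eigenvalues, aided by the off-diagonal $2\times 2$ minor that excludes two-dimensional eigenspaces.
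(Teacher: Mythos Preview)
Your argument is correct and complete. The identification $\langle v(\lambda),v^{*}(\lambda)\rangle_{\mathcal Z}=2\pi\,q_n(\lambda)$ checks out term by term with the weighted inner product \eqref{nmaxinnerproduct}, the $2\times2$ minor you single out does have determinant $n^2\sqrt{b\mu/\kappa}\neq0$, and the Jordan-chain computation for the ``only if'' direction together with the dual-basis nondegeneracy for the ``if'' direction are both standard and correctly invoked.

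This is, however, a genuinely different route from the paper's. The paper simply asserts that the equivalence follows by direct calculation from the root--coefficient relations \eqref{nmaxrc1}--\eqref{nmaxrc6}: in other words, one verifies algebraically that for a root $\lambda$ of $\mathcal F_n$ the condition $\mathcal F_n'(\lambda)=0$ is equivalent to $q_n(\lambda)=0$, with no spectral interpretation. Your approach is more conceptual: it explains \emph{why} $q_n$ is the right object, namely because it is (up to $2\pi$) the pairing that already appears as the numerator of $\psi_{n,l}$ in \eqref{nmaxequ-psi_nl}, so its vanishing is precisely the obstruction to the biorthonormal construction in \Cref{nmaxbiorthonormality}. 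The paper's method is more elementary in that it requires only polynomial algebra, whereas yours imports the Jordan structure---but yours makes transparent the link between double roots and the failure of the normalisation $\psi_{n,l}\neq0$, which is exactly the point at which the paper later needs generalised eigenfunctions.
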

									From the relation between roots and coefficients in \eqref{nmaxrc1}-\eqref{nmaxrc6}, the above result follows by a direct calculation. 
									
									\vspace{2mm}
									
									Let for some $n\in \mathbb{Z}^*$, $\mathcal{A}$ has eigenvalues $\lambda_n^1, \lambda_n^2, \lambda_n^3$ with $\lambda_n^2=\lambda_n^3 \left( \text{suppose }\lambda_n^2 \text{ is a double eigenvalue}\right) $. \Cref{condndoubleroot} gives that $q_n(\lambda_n^2)=0$.

									Now we consider $\xi_{n,1}$, an eigenfunction of $\mathcal{A}$ corresponding to the eigenvalue $\lambda_n^1$, $\xi_{n,2}$, an eigenfunction of $\mathcal{A}$ corresponding to the eigenvalue $\lambda_n^2$ and $\xi_{n,3}$, a generalized eigenfunction of $\mathcal{A}$	corresponding to the eigenvalue $\lambda_n^2$ :
									\begin{align*}
										&	\xi_{n,1}=\frac{1}{\theta_{n,1}}\begin{pmatrix}
											-1\vspace{1mm}\\\frac{\lambda_n^1+inu_s}{{in\rho_s}}\vspace{1mm}\\\frac{\mu\left( \lambda_n^1+inu_s\right) }{\rho_s\left( 1+\kappa\lambda_n^1\right) }
										\end{pmatrix}e^{inx},\quad 	\xi_{n,2}=\frac{1}{\theta_{n,2}}\begin{pmatrix}
											-1\vspace{1mm}\\\frac{\lambda_n^2+inu_s}{{in\rho_s}}\vspace{1mm}\\\frac{\mu\left( \lambda_n^2+inu_s\right) }{\rho_s\left( 1+\kappa\lambda_n^2\right) }
										\end{pmatrix}e^{inx},\\
										& \xi_{n,3}=\frac{1}{\theta_{n,2}} \begin{pmatrix}
											c_n\vspace{1mm}\\-\frac{1+c_n\left( \lambda_n^2+inu_s\right) }{in\rho_s }\vspace{1mm}\\\frac{\mu\kappa\left( \lambda_n^2+inu_s\right)-\mu\left( 1+\kappa\lambda_n^2\right)\left( 1+c_n\left( \lambda_n^2+inu_s\right)\right) }{\rho_s\left( 1+\kappa\lambda_n^2\right)^2 }
										\end{pmatrix}e^{inx},
									\end{align*}
									where $\theta_{n,1}$ and $\theta_{n,2}$ are as same as in \eqref{nmaxequ-theta_nl}. We can see that $\left(\lambda_n^2\mathcal{I}-\mathcal{A} \right)\xi_{n,3}=\xi_{n,2} $.
									
									Similarly, we consider $\xi^*_{n,1}$, an eigenfunction of $\mathcal{A}^*$ corresponding to the eigenvalue $\overline{\lambda_n^1}$, $\xi^*_{n,3}$, an eigenfunction of $\mathcal{A}^*$ corresponding to the eigenvalue $\overline{\lambda_n^2}$ and $\xi^*_{n,2}$, a generalized eigenfunction of $\mathcal{A}^*$	corresponding to the eigenvalue $\overline{\lambda_n^2}$ :
									
									\begin{align}
										&\xi^*_{n,1}=\frac{1}{\psi_{n,1}}\begin{pmatrix}
											1\vspace{1mm}\\\frac{\overline{\lambda_n^1}-inu_s}{{in\rho_s}}\vspace{1mm}\\-\frac{\mu\left( \overline{\lambda_n^1}-inu_s\right) }{\rho_s\left( 1+\kappa\overline{\lambda_n^1}\right) }
										\end{pmatrix}e^{inx},\quad \xi^*_{n,2}=\frac{1}{\psi_{n,2}}\begin{pmatrix}
											\frac{1}{\overline{\lambda_n^2}-inu_s}\vspace{1mm}\\0\vspace{1mm}\\-\frac{\mu\kappa\left( \overline{\lambda_n^2}-inu_s\right) }{\rho_s\left( 1+\kappa\overline{\lambda_n^2}\right)^2 }
										\end{pmatrix}e^{inx},\label{gen1}\\
										& \xi^*_{n,3}=\frac{1}{\psi_{n,2}}\begin{pmatrix}
											1\vspace{1mm}\\\frac{\overline{\lambda_n^2}-inu_s}{{in\rho_s}}\vspace{1mm}\\-\frac{\mu\left( \overline{\lambda_n^2}-inu_s\right) }{\rho_s\left( 1+\kappa\overline{\lambda_n^2}\right) }
										\end{pmatrix}e^{inx},\label{gen2}
									\end{align}
									where $\psi_{n,1}$ is as same as in \eqref{nmaxequ-psi_nl} and $\psi_{n,2}\neq 0$ is given by 
									\begin{equation}\label{newpsi2}
										\psi_{n,2}=\frac{1}{\theta_{n,2}}\left[ -2\pi\left(\frac{b}{\overline{\lambda_n^2}-inu_s}+\frac{\mu\kappa\left( \overline{\lambda_n^2}-inu_s\right)^2 }{\rho_s^2\left( 1+\kappa\overline{\lambda_n^2}\right)^3 } \right) \right].
									\end{equation}
									We can see that $\left(\overline{\lambda_n^2}\mathcal{I}-\mathcal{A}^* \right)\xi^*_{n,2}=\xi^*_{n,3} $.
									
									The families $\left\lbrace \xi_{n,l}\: |\: 1\leq l\leq 3, n\in\mathbb{N}\right\rbrace$ and $\left\lbrace \xi^*_{n,l}\: |\: 1\leq l\leq 3, n\in\mathbb{N}\right\rbrace$ satisfy the bi-orthonormality relations \eqref{nmaxbiortho} and \Cref{nmaxbasis_representation} holds.

									Since in the case of double eigenvalues of $\mathcal{A}$, we obtain the result analogous to \Cref{nmaxbasis_representation} using the generalized eigenvectors of $\mathcal{A}$ and $\mathcal{A}^*$, the proof of the null controllability result can be adapted to the case when $\mathcal{A}$ has double eigenvalues. The essential  modifications of the proof of the controllability results to this case are indicated below. 
									
									\begin{rem}
										$\mathcal{A}_n$ in \Cref{lem-An} and $\mathcal{B}_n$ in \Cref{lem-Bn} are given by 
										\begin{equation}
											\mathcal{A}_n=\begin{pmatrix}
												\lambda_n^1&0&0\\
												0&\lambda_n^2&-1\\
												0&0&\lambda_n^2
											\end{pmatrix}, \quad \mathcal{B}_n= b\sqrt{2\pi}\begin{pmatrix}
												\frac{1}{{\bar\psi_{n,1}}}\\\frac{1}{{\lambda_n^2\bar\psi_{n,2}}}\\\frac{1}{{\bar\psi_{n,2}}}
											\end{pmatrix}.
										\end{equation}
										We can see that, 
										\begin{align*}
											\text{Rank}\left[\lambda I-\mathcal{A}_n \;;\; \mathcal{B}_n \right]=3, \: \text{ for }\lambda=\lambda^1_n, \lambda^2_n, \lambda^3_n. 
										\end{align*}
										Thus the finite dimensional system  \eqref{opp-eqn_proj} is controllable. 
										
										The rest of the proof of \Cref{thm_pos} in the case of multiple eigenvalues is same as given in Section \ref{nmaxsecnullcont}. 
									\end{rem}
									
									In the case of double eigenvalues, to complete the proofs of \Cref{nmaxthm_pos} and \Cref{nmaxthm_pos bd}, we need to adjust the finitely many modes in the Ingham inequality where the double eigenvalues exist. For this adjustment, it is essential that the eigenfunction \(\xi^*_{n,l}\), where \(l=1,2,3\), satisfies the unique continuation property. In the following, we will verify this property for the case of density control. Using a similar approach, we can also prove the unique continuation property when the control acts only on the velocity or stress.
									
									\noindent\textbf{Interior controllability.} {\textit{Control in density: }} Here the observation operator \(\mathcal{B}^* \in \mathcal{L}\left( \mathcal{Z}_m, L^2\left(0,2\pi \right) \right)\) is given by
									\begin{equation}\label{b*intden}
										\mathcal{B}^*\begin{pmatrix}
											\phi_1\\\phi_2\\\phi_3
										\end{pmatrix}=b\mathbbm{1}_{\mathcal{O}_{1}}\phi_1.
									\end{equation}
									From \eqref{b*intden}, \eqref{gen1}, \eqref{gen2}, \eqref{newpsi2}, and \eqref{nmaxequ-psi_nl}, it is clear that \(\mathcal{B}^*\left(\xi^*_{n,l}\right)\neq0\), for \(l=1,2,3\).
									
									\noindent\textbf{Boundary controllability.} {\textit{Control in density: }} Here the observation operator \(\mathcal{B}^* \in \mathcal{L}(\mathcal{D}(\mathcal{A}^*); \mathbb{C})\) is given by
									\begin{equation}\label{b*bddden}
										\mathcal{B}^*(\phi, \psi, \xi)=bu_s{\phi(2\pi)}+b\rho_s{\psi(2\pi)}.
									\end{equation}
									From \eqref{b*bddden}, \eqref{gen1}, \eqref{gen2}, \eqref{newpsi2}, and \eqref{nmaxequ-psi_nl}, it is clear that \(\mathcal{B}^*\left(\xi^*_{n,l}\right)\neq0\), for \(l=1,2,3\).
								}

								\section*{Acknowledgments} We would like to express sincere thanks to the anonymous reviewers whose comments and suggestions immensely helped us to improve this manuscript. The authors would like to thank  Dr. Shirshendu Chowdhury and Dr. Debanjana Mitra for fruitful discussions. Sakil Ahamed expresses his gratitude to the IISER Kolkata's Department of Mathematics \& Statistics for their hospitality and assistance during his visit. The work of Subrata Majumdar is supported by the institute post-doctoral fellowship of IIT Bombay.

								\section*{Data availability statement}
								This article describes entirely theoretical research. Thus, data sharing is not applicable to this article
								as no datasets were generated or analyzed during the current study.
								\section*{Conflict of Interest}
								{The authors declare that they have no conflicts of interest.}


							\end{document}